\documentclass[letter,11pt]{amsart}

\usepackage{amsaddr}

\usepackage{fullpage}			%margens menores
\usepackage{graphicx}			%poder inserir figuras
\usepackage[english]{babel}
\usepackage[T1]{fontenc}		%uma fonte mais completa ao compilar (e.g. acentos sem isso são símbolos isolados)
\usepackage[utf8]{inputenc}		%poder escrever acentos no código
\usepackage{amsmath}			%símbolos matemáticos
\usepackage{amssymb}			%mais símbolos matemáticos
\usepackage{amsthm}			%poder usar 'theoremstyle'
\usepackage{mathrsfs}			%\mathscr
\usepackage[bookmarksnumbered]{hyperref}		%transformar \ref em links; números no índice do leitor de PDF
\usepackage{eucal}			%\CMcal for original cal
\usepackage{enumitem}
\usepackage[normalem]{ulem}

\usepackage[numbers]{natbib}

\newcounter{generalnumbering} \numberwithin{generalnumbering}{section}

\theoremstyle{plain}		\newtheorem{theorem}[generalnumbering]{Theorem}
\theoremstyle{plain}		\newtheorem{corollary}[generalnumbering]{Corollary}
\theoremstyle{definition}		\newtheorem{definition}[generalnumbering]{Definition}
\theoremstyle{definition}		\newtheorem{example}[generalnumbering]{Example}
\theoremstyle{plain}		\newtheorem{proposition}[generalnumbering]{Proposition}
\theoremstyle{plain}		\newtheorem{lemma}[generalnumbering]{Lemma}

\newcommand{\namedenvironment}{}
\theoremstyle{definition}
\newtheorem*{theoremfornamed}{\namedenvironment}
\newenvironment{named}[1]{\renewcommand{\namedenvironment}{#1}\begin{theoremfornamed}}{\end{theoremfornamed}}

\newcounter{counterequation}[section]
\setcounter{counterequation}{0}
\newcommand{\ntag}{\stepcounter{counterequation}\tag{\arabic{section}.\arabic{counterequation}}}

\makeatletter
\def\G{\@ifnextchar[{\@Gwithbrak}{\@Gwithoutbrak}}
\def\@Gwithbrak[#1]{\mathcal{G}^{(#1)}}
\def\@Gwithoutbrak{\mathcal{G}}
\makeatother

\makeatletter
\def\H{\@ifnextchar[{\@Hwithbrak}{\@Hwithoutbrak}}
\def\@Hwithbrak[#1]{\mathcal{H}^{(#1)}}
\def\@Hwithoutbrak{\mathcal{H}}
\makeatother

\newcommand{\perpp}{\perp\!\!\!\perp}
\DeclareMathOperator{\supp}{supp}
\DeclareMathOperator{\so}{\mathfrak{s}}
\DeclareMathOperator{\ra}{\mathfrak{r}}
\DeclareMathOperator{\KB}{\bf KB}
\newcommand{\id}{\mathrm{id}}

\title{A general Banach--Stone type theorem and applications}
\author{Luiz Gustavo Cordeiro}
\thanks{Supported by CAPES/Ciência Sem Fronteiras PhD scholarship 012035/2013-00, and by ANR project GAMME (ANR-14-CE25-0004)}
\address{UMPA, UMR 5669 CNRS -- École Normale Supérieure de Lyon\\
	46 allée d'Italie, 69364 Lyon Cedex 07, France}

\email{luizgc6@gmail.com}				%AMSART

\subjclass[2010]%		%AMSART
{Primary 54C35;
Secondary 54D45, 54D80, 46E25, 08A02
}

\begin{document}

\begin{abstract}
	One important class of tools in the study of the connections between algebraic and topological structures are the ``Banach--Stone type theorems'', which describe algebraic isomorphisms of algebras (or groups, lattices, etc.) of functions in terms of homeomorphisms between the underlying topological spaces. Several such theorems have been proven throughout the last century, however not all of them are comparable, and in particular no single one is the strongest. In this article, we describe a general framework which encompasses several of these results, and which allows for new applications related to groupoid algebras, and to groups of circle-valued functions. This is attained by a detailed study of ``disjointness relations'' on sets of functions, which play a central role (even if not explicitly) in previously-proven Banach--Stone type theorems.
\end{abstract}

\maketitle

\subsubsection*{Keywords:} Locally compact Hausdorff; Banach--Stone; Function space; Recovery theorem; Groupoids.

\section*{Introduction}

Let $X$ be a locally compact and Hausdorff space, and consider $\mathbb{K}=\mathbb{R}$ or $\mathbb{C}$. The initial motivation for this work is the question of whether we can recover $X$ (up to homeomorphism) from $C_c(X,\mathbb{K})$, the set of continuous, compactly supported $\mathbb{K}$-valued functions on $X$. This is a problem initiated mostly after Stone's seminal work \cite{MR1501865} on the representations of Boolean algebras, and has proven to be a rich area of study with several important applications in Logic, Functional Analysis, and Operator Algebras.

By Milutin's Theorem (\cite{MR0206695}, or \cite[Chapter 36, Theorem 2.1]{MR1999613}), just the topological vector space structure of $C_c(X,\mathbb{K})$, when endowed with the supremum norm, is not enough to recover $X$. On the other hand, throughout the last century several authors have proved that by considering additional algebraic structures on $C_c(X,\mathbb{K})$ -- such as that of a ring, a C*-algebra if $\mathbb{K}=\mathbb{C}$, a lattice if $\mathbb{K}=\mathbb{R}$, etc. -- we can in fact recover $X$. See Banach and Stone \cite{MR1357166,MR1501905}, Gelfand and Kolmogorov \cite{gelfandkolmogorov1939}, Milgram \cite{MR0029476}, Gelfand and Naimark \cite{MR0009426}, Kaplansky \cite{MR0020715}, Jarosz \cite{MR1060366}, Li and Wong \cite{MR3162258}, Hernández and Ródenas \cite{MR2324919}, Kania and Rmoutil \cite{MR3813611}.

In fact, the results of \cite{MR0020715,MR2324919} also hold for certain spaces of non-$\mathbb{R}$ or $\mathbb{C}$-valued functions. In a similar manner, Stone's duality for Boolean algebras (\cite{MR1501865}) can also be seen as a result on spaces of functions: The Boolean algebra of clopen sets of a topological space $X$ is order-isomorphic to the lattice of functions $C(X,\left\{0,1\right\})$, and if $X$ is a \emph{Stone} (zero-dimensional, compact Hausdorff) space, then it completely determines $X$.

Our goal in this paper is to provide a unified and elementary approach to all these results, under hypotheses that can be easily verified in different settings. For this, we use a stronger version of the ``disjointness'' relation for (supports of) functions as considered by Jarosz in \cite{MR1060366}. As we will see in Section \ref{sectionconsequences}, all of the results above immediately fall in this more general setting.

Let us describe the main idea in the case real-valued functions: Two functions $f,g\in C_c(X,\mathbb{R})$ are \emph{strongly disjoint} if $\supp(f)\cap\supp(g)=\varnothing$, in which case we write $f\perpp g$. Then it is possible to describe, purely in terms of the relation $\perpp$, the subsets of  $C_c(X,\mathbb{R})$ of the form $\mathbf{I}(U)=\left\{f:\supp(f)\subseteq U\right\}$, where $U\subseteq X$ is open. These sets are called \emph{$\perpp$-ideals}, and we have a bijection $x\mapsto \mathbf{I}(X\setminus\left\{x\right\})$ between $X$ and the set $\widehat{C_c(X,\mathbb{R})}$ of maximal $\perpp$-ideals. This bijection can be made into a homeomorphism, by endowing $\widehat{C_c(X,\mathbb{R})}$ with a topology, described again only in terms of $\perpp$. Therefore, any $\perpp$-isomorphism $T\colon C_c(X,\mathbb{R})\to C_c(Y,\mathbb{R})$ will induce a homeomorphism $Y\cong\widehat{C_c(Y,\mathbb{R})}\cong \widehat{C_c(X,\mathbb{R})}\cong X$. In all of the previously-proven theorems listed above, the algebraic isomorphisms under considerations happen to be also $\perpp$-isomorphisms, and thus those results follow from this.

As supports of functions are central to the result above, and we wish also to look at a theory involving non-scalar maps, we will need to extend the notion of support, which is the first problem tackled in Section \ref{sectiondisjointness}.

This article is organized as follows: In the \hyperref[sectiondisjointness]{first section} we introduce all necessary terminology and prove our main recovery theorem (Theorem \ref{maintheorem}). In Section \ref{sectionbasicmaps}, we study an important class of maps, called ``basic'', between spaces of functions, and which will appear in most applications. These are the maps which are ``classifiable'' in a certain manner.

Due to the level of generality we seek, the first two sections are rather abstract, so the reader is invited to read Definition \ref{definitionsigma} in order to get familiarized with the notation, read the main Theorem \ref{maintheorem}, and proceed directly to the applications in Section \ref{sectionconsequences}, referring back to previous parts of this article as necessary (or desired).

In Section \ref{sectionconsequences} we obtain classifications of isomorphisms for different algebraic structures on spaces of continuous functions, including the ones mentioned at the beginning of this introduction. The new applications consist of a classification of linear isomorphisms which are isometric with respect to $L^1$-norms (Theorem \ref{theoremdisjointnessl1}), classifications of classes of isomorphisms of algebras associated to groupoids (Theorems \ref{theoremmeasuredgroupoidconvolutionalgebra}, \ref{theoremrenault} and \ref{theoremsteinbergalgebras}), and a classification of (uniform-metric) isometric isomorphisms between groups of circle-valued functions.

\section{Disjointness and \texorpdfstring{$\perpp$}{⊥⊥}-isomorphisms}\label{sectiondisjointness}

Throughout this section, $X$ will always denote a locally compact and Hausdorff space, $H$ will denote a Hausdorff space, and $\theta\colon X\to H$ is a fixed continuous function. We denote by $C(X,H)$ the set of continuous functions from $X$ to $H$. For two functions $f,g\colon X\to H$, we denote
\[[f\neq g]=\left\{x\in X:f(x)\neq g(x)\right\}\quad\text{and}\quad[f=g]=X\setminus[f\neq g].\]

\subsection{Disjointness relations}

The main idea is that any manageable algebraic structure on $H$ will naturally lead us to consider a specific function $\theta$, which behaves as a ``neutral element'', and which will be used to separate points of $X$. See Example \ref{examplethetaiszero} for the classical setting. We first generalize the notion of support, in the obvious manner.

\begin{definition}\label{definitionsigma}
	Given $f\in C(X,H)$, we define the \emph{$\theta$-support} of $f$ as
	\[\supp^\theta(f)=\overline{[f\neq\theta]}.\]
	We define $\sigma^\theta(f)$ as the interior of $\supp^\theta(f)$, and $Z^\theta(f)$ as the complement of $\supp^\theta(f)$:
	\[\sigma^\theta(f)=\operatorname{int}\supp^\theta(f)\qquad\text{and}\qquad Z^\theta(f)=X\setminus\supp^\theta(f).\]
	
	Let $C_c(X,\theta)$ be the set of continuous functions from $X$ to $H$ with compact $\theta$-support. Whenever there is no risk of confusion, we will drop $\theta$ from the notation and write simply $\supp(f)$, $\sigma(f)$, $Z(f)$ and $C_c(X)$.
	
	Now we define the following relations: Given $f,g\in C(X,H)$,
	\begin{enumerate}[label=(\arabic*)]
		\item $f\perp g$: if $[f\neq\theta]\cap[g\neq\theta]=\varnothing$; we say that $f$ and $g$ are \emph{weakly disjoint};\index{Disjoint functions}
		\item $f\perpp g$: if $\supp(f)\cap\supp(g)=\varnothing$; we say that $f$ and $g$ are \emph{strongly disjoint};\index{Disjoint functions!Strongly disjoint}
		\item $f\subseteq g$: if $\sigma(f)\subseteq\sigma(g)$;
		\item $f\Subset g$: if $\supp(f)\subseteq\sigma(g)$.
	\end{enumerate}
\end{definition}

Note that $Z^\theta(f)$ is the complement of $\sigma^\theta(f)$ in the lattice of regular open sets of $X$ (see \cite[Chapter 10]{MR2466574}). Also, $\sigma^\theta(f)$ is the regularization of $[f\neq\theta]$, and thus it follows immediately that
\[f\perp g\iff \sigma(f)\cap\sigma(g)=\varnothing,\]
even though $[f\neq \theta]$ and $\sigma^\theta(f)$ are not equal in general.

\begin{example}
	Suppose $X=H=[0,1]$, $\theta=0$ (the zero map $[0,1]\to[0,1]$) and $f=\id_{[0,1]}$, the identity map of $[0,1]$. Then $[f\neq\theta]=(0,1]$ but $\sigma^\theta(f)=[0,1]$.
\end{example}

As stated above, when $H$ comes with additional structure, a particular choice of $\theta$ generally yields a suitable notion of support, and the relations above may be described in terms of this structure. This is the general technique used in the  applications in Section \ref{sectionconsequences}.

\begin{example}\label{examplethetaiszero}
	If $H=\mathbb{R}$ or $\mathbb{C}$, and $\theta=0$ is the constant zero function, we obtain the usual notion of support. We may describe $\perp$ in terms of the multiplicative structure of $C_c(X)=C_c(X,0)$: $f\perp g$ if and only if $fg=0$, which is the only absorbing element of $C_c(X)$.
\end{example}

\begin{example}[Kania--Rmoutil, \cite{MR3813611}]\label{examplekaniarmoutil}
	Let $X$, $H$ and $\theta$ as in the beginning of this \hyperref[sectiondisjointness]{section}. Define the \emph{compatibility ordering} on $C_c(X,\theta)$ by
	\[f\preceq g\iff g|_{\supp^\theta(f)}=f|_{\supp^\theta(f)}.\]
	Then $\theta$ is the minimum of $\preceq$ in $C_c(X,\theta)$. We can describe weak disjointness in $C_c(X,\theta)$ by
	\[f\perp g\iff\inf_{\preceq}\left\{f,g\right\}=\theta\text{ and }\left\{f,g\right\}\text{ has a }\preceq\text{-upper bound.}\]
\end{example}

We will, moreover, be interested in recovering $X$ not from the whole set $C_c(X)$, but instead from a subcollection $\mathcal{A}\subseteq C_c(X)$. We will need to assume, however, that there are enough functions in $\mathcal{A}$ in order to separate points of $X$, and this is attained by assuming that an appropriate version of Urysohn's Lemma is valid. (This is the same type of assumption as made in \cite{MR0020715} and in \cite{MR2324919}.)

\begin{definition}\label{definitionregularperpp}
	Let $\mathcal{A}\subseteq C_c(X)$ be a subset containing $\theta$. Denote $\sigma(\mathcal{A})=\left\{\sigma(f):f\in\mathcal{A}\right\}$. We say that $(X,\theta,\mathcal{A})$ (or simply $\mathcal{A}$) is
	\begin{enumerate}
		\item \emph{weakly regular} if $\sigma(\mathcal{A})$ is a basis for the topology of $X$.\index{Regular family of functions!weakly regular}
		\item \emph{regular} if for every $x\in X$, every neighbourhood $U$ of $x$ and every $c\in H$ there is $f\in\mathcal{A}$ with $f(x)=c$ and $\supp(f)\subseteq U$.\index{Regular family of functions}
	\end{enumerate}
\end{definition}

We will need to analyze relations between $\subseteq,\Subset,\perp$ and $\perpp$. In order to deal with finitely many functions simultaneously, we will need to adapt the notion of \emph{open cover} of a set to this language.

\begin{definition}\label{definitiondisjointcover}\index{Cover}
	Suppose $\mathcal{A}\subseteq C_c(X)$ is weakly regular. A family $A\subseteq\mathcal{A}$ is a \emph{cover} of an element $b\in\mathcal{A}$ if the implication
	\[h\perp a\text{ for all }a\in A\Longrightarrow h\perp b.\]
	is valid for all $h\in\mathcal{A}$.
\end{definition}

\begin{lemma}\label{lemmacovers}
	Suppose $\mathcal{A}$ is weakly regular, and let $A\subseteq\mathcal{A}$ and $b\in\mathcal{A}$. The following are equivalent:
	\begin{enumerate}[label=(\arabic*)]
		\item\label{lemmacovers(1)} $A$ is a cover of $b$;
		\item\label{lemmacovers(2)} The closure of $\bigcup_{a\in A}[a\neq\theta]$ contains $\supp(b)$.
	\end{enumerate}
\end{lemma}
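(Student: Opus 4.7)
The plan is to translate both conditions into statements about open subsets of $X$ and then use weak regularity to fill in the gap. First, I would record two preliminary observations that will be used throughout: (a) since $H$ is Hausdorff and $h,\theta$ are continuous, the set $[h\neq\theta]$ is open in $X$ for every $h\in C(X,H)$, because it is the preimage of the open complement of the diagonal in $H\times H$; (b) by Lemma \ref{lemmadisjointnessintermsofsigma}, the relation $h\perp a$ is the same as $\sigma(h)\cap\sigma(a)=\varnothing$, and since $\sigma(a)=\operatorname{int}\overline{[a\neq\theta]}$, an open set $V$ is disjoint from $\sigma(a)$ if and only if it is disjoint from $[a\neq\theta]$. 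Set $U=\bigcup_{a\in A}[a\neq\theta]$.

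For \ref{lemmacovers(2)}$\Rightarrow$\ref{lemmacovers(1)}, assume $\supp(b)\subseteq\overline{U}$ and let $h\in\mathcal{A}$ satisfy $h\perp a$ for every $a\in A$. Then $[h\neq\theta]$ is an open set disjoint from each $[a\neq\theta]$, hence disjoint from $U$, hence from $\overline{U}$, and therefore from $\supp(b)\supseteq[b\neq\theta]$. This gives $h\perp b$, so $A$ covers $b$.

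For \ref{lemmacovers(1)}$\Rightarrow$\ref{lemmacovers(2)} I would argue by contraposition: suppose $\supp(b)\not\subseteq\overline{U}$ and pick $x\in\supp(b)\setminus\overline{U}$. Since $X\setminus\overline{U}$ is an open neighbourhood of $x$ and $x\in\overline{[b\neq\theta]}$, the open set
\[
W=[b\neq\theta]\cap(X\setminus\overline{U})
\]
is nonempty. Pick $y\in W$ and, using that $\mathcal{A}$ is weakly regular, choose $h\in\mathcal{A}$ with $y\in\sigma(h)\subseteq W$. Then $\sigma(h)$ is a nonempty open set disjoint from $U$, so disjoint from every $[a\neq\theta]$ with $a\in A$, hence (by observation (b)) disjoint from every $\sigma(a)$; thus $h\perp a$ for all $a\in A$. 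On the other hand $\sigma(h)\subseteq[b\neq\theta]\subseteq\overline{[b\neq\theta]}$, and since $\sigma(h)$ is open this forces $\sigma(h)\subseteq\operatorname{int}\overline{[b\neq\theta]}=\sigma(b)$, so $\sigma(h)\cap\sigma(b)\supseteq\sigma(h)\neq\varnothing$ and $h\not\perp b$. Hence $A$ fails to cover $b$.

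The only subtle point is the final step of the contrapositive: one has to be careful that the disjointness of $\sigma(h)$ from the open set $U$ (not from $\overline{U}$) is already enough to deduce disjointness from each $\sigma(a)$, which is where observation (b) (that open sets see $\sigma(a)$ and $[a\neq\theta]$ identically) is essential; and dually, one needs that the containment of $\sigma(h)$ in $\overline{[b\neq\theta]}$, combined with $\sigma(h)$ being open, upgrades to containment in $\sigma(b)$. These are the only places where the regularization behaviour of $\sigma$ matters.
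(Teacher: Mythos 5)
Your proof is correct and follows essentially the same route as the paper's: the \ref{lemmacovers(2)}$\Rightarrow$\ref{lemmacovers(1)} direction is the paper's argument verbatim (an open set disjoint from a set is disjoint from its closure), and your contrapositive treatment of \ref{lemmacovers(1)}$\Rightarrow$\ref{lemmacovers(2)} is merely a rearrangement of the paper's direct argument, resting on the same two ingredients --- weak regularity to produce a basic open set $\sigma(h)$, and the regularization facts (Lemma \ref{lemmadisjointnessintermsofsigma} and your observation (b)) that open sets meet $[a\neq\theta]$, $\sigma(a)$ and $\supp(a)$ simultaneously. No gaps.
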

\begin{proof}
	\ref{lemmacovers(1)}$\Rightarrow$\ref{lemmacovers(2)}: Let $x\in\supp(b)$. Take an open neighbourhood of $x$ of the form $\sigma(h)$, $h\in\mathcal{A}$. Since $\supp(b)=\overline{\sigma(b)}$, the intersection $\sigma(h)\cap\sigma(b)$ is nonempty and thus $h$ and $b$ are not weakly disjoint. From $A$ being a cover, $h$ is not weakly disjoint to some $a\in A$, which means that $\sigma(h)\cap[a\neq\theta]$ is nonempty. Since $\mathcal{A}$ is weakly regular, then $x$ is in the closure of $\bigcup_{a\in A}[a\neq\theta]$.
	
	\ref{lemmacovers(2)}$\Rightarrow$\ref{lemmacovers(1)}: Suppose $h\in\mathcal{A}$ is such that $h\perp a$ for all $a\in A$. This means that
	\[(\bigcup_{a\in A}[a\neq\theta])\cap[h\neq\theta]=\varnothing.\]
	Taking the closure of the first term and using \ref{lemmacovers(2)} we conclude that $[b\neq\theta]\cap[h\neq\theta]\subseteq\supp(b)\cap[h\neq\theta]=\varnothing$, so $h\perp b$.
\end{proof}

If $\mathcal{A}\subseteq C_c(X)$ and $\theta\in\mathcal{A}$, note that $\subseteq$ is a preorder on $\mathcal{A}$, whose minimum is $\theta$. Alternatively, $\theta$ is the only element of $\mathcal{A}$ such that $\theta\perp\theta$. Thus the function $\theta$ is uniquely determined in terms of either $\perp$ or $\subseteq$. We now proceed to prove that $\subseteq$ and $\perp$ ``carry the same information'', as do $\Subset$ and $\perpp$.

\begin{proposition}\label{propositionrelationsrelations}
	Suppose $\mathcal{A}$ is weakly regular. If $f,g\in\mathcal{A}$, then
	\begin{enumerate}[label=(\alph*)]
		\item\label{theoremrelationsrelations(a)} $f\subseteq g\iff\forall h(h\perp g\Rightarrow h\perp f)$;
		\item\label{theoremrelationsrelations(b)} $f\perp g\iff$ The $\subseteq$-infimum of $\left\{f,g\right\}$ is $\theta$;
		\item\label{theoremrelationsrelations(c)} $f\subseteq g\iff\forall h(h\Subset f\Rightarrow h\Subset g)$;
		\item\label{theoremrelationsrelations(d)} $f\subseteq g\iff\forall h(h\perpp g\Rightarrow h\perpp f)$;
		\item\label{theoremrelationsrelations(e)} $f\perpp g\iff\exists h_1,k_1,\ldots,h_n,k_n\in\mathcal{A}$ such that $\left\{h_1,\ldots,h_n\right\}$ is a cover of $f$, $h_i\Subset k_i$ and $\phantom{f\perpp g\iff...}k_i\perp g$ for all $i$;
		\item\label{theoremrelationsrelations(f)} $f\Subset g\iff\forall b\in\mathcal{A}$, $\exists h_1,\ldots,h_n\in\mathcal{A}$ such that $\left\{h_1,\ldots,h_n,g\right\}$ is a cover of $b$ and $h_i\perpp f$.
	\end{enumerate}
	By items \ref{theoremrelationsrelations(a)} and \ref{theoremrelationsrelations(b)}, $\perp$ and $\subseteq$ are equi-expressible (i.e., each one is completely determined by the other). By \ref{theoremrelationsrelations(c)} and \ref{theoremrelationsrelations(d)} one can recover $\subseteq$ (and hence $\perp$) from either $\Subset$ or $\perpp$, which in turn implies, from \ref{theoremrelationsrelations(e)} and \ref{theoremrelationsrelations(f)}, that $\Subset$ and $\perpp$ are also equi-expressible.
\end{proposition}
\begin{proof}
	Items \ref{theoremrelationsrelations(a)}-\ref{theoremrelationsrelations(d)} are easy consequences of weak regularity of $\mathcal{A}$, and $X$ being a regular topological space for items \ref{theoremrelationsrelations(c)}-\ref{theoremrelationsrelations(d)}.
	\begin{enumerate}[label=\ref{theoremrelationsrelations(\alph*)}]\setcounter{enumi}{4}
		\item $\Rightarrow$: Suppose $f\perpp g$. Given $x\in \supp(f)$, weak regularity of $\mathcal{A}$ and regularity of the topological space $X$ give us $h_x,k_x\in\mathcal{A}$ such that $x\in\sigma(h_x)$, $h_x\Subset k_x$ and $k_x\perp g$. Compactness of $\supp(f)$ allows us to find the elements $h_i,k_i$ we need, by going to a subcover of $\left\{\sigma(h_x):x\in\supp(f)\right\}$.
		
		$\Leftarrow$: Suppose such $h_i,k_i$ exist. Then by Lemma \ref{lemmacovers},
		\[\supp(f)\subseteq\bigcup_{i=1}^n\supp(h_i)\subseteq\bigcup_{i=1}^n\sigma(k_i)\subseteq X\setminus\supp(g),\]
		and so $f\perpp g$.
		\item $\Rightarrow$: Suppose $f\Subset g$ and take any $b\in\mathcal{A}$. Since $\supp(b)\setminus\sigma(g)$ is compact and does not intersect $\supp(f)$, we can take $h_1,\ldots,h_n\in\mathcal{A}$ such that $h_i\perpp f$ and $\supp(b)\setminus\sigma(g)\subseteq\bigcup_i\sigma(h_i)$, which implies that $\left\{h_1,\ldots,h_n,g\right\}$ is a cover of $b$.
		
		$\Leftarrow$: By compactness os $\supp(f)$ and $\supp(g)$, take $b_1,\ldots,b_M$ in $\mathcal{A}$ such that $\supp(f)\cup\supp(g)\subseteq\bigcup_{k=1}^M\sigma(b_k)$. For each $k$ take functions $h_i^k$ satisfying the right-hand side of \ref{theoremrelationsrelations(f)}, relative to $b_k$.
		
		Given $k$, we have $\sigma(b_k)\subseteq\bigcup_i\supp(h_i^k)\cup\supp(g)$, so by taking complements we obtain $\bigcap_iZ(h_i^k)\cap Z(g)\cap\sigma(b_k)=\varnothing$, or equivalently $\bigcap_iZ(h_i^k)\cap\sigma(b_k)\subseteq\sigma(b_k)\setminus Z(g)\subseteq\supp(g)$. Taking interiors on both sides yields $\bigcap_iZ(h_i^k)\cap\sigma(b_k)\subseteq\sigma(g)$.
		
		Now from $h_i^j\perpp f$ we obtain
		\begin{align*}
		\supp(f)&\subseteq\bigcap_{i,j}Z(h_i^j)\cap\bigcup_{k=1}^M\sigma(b_k)\subseteq\bigcup_{k=1}^M\left[\bigcap_i Z(h_i^k)\cap\sigma(b_k)\right]\subseteq \sigma(g),
		\end{align*}
		so $f\Subset g$.\qedhere
	\end{enumerate}
\end{proof}

\begin{named}{Remark}
	One should be careful with the connections between the pairs of relations $(\perp,\perpp)$ and $(\subseteq,\Subset)$. For example, $\perp$ and $\perpp$ may coincide but $\subseteq$ and $\Subset$ may not and vice-versa. See the example below.
\end{named}

\begin{example}
	Let $X=H=\mathbb{R}$ and $\theta=0$, so that we are dealing with the usual notion of support. Let $\left\{(a_n,b_n):n\in\mathbb{N}\right\}$ (where $a_n<b_n$) be a countable basis of open intervals for the usual topology of $\mathbb{R}$. Up to small modifications, we may assume that all the numbers $a_n$, $b_n$ and $b_n+1$ are distinct. In particular, the sets $U_n:=(a_n,b_n)$ have pairwise disjoint boundaries.
	
	For each $n$, let $f_n\in C_c(\mathbb{R})$ with $\sigma(f_n)=U_n$, e.g. $f_n(x)=\max(0,(x-\widetilde{a_n})(\widetilde{b_n}-x))$,
	and let $\mathcal{A}=\left\{f_n:n\in\mathbb{N}\right\}$, which is weakly regular. Then $\perp$ and $\perpp$ coincide on $\mathcal{A}$, as do $\subseteq$ and $\Subset$, since the boundaries of all $U_n$ are pairwise disjoint.
	
	Letting $V=(\widetilde{a_1},\widetilde{b_1}+1)$ and $g_V$ be a continuous function with $\sigma(g_V)=V$, then $\perp$ and $\perpp$ still coincide in $\mathcal{A}\cup\left\{g_V\right\}$, however $\subseteq$ and $\Subset$ do not, since $f_1\subseteq g_V$ but not $f_1\Subset g_V$.
	
	Alternatively, set $W=(\widetilde{b_1},\widetilde{b_1}+1)$ and let $g_W$ be any continuous function with $\sigma(g_W)=W$. Then $\subseteq$ and $\Subset$ still coincide in $\mathcal{A}\cup\left\{g_W\right\}$, however $\perp$ and $\perpp$ do not, because $f_1\perp g$ but not $f_1\perpp g$.
\end{example}

\subsection{\texorpdfstring{$\perpp$}{⊥⊥}-ideals}

Recall that $X$, $H$ and $\theta\in C(X,H)$ are fixed, as in the beginning of the \hyperref[sectiondisjointness]{section}. We fix also a weakly regular family $\mathcal{A}\subseteq C_c(X,\theta)$.

One technique that is commonly used in the proofs of Banach--Stone type theorems is to describe an order-isomorphism between $X$ and ``maximal ideals'' of $\mathcal{A}$, where the notion of an ``ideal'' depends on whatever kinds of algebraic signature one is working. See for example \cite[Lemma 2.2]{MR0029476}, \cite[Proposition 2.7]{MR2324919}, \cite[Lemma 3]{MR0020715}. This idea also appears in some manner in the proofs of the main results of \cite{MR3162258} and \cite{MR1060366}, and of \cite[p. 170, Théorème 3]{MR1357166}.

We will follow this idea by considering \emph{$\perpp$-ideals}. Although their definition (\ref{definitionperppideal}) is given simply in terms of the relation $\perpp$, Theorem \ref{theoremperppideals} provides a much more manageable description of them. 

A strengthening of the notion of cover will be necessary (see Lemma \ref{lemmagoodformofstrongcover} for the intuition).

\begin{definition}\label{definitionstrongcover}\index{Cover!strong}
	A finite family $B\subseteq\mathcal{A}$ is said to be a \emph{strong cover} of an element $a\in\mathcal{A}$ if there is another finite family $\widetilde{B}\subseteq\mathcal{A}$ such that:
	\begin{enumerate}[label=(SC\arabic*)]
		\item For all $\widetilde{b}\in\widetilde{B}$, there is some $b\in B$ with $\widetilde{b}\Subset b$;
		\item $\widetilde{B}$ is a cover of $a$ (see Definition \ref{definitiondisjointcover}).
	\end{enumerate}
\end{definition}

The following proposition shows that strong covers encode information about the closures of sets. It is a direct consequence of the definition of $\Subset$ and Lemma \ref{lemmacovers}.

\begin{lemma}\label{lemmagoodformofstrongcover}
	A finite family $B\subseteq\mathcal{A}$ is a strong cover of $a\in\mathcal{A}$ if and only if $\supp(a)\subseteq\bigcup_{b\in B}\sigma(b)$.
\end{lemma}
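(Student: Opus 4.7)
The plan is to prove both directions directly from the definitions, using Lemma \ref{lemmacovers} to translate the covering condition into an inclusion of sets, and using local compactness of $X$ together with weak regularity of $\mathcal{A}$ to build a witness $\widetilde{B}$ on the way back.

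For the forward direction, I would assume $B$ is a strong cover of $a$, witnessed by $\widetilde{B}$. Condition (SC1) gives, for each $\widetilde{b}\in\widetilde{B}$, some $b\in B$ with $\supp(\widetilde{b})\subseteq\sigma(b)$, and in particular $[\widetilde{b}\neq\theta]\subseteq\supp(\widetilde{b})\subseteq\sigma(b)$. Since $\widetilde{B}$ is finite, $\bigcup_{\widetilde{b}\in\widetilde{B}}\supp(\widetilde{b})$ is closed and contains $\bigcup_{\widetilde{b}}[\widetilde{b}\neq\theta]$, hence its closure too. Applying Lemma \ref{lemmacovers} to (SC2) then chains:
\[
\supp(a)\subseteq\overline{\bigcup_{\widetilde{b}\in\widetilde{B}}[\widetilde{b}\neq\theta]}\subseteq\bigcup_{\widetilde{b}\in\widetilde{B}}\supp(\widetilde{b})\subseteq\bigcup_{b\in B}\sigma(b).
\]

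For the converse, assume $\supp(a)\subseteq\bigcup_{b\in B}\sigma(b)$. For each $x\in\supp(a)$, pick $b_x\in B$ with $x\in\sigma(b_x)$. Since $X$ is locally compact Hausdorff, choose an open neighbourhood $V_x$ of $x$ with $\overline{V_x}$ compact and $\overline{V_x}\subseteq\sigma(b_x)$. Weak regularity of $\mathcal{A}$ then produces $\widetilde{b}_x\in\mathcal{A}$ with $x\in\sigma(\widetilde{b}_x)\subseteq V_x$. Using the identity $\supp(\widetilde{b}_x)=\overline{\sigma(\widetilde{b}_x)}$ (which follows because $[\widetilde{b}_x\neq\theta]$ is open, so $[\widetilde{b}_x\neq\theta]\subseteq\sigma(\widetilde{b}_x)\subseteq\supp(\widetilde{b}_x)$, and the closures all agree), this gives $\supp(\widetilde{b}_x)\subseteq\overline{V_x}\subseteq\sigma(b_x)$, that is, $\widetilde{b}_x\Subset b_x$. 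Compactness of $\supp(a)$ lets me extract finitely many $\widetilde{b}_{x_1},\dots,\widetilde{b}_{x_n}$ with $\supp(a)\subseteq\bigcup_i\sigma(\widetilde{b}_{x_i})$; I set $\widetilde{B}=\{\widetilde{b}_{x_1},\dots,\widetilde{b}_{x_n}\}$.

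Condition (SC1) is built in. For (SC2), I need $\widetilde{B}$ to be a cover of $a$, which by Lemma \ref{lemmacovers} amounts to $\supp(a)\subseteq\overline{\bigcup_i[\widetilde{b}_{x_i}\neq\theta]}$. Since $\sigma(\widetilde{b}_{x_i})$ is the interior of $\overline{[\widetilde{b}_{x_i}\neq\theta]}$, it sits inside $\overline{[\widetilde{b}_{x_i}\neq\theta]}$, and the required inclusion follows from $\supp(a)\subseteq\bigcup_i\sigma(\widetilde{b}_{x_i})$.

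The only real subtlety is the backward direction: I want $\supp(\widetilde{b}_x)\subseteq\sigma(b_x)$, but weak regularity a priori only gives control of $\sigma(\widetilde{b}_x)$. Bridging this gap requires both local compactness (to squeeze a precompact neighbourhood $\overline{V_x}$ inside $\sigma(b_x)$) and the identity $\supp(\widetilde{b}_x)=\overline{\sigma(\widetilde{b}_x)}$. Everything else is bookkeeping using Lemma \ref{lemmacovers} and the finiteness needed to keep unions of closed sets closed.
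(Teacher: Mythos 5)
Your proof is correct and follows exactly the route the paper intends: the paper states this lemma as ``a direct consequence of the definition of $\Subset$ and Lemma \ref{lemmacovers}'', and your argument is precisely that consequence spelled out, with the witness $\widetilde{B}$ in the converse direction built from regularity of $X$ (available since $X$ is locally compact Hausdorff), weak regularity of $\mathcal{A}$, the identity $\supp(f)=\overline{\sigma(f)}$, and compactness of $\supp(a)$ --- all standing assumptions in this subsection. Nothing to correct.
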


\begin{definition}\index{Ideal!$\perpp$-ideal}\label{definitionperppideal}
	A \emph{$\perpp$-ideal} in $\mathcal{A}$ is a subset $I\subseteq\mathcal{A}$ such that, for all $a\in\mathcal{A}$,
	\[a\in I\iff \text{there is a finite subset }B\subseteq I\text{ which is a strong cover of }a.\]
\end{definition}

Note that every $\perpp$-ideal of $\mathcal{A}$ contains $\theta$ (since the empty set is a strong cover of $\theta$).

We will now prove that the lattice of open subsets of a space $X$ is order-isomorphic to the lattice of $\perpp$-ideals of a weakly regular tuple $(X,\theta,\mathcal{A})$.

\begin{definition}
	Suppose $(X,\theta,\mathcal{A})$ is weakly regular. Given an open set $U\subseteq X$, denote $\mathbf{I}(U)=\left\{f\in\mathcal{A}:\supp(f)\subseteq U\right\}$, and , given a $\perpp$-ideal $I\subseteq\mathcal{A}$, denote $\mathbf{U}(I)=\bigcup_{f\in I}\sigma(f)$.
\end{definition}

Lemma \ref{lemmagoodformofstrongcover} and weak regularity of $\mathcal{A}$ imply that $\mathbf{I}(U)$ is a $\perpp$-ideal of $\mathcal{A}$ for any open $U\subseteq X$.

\begin{theorem}\label{theoremperppideals}
	Suppose $(X,\theta,\mathcal{A})$ is weakly regular.
	\begin{enumerate}[label=(\alph*)]
		\item\label{theoremperppideals(a)} For every $\perpp$-ideal $I$ of $\mathcal{A}$, $I=\mathbf{I}(\mathbf{U}(I))$;
		\item\label{theoremperppideals(b)} For every open subset $U\subseteq X$, $U=\mathbf{U}(\mathbf{I}(U))$;
		\item\label{theoremperppideals(c)} The map $U\mapsto\mathbf{I}(U)$ is an order isomorphism between the lattices of open sets of $X$ and $\perpp$-ideals of $\mathcal{A}$.
	\end{enumerate}
\end{theorem}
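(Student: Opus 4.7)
The plan is to prove the three parts in the order (b), (a), (c), with Lemma \ref{lemmagoodformofstrongcover} reducing everything to a statement about open sets and supports rather than about strong covers directly.

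For part \ref{theoremperppideals(b)}, one inclusion is immediate: if $f\in\mathbf{I}(U)$, then $\sigma(f)\subseteq\supp(f)\subseteq U$, so $\mathbf{U}(\mathbf{I}(U))\subseteq U$. For the reverse, I would fix $x\in U$ and build an $f\in\mathcal{A}$ with $x\in\sigma(f)$ and $\supp(f)\subseteq U$. Weak regularity alone only gives $\sigma(f_1)\subseteq U$ for some $f_1$, not control of $\supp(f_1)$, so one needs to shrink once: since $X$ is locally compact Hausdorff (hence regular), pick an open $V$ with $x\in V\subseteq\overline{V}\subseteq\sigma(f_1)$, then apply weak regularity again to obtain $f_2\in\mathcal{A}$ with $x\in\sigma(f_2)\subseteq V$. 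Since $[f_2\neq\theta]\subseteq\sigma(f_2)\subseteq V$, we get $\supp(f_2)\subseteq\overline{V}\subseteq\sigma(f_1)\subseteq U$, so $f_2\in\mathbf{I}(U)$ and $x\in\mathbf{U}(\mathbf{I}(U))$.

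For part \ref{theoremperppideals(a)}, the forward inclusion $I\subseteq\mathbf{I}(\mathbf{U}(I))$ is immediate from the definition of a $\perpp$-ideal: if $g\in I$, there is a finite strong cover $B\subseteq I$ of $g$, and Lemma \ref{lemmagoodformofstrongcover} gives $\supp(g)\subseteq\bigcup_{b\in B}\sigma(b)\subseteq\mathbf{U}(I)$. Conversely, suppose $g\in\mathbf{I}(\mathbf{U}(I))$, so $\supp(g)\subseteq\bigcup_{f\in I}\sigma(f)$. Compactness of $\supp(g)$ extracts a finite subcover, giving $f_1,\ldots,f_n\in I$ with $\supp(g)\subseteq\bigcup_i\sigma(f_i)$; by Lemma \ref{lemmagoodformofstrongcover} the set $\{f_1,\ldots,f_n\}$ is a strong cover of $g$, and the $\perpp$-ideal property forces $g\in I$.

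Part \ref{theoremperppideals(c)} is then formal: parts \ref{theoremperppideals(a)} and \ref{theoremperppideals(b)} show $\mathbf{I}$ and $\mathbf{U}$ are mutual inverses, and both are order-preserving (clearly for $\mathbf{U}$; for $\mathbf{I}$, $U\subseteq V$ implies $\{f:\supp(f)\subseteq U\}\subseteq\{f:\supp(f)\subseteq V\}$), so $U\mapsto\mathbf{I}(U)$ is an order isomorphism. I expect the only genuinely delicate point to be the shrinking argument in \ref{theoremperppideals(b)}, since weak regularity by itself does not produce functions with supports (as opposed to $\sigma$) inside a prescribed open neighborhood, and this is where local compactness/regularity of $X$ has to be invoked.
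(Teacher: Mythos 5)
Your proposal is correct and takes essentially the same approach as the paper: part \ref{theoremperppideals(a)} via compactness of $\supp(g)$ together with Lemma \ref{lemmagoodformofstrongcover}, part \ref{theoremperppideals(b)} via weak regularity combined with regularity of $X$, and part \ref{theoremperppideals(c)} formally from the first two. The only difference is presentational: you spell out the shrinking argument (using $[f_2\neq\theta]\subseteq\sigma(f_2)$ to get $\supp(f_2)\subseteq\overline{V}\subseteq U$) that the paper compresses into the single phrase ``by weak regularity of $(X,\theta,\mathcal{A})$ and since $X$ is regular,'' and you are right that this is exactly where local compactness of $X$ enters.
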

\begin{proof}
	\begin{enumerate}[label=(\alph*)]
		\item Let $I$ be a $\perpp$-ideal. The inclusion $I\subseteq \mathbf{I}(\mathbf{U}(I))$ follows easily from the definition of $\perpp$-ideals: if $f\in I$, then take a finite strong cover $B\subseteq I$ of $f$, so that $\supp(f)\subseteq\bigcup_{b\in B}\sigma(b)\subseteq\mathbf{U}(I)$.
		
		Conversely, if $f\in\mathbf{I}(\mathbf{U}(I))$ then $\supp(f)\subseteq\mathbf{U}(I)=\bigcup_{b\in I}\sigma(b)$. Using compactness of $\supp(f)$ we find a finite family $B\subseteq I$ with $\supp(f)\subseteq\bigcup_{b\in B}\sigma(b)$, so $B$ is a strong cover of $f$ (Lemma \ref{lemmagoodformofstrongcover}) and therefore $f\in I$.
		\item Suppose $U\subseteq X$ is open. By weak regularity of $(X,\theta,\mathcal{A})$ and since $X$ is regular, we have
		\[U=\bigcup_{\substack{f\in\mathcal{A}\\\supp(f)\subseteq U}}\sigma(f)=\bigcup_{f\in\mathbf{I}(U)}\sigma(f)=\mathbf{U}(\mathbf{I}(U)).\]
		\item The previous items prove that $U\mapsto\mathbf{I}(U)$ is a bijection, with inverse $I\mapsto\mathbf{U}(I)$. It is clear that both maps are order-preserving.\qedhere
	\end{enumerate}
\end{proof}

\subsection{The main theorems}
The main theorem (\ref{maintheorem}) now follows easily from the previous subsection. Fix two locally compact Hausdorff spaces $X$ and $Y$, and for $Z\in\left\{X,Y\right\}$ a Hausdorff space $H_Z$, a continuous map $\theta_Z\colon Z\to H_Z$, and a subset $\mathcal{A}(Z)\subseteq C_c(Z,\theta_Z)$.

\begin{definition}
	We call a map $T\colon\mathcal{A}(X)\to\mathcal{A}(Y)$ a \emph{$\perpp$-morphism} if $f\perpp g$ implies $Tf\perpp Tg$; $T$ is a \emph{$\perpp$-isomorphism} if it is bijective and both $T$ and $T^{-1}$ are $\perpp$-morphisms. $\perp$, $\subseteq$ and $\Subset$-isomorphisms are define analogously.\index{$\perpp$-morphism}\index{$\perp$-morphism}\index{$\subseteq$-morphism}\index{$\Subset$-morphism}
\end{definition}

By Proposition \ref{propositionrelationsrelations}\ref{theoremrelationsrelations(a)}, $\perp$-morphisms coincide with $\subseteq$ morphisms. We obtain:

\begin{theorem}\label{theoremdisjoint}
	Suppose $(X,\theta_X,\mathcal{A}(X))$ and $(Y,\theta_Y,\mathcal{A}(Y))$ are weakly regular and $T\colon\mathcal{A}(X)\to\mathcal{A}(Y)$ is a $\perp$-isomorphism. Let $f,g\in\mathcal{A}(X)$. Then $\sigma(f)\subseteq \sigma(g)$ if and only if $\sigma(Tf)\subseteq \sigma(Tg)$. In particular, $Z(f)=\varnothing$ if and only if $Z(Tf)=\varnothing$.
\end{theorem}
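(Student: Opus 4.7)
The plan is to bootstrap everything off of Theorem \ref{theoremrelationsrelations}\ref{theoremrelationsrelations(a)}, which asserts that in a weakly regular triple, $f\subseteq g$ is completely characterized by the purely $\perp$-theoretic statement $\forall h(h\perp g\Rightarrow h\perp f)$. Since the hypothesis $\sigma(f)\subseteq\sigma(g)$ is literally the definition of $f\subseteq g$, it suffices to show that a $\perp$-isomorphism preserves the relation $\subseteq$ in both directions.

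To carry this out, first I would fix $f,g\in\mathcal{A}(X)$ with $f\subseteq g$. Applying \ref{theoremrelationsrelations(a)}, for every $h\in\mathcal{A}(X)$ satisfying $h\perp g$ we have $h\perp f$. Now given any $k\in\mathcal{A}(Y)$ with $k\perp Tg$, I would use the fact that $T^{-1}$ is a $\perp$-morphism to get $T^{-1}k\perp g$, hence $T^{-1}k\perp f$, and then the fact that $T$ is a $\perp$-morphism to conclude $k=TT^{-1}k\perp Tf$. Thus $Tf\subseteq Tg$ by the other direction of \ref{theoremrelationsrelations(a)} applied in $\mathcal{A}(Y)$. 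The reverse implication is symmetric, replacing $T$ by $T^{-1}$ (which is itself a $\perp$-isomorphism).

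For the ``in particular'' clause, I would first observe that $Z(f)=\varnothing$ is equivalent to $\sigma(f)=X$: indeed $Z(f)=\varnothing$ means $\supp(f)=X$, and since $X$ is open in itself this forces $\sigma(f)=\operatorname{int}\supp(f)=X$; conversely $\sigma(f)=X$ trivially gives $\supp(f)=X$. Next, by weak regularity $X=\bigcup_{h\in\mathcal{A}(X)}\sigma(h)$, so $\sigma(f)=X$ is equivalent to $\sigma(h)\subseteq\sigma(f)$ for every $h\in\mathcal{A}(X)$, i.e.\ $f$ is a $\subseteq$-maximum of $\mathcal{A}(X)$. Being a $\subseteq$-maximum is preserved by a $\subseteq$-isomorphism (which $T$ is, by the first part of the theorem together with \ref{theoremrelationsrelations(a)}), so $Z(f)=\varnothing\Leftrightarrow Z(Tf)=\varnothing$.

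No part of this is really a genuine obstacle; the entire content has been packaged into Theorem \ref{theoremrelationsrelations}\ref{theoremrelationsrelations(a)}. The only mild care needed is to apply the characterization \emph{in the target triple} $(Y,\theta_Y,\mathcal{A}(Y))$, which is legitimate because it is also assumed weakly regular, and to remember to quantify $h$ over $\mathcal{A}(X)$ and $k$ over $\mathcal{A}(Y)$ using the bijectivity of $T$ to pass between them.
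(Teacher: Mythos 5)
Your proof is correct and is exactly the paper's (implicit) argument: the paper derives Theorem \ref{theoremdisjoint} directly from the remark that, by Theorem \ref{theoremrelationsrelations}\ref{theoremrelationsrelations(a)}, $\perp$-isomorphisms between weakly regular triples coincide with $\subseteq$-isomorphisms, which is precisely the transfer argument you spell out. Your treatment of the ``in particular'' clause --- identifying $Z(f)=\varnothing$ with $f$ being a $\subseteq$-maximum via weak regularity (so that $X=\bigcup_{h\in\mathcal{A}(X)}\sigma(h)$) --- is a correct filling-in of the detail the paper leaves unstated.
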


Assume $(X,\theta,\mathcal{A}(X))$ is weakly regular. Let $\widehat{\mathcal{A}(X)}$ be the collection of maximal $\perpp$-ideals of $\mathcal{A}$, and endow it with the topology generated by the sets
\[U(f)=\left\{I\in\widehat{\mathcal{A}(X)}:\exists g\Subset f\text{ such that }g\not\in I\right\},\qquad f\in\mathcal{A}(X).\]
By Theorem \ref{theoremperppideals}, we obtain a bijection $\kappa_X\colon X\to\widehat{\mathcal{A}(X)}$, $\kappa_X(x)=\mathbf{I}(X\setminus\left\{x\right\})$. Since for all $x\in X$ and $f\in\mathcal{A}(X)$,
\[x\in\sigma(f)\iff\exists g\Subset f\text{ such that } x\in\supp(g)\iff\kappa_X(x)\in U(f),\]
then $\kappa_X(\sigma(f))=U(f)$, which proves that $\kappa_X$ is a homeomorphism. Performing a similar procedure  with $Y$ and using standard duality arguments, we obtain our main theorem:

\begin{theorem}\label{maintheorem}
	If $\mathcal{A}(X)$ and $\mathcal{A}(Y)$ are weakly regular and $T\colon\mathcal{A}(X)\to\mathcal{A}(Y)$ is a $\perpp$-isomorphism then there is a unique homeomorphism $\phi\colon Y\to X$ such that $\phi(\supp(Tf))=\supp(f)$ for all $f\in \mathcal{A}(X)$ (equivalently, $\phi(\sigma(Tf))=\sigma(f)$, or $\phi(Z(Tf))=Z(f)$, for all $f\in\mathcal{A}(X)$).
\end{theorem}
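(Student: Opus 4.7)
The plan is to follow the blueprint sketched in the paragraph preceding the statement: promote $T$ to a map on maximal $\perpp$-ideals via the dictionary of Theorem~\ref{theoremperppideals}, and then transfer the resulting homeomorphism of $\widehat{\mathcal{A}(X)}$ and $\widehat{\mathcal{A}(Y)}$ to $X$ and $Y$ through the identifications $\kappa_X, \kappa_Y$. No deep obstacle arises; the main care needed is in keeping track of directions between $T$, its induced action on ideals, the homeomorphisms $\kappa_X, \kappa_Y$, and the resulting $\phi:Y\to X$, so that the compatibility $\phi(\supp(Tf)) = \supp(f)$ ends up with the correct orientation.

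First, I would observe that by Theorem~\ref{theoremrelationsrelations} the relations $\perp$, $\subseteq$, $\perpp$, and $\Subset$ are all mutually expressible under weak regularity. Thus the hypothesis that $T$ is a $\perpp$-isomorphism already entails that $T$ and $T^{-1}$ preserve $\perp$, $\subseteq$ and $\Subset$. Reading off Definitions~\ref{definitiondisjointcover} and~\ref{definitionstrongcover}, $T$ therefore carries covers to covers and strong covers to strong covers, and hence sends $\perpp$-ideals of $\mathcal{A}(X)$ bijectively and inclusion-preservingly onto $\perpp$-ideals of $\mathcal{A}(Y)$. Restricting to maximal elements yields a bijection $\widehat{T} : \widehat{\mathcal{A}(X)} \to \widehat{\mathcal{A}(Y)}$.

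Next I would check that $\widehat{T}$ is a homeomorphism for the topologies generated by the $U(f)$. Using the bijectivity of $T$ and that $T$ preserves $\Subset$, a short unfolding of the definition of $U(f)$ gives the identity $\widehat{T}(U(f)) = U(Tf)$ for every $f \in \mathcal{A}(X)$; the analogous statement for $\widehat{T}^{-1}$ follows symmetrically, so $\widehat{T}$ is indeed a homeomorphism. I then set $\phi := \kappa_X^{-1} \circ \widehat{T}^{-1} \circ \kappa_Y : Y \to X$, a homeomorphism since all three factors are. Combining $\widehat{T}(U(f)) = U(Tf)$ with $\kappa_X(\sigma(f)) = U(f)$ (and its analogue for $\kappa_Y$) gives $\phi(\sigma(Tf)) = \sigma(f)$, and applying the homeomorphism $\phi$ to closures and to complements yields the versions stated with $\supp$ and $Z$.

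Finally, for uniqueness I would use that the basic open sets $\sigma(f)$ separate points of the Hausdorff space $X$: if $\phi_1, \phi_2$ both satisfy the conclusion and $\phi_1(y) \neq \phi_2(y)$ for some $y \in Y$, pick $f \in \mathcal{A}(X)$ with $\phi_1(y) \in \sigma(f)$ and $\phi_2(y) \notin \sigma(f)$, and then apply $\phi_i^{-1}$ to the equality $\sigma(f) = \phi_i(\sigma(Tf))$ to obtain both $y \in \sigma(Tf)$ and $y \notin \sigma(Tf)$, a contradiction.
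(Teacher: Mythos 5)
Your proposal is correct and takes essentially the same approach as the paper: the paper proves the theorem exactly by this duality through the space $\widehat{\mathcal{A}(X)}$ of maximal $\perpp$-ideals topologized by the sets $U(f)$, the homeomorphisms $\kappa_X,\kappa_Y$, and the induced map on ideals, leaving the transfer step to ``standard duality arguments.'' Your write-up simply fills in those arguments as intended — using Theorem~\ref{theoremrelationsrelations} to see that $T$ preserves $\subseteq$, $\Subset$, covers and strong covers (hence $\perpp$-ideals), verifying $\widehat{T}(U(f))=U(Tf)$, setting $\phi=\kappa_X^{-1}\circ\widehat{T}^{-1}\circ\kappa_Y$, and deducing uniqueness from $\sigma(\mathcal{A}(X))$ being a basis — all correctly.
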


\begin{definition}\label{definitionthomeomorphism}\index{$T$-homeomorphism}
	The unique homeomorphism $\phi$ associated with $T$ as in \ref{maintheorem} will be called the \emph{$T$-homeomorphism}.
\end{definition}

We finish this section by proving that Theorem \ref{maintheorem} is sharp, in the sense that the analogous result for $\perp$-isomorphisms is not true in general. We can even be more bold and find counter-examples in settings which are usually regarded as ``well-behaved''; namely, we will consider only real-valued functions and the usual notion of support (i.e., $C_c(X)=C_c(X,0)$ for a space $X$), and compact spaces.

Moreover, we will provide two examples: one where the underlying topological spaces do not have the same small inductive dimension (Corollary \ref{corollary01stoneperp}), and one where they do (Corollary \ref{corollary01circleperp}).

Let us fix some notation, and recall some basic facts about Stone duality. We refer to \cite{MR1507106,dml124080} for details (see also \cite[II.4.4]{MR861951})

\begin{named}{Notation}
	Given a Hausdorff space $X$, denote by $\operatorname{RO}_K(X)$ the generalized Boolean algebra of regular open subsets of $X$ with compact closure, and by $\operatorname{KO}(X)$ the generalized Boolean algebra of compact-open subsets of $X$. Given $A\in \operatorname{RO}_K(X)$, we define $\Sigma_X(A)=\left\{f\in C_c(X):\sigma(f)=A\right\}$.
	
	Given a generalized Boolean algebra $B$, let $\operatorname{Spec}(B)$ be the spectrum of $B$, i.e., the space of non-trivial lattice homomorphisms from $B$ to the two-element lattice $\left\{0,1\right\}$, with the topology of pointwise convergence. (Equivalently, it may be regarded as the space of ultrafilters of $B$.)
\end{named}

\begin{named}{Stone duality}
	The usual form of Stone duality states that the category of Stone (i.e., zero-dimensional, compact Hausdorff) spaces is dual to that of Boolean algebras. This extends to the \emph{locally} compact spaces and \emph{generalized} Boolean algebras, and in particular we obtain: Every zero-dimensional, locally compact Hausdorff space $X$ is (naturally) homeomorphic to $\operatorname{Spec}(\operatorname{KO}(X))$, and every generalized Boolean algebra $B$ is (naturally) isomorphic to $\operatorname{KO}(\operatorname{Spec}(B))$. For a more general version, see \cite{bicestarlinglcsd}.
\end{named}

In order to find non-homeomorphic spaces $X$ and $Y$ such that $C_c(X)$ and $C_c(Y)$ are $\perp$-isomorphic, we need the following result:

\begin{proposition}\label{propositionperpisnotenough}
	Suppose that:
	\begin{enumerate}[label=(\roman*)]
		\item\label{theoremperpisnotenough(i)} $X$ and $Y$ are separable, locally compact Hausdorff spaces;
		\item\label{theoremperpisnotenough(ii)} For all nonempty $A\in\operatorname{RO}_K(X)$ and $B\in\operatorname{RO}_K(Y)$, $\Sigma_X(A)$ and $\Sigma_Y(B)$ are nonempty;
		\item\label{theoremperpisnotenough(iii)} $\varphi\colon\operatorname{RO}_K(X)\to\operatorname{RO}_K(Y)$ is an order isomorphism (with respect to set inclusion).
	\end{enumerate}
	Then $C_c(X)$ and $C_c(Y)$ are $\perp$-isomorphic.
\end{proposition}
\begin{proof}
	Given $A\in\operatorname{RO}_K(X)$, the sets  $\Sigma_X(A)$ and $\Sigma_Y(\varphi(A))$ have the same cardinality: They are either singletons if $A=\varnothing$, or have cardinality $2^{\aleph_0}$ otherwise, by \ref{theoremperpisnotenough(ii)} and since $X$ and $Y$ are separable. Consider any bijection $T_A\colon\Sigma_X(A)\to\Sigma_Y(\varphi(A))$. Then the map
	\[T\colon C_c(X)\to C_c(Y),\qquad T(f)=T_{\sigma(f)}(f)\]
	is a $\perp$-isomorphism.\qedhere
\end{proof}

The following are two technical lemmas which will allow us to construct spaces $X$ and $Y$ satisfying the hypotheses of the theorem above.

\begin{lemma}
	\label{lemmawhenregularcompactopenareclopen}
	Suppose that $\mathfrak{C}$ is a zero-dimensional, locally compact Hausdorff space and $\operatorname{KO}(\mathfrak{C})$ is conditionally complete (i.e., every \emph{bounded} family has a supremum). Then $\operatorname{RO}_K(\mathfrak{C})=\operatorname{KO}(\mathfrak{C})$.
\end{lemma}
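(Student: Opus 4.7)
My plan is to prove the two inclusions $\operatorname{KO}(\mathfrak{C}) \subseteq \operatorname{RO}_K(\mathfrak{C})$ and $\operatorname{RO}_K(\mathfrak{C}) \subseteq \operatorname{KO}(\mathfrak{C})$ separately. The first is essentially formal: any compact-open subset of a Hausdorff space is clopen, and every clopen $U$ satisfies $U = \operatorname{int}(\overline{U})$ with $\overline{U} = U$ compact, so $U \in \operatorname{RO}_K(\mathfrak{C})$. Conditional completeness plays no role in this direction.

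For the substantive direction, fix $A \in \operatorname{RO}_K(\mathfrak{C})$ and consider the family $\mathcal{F} = \{U \in \operatorname{KO}(\mathfrak{C}) : U \subseteq A\}$. The plan is to show that the lattice-theoretic supremum $W$ of $\mathcal{F}$ in $\operatorname{KO}(\mathfrak{C})$ exists and equals $\overline{A}$; once this is done, $\overline{A}$ is clopen, and the regular-open hypothesis gives $A = \operatorname{int}(\overline{A}) = \overline{A}$, so $A \in \operatorname{KO}(\mathfrak{C})$. First I would record that in a locally compact Hausdorff zero-dimensional space every point admits a compact-open neighborhood: take any compact neighborhood, and use zero-dimensionality to locate a clopen subset of it containing the point, which is closed in a compact set hence compact-open. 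Applied to points of $A$, this yields $\bigcup \mathcal{F} = A$ set-theoretically; applied to points of $\overline{A}$ together with compactness of $\overline{A}$, it produces a finite union $V \in \operatorname{KO}(\mathfrak{C})$ with $\overline{A} \subseteq V$. Hence $\mathcal{F}$ is bounded in $\operatorname{KO}(\mathfrak{C})$, and conditional completeness produces $W = \sup \mathcal{F}$.

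The key step is to identify $W$ with $\overline{A}$. One inclusion is easy: $A = \bigcup \mathcal{F} \subseteq W$, and since $W$ is clopen, hence closed, one gets $\overline{A} \subseteq W$. For the reverse inclusion I would argue by contradiction: given $x \in W \setminus \overline{A}$, local compactness and zero-dimensionality supply a compact-open neighborhood $V_x$ of $x$ disjoint from the closed set $\overline{A}$; then $W \setminus V_x$ is compact-open, still contains every $U \in \mathcal{F}$ (each such $U$ lies in $\overline{A}$ and so misses $V_x$), and is strictly smaller than $W$, contradicting the supremum property. I expect the main obstacle to be this identification: the set-theoretic union of $\mathcal{F}$ is merely $A$, which is in general strictly smaller than its lattice supremum in $\operatorname{KO}(\mathfrak{C})$. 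Conditional completeness is exactly what produces a genuinely minimal compact-open upper bound, and the shrinking argument above is what pins that bound down as $\overline{A}$; everything else is soft.
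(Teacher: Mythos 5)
Your proof is correct and takes essentially the same approach as the paper: both form the bounded family $\{U\in\operatorname{KO}(\mathfrak{C}):U\subseteq A\}$, take its supremum $W$ via conditional completeness, and rule out any part of $W$ outside $\overline{A}$ by the same shrinking argument (deleting a compact-open set disjoint from $\overline{A}$ yields a strictly smaller upper bound, contradicting minimality). The only cosmetic differences are that you identify $W=\overline{A}$ directly whereas the paper squeezes $A\subseteq W\subseteq\overline{A}$ and invokes regular-openness to get $A=W$, and that you spell out the boundedness of the family (via a finite compact-open cover of $\overline{A}$), which the paper leaves implicit.
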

\begin{proof}
	The only non-trivial part is proving $\operatorname{RO}_K(\mathfrak{C})\subseteq\operatorname{KO}(\mathfrak{C})$. Given $A\in\operatorname{RO}_K(\mathfrak{C})$, the family $\left\{V\in\operatorname{KO}(\mathfrak{C}):V\subseteq A\right\}$ is bounded in $\operatorname{KO}(\mathfrak{C})$, so let $U$ be its supremum in $\operatorname{KO}(\mathfrak{C})$. As $\mathfrak{C}$ is zero-dimensional we have $A\subseteq U$. To prove the reverse inclusion, we first show that $U\setminus\overline{A}=\varnothing$.
	
	If $W\in\operatorname{KO}(\mathfrak{C})$ and $W\subseteq U\setminus\overline{A}$, then $A\subseteq  U\setminus W$, from which it follows that $U\subseteq U\setminus W$, so $W=\varnothing$. This proves that $U\setminus\overline{A}=\varnothing$, because $\mathfrak{C}$ is zero-dimensional, and so $U\subseteq\overline{A}$. However, $U$ is clopen and $A$ is regular open, which implies $A=U$.\qedhere
\end{proof}

\begin{lemma}\label{gprokseparableisseparable}
	If $X$ is a separable locally compact Hausdorff space, then $\mathfrak{C}=\operatorname{Spec}(\operatorname{RO}_K(X))$ is separable as well.
\end{lemma}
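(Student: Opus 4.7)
The plan is to construct a countable dense subset of $\mathfrak{C}$ directly from a countable dense subset of $X$ via a natural ``point filter'' construction. Let $\{x_n : n \in \mathbb{N}\}$ be a countable dense subset of $X$, and for each $n$ set
\[F_n = \{A \in \operatorname{RO}_K(X) : x_n \in A\}.\]
My first step would be to verify that each $F_n$ is a proper filter in the generalized Boolean algebra $\operatorname{RO}_K(X)$. Nonemptiness uses local compactness of $X$: pick a relatively compact open neighbourhood $V$ of $x_n$; then $\operatorname{int}(\overline{V})$ is regular open, has compact closure, and contains $x_n$. Closure under binary meets holds because the intersection of two regular open sets is again regular open, and its closure is contained in $\overline{A}\cap\overline{B}$, hence compact. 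Upward closure is automatic, and properness is immediate since $x_n$ lies in every member of $F_n$.

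Next, I would extend each $F_n$ to an ultrafilter $U_n \in \mathfrak{C}$ by Zorn's lemma, and claim that $\{U_n : n \in \mathbb{N}\}$ is dense in $\mathfrak{C}$. Recalling that the spectrum topology is generated by the basic open sets $\widehat{A} = \{U \in \mathfrak{C} : A \in U\}$ indexed by $A \in \operatorname{RO}_K(X)$, density reduces to showing that every nonempty $A \in \operatorname{RO}_K(X)$ lies in at least one $U_n$. Since $A$ is a nonempty open subset of $X$ and $\{x_n\}$ is dense, there exists $n$ with $x_n \in A$; then $A \in F_n \subseteq U_n$, which finishes the argument.

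No step in this strategy strikes me as a genuine obstacle. The only mild caveat is that when $X$ is noncompact, $\operatorname{RO}_K(X)$ is only a \emph{generalized} Boolean algebra (without a top element), so one must invoke the versions of the ultrafilter-extension lemma and the Stone-type topology on $\operatorname{Spec}$ appropriate to that setting; both are standard and are implicitly already in use in the Stone duality discussion preceding the lemma.
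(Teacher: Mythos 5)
Your proof is correct and follows essentially the same route as the paper's: extend each point filter $\left\{A\in\operatorname{RO}_K(X):x_n\in A\right\}$ to an ultrafilter in $\mathfrak{C}$ by Zorn's lemma, then use density of $\left\{x_n\right\}$ in $X$ to show these ultrafilters meet every basic open set $[A]$. The only difference is presentational: you spell out the verification that the point filters are proper filters (which the paper leaves implicit in its appeal to Zorn's lemma), and that verification is itself correct.
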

\begin{proof}
	Let $\left\{x_n:n\in\mathbb{N}\right\}$ be a countable dense subset of $X$. For each $n$, Zorn's Lemma implies that exists $\phi_n\in\mathfrak{C}$ such that $\phi_n(U)=1$ whenever $x_n\in U$. Then $\left\{\phi_n\right\}_n$ is easily seen to be dense in $\mathfrak{C}$.\qedhere
\end{proof}

\begin{lemma}\label{lemmaregularopeninsecondcountableissigma}
	If $X$ is a second-countable locally compact Hausdorff space and $A\in\operatorname{RO}_K(X)$, then there is $f\in C_c(X)$ such that $\sigma(f)=A$.
\end{lemma}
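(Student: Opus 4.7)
The plan is to exhibit an explicit $f\in C_c(X)$ with $\sigma(f)=A$ by summing bump functions supported at the points of a countable dense subset of $A$.

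Since $X$ is second-countable, so is its subspace $A$; fix a countable dense subset $\{x_n\}_{n\in\mathbb{N}}\subseteq A$ (the case $A=\varnothing$ being trivial with $f=0$). For each $n$, local compactness and Hausdorffness together with $A$ being open yield a precompact open neighbourhood $V_n$ of $x_n$ with $\overline{V_n}\subseteq A$, and the Urysohn lemma for locally compact Hausdorff spaces then produces a continuous $f_n\colon X\to[0,1]$ with $f_n(x_n)=1$ and $\supp(f_n)\subseteq\overline{V_n}$. I would then set $f=\sum_{n=1}^\infty 2^{-n}f_n$: since $\|f_n\|_\infty\leq 1$, the Weierstrass M-test guarantees uniform convergence of the series, so $f$ is continuous. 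Each summand is supported in $\overline{A}$, which is compact by the hypothesis $A\in\operatorname{RO}_K(X)$, hence $\supp(f)\subseteq\overline{A}$ is compact and $f\in C_c(X)$.

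Finally, I would verify $\sigma(f)=A$ by two opposite inclusions. Since $\supp(f)\subseteq\overline{A}$, taking interiors and using that $A$ is regular open gives $\sigma(f)=\operatorname{int}\supp(f)\subseteq\operatorname{int}\overline{A}=A$. Conversely, $f(x_n)\geq 2^{-n}>0$, so the dense set $\{x_n\}$ lies inside $[f\neq 0]$; hence $A\subseteq\overline{\{x_n\}}\subseteq\overline{[f\neq 0]}=\supp(f)$, and since $A$ itself is open this forces $A\subseteq\operatorname{int}\supp(f)=\sigma(f)$. No step presents a genuine obstacle; the main point to watch is the use of the regular-open hypothesis on $A$ at the very end, which is precisely what pins down the equality $\operatorname{int}\overline{A}=A$ rather than a mere containment in $\overline{A}$.
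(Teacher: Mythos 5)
Your proof is correct and follows essentially the same route as the paper's: a weighted sum $\sum_n 2^{-n}f_n$ of Urysohn bumps supported inside $A$, with the regular-open hypothesis invoked at the end to get $\operatorname{int}\overline{A}=A$. The only (immaterial) difference is that the paper chooses the $f_n$ to equal $1$ on compact sets $K_n$ exhausting $A$, so that $[f\neq 0]=A$ exactly, whereas your bumps at a countable dense subset only make $[f\neq 0]$ dense in $A$ --- which, as you note, still suffices.
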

\begin{proof}
	First choose a countable family of compact subsets $K_n\subseteq A$ such that $\bigcup_nK_n=A$. For each $n$ we can, by Urysohn's Lemma and regularity of $X$, find a continuous function $f_n\colon X\to[0,1]$ such that $f_n(k)=1$ for all $k\in K_n$ and $\supp(f_n)\subseteq A$. Letting $f=\sum_{n=1}^\infty 2^{-n}f_n$ we obtain $[f\neq 0]=\sigma(f)=A$, because $A$ is regular.
\end{proof}

Given locally compact Hausdorff $X$, let $\mathfrak{C}=\operatorname{Spec}(\operatorname{RO}_K(X))$. The generalized Boolean algebra $\operatorname{RO}_K(X)$ is conditionally complete, so by Stone duality, $\operatorname{KO}(\mathfrak{C})$ is also conditionally complete, and hence coincides with $\operatorname{RO}_K(\mathfrak{C})$. As a consequence of Lemmas \ref{lemmawhenregularcompactopenareclopen}, \ref{gprokseparableisseparable} and \ref{lemmaregularopeninsecondcountableissigma} and Proposition \ref{propositionperpisnotenough} when $X=[0,1]$, we conclude:

\begin{corollary}\label{corollary01stoneperp}
	There exists a zero-dimensional, compact Hausdorff topological space $\mathfrak{C}$ (name\-ly, $\mathfrak{C}=\operatorname{Spec}(\operatorname{RO}_K([0,1]))$) -- which is, in particular, not homeomorphic to $[0,1]$ -- such that $C(\mathfrak{C})$ and $C([0,1])$ are $\perp$-isomorphic.
\end{corollary}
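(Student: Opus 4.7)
The plan is to apply Theorem \ref{theoremperpisnotenough} with $X=[0,1]$ and $Y=\mathfrak{C}:=\operatorname{Spec}(\operatorname{RO}_K([0,1]))$, and then argue that $\mathfrak{C}$ cannot be homeomorphic to $[0,1]$ on topological grounds.

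First I would verify hypothesis \ref{theoremperpisnotenough(i)}. The interval $[0,1]$ is trivially separable compact Hausdorff. For $\mathfrak{C}$, being the Stone spectrum of a generalized Boolean algebra with unit (namely $\operatorname{RO}_K([0,1])=\operatorname{RO}([0,1])$, which contains $[0,1]$ as its top element), it is automatically zero-dimensional compact Hausdorff; separability follows from Lemma \ref{gprokseparableisseparable} applied to $X=[0,1]$.

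Next I would verify \ref{theoremperpisnotenough(iii)}. Since $\operatorname{RO}([0,1])$ is a complete Boolean algebra and $[0,1]$ is compact, $\operatorname{RO}_K([0,1])=\operatorname{RO}([0,1])$ is conditionally complete. By Stone duality $\operatorname{KO}(\mathfrak{C})\cong\operatorname{RO}_K([0,1])$, so $\operatorname{KO}(\mathfrak{C})$ is conditionally complete as well. Lemma \ref{lemmawhenregularcompactopenareclopen} then gives $\operatorname{RO}_K(\mathfrak{C})=\operatorname{KO}(\mathfrak{C})$, and composing with the Stone duality isomorphism produces an order isomorphism $\varphi:\operatorname{RO}_K([0,1])\to\operatorname{RO}_K(\mathfrak{C})$, as required.

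Now I would verify \ref{theoremperpisnotenough(ii)} on both sides. For $[0,1]$, which is second-countable, Lemma \ref{lemmaregularopeninsecondcountableissigma} produces, for any nonempty $A\in\operatorname{RO}_K([0,1])$, a function $f\in C_c([0,1])$ with $\sigma(f)=A$. For $\mathfrak{C}$, given nonempty $B\in\operatorname{RO}_K(\mathfrak{C})=\operatorname{KO}(\mathfrak{C})$, the set $B$ is compact-open, so its indicator function $\chi_B$ lies in $C_c(\mathfrak{C})$ with $\sigma(\chi_B)=B$ (since $B$ is clopen, $\overline{[\chi_B\neq 0]}=B$ and its interior is $B$ itself).

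Having all three hypotheses, Theorem \ref{theoremperpisnotenough} yields a $\perp$-isomorphism between $C_c([0,1])=C([0,1])$ and $C_c(\mathfrak{C})=C(\mathfrak{C})$. Finally, $\mathfrak{C}$ is zero-dimensional Hausdorff and contains more than one point (as $\operatorname{RO}_K([0,1])$ has many ultrafilters), hence is totally disconnected; since $[0,1]$ is connected, the two spaces cannot be homeomorphic. I do not anticipate a serious obstacle: the content is already packaged in the preceding lemmas and the theorem, and the argument amounts to checking that each hypothesis is met by the chosen pair.
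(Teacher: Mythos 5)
Your proposal is correct and follows essentially the same route as the paper: the same chain of Lemmas \ref{lemmawhenregularcompactopenareclopen}, \ref{gprokseparableisseparable} and \ref{lemmaregularopeninsecondcountableissigma} feeding into Theorem \ref{theoremperpisnotenough} with $X=[0,1]$ and $\mathfrak{C}=\operatorname{Spec}(\operatorname{RO}_K([0,1]))$. You even correctly make explicit a step the paper leaves implicit --- verifying hypothesis \ref{theoremperpisnotenough(ii)} for $\mathfrak{C}$ via indicator functions of compact-open sets rather than Lemma \ref{lemmaregularopeninsecondcountableissigma}, which would not apply since $\mathfrak{C}$ is not second-countable.
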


For our second example, we will consider only second-countable spaces. The next lemma is again a technical lemma which can be proven by elementary topological considerations, so we omit its proof.

\begin{lemma}\label{lemmamorphismregularopenofsubset}
	Let $X$ be a topological space and $U$ a dense open subset of $X$. Then the map
	\[\varphi_U\colon \operatorname{RO}(X)\to\operatorname{RO}(U),\qquad \varphi_U(A)=A\cap U\]
	is an order isomorphism.
\end{lemma}

Let $\mathbb{S}^1=\left\{z\in\mathbb{C}:|z|=1\right\}$ be the complex unit circle.

\begin{corollary}\label{corollary01circleperp}
	$C([0,1])$ and $C(\mathbb{S}^1)$ are $\perp$-isomorphic.
\end{corollary}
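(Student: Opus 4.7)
The plan is to verify the three hypotheses of Theorem \ref{theoremperpisnotenough} for $X=[0,1]$ and $Y=\mathbb{S}^1$, and then simply quote that theorem. Hypothesis \ref{theoremperpisnotenough(i)} is immediate: both spaces are second-countable (hence separable) compact Hausdorff. For hypothesis \ref{theoremperpisnotenough(ii)}, note that since $[0,1]$ and $\mathbb{S}^1$ are compact, every regular open subset of either has compact closure, so $\operatorname{RO}_K$ coincides with $\operatorname{RO}$ in both cases; then Lemma \ref{lemmaregularopeninsecondcountableissigma} directly produces, for each nonempty $A\in\operatorname{RO}(X)$ or $\operatorname{RO}(Y)$, a function $f\in C(X)$ or $C(Y)$ with $\sigma(f)=A$.

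The substantive step is hypothesis \ref{theoremperpisnotenough(iii)}: producing an order isomorphism $\varphi:\operatorname{RO}([0,1])\to\operatorname{RO}(\mathbb{S}^1)$. The key observation is that both spaces contain a homeomorphic copy of $\mathbb{R}$ as a dense open subset: $(0,1)\subseteq[0,1]$ is dense, and for any fixed $p\in\mathbb{S}^1$, $\mathbb{S}^1\setminus\{p\}$ is dense and homeomorphic to $\mathbb{R}$ (via stereographic projection), hence homeomorphic to $(0,1)$. Fix such a homeomorphism $\eta:(0,1)\to\mathbb{S}^1\setminus\{p\}$, which induces an order isomorphism $\eta_*:\operatorname{RO}((0,1))\to\operatorname{RO}(\mathbb{S}^1\setminus\{p\})$ by direct image.

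Now apply Lemma \ref{lemmamorphismregularopenofsubset}\ref{lemmamorphismregularopenofsubset(c)} twice: since $(0,1)$ is dense in $[0,1]$, the map $\varphi_{(0,1)}:\operatorname{RO}([0,1])\to\operatorname{RO}((0,1))$, $A\mapsto A\cap(0,1)$, is an order isomorphism, and since $\mathbb{S}^1\setminus\{p\}$ is dense in $\mathbb{S}^1$, the map $\varphi_{\mathbb{S}^1\setminus\{p\}}:\operatorname{RO}(\mathbb{S}^1)\to\operatorname{RO}(\mathbb{S}^1\setminus\{p\})$ is also an order isomorphism. Composing,
\[\varphi:=\varphi_{\mathbb{S}^1\setminus\{p\}}^{-1}\circ\eta_*\circ\varphi_{(0,1)}:\operatorname{RO}([0,1])\longrightarrow\operatorname{RO}(\mathbb{S}^1)\]
is the desired order isomorphism. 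With all three hypotheses verified, Theorem \ref{theoremperpisnotenough} yields a $\perp$-isomorphism $C([0,1])\to C(\mathbb{S}^1)$, proving the corollary. The only mild obstacle is bookkeeping the identification $\operatorname{RO}_K=\operatorname{RO}$ in the compact setting, which is trivial here; the construction itself is entirely routine once the dense-open-subset trick is spotted.
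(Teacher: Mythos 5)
Your proposal is correct and follows essentially the same route as the paper: the paper also transports regular open sets through the dense open subsets $(0,1)\subseteq[0,1]$ and $\mathbb{S}^1\setminus\{1\}\subseteq\mathbb{S}^1$ (which are homeomorphic), applies Lemma \ref{lemmamorphismregularopenofsubset} twice to get the order isomorphism $\operatorname{RO}([0,1])\cong\operatorname{RO}(\mathbb{S}^1)$, and then invokes Lemma \ref{lemmaregularopeninsecondcountableissigma} and Theorem \ref{theoremperpisnotenough}. Your write-up just makes explicit the bookkeeping (the identification $\operatorname{RO}_K=\operatorname{RO}$ in the compact case and the composite map $\varphi$) that the paper leaves implicit.
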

\begin{proof}
	Let $X=(0,1)$ and $Y=\mathbb{S}^1\setminus\left\{1\right\}$. Then $X$ and $Y$ are homeomorphic, and two applications of Lemma \ref{lemmamorphismregularopenofsubset} imply that $\operatorname{RO}([0,1])$ and $\operatorname{RO}(\mathbb{S}^1)$ are order-isomorphic. Lemmas \ref{gprokseparableisseparable} and \ref{lemmaregularopeninsecondcountableissigma}, and Proposition \ref{propositionperpisnotenough}, imply that $C([0,1])$ and $C(\mathbb{S}^1)$ are $\perp$-isomorphic.
\end{proof}

\section{Basic maps}\label{sectionbasicmaps}

In this section we will develop techniques to classify isomorphisms for spaces of functions with different algebraic structures. As in the preceding sections, we will be interested mostly in spaces of continuous functions between topological spaces, however the initial notions we will deal with can be defined in purely set-theoretical terms.

Let $X$ and $H_X$ be sets, and consider a class $\mathcal{A}(X)\subseteq(H_X)^X$ of $H_X$-valued functions on $X$. Given a point $x\in X$, denote by $\mathcal{A}(X)|_x$ the set of images of $x$ under elements of $\mathcal{A}(X)$, i.e.
\[\mathcal{A}(X)|_x=\left\{f(x):f\in\mathcal{A}(X)\right\}.\ntag\label{definitionimageofpointunderclassoffunction}\]

If $Y$ is another set and $\phi\colon Y\to X$ is a map, denote by
\[Y\times_{(\phi,\mathcal{A}(X))} H_X=\bigcup_{y\in Y}\left\{y\right\}\times\mathcal{A}(X)|_{\phi(y)}=\left\{(y,f(\phi(y))):y\in Y,f\in\mathcal{A}(X)\right\}.\]

Note that $Y\times_{(\phi,\mathcal{A}(X))} H_X$ is equal to $Y\times H_X$ if and only if the following property is satisfied: For every $y\in Y$ and every $c\in H_X$, there exists $f\in\mathcal{A}(X)$ such that $f(\phi(y))=c$.

\begin{definition}\label{definitionttransform}
	Let $X$, $H_X$, $Y$ and $H_Y$ be sets, $\phi\colon Y\to X$ be a function, and consider a class of functions $\mathcal{A}(X)\subseteq (H_X)^X$.
	
	Given maps $\phi\colon Y\to X$ and $\chi\colon Y\times_{(\phi,\mathcal{A}(X))} H_X\to H_Y$, we define $T_{(\phi,\chi)}\colon\mathcal{A}(X)\to (H_Y)^Y$ by
	\[(T_{(\phi,\chi)}f)(y)=\chi(y,f(\phi(y)),\qquad\forall f\in\mathcal{A}(X),\quad\forall y\in Y.\ntag\label{definitiontphichi}\]
\end{definition}

\begin{definition}\label{definitionbasic}
	Let $X$, $H_X$, $Y$ and $H_Y$ be sets, $\phi\colon Y\to X$ be a function, and consider classes of functions $\mathcal{A}(X)\subseteq (H_X)^X$ and $\mathcal{A}(Y)\subseteq (H_Y)^Y$. A map $T\colon\mathcal{A}(X)\to\mathcal{A}(Y)$ is called \emph{$\phi$-basic} if there exists $\chi\colon Y\times_{(\phi,\mathcal{A}(X))} H_X\to H_Y$ such that $T=T_{(\phi,\chi)}$. We call such $\chi$ a \emph{$(\phi,T)$-transform}.
	
	We denote \emph{sections} of $\chi$ by $\chi(\cdot,y)\colon \mathcal{A}(X)|_{\phi(y)}\to H_Y$ (where $y\in Y$).
\end{definition}

(In the definition above, we ignore the fact that the codomain of $T$ is $\mathcal{A}(Y)$, while the codomain of $T_{(\phi,\chi)}$ is $(H_Y)^Y$.)

In simpler terms, a basic map is one that is induced naturally by the transformation $\phi\colon Y\to X$, and the field $\left\{\chi(y,\cdot):y\in Y\right\}$ of partial functions on $H_X$.

\begin{example}
	Let $\phi\colon Y\to X$ and $\psi\colon H_X\to H_Y$ be functions. Then the map
	\[T\colon(H_X)^X\to (H_Y)^Y,\qquad Tf=\psi\circ f\circ \phi\]
	is $\phi$-basic, and the $(\phi,T)$-transform $\chi$ is given by $\chi(y,z)=\psi(z)$.
\end{example}

The next example will appear, in some form, in most applications in Section \ref{sectionconsequences}.

\begin{example}
	Suppose that $X$ is a locally compact Hausdorff space, $H_X$ is a Hausdorff space, $\theta_X\in C(X,H_X)$ and $\mathcal{A}(X)\subseteq C_c(X,\theta_X)$. Let $Y$ and $H_Y$ be topological spaces, $\phi\colon Y\to X$ be a homeomorphism, and $\chi\colon Y\times H_X\to H_Y$ be a continuous map such that for every $y\in Y$, the section $\chi(y,\cdot)\colon H_X\to H_Y$ is a bijection.
	
	For every $f\in\mathcal{A}(X)$, define $Tf\in C(Y,H_Y)$ as
	\[Tf\colon Y\to H_Y,\qquad Tf(y)=\chi(y,f(\phi(y))),\]
	and let $\mathcal{A}(Y)=\left\{Tf\colon f\in\mathcal{A}(X)\right\}$. Also define $\theta_Y=T\theta_X$. Then
	\begin{enumerate}[label=\arabic*.]
		\item $T$ is $\phi$-basic, and the $(\phi,T)$-transform is the restriction of $\chi$ to $Y\times_{(\phi,\mathcal{A}(X))} H_X$;
		\item $(X,\theta_X,\mathcal{A}(X))$ is (weakly) regular if and only if $(Y,\theta_Y,\mathcal{A}(Y))$ is (weakly) regular. In this case, $T$ is a $\perpp$-isomorphism and $\phi$ is the $T$-homeomorphism.
	\end{enumerate}
\end{example}

Note that not every $\perpp$-isomorphism is given as in the previous example.

\begin{example}
	Suppose that $X=Y$ is compact Hausdorff, $H_X=H_Y=\mathbb{R}$ and $\theta_X=\theta_Y=0$, so we simply write $C(X)=C(X,\mathbb{R})$. Let $T\colon C(X)\to C(X)$ be any bijection satisfying $[f\neq 0]=[Tf\neq 0]$ for all $f\in C(X)$. Then $T$ is a $\perpp$-isomorphism, and the $T$-homeomorphism is the identity $\id_X\colon X\to X$. Let us look at two particular cases:
	\begin{itemize}
		\item Suppose that $X$ is not a singleton, $T(1)=2$, $T(2)=1$, and $Tf=f$ for every $f\neq 1,2$. Then $T$ is non-basic (see Proposition \ref{propositionbasicisomorphisms}\ref{propositionbasicisomorphisms(a)}) and discontinuous with respect to either the topology of uniform convergence, or the topology of pointwise convergence.
		
		\item If $X=\{\ast\}$ is a singleton, we identify $C(X)$ with $\mathbb{R}$, so any self-bijection $T\colon\mathbb{R}\to\mathbb{R}$ preserving $0$ is a basic $\perpp$-automorphism. In this case, the $T$-transform $\chi$ coincides with $T$ (or more precisely $\chi(\ast,z)=T(z)$ for all $z\in\mathbb{R}$), and most (cardinality-wise) of these are discontinuous: indeed, there are $2^{2^\aleph_0}\mathfrak{c}$ self-bijections of $\mathbb{R}\setminus\left\{0\right\}$, but only $2^{\aleph_0}$ of these are continuous.
	\end{itemize}
\end{example}

In the next proposition, we again consider only sets (without topologies).

\begin{proposition}\label{propositionbasicisomorphisms}
	Let $\mathcal{A}(X)\subseteq (H_X)^X$ and $\mathcal{A}(Y)\subseteq (H_Y)^Y$, and consider maps $\phi\colon Y\to X$ and $T\colon\mathcal{A}(X)\to\mathcal{A}(Y)$. Then
	\begin{enumerate}[label=(\alph*)]
		\item\label{propositionbasicisomorphisms(a)} $T$ is $\phi$-basic if and only if for all $y\in Y$, the following implication holds:
		\[f(\phi(y))=g(\phi(y))\Longrightarrow Tf(y)=Tg(y),\qquad\forall f,g\in\mathcal{A}(X).\ntag\label{equationpropositionbasicisomorphisms}\]
	\end{enumerate}
	In this case,
	\begin{enumerate}[label=(\alph*)]\setcounter{enumi}{1}
		\item\label{propositionbasicisomorphisms(b)} the $(\phi,T)$-transform $\chi$ is unique.
		\item\label{propositionbasicisomorphisms(c)} A section $\chi(y,\cdot)$ is injective if and only if
		\[Tf(y)=Tg(y)\Longrightarrow f(\phi(y))=g(\phi(y)),\qquad\forall f,g\in\mathcal{A}(X).\ntag\label{equationpropositionbasicisomorphisms2}\]
		\item\label{propositionbasicisomorphisms(d)} A section $\chi(y,\cdot)$ is surjective if and only if $H_Y=\left\{Tf(y):f\in\mathcal{A}(X)\right\}$.
	\end{enumerate}
\end{proposition}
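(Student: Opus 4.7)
The plan is to unpack each of the four statements directly from the defining formula $(T_{(\phi,\chi)}f)(y) = \chi(y, f(\phi(y)))$ and the fact that the domain of $\chi$ is $Y \times_{(\phi,\mathcal{A}(X))} H_X = \{(y, f(\phi(y))) : y \in Y,\, f \in \mathcal{A}(X)\}$. There is no real technical obstacle here; every part is essentially a definition chase, and the only mildly delicate point is making sure the candidate $\chi$ in the reverse direction of (a) is well-defined.

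For part (a), the forward direction is immediate: if $T = T_{(\phi,\chi)}$ and $f(\phi(y)) = g(\phi(y))$, then $Tf(y) = \chi(y, f(\phi(y))) = \chi(y, g(\phi(y))) = Tg(y)$. For the converse, I would construct $\chi$ as follows: given $(y, z) \in Y \times_{(\phi,\mathcal{A}(X))} H_X$, pick any $f \in \mathcal{A}(X)$ with $f(\phi(y)) = z$ and set $\chi(y, z) := Tf(y)$. The hypothesis \eqref{equationpropositionbasicisomorphisms} is precisely what is needed to see that this assignment does not depend on the choice of $f$, so $\chi$ is a well-defined map on $Y \times_{(\phi,\mathcal{A}(X))} H_X$, and by construction $T = T_{(\phi,\chi)}$.

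Part (b) falls out of the same argument: if $\chi$ and $\chi'$ are two $(\phi,T)$-transforms and $(y, z) \in Y \times_{(\phi,\mathcal{A}(X))} H_X$, pick $f \in \mathcal{A}(X)$ with $f(\phi(y)) = z$; then $\chi(y, z) = Tf(y) = \chi'(y, z)$. So $\chi$ is determined on its entire domain by $T$ and $\phi$.

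For part (c), the section $\chi(y, \cdot)$ has domain $\mathcal{A}(X)|_{\phi(y)}$, and its elements are exactly the values $f(\phi(y))$ for $f \in \mathcal{A}(X)$. Thus injectivity of $\chi(y, \cdot)$ reads: whenever $f, g \in \mathcal{A}(X)$ satisfy $\chi(y, f(\phi(y))) = \chi(y, g(\phi(y)))$, one has $f(\phi(y)) = g(\phi(y))$; rewriting the hypothesis as $Tf(y) = Tg(y)$ gives exactly \eqref{equationpropositionbasicisomorphisms2}. Finally, for (d), by the definition of $\chi$ the image of the section $\chi(y, \cdot)$ is $\{\chi(y, f(\phi(y))) : f \in \mathcal{A}(X)\} = \{Tf(y) : f \in \mathcal{A}(X)\}$, so surjectivity onto $H_Y$ is literally the stated equality.
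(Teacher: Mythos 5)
Your proposal is correct and follows essentially the same route as the paper: the converse of (a) is handled by defining $\chi(y,t)=Tf(y)$ for any $f\in\mathcal{A}(X)$ with $f(\phi(y))=t$, with well-definedness supplied by the implication \eqref{equationpropositionbasicisomorphisms}, and items (b)--(d) are read off directly from the identity $Tf(y)=\chi(y,f(\phi(y)))$. The only difference is that you spell out the verifications for (b)--(d) that the paper dismisses as immediate, which is fine.
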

\begin{proof}
	For one direction of \ref{propositionbasicisomorphisms(a)}, if $T$ satisfies \eqref{equationpropositionbasicisomorphisms}, define $\chi$ by
	\[\chi(y,t)=Tf(y),\]
	whenever $f\in\mathcal{A}(X)$ is any function satisfying $f(\phi(y))=t$. Then $\chi(y,t)$ does not depend on the choice of $f$ by implication \eqref{equationpropositionbasicisomorphisms}, and hence $T$ is $\phi$-basic.
	
	The converse direction of \ref{propositionbasicisomorphisms(a)}, as well as items \ref{propositionbasicisomorphisms(b)}, \ref{propositionbasicisomorphisms(c)} and \ref{propositionbasicisomorphisms(d)} are immediate from the formula $Tf(y)=\chi(y,f(\phi(y)))$, which holds for all $f\in\mathcal{A}(X)$ and $y\in Y$.
\end{proof}

In our applications, we will use item \ref{propositionbasicisomorphisms(a)} above several times, by proving that a given $\perpp$-isomorphism $T\colon\mathcal{A}(X)\to\mathcal{A}(Y)$, between regular classes of functions, is basic with respect to the associated homeomorphism $\phi\colon Y\to X$ given by Theorem \ref{maintheorem}. This is, in fact, the only possibility: If $\psi\colon Y\to X$ is any map such that $T$ is $\psi$-basic, then $\psi$ is the $T$-homeomorphism! Since this fact will not be used later in the article we refer its proof to \cite[Proposition 3.3.7]{cordeirothesis}.

\subsection{Algebraic signatures and basic maps}

In the \hyperref[sectionconsequences]{next section} we will consider different algebraic structures on spaces of continuous functions. To this end, recall (see \cite{MR1221741}) that an \emph{algebraic signature}\index{Signature!Algebraic} is a collection $\eta$ of pairs $(*,n)$, where $*$  is a (function) symbol and $n$ is a non-negative integer, called the \emph{arity} of $*$. A \emph{model} of $\eta$ consists of a set $H$ and a map associating to each $(*,n)\in\eta$ a function $*\colon H^n\to H$, $(c_1,\ldots,c_n)\mapsto c_1*\cdots*c_n$. (We use the convention that $H^0$ is a singleton set, so that a $0$-ary function symbol is the same as a constant.)

For example, the usual signature of groups consists of one binary symbol $\cdot$ (for the product), one unary symbol $(\ )^{-1}$ (the inversion) and one constant/0-ary symbol $1$ (the unit).

If $H$ is a model of $\eta$ and $X$ is a set then the function space $H^X$ can also be regarded as a model of $\eta$ with the pointwise structure: $(f_1*\cdots*f_n)(x)=f_1(x)*\cdots*f_n(x)$ for all $f_1,\ldots,f_n\in H^X$, all $x\in X$, and all $n$-ary function symbols $\ast$.

A \emph{morphism} of two models $H_1$ and $H_2$ of a given signature $\eta$ is a map $m\colon H_1\to H_2$ such that for any $n$-ary function symbol $*$ of $\eta$ and any $x_1,\ldots,x_n\in H_1$, we have $m(x_1*\cdots*x_n)=m(x_1)*\cdots*m(x_n)$.

Finally, a \emph{submodel} of a model $H$ of a signature $\eta$ is a subset $K\subseteq H$ such that for all $n$-ary symbols $*$ of $\eta$ and any $d_1,\ldots,d_n\in K$, $d_1*\cdots*d_n\in K$, so that $K$ can be naturally regarded as a model of $\eta$.

In the topological setting, a \emph{continuous model} $H$ of a signature $\eta$ is defined in the same manner, but we assume that all maps are continuous. In this case, if $X$ is a topological space then $C(X,H)$ is a submodel of $H^X$.

\begin{proposition}\label{propositionmodelmorphism}
	Let $X$ and $Y$ be sets. Suppose that $H_X$ and $H_Y$ are models for an algebraic signature $\eta$, and that $\mathcal{A}(X)$ and $\mathcal{A}(Y)$ are submodels of $(H_X)^X$ and $(H_Y)^Y$. Then for all $x\in X$, $\mathcal{A}(X)|_x$ is a submodel of $H_X$, and similarly $\mathcal{A}(Y)|_y$ is a submodel of $H_Y$ for all $y\in Y$. (See Equation \eqref{definitionimageofpointunderclassoffunction}.)
	
	Let $T\colon\mathcal{A}(X)\to\mathcal{A}(Y)$ be a basic map with respect to a function $\phi\colon Y\to X$, and let $\chi$ be the $T$-transform. Then $T$ is a morphism (for $\eta$) if and only if every section $\chi(y,\cdot)$ is a morphism (from $\mathcal{A}(X)|_{\phi(y)}$ to $\mathcal{A}(Y)|_y$).
\end{proposition}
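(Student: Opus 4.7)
The plan is to unpack all the definitions and reduce everything to the pointwise nature of the algebraic operations on $(H_X)^X$ and $(H_Y)^Y$. Essentially everything is routine once we evaluate at a single point, so there is no real obstacle; the main task is to carefully track what it means for $T$ to be $\phi$-basic with transform $\chi$, namely $Tf(y) = \chi(y, f(\phi(y)))$.

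First I would establish that each fiber $\mathcal{A}(X)|_x$ is a submodel of $H_X$. Given an $n$-ary symbol $*$ in $\eta$ and elements $d_1, \ldots, d_n \in \mathcal{A}(X)|_x$, choose $f_i \in \mathcal{A}(X)$ with $f_i(x) = d_i$. Since $\mathcal{A}(X)$ is a submodel of $(H_X)^X$, the function $f_1 * \cdots * f_n$ lies in $\mathcal{A}(X)$, and by the pointwise definition of the operation on $(H_X)^X$ its value at $x$ is $d_1 * \cdots * d_n$, which therefore lies in $\mathcal{A}(X)|_x$. For $0$-ary symbols (constants), the constant function in $\mathcal{A}(X)$ evaluates to the constant in $\mathcal{A}(X)|_x$. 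The same argument handles $\mathcal{A}(Y)|_y$.

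Next I would prove the equivalence. For the ``if'' direction, assume every section $\chi(y, \cdot)$ is a morphism from $\mathcal{A}(X)|_{\phi(y)}$ to $\mathcal{A}(Y)|_y$. Fix $f_1, \ldots, f_n \in \mathcal{A}(X)$ and $y \in Y$. Since the operation on $(H_X)^X$ is pointwise,
\[
T(f_1 * \cdots * f_n)(y) = \chi\bigl(y,\, f_1(\phi(y)) * \cdots * f_n(\phi(y))\bigr),
\]
and using that the elements $f_i(\phi(y))$ lie in $\mathcal{A}(X)|_{\phi(y)}$ together with the morphism property of $\chi(y, \cdot)$, this equals
\[
\chi(y, f_1(\phi(y))) * \cdots * \chi(y, f_n(\phi(y))) = Tf_1(y) * \cdots * Tf_n(y) = (Tf_1 * \cdots * Tf_n)(y).
\]
Since $y$ is arbitrary, $T$ respects $*$; the constant case is identical. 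So $T$ is a morphism.

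For the ``only if'' direction, assume $T$ is a morphism and fix $y \in Y$, an $n$-ary symbol $*$, and elements $d_1, \ldots, d_n \in \mathcal{A}(X)|_{\phi(y)}$. Pick $f_i \in \mathcal{A}(X)$ with $f_i(\phi(y)) = d_i$. Then $\chi(y, d_i) = Tf_i(y)$, and reading the same chain of equalities backwards,
\[
\chi(y, d_1 * \cdots * d_n) = T(f_1 * \cdots * f_n)(y) = (Tf_1 * \cdots * Tf_n)(y) = \chi(y, d_1) * \cdots * \chi(y, d_n),
\]
so $\chi(y, \cdot)$ is a morphism on $\mathcal{A}(X)|_{\phi(y)}$. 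The case of $0$-ary symbols follows from applying $T$ to the corresponding constant function in $\mathcal{A}(X)$ and evaluating at $y$. This completes the proof.
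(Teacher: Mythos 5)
Your proof is correct and is essentially the paper's argument written out elementwise: the paper packages the same idea abstractly via the identity $\chi(y,\cdot)\circ\pi_{\phi(y)}=\pi_y\circ T$, where $\pi_x$ is the evaluation morphism (surjective onto the fiber $\mathcal{A}(X)|_{\phi(y)}$), while you unpack exactly this by choosing representatives $f_i$ with $f_i(\phi(y))=d_i$ and chasing the pointwise operations. No gaps; your choice of representatives is precisely the surjectivity of $\pi_{\phi(y)}$ that the paper invokes.
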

\begin{proof}
	Given $x\in X$, the evaluation map $\pi_x\colon(H_X)^X\to H_X$, $\pi_x(f)=f(x)$, is a morphism, and it follows that $\mathcal{A}(X)|_x=\pi_x(\mathcal{A}(X))$ is a submodel of $H_X$.
	
	Note that for all $y\in Y$, $\chi(y,\cdot)\circ\pi|_{\phi(y)}=\pi_y\circ T$. On one hand, $T$ is a morphism if and only if $\pi_y\circ T$ is a morphism for all $y$. On the other, $\pi|_{\phi(y)}$ is a surjective morphism from $\mathcal{A}(X)$ to $\mathcal{A}(X)|_{\phi(y)}$. It follows that $T$ is a morphism if and only if $\chi(y,\cdot)$ is a morphism for all $y\in Y$.\qedhere
\end{proof}

\subsection{Group-valued maps}\label{subsectiongroupvaluedmaps}

In several applications, we will consider groups of functions, and in this case a slight, but nevertheless important, simplification of Proposition \ref{propositionbasicisomorphisms}\ref{propositionbasicisomorphisms(a)} will be used.

\begin{proposition}\label{propositionbasicnessofgroupvalued}
	Suppose that $H_X$ and $H_Y$ are groups, $\mathcal{A}(X)$ and $\mathcal{A}(Y)$ are subgroups of $(H_X)^X$ and $(H_Y)^Y$, respectively, $T\colon\mathcal{A}(X)\to\mathcal{A}(Y)$ is a group isomorphism and $\phi\colon Y\to X$ is a function. Then $T$ is $\phi$-basic if and only if for all $y\in Y$,
	\[f(\phi(y))=1\Longrightarrow Tf(y)=1,\qquad\forall f\in\mathcal{A}(X).\]
\end{proposition}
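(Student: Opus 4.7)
The plan is to reduce this to Proposition \ref{propositionbasicisomorphisms}\ref{propositionbasicisomorphisms(a)}, using the group structure to pass from the condition about agreement at $\phi(y)$ to a condition about vanishing at $\phi(y)$. The point is that in a group, $f(\phi(y))=g(\phi(y))$ is equivalent to $(fg^{-1})(\phi(y))=1$, and the analogous equivalence holds for $Tf(y)=Tg(y)$ since $T$ is a group homomorphism, so the pointwise-agreement characterization of $\phi$-basicness collapses to a pointwise-vanishing condition once we have group structure.

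For the forward implication, I would assume $T$ is $\phi$-basic and fix $y \in Y$ and $f \in \mathcal{A}(X)$ with $f(\phi(y))=1$. Let $e \in \mathcal{A}(X)$ denote the identity element (the constant function $1$ restricted to $\mathcal{A}(X)$), which satisfies $e(\phi(y)) = 1 = f(\phi(y))$. Proposition \ref{propositionbasicisomorphisms}\ref{propositionbasicisomorphisms(a)} then gives $Tf(y) = Te(y)$, and since $T$ is a group homomorphism $Te$ is the identity of $\mathcal{A}(Y)$, so $Tf(y) = 1$.

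For the reverse implication, assume the vanishing condition and take $f, g \in \mathcal{A}(X)$ and $y \in Y$ with $f(\phi(y)) = g(\phi(y))$. Then $(fg^{-1})(\phi(y)) = f(\phi(y)) g(\phi(y))^{-1} = 1$, so by hypothesis $T(fg^{-1})(y) = 1$. Since $T$ is a group homomorphism and the group operation on $\mathcal{A}(Y)$ is pointwise, $T(fg^{-1})(y) = Tf(y)\cdot Tg(y)^{-1}$, hence $Tf(y) = Tg(y)$. Proposition \ref{propositionbasicisomorphisms}\ref{propositionbasicisomorphisms(a)} then yields that $T$ is $\phi$-basic.

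There is no real obstacle here; the only subtlety worth flagging is that the argument uses only that $T$ is a group homomorphism (injectivity/surjectivity are not needed), and that the group operation on $\mathcal{A}(X)$ and $\mathcal{A}(Y)$ is computed pointwise, which is implicit in their being subgroups of $(H_X)^X$ and $(H_Y)^Y$.
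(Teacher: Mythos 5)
Your proposal is correct and follows essentially the same route as the paper, whose proof is a one-line reduction to Proposition \ref{propositionbasicisomorphisms}\ref{propositionbasicisomorphisms(a)} via the equivalence $f(x)=g(x)\iff(fg^{-1})(x)=1$ together with $T$ preserving products and inverses. Your spelled-out two directions (using the identity element for the forward implication and $fg^{-1}$ for the reverse) are exactly the details the paper leaves implicit, and your remark that only the homomorphism property of $T$ is used is accurate.
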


Assume that $H$ is a topological group and $X$ is a locally compact Hausdorff space. Any subgroup $\mathcal{A}$ of $C(X,H)$ contains the constant function $1$, and if $\theta\in\mathcal{A}$, then the map $f\mapsto f\theta^{-1}$ is a $\perpp$-isomorphism between $(X,\theta,\mathcal{A})$ and $(X,1,\mathcal{A})$. In this case, since we will be mostly interested in group isomorphisms between subgroups of $C(X,H)$, we may always assume that $\theta=1$. In the case that $H=\mathbb{R}$ or $\mathbb{C}$, as additive groups, we recover the usual notion of support.

\subsection{Continuity}

Now, we study continuity of basic $\perpp$-isomorphisms and relate it to the continuity of its transform. For this, it is necessary to construct functions which attain predetermined values on infinitely many points (e.g.\ the points of some converging net). One procedure for this is by ``cutting and pasting'' continuous functions, although this sometimes requires some first countability or connectedness hypotheses in order to maintain control of the final function. This technique is somewhat elementary, although one needs to take some care in order to guarantee continuity, so we refer the most cumbersome details to \cite[Section 3.3.3]{cordeirothesis}, and sketch the main points of the proofs. The next proposition can be proven by elementary Topology.

\begin{proposition}\label{propositiondisjointopensets}
	If $F$ is an infinite subset of a regular Hausdorff space $X$, then there exists a countable infinite subset $\left\{y_1,y_2,\ldots\right\}\subseteq F$ and pairwise disjoint open sets $U_n$ such that $y_n\in U_n$ for all $n$.
\end{proposition}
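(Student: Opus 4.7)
The plan is to construct inductively, for each $n \geq 0$, pairwise disjoint open sets $U_1, \ldots, U_n$ together with distinct points $y_i \in U_i \cap F$ and an auxiliary open set $W_n$, disjoint from $U_1 \cup \cdots \cup U_n$, such that $W_n \cap F$ is infinite. The base case $n = 0$ is trivial, taking $W_0 = X$. Once the construction proceeds through all $n$, the resulting sequences $(U_n)$ and $(y_n)$ answer the proposition: the $y_n$ are automatically distinct by pairwise disjointness of the $U_n$.

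For the inductive step I would use the following two-point trick. Since $W_n \cap F$ is infinite, pick two distinct points $p, q \in W_n \cap F$. Using that $X$ is regular Hausdorff, I would find open sets $A, B \subseteq W_n$ with $p \in A$, $q \in B$, and $\overline{A} \cap \overline{B} = \varnothing$: first separate $p$ and $q$ by disjoint open sets via Hausdorffness, then shrink each by regularity so that its closure lies inside its original open set (which keeps the closures disjoint), and finally intersect with $W_n$. Now split into cases. If $(W_n \cap F) \setminus \overline{A}$ is infinite, set $y_{n+1} := p$, $U_{n+1} := A$, $W_{n+1} := W_n \setminus \overline{A}$. Otherwise $(W_n \cap F) \cap \overline{A}$ is infinite, and from $\overline{A} \cap \overline{B} = \varnothing$ one obtains $(W_n \cap F) \setminus \overline{B} \supseteq (W_n \cap F) \cap \overline{A}$, so set $y_{n+1} := q$, $U_{n+1} := B$, $W_{n+1} := W_n \setminus \overline{B}$. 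A routine check shows the invariant is preserved.

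The main obstacle is precisely this inductive step. A naive strategy where one picks a single point $p \in W_n \cap F$ and shrinks a neighborhood around it could fail, since all of $W_n \cap F$ might accumulate at $p$, leaving no infinite reservoir of $F$-points to continue. The two-point trick -- combined with the crucial use of regularity to produce open sets whose \emph{closures} are disjoint, not merely the open sets themselves -- guarantees that, however the infinite set $W_n \cap F$ distributes between $\overline{A}$ and its complement, one of the two candidate shrinkings still leaves infinitely many points of $F$ available in the complementary open set.
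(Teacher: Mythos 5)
Your proof is correct, and it takes a genuinely different route from the paper's. The paper argues by a dichotomy on whether $F$ has a cluster point: if some $y$ is a cluster point of $F$, then every neighbourhood of $y$ meets $F$ in infinitely many points, so one repeatedly separates $y$ (via Hausdorffness alone) from a freshly chosen point of $F$ inside ever-smaller neighbourhoods $V_n$ of $y$, peeling off the disjoint sets $U_n$ along the way; if $F$ has no cluster point, each $y_n$ has a neighbourhood $V_n$ with $V_n\cap F=\left\{y_n\right\}$, and regularity is used to shrink recursively so that $\overline{U}_n\subseteq V_n\setminus\bigcup_{i=1}^{n-1}\overline{U_i}$. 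Your single induction replaces this case split with the explicit invariant that the reservoir $W_n\cap F$ remains infinite, and your two-point trick with disjoint \emph{closures} is precisely what makes the invariant self-sustaining: by pigeonhole, either $(W_n\cap F)\setminus\overline{A}$ is infinite or $(W_n\cap F)\cap\overline{A}\subseteq (W_n\cap F)\setminus\overline{B}$ is, and in either case an infinite open reservoir survives, with no global information about where $F$ accumulates ever being needed. What the paper's dichotomy buys is brevity in each branch (and the cluster-point branch needs no regularity at all, only Hausdorffness plus the T1 fact that neighbourhoods of a cluster point meet $F$ infinitely often); what your argument buys is uniformity -- one mechanism covers both the accumulating and the discrete configurations of $F$ -- at the cost of carrying the invariant and invoking regularity at every step. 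Your verification details (closures staying disjoint after intersecting with $W_n$, openness of $W_{n+1}=W_n\setminus\overline{A}$, distinctness of the $y_n$ from disjointness of the $U_n$) all check out.
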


For the next proposition, recall (\cite[27.4]{MR0264581}) that a topological space $H$ is \emph{locally path-connected} if every point $t\in H$ admits a neighbourhood basis consisting of path-connected subsets.

\begin{proposition}\label{propositionconstructfunction}
	Let $X$ be a locally compact Hausdorff space, $\left\{x_n\right\}_n$ be a sequence of elements of $X$, $\left\{U_n\right\}_n$ a sequence of pairwise disjoint open subsets of $X$ with $x_n\in U_n$ for all $n$.
	
	Let $H$ be a Hausdorff first-countable locally path-connected topological space and consider a family $\left\{g_n\colon U_n\to H\right\}_n$ of continuous functions such that $g_n(x_n)$ converges to some $t\in H$. Then
	\begin{enumerate}[label=(\alph*)]
		\item\label{propositionconstructfunction(a)} there exists a continuous function $f\colon X\to H$ such that $f(x_n)=g_n(x_n)$ for all sufficiently large $n$, and $f(x)=t$ for all $x\not\in\bigcup_n U_n$.
		\item\label{propositionconstructfunction(b)} if $H=\mathbb{R}$, there is a continuous function $f\colon X\to\mathbb{R}$ such that $f=g_n$ on a neighbourhood of $x_n$ and $f(x)=t$ for all $x\not\in\bigcup_n U_n$.
	\end{enumerate}
\end{proposition}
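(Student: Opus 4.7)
The plan is to build $f$ piecewise on each $U_n$ so that $f = t$ outside a compact portion of $U_n$, and the ``jump'' from $t$ to $g_n(x_n)$ is carried out by a path lying in a small neighbourhood of $t$. The compactness of the bad region and the fact that we control the image of $f$ on each $U_n$ will give continuity at the points outside $\bigcup_n U_n$.

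First I would use first-countability of $H$ to pick a nested neighbourhood basis $V_1 \supseteq V_2 \supseteq \cdots$ of $t$ consisting of path-connected open sets (this is where local path-connectedness enters). Since $g_n(x_n) \to t$, one can define a non-decreasing function $k\colon \mathbb{N} \to \mathbb{N}$ with $k(n) \to \infty$ such that $g_n(x_n) \in V_{k(n)}$ whenever $n \geq N_1$ for some fixed $N_1$. For each such $n$, path-connectedness of $V_{k(n)}$ yields a continuous path $\gamma_n\colon [0,1] \to V_{k(n)}$ with $\gamma_n(0) = t$ and $\gamma_n(1) = g_n(x_n)$. Local compactness and Hausdorffness of $X$ then provide, for each $n \geq N_1$, a continuous cutoff $\phi_n\colon X \to [0,1]$ with $\phi_n(x_n) = 1$ and $\operatorname{supp}(\phi_n) \subseteq U_n$ compact. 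Define $f(x) = \gamma_n(\phi_n(x))$ for $x \in U_n$ (with $n \geq N_1$), and $f(x) = t$ otherwise; for $n < N_1$ the value $f(x_n)$ is whatever it turns out to be, and we only claim $f(x_n) = g_n(x_n)$ for $n \geq N_1$.

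Continuity on each $U_n$ is automatic. The only delicate case is continuity at a point $x \notin \bigcup_n U_n$, where $f(x) = t$. Given a neighbourhood $W$ of $t$, pick $K$ with $V_K \subseteq W$ and consider the set $E = \bigcup\{\operatorname{supp}(\phi_n) : N_1 \leq n, \ k(n) < K\}$, which is a \emph{finite} union of compacts inside $\bigcup_n U_n$, hence compact and closed, and does not contain $x$. Let $\mathcal{N}$ be a neighbourhood of $x$ disjoint from $E$. For any $y \in \mathcal{N}$, either $y \notin \bigcup_n \operatorname{supp}(\phi_n)$ and $f(y) = t \in W$, or $y \in \operatorname{supp}(\phi_n)$ for some $n$ forced to satisfy $k(n) \geq K$, in which case $f(y) = \gamma_n(\phi_n(y)) \in V_{k(n)} \subseteq V_K \subseteq W$. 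This gives $f(\mathcal{N}) \subseteq W$, proving continuity.

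For part (b), the path construction is replaced by the honest convex combination $f(x) = t + \phi_n(x)\bigl(g_n(x) - t\bigr)$ on $U_n$ (and $f = t$ elsewhere), with the added requirement that $\phi_n$ equals $1$ on a neighbourhood of $x_n$. The subtlety here is that $|g_n - t|$ need not be uniformly small on $U_n$, so before choosing $\phi_n$ I would shrink to an open $U_n' \subseteq U_n$ with $x_n \in U_n'$ and $|g_n - g_n(x_n)| < 1/n$ on $U_n'$, and then arrange $\operatorname{supp}(\phi_n) \subseteq U_n'$. Then on $\operatorname{supp}(\phi_n)$ we get $|f - t| \leq |g_n - t| \leq 1/n + |g_n(x_n) - t| \to 0$, and the continuity argument at $x \notin \bigcup_n U_n$ proceeds exactly as in (a) with the role of the $V_k$ played by the intervals $(t - 1/k, t + 1/k)$. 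I expect the main obstacle to be precisely this control of $|g_n - t|$ away from $x_n$ in case (b); in case (a) the corresponding issue is handled for free because the image of the whole path $\gamma_n$ sits inside $V_{k(n)}$ by construction.
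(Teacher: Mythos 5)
Your proof is correct and takes essentially the same approach as the paper's: a nested basis of path-connected neighbourhoods of $t$, paths from $t$ to $g_n(x_n)$ inside shrinking neighbourhoods composed with Urysohn-type cutoffs, and continuity at points outside $\bigcup_n U_n$ via the observation that only finitely many pieces can leave a given basic neighbourhood of $t$ (your re-indexing $k(n)$ is the paper's ``repeating the sets $W_k$'' device, and your convex-combination truncation in (b) is exactly the paper's Urysohn modification of $g_n$ on $U_n\setminus V_n$). If anything, your boundary-continuity argument is spelled out in more detail than the paper's compressed version.
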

\begin{proof} Item \ref{propositionconstructfunction(b)} is an easy application of Tietze's Extension Theorem, so we concentrate on item \ref{propositionconstructfunction(a)}. Let $\left\{W_n\right\}_n$ be a decreasing basis of path-connected neighbourhoods of $t$. Disregarding any $n$ such that $g_n(x_n)$ does not belong to $W_1$, and repeating the sets $W_k$ if necessary (i.e., considering a new sequence of neighbourhoods of $t$ of the form
	\[W_1,W_1,\ldots,W_1,W_2,W_2,\ldots,W_2,\ldots,\]
	where each $W_k$ is repeated finitely many times) we may assume that $t_n:=g_n(x_n)\in W_n$.
	
	For each $n$, take a continuous path $\alpha_n\colon [0,1]\to W_n$ such that $\alpha_n(0)=t_n$ and $\alpha_n(1)=t$. Now take continuous functions $b_n\colon X\to[0,1]$ such that $b_n(x_n)=0$ and $b_n=1$ outside $U_n$. Define $f$ as $\alpha_n\circ b_n$ on each $U_n$, and as $t$ on $X\setminus\bigcup_n U_n$.
	
	The only non-trivial part about continuity of $f$ is proving that $f$ is continuous on the boundary $\partial\left(\bigcup_n U_n\right)$. If $x$ belongs to this set then $f(x)=t$. Given a basic neighbourhood $W_N$ of $t$, we have that $\bigcap_{n=1}^N(\alpha_n\circ b_n)^{-1}(W_N)$ is a neighbourhood of $x$ contained in $f^{-1}(W_N)$, and thus $f$ is continuous. Item \ref{propositionconstructfunction(b)} uses similar arguments.
\end{proof}

\begin{theorem}\label{theorembasicisomorphisms}
	Let $X$ and $Y$ be locally compact Hausdorff and for $Z\in\left\{X,Y\right\}$, $H_Z$ a Hausdorff space and $\theta_Z\in C(Z,H_Z)$ be given such that $(Z,\theta_Z,C_c(Z,\theta_Z))$ is regular.
	
	Suppose that $T\colon C_c(X,\theta_X)\to C_c(Y,\theta_Y)$ is a $\perpp$-isomorphism, that $\phi\colon Y\to X$ is the $T$-homeomorphism $\phi$, and that that $T$ is $\phi$-basic. Let $\chi\colon Y\times H_X\to H_Y$ be the corresponding $(\phi,T)$-transform. Consider the following statements:
	\begin{enumerate}[label=(\roman*)]
		\item\label{theorembasicisomorphisms(1)} $\chi$ is continuous.
		\item\label{theorembasicisomorphisms(2)} Each section $\chi(y,\cdot)$ is a continuous;
		\item\label{theorembasicisomorphisms(3)} $T$ is continuous with respect to the topologies of pointwise convergence.
	\end{enumerate}
	Then the implications \ref{theorembasicisomorphisms(1)}$\Rightarrow$\ref{theorembasicisomorphisms(2)}$\iff$\ref{theorembasicisomorphisms(3)} always hold.
	
	If $X$, $Y$ and $H_X$ are first countable, $H_X$ is locally path-connected and $\theta_X$ is constant, then $\ref{theorembasicisomorphisms(2)}\Rightarrow \ref{theorembasicisomorphisms(1)}$.
\end{theorem}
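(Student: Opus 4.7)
The implication $(1) \Rightarrow (2)$ is immediate: each $\chi(y, \cdot)$ is the restriction of the continuous map $\chi$ to the slice $\{y\} \times H_X$. For $(2) \Rightarrow (3)$, if $f_\alpha \to f$ pointwise in $C_c(X, \theta_X)$ then $f_\alpha(\phi(y)) \to f(\phi(y))$ for every $y \in Y$, and continuity of $\chi(y, \cdot)$ yields $Tf_\alpha(y) = \chi(y, f_\alpha(\phi(y))) \to \chi(y, f(\phi(y))) = Tf(y)$, so $Tf_\alpha \to Tf$ pointwise.

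The implication $(3) \Rightarrow (2)$ rests on a multi-point interpolation property for $C_c(X, \theta_X)$: for pairwise distinct $x_1, \ldots, x_n \in X$ and prescribed values $c_1, \ldots, c_n \in H_X$, there exists $g \in C_c(X, \theta_X)$ with $g(x_i) = c_i$. Indeed, since $X$ is locally compact Hausdorff one may choose open $U_i \ni x_i$ with pairwise disjoint compact closures; regularity supplies $g_i \in C_c(X, \theta_X)$ with $g_i(x_i) = c_i$ and $\supp(g_i) \subseteq U_i$, and the function $g$ defined as $g_i$ on $U_i$ and $\theta_X$ elsewhere is continuous because each $g_i$ coincides with $\theta_X$ outside $U_i$. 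To prove continuity of $\chi(y_0, \cdot)$ at $t \in H_X$, fix an open neighborhood $V$ of $\chi(y_0, t)$ in $H_Y$ and, by regularity, $f \in C_c(X, \theta_X)$ with $f(\phi(y_0)) = t$. Pointwise continuity of $T$ at $f$ provides distinct points $x_1, \ldots, x_n \in X$ (which one may assume include $x_1 = \phi(y_0)$) and open $W_i \ni f(x_i)$ such that every $g \in C_c(X, \theta_X)$ with $g(x_i) \in W_i$ for all $i$ satisfies $Tg(y_0) \in V$. For any $t' \in W_1$, multi-point interpolation supplies $g$ with $g(\phi(y_0)) = t'$ and $g(x_i) = f(x_i) \in W_i$ for $i \geq 2$; then $\chi(y_0, t') = Tg(y_0) \in V$, so $\chi(y_0, W_1) \subseteq V$.

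For $(2) \Rightarrow (1)$ under the additional hypotheses, I would argue by contradiction, using first countability to reduce to sequences. Suppose $(y_n, t_n) \to (y_0, t_0)$ in $Y \times H_X$ with $\chi(y_n, t_n) \not\to \chi(y_0, t_0)$; since $\chi(y_0, \cdot)$ is continuous by $(2)$, one may pass to a subsequence with $y_n \neq y_0$, whence $x_n := \phi(y_n) \neq x_0 := \phi(y_0)$. The plan is to construct $f \in C_c(X, \theta_X)$ with $f(x_n) = t_n$ for large $n$ and $f(x_0) = t_0$: continuity of $Tf$ at $y_0$ would then force $\chi(y_n, t_n) = Tf(y_n) \to Tf(y_0) = \chi(y_0, t_0)$, a contradiction. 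Proposition \ref{propositiondisjointopensets} produces, after a further subsequence, pairwise disjoint open $U_n \ni x_n$, which may be chosen inside a compact neighborhood of $x_0$ with $x_0 \notin \overline{U_n}$. In the case $t_0 = c_\theta$ (the constant value of $\theta_X$), Proposition \ref{propositionconstructfunction}\ref{propositionconstructfunction(a)} delivers $f$ directly, since $f = c_\theta$ on the complement of $\bigcup_n U_n$, which lies in a compact set.

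The main technical obstacle is the general case $t_0 \neq c_\theta$, where the background value $t_0$ prescribed by Proposition \ref{propositionconstructfunction}\ref{propositionconstructfunction(a)} prevents $f$ from having compact $\theta_X$-support. I would treat this by starting from an $h \in C_c(X, \theta_X)$ with $h(x_0) = t_0$ (supplied by regularity) and modifying $h$ only inside $\bigcup_n U_n$. Using local path-connectedness of $H_X$, fix a path-connected open neighborhood $W$ of $t_0$; by continuity of $h$ at $x_0$ and $t_n \to t_0$, one may shrink the $U_n$ so that $h(U_n) \subseteq W$ and $t_n \in W$ for all large $n$. Adapting the path-interpolation from the proof of Proposition \ref{propositionconstructfunction}\ref{propositionconstructfunction(a)} inside $W$, but with $h$ playing the role of the variable background, produces a continuous $f$ that coincides with $h$ outside $\bigcup_n U_n$ and satisfies $f(x_n) = t_n$ for large $n$; hence $f \in C_c(X, \theta_X)$ and $f(x_0) = h(x_0) = t_0$, as required.
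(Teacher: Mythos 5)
Your treatments of (1)$\Rightarrow$(2), (2)$\Rightarrow$(3) and (3)$\Rightarrow$(2) are correct, and the last is essentially the paper's argument in neighborhood-basis form: the paper builds a net $f_{(F,i)}$ agreeing with $f$ at the finitely many points of $F$ and taking value $t_i$ at $\phi(y)$, which is exactly your multi-point interpolation; your version, extracting the finite set $x_1,\ldots,x_n$ and the neighborhoods $W_i$ directly from continuity of $T$ at $f$, is if anything a little cleaner, and your gluing argument for the interpolating function is sound.

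The gap is in (2)$\Rightarrow$(1), at precisely the step you call the main technical obstacle, and your proposed treatment does not work as written. A first, fixable, point: a single path-connected neighborhood $W$ of $t_0$ is not enough. Since $x_0\in\overline{\bigcup_n U_n}\setminus\bigcup_n U_n$, continuity of $f$ at $x_0$ requires the values of $f$ on $U_n$ to enter \emph{every} neighborhood of $t_0$ as $n\to\infty$; you need a decreasing basis $W_1\supseteq W_2\supseteq\cdots$ of path-connected neighborhoods with $f(U_n)\subseteq W_n$, as in the proof of Proposition \ref{propositionconstructfunction}\ref{propositionconstructfunction(a)} (this is where first countability of $H_X$ enters). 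The essential problem is the phrase ``adapting the path-interpolation \ldots with $h$ playing the role of the variable background'': in Proposition \ref{propositionconstructfunction}\ref{propositionconstructfunction(a)} the function on $U_n$ is $\alpha_n\circ b_n$, and it glues to the outside only because the background is the \emph{constant} $t=\alpha_n(1)$. With a non-constant background $h$ you must solve, on each $U_n$, the extension problem ``find a continuous $f_n$ with values in $W_n$, equal to $h$ near $\partial U_n$, with $f_n(x_n)=t_n$''; a path in $W_n$ has a single endpoint and cannot match the varying boundary values of $h$, and in a general Hausdorff, first-countable, locally path-connected $H_X$ there is no Tietze-type theorem to produce such an $f_n$. (In the real-valued case, Proposition \ref{propositionconstructfunction}\ref{propositionconstructfunction(b)} performs exactly this patching, but it uses the order and affine structure of $\mathbb{R}$.)

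The paper's proof is engineered to avoid this issue, and the hypothesis that $\theta_X$ is constant is used for exactly that purpose. It never introduces a background function $h$: it applies Proposition \ref{propositionconstructfunction}\ref{propositionconstructfunction(a)} on $\overline{\phi(Y')}$ with constant background $t_0$, which already gives $f(\phi(y_0))=t_0$ since $\phi(y_0)\notin\bigcup_n U_n$, and then makes the support compact using constancy of $\theta_X=c$. The correct dichotomy is not your $t_0=c$ versus $t_0\neq c$, but whether $t_0$ lies in the path component of $c$: if it does, one composes a single path $\beta$ from $t_0$ to $c$ with a scalar Urysohn-type function $g$ and sets $f=\beta\circ g$ outside $\phi(Y')$ — this glues because the boundary values of $f$ on $\partial\phi(Y')$ are the constant $t_0=\beta(0)$; if it does not, path components of $H_X$ are clopen, regularity of $C_c(X,\theta_X)$ forces $X$ to be zero-dimensional, and one chooses $Y'$ clopen and sets $f=c$ outside $\phi(Y')$. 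Your Case $t_0=c$ is this scheme with a trivial bridge; the general case should be handled the same way, not by locally modifying an $h\in C_c(X,\theta_X)$.
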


\begin{named}{Remarks}
	\begin{enumerate}[label=(\arabic*)]
		\item In the last part of the theorem, if $H_X$ admits any structure of topological group then the condition that $\theta_X$ is constant can be dropped, since we may simply compose $T$ with the $\perpp$-isomorphism $f\mapsto f\theta^{-1}$.
		\item The domain of the $(\phi,T)$-transform $\chi$ is $Y\times H_X$ because we assume that $C_c(X,\theta_X)$ is regular.
	\end{enumerate}
\end{named}

\begin{proof}
	The implication \ref{theorembasicisomorphisms(1)}$\Rightarrow$\ref{theorembasicisomorphisms(2)} is trivial.
	
	\begin{description}
		\item[\ref{theorembasicisomorphisms(2)}$\Rightarrow$\ref{theorembasicisomorphisms(3)}] Suppose $f_i\to f$ pointwise. Then for all $y$, the section $\chi(y,\cdot)$ is continuous, thus
		\[Tf_i(y)=\chi(y,f_i(\phi(y)))\to\chi(y,f(\phi(y)))=Tf(y).\]
		This proves that $Tf_i\to Tf$ pointwise.
		\item[\ref{theorembasicisomorphisms(3)}$\Rightarrow$\ref{theorembasicisomorphisms(2)}] Assume that $T$ is continuous with respect to pointwise convergence. Let $y\in Y$ be fixed. Suppose that $t_i\to t$ in $H_X$, and let us prove that $\chi(y,t_i)\to\chi(y,t)$. Choose any function $f\in C_c(X,\theta_X)$ such that $f(\phi(y))=t$.
		
		Let $\operatorname{Fin}(X)$ be the collection of finite subsets of $X$, ordered by inclusion. For every $F\in\operatorname{Fin}(X)$ and every $i\in I$, regularity of $C_c(X,\theta_X)$ allows us to construct $f_{(F,i)}\in C_c(X,\theta_X)$ such that
		\begin{enumerate}[label=(\roman*)]
			\item $f_{(F,i)}(x)=f(x)$ pointwise if $x\in F$ and $x\neq \phi(y)$; and
			\item $f_{(F,i)}(\phi(y))=t_i$.
		\end{enumerate}
		Ordering $\operatorname{Fin}(X)$ by set inclusion, we have that $f_{(F,i)}\to f$ pointwise as $(F,i)\to\infty$, so $Tf_{(F,i)}\to Tf$ pointwise as well. For each $F\in\operatorname{Fin}(X)$ and $i\in I$, we have
		\[\chi(y,t_i)=\chi(y,f_{(F,i)}(\phi(y)))=Tf_{(F,i)}(y)\]
		so by considering $i$ and $F$ sufficiently large we see that $\chi(y,t_i)\to Tf(y)=\chi(y,t)$ as $i\to\infty$.
	\end{description}
	
	We now assume further that $X$, $Y$ and $H_X$ are first countable, $H_X$ is locally path-connected and $\theta_X$ is constant. Let $c\in H_X$ such that $\theta_X(x)=c$ for all $x\in X$.
	\begin{description}
		\item[\ref{theorembasicisomorphisms(2)}$\Rightarrow$\ref{theorembasicisomorphisms(1)}] Assume that each section $\chi(y,\cdot)$ is continuous. In order to prove that $\chi$ is continuous, we simply need to prove that for any converging sequence $(y_n,t_n)\to (y,t)$ in $Y\times H_X$, we can take a subsequence $(y_{n'},t_{n'})$ such that $\chi(y_{n'},t_{n'})\to \chi(y,t)$ as $n'\to\infty$.
		
		Given a converging sequence $(y_n,t_n)\to(y,t)$, consider an open $Y'\subseteq Y$ with compact closure such that $y,y_n\in Y'$ for all $n$.
		
		We have two cases: If for a given $z\in Y$ the set $N(z)=\left\{n\in\mathbb{N}:y_n=z\right\}$ is infinite, then we necessarily have $z=y$. Restricting the sequence $(y_n,t_n)$ to $N(y)$ and using continuity of the section $\chi(y,\cdot)$, we obtain $\chi(y_n,t_n)=\chi(y,t_n)\to \chi(y,t)$ as $n\to\infty$, $n\in N(y)$.
		
		Now assume that none of the sets $N(z)=\left\{n\in\mathbb{N}: y_n=z\right\}$ ($z\in Y$) is infinite. We may then take a subsequence and assume that all the elements $y_n$ are distinct, and actually never equal to $y$. Using Propositions \ref{propositiondisjointopensets} and \ref{propositionconstructfunction}\ref{propositionconstructfunction(a)}, and taking another subsequence if necessary, we find a continuous function $f\colon\overline{\phi(Y')}\to H_X$ such that $f(\phi(y_n))=t_n$ and $f=t$ on $X\setminus \bigcup_n U_n$. In particular, $f=t$ on the boundary $\partial(\phi(Y'))$.
		
		We now need to extend $f$ to an element of $C_c(X,\theta_X)$ (this is where we use that $\theta_X=c$ is constant). We have two cases:
		\begin{description}
			\item[Case 1] $t$ is in the path-connected component of $c$:
			
			Since $H_X$ is locally path-connected, there is a continuous path $\beta\colon[0,1]\to H_X$ with $\beta(0)=t$ and $\beta(1)=c$. Let $g\colon X\to[0,1]$ be a function with $g=0$ on $\phi(Y')$ and $g=1$ outside of a compact containing $\phi(Y')$. By defining $f=\beta\circ g$ outside of $\phi(Y')$, we obtain $f\in C_c(X,\theta_X)$. ($f$ is continuous because $f=t=\beta\circ g$ on $\partial(\phi(Y')$.)
			\item[Case 2] $t$ is not in the path-connected component of $c$:
			
			Since $H_X$ is locally path-connected, its path-connected components are clopen, and regularity of $C_c(X,\theta_X)$ then implies that $X$ (and thus also $Y=\phi(X)$) is zero-dimensional. In particular, we could have assumed at the beginning that $Y'$ is clopen, so simply set $f=c$ outside of $\phi(Y')$.
		\end{description}
		
		In any case, we obtain $f\in C_c(X,\theta_X)$ with $f(\phi(y))=t$ and $f(\phi(y_n))=t_n$, so
		\[\chi(y_n,t_n)=Tf(y_n)\to Tf(y)=\chi(y,t).\qedhere\]
	\end{description}
\end{proof}

\subsection{Non-vanishing bijections}

Let $X$ and $Y$ be compact Hausdorff spaces, $H_X$ and $H_Y$ Hausdorff spaces, $\theta_X\in C(X,H_X)$, $\theta_Y\in C(Y,\theta_Y)$ and $\mathcal{A}(X)$ and $\mathcal{A}(Y)$ regular subsets of $C_c(X,\theta_X)$ and $C_c(Y,\theta_Y)$, respectively.

\begin{definition}[\cite{MR2324919}]\label{definitionnonvanishingbijection}\index{Non-vanishing}
	We call a bijection $T\colon\mathcal{A}(X)\to\mathcal{A}(Y)$ \emph{non-vanishing} if for every $f_1,\ldots,f_n\in\mathcal{A}(X)$,
	\[\bigcap_{i=1}^n[f_i=\theta_X]=\varnothing\iff\bigcap_{i=1}^n[Tf_i=\theta_Y]=\varnothing.\]
\end{definition}

\begin{proposition}\label{propositionnonvanishingbijectionisperppisomorphism}
	If $T\colon\mathcal{A}(X)\to\mathcal{A}(Y)$ is a non-vanishing bijection, then $T$ is a $\perpp$-isomorphism.
\end{proposition}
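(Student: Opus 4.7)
The plan is to reduce $\perp$ and $\perpp$ to conditions expressible purely in terms of the predicate ``a finite family has no common zero'', since that is exactly what non-vanishing preserves in both directions. Once both relations are characterized this way, bijectivity of $T$ will transfer the characterizations verbatim between $\mathcal{A}(X)$ and $\mathcal{A}(Y)$.

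First I will establish the auxiliary fact: $f\perp g$ if and only if, for every pair of finite families $h_1,\dots,h_n,k_1,\dots,k_m\in\mathcal{A}(X)$, the two premises ``$\{f,h_1,\dots,h_n\}$ has no common zero'' and ``$\{g,k_1,\dots,k_m\}$ has no common zero'' jointly imply ``$\{h_1,\dots,h_n,k_1,\dots,k_m\}$ has no common zero''. The forward direction is an inclusion chase: the premises read $\bigcap_i[h_i=\theta_X]\subseteq[f\neq\theta_X]$ and $\bigcap_j[k_j=\theta_X]\subseteq[g\neq\theta_X]$, so their joint intersection lies in $[f\neq\theta_X]\cap[g\neq\theta_X]$, which is empty by $f\perp g$. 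For the converse I argue contrapositively: given $x_0\in[f\neq\theta_X]\cap[g\neq\theta_X]$, the closed subset $[f=\theta_X]$ is compact in $X$ and does not contain $x_0$, so by Hausdorffness and regularity of $\mathcal{A}(X)$ each $y\in[f=\theta_X]$ admits some $h_y\in\mathcal{A}(X)$ with $h_y(y)\neq\theta_X(y)$ and $\supp(h_y)$ contained in a neighborhood of $y$ missing $x_0$; compactness yields a finite subfamily $\{h_i\}$ whose nonzero sets cover $[f=\theta_X]$, with each $h_i(x_0)=\theta_X(x_0)$. An analogous $\{k_j\}$ is built for $g$, and together these families witness the failure of the implication at $x_0$. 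Since the characterization is entirely in ``no common zero'' language, non-vanishing and bijectivity of $T$ give $f\perp g\iff Tf\perp Tg$.

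Next I will prove the key identity $Z(f)=\bigcup\{[h\neq\theta_X]:h\in\mathcal{A}(X),\,h\perp f\}$. The inclusion $\supseteq$ follows because $h\perp f$ forces the open set $[h\neq\theta_X]$ into the closed set $[f=\theta_X]$, hence into $\operatorname{int}[f=\theta_X]=Z(f)$; the inclusion $\subseteq$ uses regularity of $\mathcal{A}(X)$ to produce, for each $x\in Z(f)$, an $h$ with $h(x)\neq\theta_X(x)$ and $\supp(h)\subseteq Z(f)$ (which forces $h\perp f$). Since $f\perpp g\iff Z(f)\cup Z(g)=X$ and $X$ is compact, this identity upgrades via extraction of a finite subcover to the desired witnessed characterization: $f\perpp g$ iff there exist finite families $h_1,\dots,h_m,k_1,\dots,k_n\in\mathcal{A}(X)$ with $h_i\perp f$ for every $i$, $k_j\perp g$ for every $j$, and the combined family $\{h_i,k_j\}$ having no common zero.

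All three clauses in this characterization transfer under $T$: the $\perp$-clauses by the first step, the ``no common zero'' clause by non-vanishing, and the quantifier over witnesses by bijectivity of $T:\mathcal{A}(X)\to\mathcal{A}(Y)$. Consequently $f\perpp g\Rightarrow Tf\perpp Tg$, and the same argument applied to the non-vanishing bijection $T^{-1}$ yields the reverse implication. The main obstacle is the contrapositive half of the first step: converting a pointwise witness to $f\not\perp g$ into a finite-family obstruction stated entirely in ``no common zero'' terms is where compactness of $X$ and regularity of $\mathcal{A}(X)$ play an essential role.
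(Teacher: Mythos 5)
Your proposal is correct, and it follows the paper's overall strategy: encode first $\perp$, then $\perpp$, in statements built solely from the predicate ``this finite family has no common zero'' (plus, for $\perpp$, the already-transferred relation $\perp$), and then push everything through the bijection using the non-vanishing hypothesis in both directions. The key lemmas differ, though. For $\perp$, the paper notes that the sets $[h=\theta_X]$ form a closed basis of the compact space $X$ and appeals to Cantor's intersection theorem, obtaining a one-family criterion (if $\bigcap_i[h_i=\theta_X]$ misses both $[f=\theta_X]$ and $[g=\theta_X]$, it is empty); your two-family version proves the same thing by an explicit covering construction from regularity, with identical compactness content. The real divergence is at $\perpp$: the paper's witness structure is three families $\{a_i\},\{b_j\},\{c_k\}$ --- the $a_i$ covering $\supp(f)$, the $b_j$ covering $\supp(g)$, the $c_k$ covering the remainder --- with the cross-condition $a_i\perp b_j$ carrying the disjointness of the two support neighbourhoods. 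You instead dualize through the identity $Z(f)=\bigcup\{[h\neq\theta_X]:h\in\mathcal{A}(X),\,h\perp f\}$ and the set-theoretic equivalence $f\perpp g\iff Z(f)\cup Z(g)=X$, which compactness converts into a two-family criterion with only the one-sided conditions $h_i\perp f$ and $k_j\perp g$ plus a joint no-common-zero clause. Your encoding is leaner --- one fewer family, and no cross-disjointness to verify in either direction of the equivalence --- whereas the paper's version stays closer to the cover formalism of Theorem \ref{theoremrelationsrelations}\ref{theoremrelationsrelations(e)}. One cosmetic point: in your contrapositive argument for $\perp$, if $[f=\theta_X]$ is empty the covering family you build is empty; to keep all families nonempty (the definition of non-vanishing implicitly quantifies over $n\geq 1$), pad with $\theta_X$ itself, which lies in $\mathcal{A}(X)$ by Definition \ref{definitionregularperpp} and vanishes at the witness point $x_0$. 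With that remark, the two proofs are interchangeable.
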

\begin{proof}
	First note that $f\perp g$ if and only if $[f=\theta_X]\cup[g=\theta_X]=X$, or equivalently if every closed subset of $X$ intersects $[f=\theta_X]$ or $[g=\theta_X]$.
	
	As the sets $[h=\theta_X]$ ($h\in\mathcal{A}(X)$) form a closed basis, Cantor's Intersection Theorem implies that $f\perp g$ is equivalent to the following statement:
	\begin{center}
		``For all $h_1,\ldots,h_n\in\mathcal{A}(X)$, if $\bigcap_{i=1}^n[h_i=\theta_X]\cap[f=\theta_X]$ and $\bigcap_{i=1}^n[h_i=\theta_X]\cap[g=\theta_X]A$ are both empty, then $\bigcap_{i=1}^n[h_i=\theta_X]=\varnothing$.''
	\end{center}
	This condition is preserved under non-vanishing bijections, and so $T$ is a $\perp$-isomorphism.
	
	that $f\perpp g$ is equivalent to the following statement: ``There are finite families $\left\{a_i\right\}$, $\left\{b_j\right\}$ and $\left\{c_k\right\}$ in $\mathcal{A}(X)$ such that
	\begin{enumerate}[label=(\roman*)]
		\item\label{propositionnonvanishingbijectionisperppisomorphismproofitem1} $\bigcap_{i,j,k}[a_i=\theta_X]\cap[b_j=\theta_X]\cap[c_k=\theta_X]=\varnothing$;
		\item\label{propositionnonvanishingbijectionisperppisomorphismproofitem2} $a_i\perp b_j$ for all $i$ and $j$;
		\item\label{propositionnonvanishingbijectionisperppisomorphismproofitem3} $f\perp b_j$, $f\perp c_k$, $g\perp a_i$, and $g\perp c_k$ for all $i$, $j$ and $k$.''
	\end{enumerate}
	
	These statements should be interpreted as follows: Let $A=\bigcup_i[a_i\neq\theta_X]$, $B=\bigcup_j[b_j\neq\theta_X]$ and $C=\bigcup_k[c_k\neq\theta_X]$. Item \ref{propositionnonvanishingbijectionisperppisomorphismproofitem1} states that $A\cup B\cup C=X$, item \ref{propositionnonvanishingbijectionisperppisomorphismproofitem2} states that $A\cap B=\varnothing$, and item \ref{propositionnonvanishingbijectionisperppisomorphismproofitem3} states that $\supp(f)\cap B=\supp(f)\cap C=\supp(g)\cap A=\supp(g)\cap C=\varnothing$.
	In other words, $A$ and $B$ are two disjoint open sets used to separate $\supp(f)$ and $\supp(g)$, respectively, and the set $C$ covers the remainder of $X$ (while not intersecting $\supp(f)$ nor $\supp(g)$). The existence of such $A,B,C$ clearly implies $f\perpp g$, and the converse follows from regularity of $\mathcal{A}(X)$ and compactness arguments.
	
	Similar statements hold with $Y$ in place of $X$, and all of these properties are preserved by non-vanishing bijections. Therefore $T$ is a $\perpp$-isomorphism.
\end{proof}

\begin{theorem}\label{theoremnonvanishing}
	For every non-vanishing bijection $T\colon\mathcal{A}(X)\to\mathcal{A}(Y)$ there is a unique homeomorphism $\phi\colon Y\to X$ such that $[f=\theta_X]=\phi([Tf=\theta_Y])$ for all $f\in\mathcal{A}(X)$.
\end{theorem}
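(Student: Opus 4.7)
The strategy is to combine Proposition~\ref{propositionnonvanishingbijectionisperppisomorphism} with Theorem~\ref{maintheorem} to produce a candidate homeomorphism, and then to invoke the full strength of the non-vanishing hypothesis to promote the conclusion from supports to the closed sets $[f=\theta]$. By Proposition~\ref{propositionnonvanishingbijectionisperppisomorphism}, $T$ is a $\perpp$-isomorphism, and since regularity of $\mathcal{A}(X)$ and $\mathcal{A}(Y)$ implies weak regularity, Theorem~\ref{maintheorem} supplies the unique $T$-homeomorphism $\phi:Y\to X$ satisfying $\phi(Z(Tf))=Z(f)$ for every $f\in\mathcal{A}(X)$. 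The plan is to show this same $\phi$ already satisfies $\phi([Tf=\theta_Y])=[f=\theta_X]$; uniqueness will be automatic, since any homeomorphism obeying the latter identity must also map $\operatorname{int}[Tf=\theta_Y]=Z(Tf)$ to $\operatorname{int}[f=\theta_X]=Z(f)$ and hence coincide with the $T$-homeomorphism of Theorem~\ref{maintheorem}.

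By symmetry (apply the argument to the non-vanishing bijection $T^{-1}$, whose associated homeomorphism is $\phi^{-1}$), it suffices to prove the inclusion $\phi([Tf=\theta_Y])\subseteq[f=\theta_X]$, i.e.\ that $Tf(y_0)=\theta_Y(y_0)$ forces $f(\phi(y_0))=\theta_X(\phi(y_0))$ for every $y_0\in Y$ and $f\in\mathcal{A}(X)$. Fix $y_0$ and set $J_{y_0}=\{f\in\mathcal{A}(X):Tf(y_0)=\theta_Y(y_0)\}$; the central claim is
\[
\bigcap_{f\in J_{y_0}}[f=\theta_X]=\{\phi(y_0)\},
\]
which immediately yields the desired implication. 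Non-emptiness comes from the non-vanishing hypothesis together with compactness: the family $\{[Tf=\theta_Y]:f\in J_{y_0}\}$ contains $y_0$ and hence has the finite intersection property, so by (finite) non-vanishing the same holds for $\{[f=\theta_X]:f\in J_{y_0}\}$, and compactness of $X$ forces the full intersection to be nonempty. For the containment in $\{\phi(y_0)\}$, fix $x'\neq\phi(y_0)$: Hausdorffness of $X$ and regularity of $\mathcal{A}(X)$ produce $g\in\mathcal{A}(X)$ with $g(x')\neq\theta_X(x')$ and $\supp(g)\subseteq X\setminus\{\phi(y_0)\}$; then $\phi(y_0)\in Z(g)$, so $y_0\in\phi^{-1}(Z(g))=Z(Tg)\subseteq[Tg=\theta_Y]$, whence $g\in J_{y_0}$, while $x'\notin[g=\theta_X]$ excludes $x'$ from the intersection.

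The main obstacle is conceptual: Theorem~\ref{maintheorem} only determines the open sets $Z(f)=\operatorname{int}[f=\theta_X]$, whereas $[f=\theta_X]$ can strictly contain its interior (e.g.\ at ``touching zeros'' of $f$ on $\partial\supp(f)$, which the $\perpp$-isomorphism structure alone cannot detect). The non-vanishing condition is precisely the extra input needed to rule out this pathology, by transferring the finite intersection property of the closed sets $[Tf=\theta_Y]$ over to $[f=\theta_X]$ and thereby feeding the compactness argument above.
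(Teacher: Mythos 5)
Your proof is correct, and it executes the key step differently from the paper. Both arguments share the same skeleton: invoke Proposition \ref{propositionnonvanishingbijectionisperppisomorphism} to get a $\perpp$-isomorphism, take the $T$-homeomorphism $\phi$ from Theorem \ref{maintheorem}, and then use the non-vanishing hypothesis to upgrade the conclusion from the open sets $Z(f)$ to the closed sets $[f=\theta_X]$. But where the paper proves a \emph{finitary}, $T$-invariant biconditional characterization of the pointwise condition $f(x)=\theta_X(x)$ (Equation \eqref{equationnonvanishingproperty}: for all $h_1,\ldots,h_n$ with $x\notin\bigcup_i\sigma^{\theta_X}(h_i)$, the set $\bigcap_i[h_i=\theta_X]\cap[f=\theta_X]$ is nonempty, with the reverse direction proved by covering $[f=\theta_X]$ with finitely many $[h_i\neq\theta_X]$), and then transfers it through $T$ in one stroke using $\phi(\sigma^{\theta_Y}(Th))=\sigma^{\theta_X}(h)$ — yielding both inclusions simultaneously with no appeal to $T^{-1}$ — you prove only one inclusion, via the identity $\bigcap_{f\in J_{y_0}}[f=\theta_X]=\{\phi(y_0)\}$, and get the other by symmetry. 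Your mechanism is infinitary where the paper's is finitary: compactness enters through the finite intersection property of the full closed family $\{[f=\theta_X]:f\in J_{y_0}\}$, with finite non-vanishing transferring FIP from $Y$ to $X$, while the paper uses compactness to build the finite cover inside its characterization; in exchange, your separation step needs only a single function $g$ per point $x'\neq\phi(y_0)$, which is a bit lighter than the paper's covering argument. The symmetry step does require the small observations that $T^{-1}$ is non-vanishing (immediate from the symmetric definition) and that its associated homeomorphism is $\phi^{-1}$ (immediate from the uniqueness clause of Theorem \ref{maintheorem}), which you correctly flag. Your uniqueness argument — any $\psi$ satisfying the conclusion maps $\operatorname{int}[Tf=\theta_Y]=Z(Tf)$ onto $\operatorname{int}[f=\theta_X]=Z(f)$, hence coincides with $\phi$ — is also valid, and is in fact more explicit than the paper, which leaves uniqueness implicit. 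One minor caveat your proof shares with the paper's: choosing $g(x')\neq\theta_X(x')$ presupposes some $c\in H_X$ with $c\neq\theta_X(x')$, a harmless nondegeneracy assumption implicit throughout.
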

\begin{proof}
	By Proposition \ref{propositionnonvanishingbijectionisperppisomorphism}, we already know that $T$ is a $\perpp$-isomorphism, so let $\phi$ be the $T$-homeomorphism. Recall (Definition \ref{definitionsigma}) that $\sigma^{\theta_X}(f)=\operatorname{int}(\overline{[f\neq\theta_X]})$ and $Z^{\theta_X}(f)=\operatorname{int}([f=\theta_X])$ for all $f\in\mathcal{A}(X)$. Let us prove that
	\begin{align*}
	f(x)=\theta_X(x)\iff&\forall h_1\ldots,h_n\in\mathcal{A}(X),\\
	&\text{if }x\not\in\bigcup_{i=1}^n\sigma^{\theta_X}(h_i)\text{ then }\bigcap_{i=1}^n[h_i=\theta_X]\cap[f=\theta_X]\neq\varnothing,\ntag\label{equationnonvanishingproperty}
	\end{align*}
	Intuitively, the functions $h_i$ above should be thought of in such a way that $\bigcup_{i=1}n[h_i\neq\theta_X]$ is a ``large subset'' of $[f=\theta_X]\setminus\left\{x\right\}$.
	
	Formally, for the direction ``$\Rightarrow$'', assume that $f(x)=\theta_X(x)$ and $h_1,\ldots,h_n$ are such that $x\not\in\bigcup_i\sigma^{\theta_X}(h_i)$. Then $x\in\bigcap_i[h_i=\theta_X]\cap[f=\theta_X]$, and this set is nonempty.
	
	For the converse we prove the contrapositive. Assume that $f(x)\neq\theta_X(x)$. Regularity of $\mathcal{A}(X)$ and compactness of $[f=\theta_X]$ give us $h_1,\ldots,h_n\in\mathcal{A}(X)$ such that $h_i(x)\neq\theta_X(x)$ for all $i$, and $[f=\theta_X]\subseteq\bigcup_i[h_i\neq\theta_X]$, which negates the right-hand side of \eqref{equationnonvanishingproperty}.
	
	Since $\phi(\sigma^{\theta_Y}(Th))=\sigma^{\theta_X}(h)$ for all $h\in\mathcal{A}(X)$ and $T$ is non-vanishing, the condition of  \eqref{equationnonvanishingproperty} is preserved by $T$ and therefore, $\phi$ has the desired property.
\end{proof}

\section{Consequences}\label{sectionconsequences}

In this section we will recover several known results, some in greater generality than in their original statements, dealing with different algebraic structures on classes of continuous functions and their isomorphisms.

The general procedure we will use is the following: Suppose that $X$ and $Y$ are locally compact Hausdorff spaces, $H_X$ and $H_Y$ are Hausdorff spaces, $\theta_X\in C(X,H_X)$, $\theta_Y\in C(Y,H_Y)$ and $\mathcal{A}(X)$ and $\mathcal{A}(Y)$ regular subsets of $C_c(X,\theta_X)$ and $C_c(Y,\theta_Y)$, respectively.
\begin{enumerate}[label=\arabic*.]\label{generalprocedureforclassification}
	\item\label{generalprocedure1} Describe the relation $\perpp$ with the algebraic structure at hand. In general, we will first describe $\perp$ and use it in order to describe $\perpp$.
	\item\label{generalprocedure2} Given an algebraic isomorphism $T\colon\mathcal{A}(X)\to\mathcal{A}(Y)$ for appropriate classes of functions $\mathcal{A}(X)$ and $\mathcal{A}(Y)$, the first item ensures that $T$ is a $\perpp$-isomorphism, so let $\phi\colon Y\to X$ be the $T$-homeomorphism.
\end{enumerate}
At this point, we have proved that the algebraic isomorphism $T\colon\mathcal{A}(X)\to\mathcal{A}(Y)$ determines an isomorphism $\phi\colon Y\to X$. Since we moreover would like to describe $T$ in terms of $\phi$, we proceed as follows using the theory of Section \ref{sectionbasicmaps}:
\begin{enumerate}[label=\arabic*.]\setcounter{enumi}{2}
	\item\label{generalprocedure3} Prove that $T$ is $\phi$-basic, as in Definition \ref{definitionbasic}. Let $\chi$ be the $(\phi,T)$-transform.
	\item\label{generalprocedure4} Items \ref{generalprocedure1}-\ref{generalprocedure3} also apply to $T^{-1}$, which is thus also a basic $\perpp$-isomorphism.
	\item\label{generalprocedure5} Items \ref{generalprocedure3} and \ref{generalprocedure4}  imply, by Proposition \ref{propositionbasicisomorphisms}\ref{propositionbasicisomorphisms(a)} and \ref{propositionbasicisomorphisms(b)}, that the sections $\chi(y,\cdot)$ are injective and in the case that $\mathcal{A}(Y)$ is regular, item \ref{propositionbasicisomorphisms(d)} implies that $\chi(y,\cdot)$ is also surjective.
	\item\label{generalprocedure6} If we have a classification of algebraic isomorphisms between $H_X$ and $H_Y$, this classification will apply to each section $\chi(y,\cdot)$ of the $(\phi,T)$-transform, by Proposition \ref{propositionmodelmorphism}. This will in turn describe $T$ completely.
\end{enumerate}

\subsection{Milgram's Theorem}

In this subsection, we will always assume that $X$ (and similarly $Y$) is a locally compact Hausdorff space. We first generalize Milgram's theorem (and by consequence, Gelfand--Kolmogorov and Gelfand--Naimark) as follows: Let $S_X$ be a non-trivial path-connected Hausdorff topological monoid with zero $0$ and unit $1$, and which is both:
\begin{itemize}
	\item \emph{$0$-right cancellative}: for all $s,r,t\in S$, $st=rt\neq 0$ implies $s=r$;
	\item \emph{categorical at $0$}: $st=0$ implies $s=0$ or $t=0$.
\end{itemize}

Common examples of such semigroups are the following, under usual product: $\mathbb{R}$, $\mathbb{C}$, $[-1,1]$, the closed complex unit disc, $\operatorname{Gl}(n,\mathbb{R})\cup\left\{0\right\}$, and other variations.

We consider $\theta_X=0$, the zero map from $X$ to $S_X$. Let us also write $C_c(X,S_X)$ for $C_c(X,0)$ (to make the counter-domain explicit). Consider $Y$, $\theta_Y$ and $S_Y$ similarly. Urysohn's Lemma and path-connectedness of $S_X$ implies that $C_c(X,S_X)$ is regular.

Following the procedure in the beginning of this section, we first describe $\perpp$ in multiplicative terms:

\begin{lemma}\label{lemmamilgram}
	If $f,g\in C_c(X,S_X)$, then
	\[f\perpp g\iff \exists h\in C_c(X,S_X)\text{ such that }hf=f\text{ and }hg=0.\ntag\label{conditionmilgram}\]
\end{lemma}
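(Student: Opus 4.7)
The plan is to show that the condition on $h$ in \eqref{conditionmilgram} is essentially a continuous Urysohn-type function whose values on $\supp(f)$ are forced to be $1$ and whose values on $\supp(g)$ are forced to be $0$. Both implications then follow by combining the algebraic hypotheses on $S_X$ (non-triviality, $0$-right cancellativity, categoricity at $0$) with the topological hypotheses (locally compact Hausdorff $X$, path-connected $S_X$).

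For the implication ``$\Leftarrow$'', suppose $h \in C_c(X,S_X)$ satisfies $hf = f$ and $hg = 0$. For every $x \in [f \neq 0]$ one has $h(x) f(x) = f(x) \neq 0$; comparing with $1 \cdot f(x) = f(x)$ and invoking $0$-right cancellativity gives $h(x) = 1$, so $[f \neq 0] \subseteq [h = 1]$. Since $S_X$ is Hausdorff the singleton $\{1\}$ is closed, hence $[h=1]$ is closed, hence it contains $\supp(f) = \overline{[f \neq 0]}$. Similarly $hg = 0$ combined with categoricity at $0$ forces $[g \neq 0] \subseteq [h=0]$, and since $[h=0]$ is closed we get $\supp(g) \subseteq [h=0]$. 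As $S_X$ is non-trivial, $0 \neq 1$, so $[h=0] \cap [h=1] = \varnothing$, and therefore $\supp(f) \cap \supp(g) = \varnothing$, i.e., $f \perpp g$.

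For the implication ``$\Rightarrow$'', assume $\supp(f) \cap \supp(g) = \varnothing$. Since $X$ is locally compact Hausdorff and $\supp(f)$ is a compact subset of the open set $X \setminus \supp(g)$, a standard Urysohn-type lemma yields a continuous function $u : X \to [0,1]$ with $u \equiv 1$ on $\supp(f)$, $u \equiv 0$ on $\supp(g)$, and $\supp(u)$ compact. Using path-connectedness of $S_X$, pick a continuous path $\gamma : [0,1] \to S_X$ with $\gamma(0) = 0$ and $\gamma(1) = 1$, and set $h = \gamma \circ u$. Then $h$ is continuous, $h \equiv 0$ outside $\supp(u)$ (so $h \in C_c(X,S_X)$), $h \equiv 1$ on $\supp(f)$ and $h \equiv 0$ on $\supp(g)$. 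A pointwise check then gives $hf = f$ (equality holds on $\supp(f)$ because $h = 1$ there, and off $\supp(f)$ both sides are $0$) and $hg = 0$ (equality holds on $\supp(g)$ because $h = 0$ there, and off $\supp(g)$ we have $g = 0$).

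The mildly delicate points are bookkeeping rather than conceptual: ensuring that the Urysohn function $u$ can be chosen with \emph{compact} support (this uses local compactness of $X$, via inserting a precompact open neighbourhood of $\supp(f)$ inside $X \setminus \supp(g)$), and checking that the constructed $h$ inherits compact support from $u$ despite $\gamma$ possibly revisiting $0$ in the middle of $[0,1]$; this is immediate from $\supp(h) \subseteq \supp(u)$. All three algebraic hypotheses on $S_X$ are used exactly once, in the $\Leftarrow$ direction, and only path-connectedness of $S_X$ is used in the $\Rightarrow$ direction.
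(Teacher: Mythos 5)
Your proof is correct and follows essentially the same route as the paper: Urysohn's Lemma plus a path from $0$ to $1$ in $S_X$ for the forward direction, and $0$-right cancellativity together with categoricity at $0$ for the converse. The only (cosmetic) difference is that in the converse you conclude directly from the disjoint closed level sets $[h=1]\supseteq\supp(f)$ and $[h=0]\supseteq\supp(g)$, whereas the paper phrases the same facts as $f\Subset h$ and $h\perp g$; your version is, if anything, slightly more self-contained.
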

\begin{proof}
	The condition in the right-hand side of \eqref{conditionmilgram} states that $h=1$ on $\supp(f)$ and $h\perp g$, so the implication ``$\Leftarrow$'' is immediate. The implication ``$\Rightarrow$'' (i.e., the existence of such $h$ provided $f\perpp g$) follows from Urysohn's Lemma and path-connectedness of $S_X$.\qedhere
\end{proof}

As a consequence, any multiplicative isomorphism $C_c(X,S_X)\to C_c(Y,S_Y)$ is a $\perpp$-isomorphism.

\begin{corollary}
	If $T\colon C(X,S_X)\to C_c(Y,S_Y)$ is a multiplicative isomorphism, then $X$ and $Y$ are homeomorphic.
\end{corollary}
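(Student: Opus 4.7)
The plan is to deduce the corollary as an immediate application of Theorem \ref{maintheorem} in combination with Lemma \ref{lemmamilgram}. The key observation is that the multiplicative characterization of $\perpp$ provided by Lemma \ref{lemmamilgram} is phrased entirely in terms of the multiplication and the zero element, so any multiplicative isomorphism automatically preserves it, provided it preserves the zero function.

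First I would verify that the multiplicative isomorphism $T$ sends the zero function to the zero function. The zero function is the unique left-absorbing element of $C_c(X,S_X)$: if $f\cdot g=f$ for every $g\in C_c(X,S_X)$, then in particular $f=f\cdot 0=0$ since $S_X$ has a zero. The property of being the unique left-absorbing element is multiplicative, so $T(0)=0$, and likewise $T^{-1}(0)=0$.

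With $T(0)=0$ established, Lemma \ref{lemmamilgram} gives the characterization
\[f\perpp g\iff\exists\, h\in C_c(X,S_X)\text{ such that }hf=f\text{ and }hg=0,\]
and an analogous characterization in $C_c(Y,S_Y)$. Applying $T$, which is a bijection preserving products and the zero function, we see directly that $f\perpp g$ if and only if $Tf\perpp Tg$; hence $T$ is a $\perpp$-isomorphism. Since $(X,0,C_c(X,S_X))$ and $(Y,0,C_c(Y,S_Y))$ are regular (in particular, weakly regular) by the remark preceding Lemma \ref{lemmamilgram}, Theorem \ref{maintheorem} produces a unique homeomorphism $\phi:Y\to X$.

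No serious obstacle arises in the corollary itself; the essential content is already packaged inside Lemma \ref{lemmamilgram}. The role of the hypotheses on $S_X$ (monoid with unit and zero, $0$-right cancellative, categorical at $0$, path-connected Hausdorff) is precisely to make that lemma work: path-connectedness together with Urysohn's lemma furnishes the separating function $h$ with $h=1$ on $\supp(f)$, $0$-right cancellativity lets one conclude from $hf=f$ that $h=1$ on $\supp(f)$, and being categorical at $0$ turns the pointwise equation $hg=0$ into the disjointness $h\perp g$.
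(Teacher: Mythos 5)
Your proof is correct and follows essentially the same route as the paper: Lemma \ref{lemmamilgram} shows any multiplicative isomorphism is a $\perpp$-isomorphism (using regularity, hence weak regularity, of $C_c(X,S_X)$), and Theorem \ref{maintheorem} then yields the homeomorphism. Your explicit verification that $T(0)=0$ via the unique absorbing element is a small detail the paper leaves implicit, and it is argued correctly.
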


To recover Milgram's original theorem in full generality, we restrict now to the case $S_X=S_Y=\mathbb{R}$ in order to obtain an explicit description of $T$ as above.

First, we will need to recall the classification of multiplicative isomorphisms of $\mathbb{R}$. The general case may be found in \cite[Theorem 3.1.3]{MR2467621}, but for our purposes it will suffice to consider only the continuous isomorphisms (which were already described in \cite[Lemma 4.3]{MR0029476}, for example).

\begin{proposition}\label{theoremclassificationmultiplicativeisomorphismsofr}
	Let $\tau\colon\mathbb{R}\to\mathbb{R}$ be a multiplicative isomorphism. Then $\tau$ is continuous if and only if $0<x<1$ implies $0<\tau(x)<1$. In this case, $\tau$ has the form $\tau(x)=\operatorname{sgn}(x)|x|^p$ for some $p>0$.
\end{proposition}

We will now classify multiplicative isomorphisms from $C_c(X,\mathbb{R})$ and $C_c(Y,\mathbb{R})$. Note that the following theorem is a generalization of \cite[Theorem A]{MR0029476} to the \emph{locally} compact setting.

\begin{theorem}[{Milgram's Theorem \cite[Theorem A]{MR0029476}, for locally compact spaces}]\label{theoremmilgram}
	Let $X$ and $Y$ be locally compact Hausdorff spaces and let $T\colon C_c(X,\mathbb{R})\to C_c(Y,\mathbb{R})$ be a multiplicative isomorphism. Then there exists a homeomorphism $\phi\colon Y\to X$, a closed, discrete and isolated subset $F\subseteq Y$, and a continuous positive function $p\colon Y\setminus F\to(0,\infty)$ satisfying
	\[Tf(y)=\operatorname{sgn}(f(\phi(y)))|f(\phi (y))|^{p(y)}\]
	for all $f\in C_c(X,\mathbb{R})$ and $y\in Y\setminus F$.
\end{theorem}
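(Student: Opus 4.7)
I would follow the general procedure on page~\pageref{generalprocedureforclassification}. Lemma~\ref{lemmamilgram} and the corollary following it show that the multiplicative isomorphism $T$ is automatically a $\perpp$-isomorphism, so Theorem~\ref{maintheorem} produces the unique $T$-homeomorphism $\phi:Y\to X$ satisfying $\phi(\supp(Tf))=\supp(f)$ for all $f\in C_c(X,\mathbb{R})$.

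The heart of the argument is to show that $T$ is $\phi$-basic, split on whether $x=\phi(y)$ is isolated in $X$. If $x$ is isolated, $e_x=\mathbf{1}_{\{x\}}\in C_c(X,\mathbb{R})$ is a nonzero idempotent; $T(e_x)$ is then the indicator of a compact-open subset $V\subseteq Y$, and $\phi(\supp(Te_x))=\{x\}$ forces $V=\{y\}$. The identity $f\cdot e_x=f(x)\,e_x$, applied under $T$ and evaluated at $y$, gives $Tf(y)=T(f(x)\,e_x)(y)$, which depends only on $f(x)$. If $x$ is non-isolated, I would first establish the germ statement ``$f\equiv g$ on a neighbourhood $W$ of $x$ $\Rightarrow Tf(y)=Tg(y)$'' using a cutoff $h\in C_c(X,\mathbb{R})$ with $h=1$ on a neighbourhood of $x$, $\supp(h)\subseteq W$, and $Th(y)\neq 0$; existence of such $h$ follows from surjectivity of $T|_{C_c(W)}$ onto $C_c(\phi^{-1}(W))$ combined with regularity of the target. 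I would then upgrade germ equivalence to the pointwise statement ``$f(x)=g(x)\Rightarrow Tf(y)=Tg(y)$'' by a multiplicative interpolation: for $c=f(x)=g(x)\neq 0$, define $r=g/f$ on a neighbourhood of $x$ where $f$ does not vanish, cut off and extend to all of $X$, and exploit $T(f\cdot r)=T(f)\,T(r)$ together with the germ statement to identify $Tg(y)$ with $Tf(y)$; the $c=0$ case is handled by a limiting argument based on the preceding case.

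With $T$ shown to be $\phi$-basic, let $\chi$ be its $(\phi,T)$-transform. Proposition~\ref{propositionmodelmorphism} makes each section $\chi(y,\cdot):\mathbb{R}\to\mathbb{R}$ a multiplicative homomorphism, and the same procedure applied to $T^{-1}$, together with Propositions~\ref{propositionuniquemapwhichmakesbasicisthomeomorphism} and \ref{propositionbasicisomorphisms}\ref{propositionbasicisomorphisms(c)}--\ref{propositionbasicisomorphisms(d)}, upgrades each to a multiplicative automorphism of $\mathbb{R}$. Setting
\[
F=\{y\in Y:\chi(y,\cdot)\text{ is discontinuous}\},
\]
Proposition~\ref{theoremclassificationmultiplicativeisomorphismsofr}\ref{theoremclassificationmultiplicativeisomorphismsofr(c)} yields $\chi(y,t)=\operatorname{sgn}(t)|t|^{p(y)}$ on $Y\setminus F$ for a unique $p(y)>0$. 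A direct limit argument based on continuity of $f\in C_c(X,\mathbb{R})$ at $x=\phi(y)$ shows $\chi(y,\cdot)$ is automatically continuous whenever $x$ is non-isolated in $X$, so $F$ lies inside the set of isolated points of $Y$. Closedness and discreteness of $F$ follow because any accumulation of $F$ at a non-isolated point would force the section there to inherit the discontinuity, contradicting the previous sentence. Finally, continuity of $p$ on $Y\setminus F$ comes from the equivalence (2)$\Leftrightarrow$(3) of Theorem~\ref{theorembasicisomorphisms}.

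The main obstacle is the upgrade from germ equivalence to pointwise equivalence at non-isolated points. Unlike in the group-valued setting of Proposition~\ref{propositionbasicnessofgroupvalued}, one cannot reduce $f(x)=g(x)$ to a single identity such as $(fg^{-1})(x)=1$ in $C_c(X,\mathbb{R})$, due to zeros and the absence of global multiplicative inverses; the multiplicative-interpolation trick above is the key technical ingredient, and it is specific to the multiplicative structure of $\mathbb{R}$ on the complement of $0$.
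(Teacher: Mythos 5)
Your skeleton (use Lemma \ref{lemmamilgram} to get a $\perpp$-isomorphism, Theorem \ref{maintheorem} for $\phi$, prove $\phi$-basicness, then analyze the transform via Propositions \ref{propositionmodelmorphism} and \ref{theoremclassificationmultiplicativeisomorphismsofr}) matches the paper, and your isolated-point idempotent argument and germ lemma are both correct. But there is a genuine gap at the heart of the basicness proof for non-isolated $x=\phi(y)$. Your multiplicative interpolation with $r=g/f$ (cut off near $x$) only yields $Tg(y)=Tf(y)\,Tr(y)$ with $r(\phi(y))=1$; to conclude $Tg(y)=Tf(y)$ you need precisely the statement ``$r(\phi(y))=1\Rightarrow Tr(y)=1$'', and $r$ need not agree with the constant $1$ on any neighbourhood of $\phi(y)$ (that would require $f=g$ near the point), so the germ lemma does not apply. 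You never prove this statement, and the ``direct limit argument'' you invoke for continuity of sections at non-isolated points cannot supply it: the section $\chi(y,\cdot)$ enters the data only through the single value $Tf(y)=\chi(y,f(\phi(y)))$, and continuity of $Tf$ at $y$ constrains the values $\chi(y_n,f(\phi(y_n)))$ of \emph{other} sections at nearby points, not $\chi(y,\cdot)$ itself. This is exactly where the paper does its hardest work (step 6 of its proof): supposing $f(\phi(y))=1$ but $Tf(y)\neq 1$, it produces distinct points $y_n$ with $f(\phi(y_n))^n\to 1$ while $|Tf(y_n)-1|>r$, uses Propositions \ref{propositiondisjointopensets} and \ref{propositionconstructfunction}\ref{propositionconstructfunction(b)} to build $g$ coinciding with $f^n$ near each $\phi(y_n)$ and nowhere vanishing (the $\max(g,1/2)$ modification), and derives a contradiction at a cluster point $z$ of $(y_n)$, since $T(ug)(y_n)=Tf(y_n)^n$ forces $T(ug)(z)=0$ and hence $g(\phi(z))=0$. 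Nothing in your outline replaces this construction; without it, your basicness claim, your assertion that $F$ consists of isolated points, and your closedness/discreteness claim are all unsupported. Note also that even granting ``$F\subseteq$ isolated points'', closedness and discreteness do not follow: isolated points can accumulate at a non-isolated point, and a limit of discontinuous sections need not ``inherit the discontinuity''. The paper rules out accumulation by a separate unboundedness argument (pick $t_n\downarrow 0$ with $\chi(y_n,t_n)>n$, build $f$ with $f(\phi(y_n))=t_n$ via Proposition \ref{propositionconstructfunction}\ref{propositionconstructfunction(b)}, and contradict boundedness of $Tf$), plus an openness argument showing discontinuity of a section propagates to a neighbourhood.

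A secondary error: continuity of $p$ on $Y\setminus F$ does not follow from the equivalence \ref{theorembasicisomorphisms(2)}$\iff$\ref{theorembasicisomorphisms(3)} of Theorem \ref{theorembasicisomorphisms}, which concerns continuity of the sections and pointwise continuity of $T$, not the dependence $y\mapsto p(y)$; the implication that would help, \ref{theorembasicisomorphisms(2)}$\Rightarrow$\ref{theorembasicisomorphisms(1)}, requires first countability of $X$ and $Y$, which is not assumed here. The paper instead argues directly: on an open $U$ with compact closure disjoint from $F$, take $f\in C_c(X,\mathbb{R})$ with $f=2$ on $\phi(U)$ and read off $p(y)=\log_2(Tf(y))$ for $y\in U$.
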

\begin{proof}
	By Lemma \ref{lemmamilgram}, $T$ is a $\perpp$-isomorphism, so let $\phi$ be the $T$-homeomorphism. We prove that $T$ is $\phi$-basic in a few simple steps.
	\begin{enumerate}[label=\arabic*.]
		\item\label{theoremmilgramstep1} \uline{If $f,h\in C_c(X,\mathbb{R})$, then $f=1$ on $\supp(h)$ (i.e., $fh=h$) if and only if $Tf=1$ on $\supp(Th)$ (i.e., $(Tf)(Th)=(Th)$).} This implies that
		
		\item\label{theoremmilgramstep2} \uline{If $f,h\in C_c(X,\mathbb{R})$, then $f\neq 0$ on $\supp(h)$ if and only if $Tf\neq 0$ on $\supp(Th)$.}
		
		Indeed, if $f\neq 0$ on $\supp(h)$, we can find $g\in C_c(X,\mathbb{R})$ such that $g=1/f$ on $\supp(h)$. Item \ref{theoremmilgramstep1} implies that $TfTg=1$ on $\supp(Th)$, and in particular $Tf\neq 0$ on $\supp(Th)$. As the sets of the form $\sigma(h)$ form a basis of $X$, we conclude that
		
		\item\label{theoremmilgramstep3} \uline{If $f\in C_c(X,\mathbb{R})$ and $y\in Y$, then $f(\phi(y))\neq 0$ if and only if $Tf(y)\neq 0$.}
		
		A similar argument to that of item \ref{theoremmilgramstep2} also proves that
		
		\item\label{theoremmilgramstep4} \uline{If $f,g,h\in C_c(X,\mathbb{R})$, then $f$ and $g$ coincide and are nonzero on $\supp(h)$ if and only if $Tf$ and $Tg$ coincide and are nonzero on $\supp(h)$.}
		
		\item\label{theoremmilgramstep5} In particular, from \ref{theoremmilgramstep3}, \uline{$f(\phi(y))=0$ if and only if $Tf(y)=0$.}
		\item\label{theoremmilgramstep6} \uline{If $f\in C_c(X,\mathbb{R})$ and $y\in Y$, then $f(\phi(y))=1$ if and only if $Tf(y)=1$:}
		
		Suppose this was not the case, say $f(\phi(y))=1$ but $Tf(y)\neq 1$. From item \ref{theoremmilgramstep1} above we know that $y$ is not isolated. If necessary, as $T$ is a $\perp$-isomorphism, Theorem \ref{theoremdisjoint} allows us to change $f$ by a function which equals $1/f$ on some neighbourhood of $\phi(y)$ and assume that $Tf(y)>1$. The rest of the argument is similar to that of \cite[Lemma 4.1]{MR0029476}: We first take a sequence of distinct points $y_n$, all contained in some compact neighbourhood of $y$, satisfying
		\begin{itemize}
			\item $f(\phi(y_n))^n\to 1=f(\phi(y))$; and 
			\item $Tf(y_n)\to Tf(y)$, so in particular $Tf(y_n)^n\to\infty$.
		\end{itemize}
		Using Propositions \ref{propositiondisjointopensets} and \ref{propositionconstructfunction}\ref{propositionconstructfunction(b)}, we may find $g\in C_c(X,\mathbb{R})$ which coincides with $f^n$ on some neighbourhood of $\phi(y_n)$. But then $Tg$ coincides with $(Tf)^n$ on a neighbourhood $y_n$ (because $T$ is a $\perp$-isomorphism), contradicting the fact that $Tg$ is bounded.
	\end{enumerate}
	
	We conclude, from \ref{theoremmilgramstep5} and \ref{theoremmilgramstep6}, that $T$ is $\phi$-basic. Let $\chi\colon Y\times\mathbb{R}\to\mathbb{R}$ be the $T$-transform.
	
	Let $F=\left\{y\in Y:\chi(y,\cdot)\text{ is discontinuous}\right\}$. Let us prove that $F$ is closed and discrete, or equivalently that $F\cap K$ is finite for all compact $K\subseteq Y$. Otherwise, by Proposition \ref{theoremclassificationmultiplicativeisomorphismsofr}, there would be distinct $y_1,y_2,\ldots\in K$ and a strictly decreasing sequence $t_n\to 0$ such that $\chi(y_n,t_n)>n$. Going to a subsequence if necessary, we can construct, by Proposition \ref{propositionconstructfunction}\ref{propositionconstructfunction(b)}, $f\in C_c(X,\mathbb{R})$ with $f(\phi(y_n))=t_n$, so $Tf(y_n)>n$ for all $n$, a contradiction to the boundedness of $Tf$.
	
	To prove that $F$ is isolated we prove that it is open: If $z\in F$, then there is $t\in(0,1)$ with $\chi(z,t)>1$. Take $f\in C_c(X,\mathbb{R})$ such that $f=t$ on a neighbourhood $U$ of $\phi(z)$. In particular $Tf(z)>1$, so there is a neighbourhood $W$ of $z$ such that $Tf>1$ on $W$. Then for all $y\in\phi^{-1}(U)\cap W$, $\chi(y,t)=Tf(y)>1$, so $y\in F$.
	
	Therefore, $F$ consists of isolated points, since $Y$ is locally compact. For $y\not\in F$, Proposition \ref{propositionmodelmorphism} and Proposition \ref{theoremclassificationmultiplicativeisomorphismsofr} imply that $\chi(y,\cdot)$ has the form $\chi(y,t)=\operatorname{sgn}(t)|t|^{p(y)}$ for some $p(y)>0$. As for the continuity of $p$, Let $U$ be any relatively compact open subset of $Y$ not intersecting $F$, and $f\in C_c(X,\mathbb{R})$ with $f=2$ on $\phi(U)$. Then $Tf(y)=f(y)^{p(y)}=2^p(y)$, so $p=\log_{2}\circ Tf$ on $U$. This proves that $T$ is continuous on $Y\setminus F$.\qedhere
\end{proof}

\subsection{Group-Valued functions; the Hernández--Ródenas Theorem}

Given topological groups $G$ and $H$, denote by $\operatorname{AbsIso}(G,H)$ the set of algebraic group isomorphisms from $G$ to $H$, and by $\operatorname{TopIso}(G,H)$ the set of topological (i.e., homeomorphisms which are) group isomorphisms.

Let $X$ and $Y$ be compact Hausdorff spaces and $G$ a Hausdorff topological group. In \cite[Theorem 3.7]{MR2324919}, Hernández and Ródenas classified non-vanishing group morphisms (not necessarily \emph{isomorphisms}) $T\colon C(X,G)\to C(Y,G)$ which satisfy the following properties:
\begin{enumerate}[label=(\roman*)]
	\item There exists a continuous group morphism $\psi\colon G\to C(X,G)$, where $C(X,G)$ is endowed with the topology of pointwise convergence, such that for all $\alpha\in G$ and all $y\in Y$, $T(\psi(\alpha))(y)=\alpha$;
	\item For every continuous endomorphism $\theta\colon G\to G$ and every $f\in C(X,G)$, $T(\theta\circ f)=\theta\circ (Tf)$.
\end{enumerate}
If $T$ is a group isomorphism and $T^{-1}$ is continuous (with respect to uniform convergence) then condition (i) is immediately satisfied, however this is not true for (ii): For example, if $\operatorname{TopIso}(G,G)$ is non-abelian and $\rho\in\operatorname{TopIso}(G,G)$ is any non-central element, then the map $T\colon C(X,G)\to C(X,G)$ given by $Tf=\rho\circ f$ is a group isomorphism, and a self-homeomorphism of $C(X,G)$ with the topology of uniform convergence, which does not satisfy (ii).

In the next theorem we obtain the same type of classification as in \cite[Theorem 3.7]{MR2324919}, without assuming condition (ii), however we consider only non-vanishing group isomorphisms. Given a compact Hausdorff space $X$, a topological group $G$ and $\alpha\in G$, denote by $\overline{\alpha}$ the constant function $X\to G$, $x\mapsto \alpha$. We endow $C(X,G)$ with the topology of pointwise convergence.

\begin{theorem}
	Suppose that $G$ and $H$ are Hausdorff topological groups, and $X$ and $Y$ are compact Hausdorff spaces for which $(X,\overline{1_G},C(X,G))$ and $(Y,\overline{1_H},C(Y,H))$ are regular.
	
	Let $T\colon C(X,G)\to C(Y,H)$ be a non-vanishing group isomorphism (Definition \ref{definitionnonvanishingbijection}). Then there exist a homeomorphism $\phi\colon Y\to X$ and a map $w\colon Y\to\operatorname{AbsIso}(H,G)$ such that $Tf(y)=w(y)(f(\phi(y))$ for all $y\in Y$ and $f\in C(X,G)$.
	
	If $T$ is continuous on the constant functions then each $w(y)$ is continuous and $T$ is continuous on $C(X,G)$. If both $T$ and $T^{-1}$ are continuous on the constant functions then $w(y)\in\operatorname{TopIso}(H,G)$ and $T$ is a homeomorphism for the topologies of pointwise convergence.
\end{theorem}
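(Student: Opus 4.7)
The plan is to execute the six-step procedure outlined on page~\pageref{generalprocedureforclassification}. First, since $X$ and $Y$ are compact Hausdorff, we have $C(X,G)=C_c(X,\overline{1_G})$ and $C(Y,H)=C_c(Y,\overline{1_H})$, so Theorem~\ref{theoremnonvanishing} applies and produces a unique homeomorphism $\phi\colon Y\to X$ satisfying $[f=\overline{1_G}]=\phi([Tf=\overline{1_H}])$ for every $f\in C(X,G)$. Reading this pointwise, $f(\phi(y))=1_G$ if and only if $Tf(y)=1_H$ for every $y\in Y$. This is exactly the hypothesis of Proposition~\ref{propositionbasicnessofgroupvalued}, so $T$ is $\phi$-basic. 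Applying the same argument to $T^{-1}$ (which is again a non-vanishing group isomorphism and has $\phi^{-1}$ as its Theorem~\ref{theoremnonvanishing}-homeomorphism, by uniqueness) shows $T^{-1}$ is $\phi^{-1}$-basic.

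Next, let $\chi\colon Y\times_{(\phi,C(X,G))}G\to H$ be the $(\phi,T)$-transform; regularity of $(X,\overline{1_G},C(X,G))$ ensures its domain is all of $Y\times G$. By Proposition~\ref{propositionbasicisomorphisms}\ref{propositionbasicisomorphisms(c)} and \ref{propositionbasicisomorphisms(d)}, each section $\chi(y,\cdot)\colon G\to H$ is bijective, because the basicness of $T^{-1}$ gives both implications. Since $T$ is a group homomorphism, Proposition~\ref{propositionmodelmorphism} forces each section $\chi(y,\cdot)$ to be a group homomorphism, hence a group isomorphism. Setting $w(y)=\chi(y,\cdot)\in\operatorname{AbsIso}(G,H)$ gives the required formula $Tf(y)=w(y)(f(\phi(y)))$.

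For the continuity claims, note that evaluating at $y\in Y$ yields $T\overline{g}(y)=w(y)(g)$ for every $g\in G$. Suppose $T$ is continuous on the constant functions with respect to the topology of pointwise convergence on $C(Y,H)$. Since $g\mapsto \overline{g}$ is continuous from $G$ into $C(X,G)$ with pointwise topology, and the evaluation $\mathrm{ev}_y\colon C(Y,H)\to H$ is continuous, the composition $g\mapsto w(y)(g)=\mathrm{ev}_y(T\overline{g})$ is continuous; i.e., each $w(y)$ is continuous. For the full continuity of $T$, if $f_\alpha\to f$ pointwise in $C(X,G)$, then for each $y$ we have $f_\alpha(\phi(y))\to f(\phi(y))$ in $G$, and continuity of $w(y)$ gives
\[
Tf_\alpha(y)=w(y)(f_\alpha(\phi(y)))\longrightarrow w(y)(f(\phi(y)))=Tf(y),
\]
so $Tf_\alpha\to Tf$ pointwise. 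Finally, if both $T$ and $T^{-1}$ are continuous on constants, the same argument applied to $T^{-1}$ produces the $(\phi^{-1},T^{-1})$-transform whose section at $\phi(y)$, by the identity $T^{-1}\circ T=\id$, must equal $w(y)^{-1}$; thus $w(y)^{-1}$ is continuous and $w(y)\in\operatorname{TopIso}(G,H)$. Symmetrically $T^{-1}$ is continuous on all of $C(Y,H)$, so $T$ is a homeomorphism for the pointwise topologies.

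The substantive work happens in the first paragraph: translating the non-vanishing hypothesis into the pointwise vanishing equivalence needed to invoke Proposition~\ref{propositionbasicnessofgroupvalued}. Everything downstream is essentially bookkeeping with Propositions~\ref{propositionbasicisomorphisms} and \ref{propositionmodelmorphism}, plus the elementary observation that continuity on constants is, through the formula $T\overline{g}(y)=w(y)(g)$, literally the same data as continuity of every section $w(y)$. I do not foresee any genuinely hard step, since Theorem~\ref{theoremnonvanishing} already did the topological recovery.
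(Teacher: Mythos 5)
Your proposal is correct and follows essentially the same route as the paper's own proof: Theorem \ref{theoremnonvanishing} plus Proposition \ref{propositionbasicnessofgroupvalued} to get $\phi$ and basicness of $T$ and $T^{-1}$, Propositions \ref{propositionmodelmorphism} and \ref{propositionbasicisomorphisms} for the sections being group isomorphisms, and the identity $T\overline{g}(y)=w(y)(g)$ for the continuity statements. Your writing $w(y)\in\operatorname{AbsIso}(G,H)$ is in fact the direction consistent with the formula $Tf(y)=w(y)(f(\phi(y)))$ (the statement's ``$\operatorname{AbsIso}(H,G)$'' is a notational slip in the paper), so this is not a gap.
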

\begin{proof}
	By Theorem \ref{theoremnonvanishing} and Proposition \ref{propositionbasicnessofgroupvalued}, there is a homeomorphism $\phi\colon Y\to X$ such that $T$ is $\phi$-basic, and the sections $\chi(y,\cdot)\colon G\to H$ of the $(\phi,T)$-transform $\chi$ are group morphisms by Proposition \ref{propositionmodelmorphism}. Similar facts hold for $T^{-1}$, so Proposition \ref{propositionbasicisomorphisms} implies that each section $\chi(y,\cdot)$ is bijective. Letting $w(y)=\chi(y,\cdot)$ we are done with the first part.
	
	Now note that for all $\alpha\in G$ and $y\in Y$,
	\[w(y)(\alpha)=w(y)(\overline{\alpha}(\phi(y)))=T\overline{\alpha}(y)\]
	which implies that every $w(y)$ is continuous if and only if $T$ is continuous on the constant functions (because the map $\alpha\mapsto\overline{\alpha}$ from $G$ to $C(X,G)$ is a homeomorphism onto its image). In this case, from the equality
	\[Tf(y)=w(y)(f(\phi(y))\qquad\text{for all}\quad f\in C(X,G)\quad\text{and}\quad y\in Y,\]
	we can readily see that $T$ is continuous. The last part, assuming also that $T^{-1}$ is continuous on the constant functions, is similar, using $T^{-1}$ and $w(y)^{-1}$ in place of $T$ and $w(y)$.
\end{proof}

%%%%%%%%%%%%%%%%%%%%%%%%%%%%%%%%%%%%%%%%%%%%%%%%%%%%%%%%%%%%%%%%%%%%%%%%%%%%%%%%%%%%%%%%%%%%%%%%%%%%%%%%%%%%%%%%%%%%%%%%%%%%%%%%%%%%%%%%%%%%%%%%%%%%%%%%%%%%%%%%%%%%%%%%%%%%%%%%%%%%%%%%%%%%%%%%%%%%%%%%%%%%%%%%%%%%%%%%%%%%%%%%%%%%%%%%%%%%%%%%%%%%%%%%%%%%%%%%%%%%
\subsection{Kaplansky's Theorem}\label{subsectionkaplansky}

Let $R$ be a totally ordered set without supremum nor infimum, regarded as a topological space with the order topology, and let $X$ be a locally compact Hausdorff space. We consider the pointwise order on $C(X,R)$: $f\leq g$ if and only if $f(x)\leq g(x)$ for all $x$, which makes $C(X,R)$ a lattice: for all $f,g\in C(X,R)$ and $x\in X$,
\[(f\lor g)(x)=\max\left\{f(x),g(x)\right\},\qquad\text{and}\qquad(f\land g)(x)=\min\left\{f(x),g(x)\right\}.\]
Denote by $C_b(X,R)$ the sublattice of bounded continuous functions from $X$ to $R$.

In \cite{MR0020715} Kaplansky proved that if $X$ is compact and \emph{$R$-normal} (this is a stronger version of strong regularity; see \cite[p.\ 618]{MR0020715}), then the lattice $C(X,R)$ determines $X$ completely, and in \cite{MR0026240}, classified additive lattice isomorphisms between these lattices of functions in the case that $R=\mathbb{R}$. We improve on these results in the following ways: We allow $X$ to be non-compact (only locally compact), obtain a recovery theorem for $X$ from a subcollection $\mathcal{A}$ of $C_b(X,R)$ (Theorem \ref{theoremkaplanskywithoutlowerbound}), and classify lattice isomorphisms in the case of non-real-valued functions for first-countable spaces (Theorem \ref{theoremclassificationlatticemorphismfirstcountable}).

We will consider sublattices $\mathcal{A}$ of $C_b(X,R)$ which satisfy
\begin{enumerate}[label=(L\arabic*)]
	\item\label{conditionskaplansky1} for all $f,g\in\mathcal{A}$, $\overline{[f\neq g]}$ is compact;
	\item\label{conditionskaplansky2} for all $f\in\mathcal{A}$, every open set $U\subseteq X$, every $x\in U$ and $\alpha\in R$, there exists $g\in\mathcal{A}$ such that $g(x)=\alpha$ and $[g\neq f]\subseteq U$.
\end{enumerate}

Condition \ref{conditionskaplansky1} is equivalent to saying that $\mathcal{A}\subseteq C_c(X,\theta)$, where $\theta$ is some (any) element of $\mathcal{A}$, and condition \ref{conditionskaplansky2} is a form of regularity. These conditions are satisfied in the settings that have been previously studied.

\begin{example}[{Kaplansky, \cite{MR0020715}}]\label{examplekaplanskyisl1l2}
	Suppose that $X$ is compact and $\mathcal{A}$ is an $R$-normal sublattice of $C(X,R)$. Condition \ref{conditionskaplansky1} is trivial, so let us check that $\mathcal{A}$ satisfies \ref{conditionskaplansky2}: Suppose $f$, $U$, $x$ and $\alpha$ are as in that condition. For the sake of the argument we can assume $f(x)\leq\alpha$. Let $\beta$ be any lower bound of $f(X)$, and from $R$-normality find $h\in\mathcal{A}$ such that $h(x)=\alpha$ and $h=\beta$ outside $U$. Then $g=f\lor h$ has the desired properties.
\end{example}

\begin{example}[{Li--Wong,  \cite{MR3162258}}]
	Suppose that $X$ is compact, $R=\mathbb{R}$, and $\mathcal{A}$ is a regular additive subgroup of $C(X,\mathbb{R})$, so \ref{conditionskaplansky1} is also trivial and again we need to verify condition \ref{conditionskaplansky2}: Let $f$, $U$, $x$ and $\alpha$ be as in \ref{conditionskaplansky2}. By regularity, take $h$ such that $\supp(h)\subseteq U$ and $h(x)=\alpha-f(x)$. Then $g=f+h$ has the desired properties
\end{example}

We will now recover the main result of \cite{MR0020715}, The following lemma is based on \cite{MR0052760}.

\begin{lemma}\label{importantlemmakaplansky}
	Let $\mathcal{A}$ be a sublattice of $C_b(X,R)$ satisfying \ref{conditionskaplansky1} and \ref{conditionskaplansky2}, and let $f_0$ be any element of $\mathcal{A}$. Let $\mathcal{A}_{\geq f_0}=\left\{f\in\mathcal{A}:f\geq f_0\right\}$. Then $(X,f_0,\mathcal{A}_{\geq f_0})$ is weakly regular and for $f,g\in\mathcal{A}_{\geq f_0}$,
	\begin{enumerate}[label=(\alph*)]
		\item\label{importantlemmakaplansky(a)} $f\perp g\iff f\land g=f_0$ (which is the minimum of $\mathcal{A}_{\geq f_0})$;
		\item\label{importantlemmakaplansky(b)} $f\Subset g$ is equivalent to the following statement:
		\[\begin{split}
		\text{``for every bounded subset $\H\subseteq\mathcal{A}$ such that $h\subseteq f$ for all $h\in \H$,}\\
		\text{there is an upper bound $k$ of $\H$ such that $k\subseteq g$.''}\end{split}\tag{K}\]
	\end{enumerate}
\end{lemma}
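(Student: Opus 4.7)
The plan is to establish weak regularity, then part (a), and finally the two directions of part (b), which contain the technical heart of the lemma.

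For weak regularity of $(X, f_0, \mathcal{A}_{\geq f_0})$, given $x$ in an open $U$, I would shrink $U$ to an open $W$ with $x \in W$ and $\overline{W} \subseteq U$ compact (by local compactness of $X$), pick $\alpha \in R$ with $\alpha > f_0(x)$ (available since $R$ has no supremum), and apply \ref{conditionskaplansky2} to $f_0$, $W$, $x$, $\alpha$ to obtain $h \in \mathcal{A}$ with $h(x) = \alpha$ and $[h \neq f_0] \subseteq W$. Then $h \lor f_0 \in \mathcal{A}_{\geq f_0}$ satisfies $x \in \sigma(h \lor f_0) \subseteq \supp(h \lor f_0) \subseteq \overline{W} \subseteq U$, giving the required basic neighbourhood. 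Part (a) reduces to a tautology: since $f, g \geq f_0$, we have $[f \neq f_0] = [f > f_0]$ and $[g \neq f_0] = [g > f_0]$, so their disjointness is equivalent to $\min(f(x), g(x)) = f_0(x)$ at every $x$, i.e., to $f \land g = f_0$; and $f_0$ is trivially the minimum of $\mathcal{A}_{\geq f_0}$.

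For the forward direction of (b), I would assume $f \Subset g$ and take $\H \subseteq \mathcal{A}$ bounded above by some $K_0 \in \mathcal{A}$ with $h \subseteq f$ for every $h \in \H$. From $\sigma(h) \subseteq \sigma(f)$ I conclude $\supp(h) \subseteq \supp(f) \subseteq \sigma(g)$, so $h = f_0$ off $\supp(f)$. Next I would choose an open $W$ with $\supp(f) \subseteq W \subseteq \overline{W} \subseteq \sigma(g)$, and for each $x \in \supp(f)$ pick $\alpha_x > K_0(x)$ in $R$ and apply \ref{conditionskaplansky2} to obtain $p_x \in \mathcal{A}$ with $p_x(x) = \alpha_x$ and $[p_x \neq f_0] \subseteq W$; by continuity $p_x > K_0$ on a neighbourhood $V_x \ni x$. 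A finite subcover of $\supp(f)$ by such $V_{x_i}$ yields $k := p_{x_1} \lor \cdots \lor p_{x_n} \lor f_0 \in \mathcal{A}_{\geq f_0}$ that dominates every $h \in \H$ (since $k > K_0 \geq h$ on $\supp(f)$ and $k \geq f_0 = h$ off $\supp(f)$), while $[k \neq f_0] \subseteq W$ forces $\sigma(k) \subseteq \overline{W} \subseteq \sigma(g)$, i.e.\ $k \subseteq g$.

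The reverse direction of (b) is where the main obstacle lies: the family $\H$ must be \emph{order-bounded in $\mathcal{A}$}, not merely pointwise bounded in $R$, which forces the construction of an ambient element of $\mathcal{A}$ to serve as the bound. Contrapositively, if $x_0 \in \supp(f) \setminus \sigma(g)$, I would pick a net $y_i \to x_0$ in $\sigma(f)$ (available since $x_0 \in \overline{\sigma(f)} = \supp(f)$), fix $\alpha > \sup_X f_0$, and build $K \in \mathcal{A}_{\geq f_0}$ with $K \geq \alpha$ on $\supp(f)$ by the same finite-cover plus \ref{conditionskaplansky2} procedure used in the forward direction. For each $i$, I would use \ref{conditionskaplansky2} to obtain $h_i' \in \mathcal{A}$ with $h_i'(y_i) = \alpha$ and support inside a small $V_i \subseteq \sigma(f)$ around $y_i$, and set $h_i := (h_i' \lor f_0) \land K \in \mathcal{A}$. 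The lattice manipulation forces $h_i(y_i) = \alpha$, $f_0 \leq h_i \leq K$, and $\sigma(h_i) \subseteq \sigma(f)$, so $\H := \{h_i\}$ is order-bounded by $K$ with $h_i \subseteq f$ for every $i$. Any upper bound $k$ of $\H$ satisfies $k(y_i) \geq \alpha$ for all $i$, whence $k(x_0) \geq \alpha > f_0(x_0)$ by continuity and $x_0 \in [k > f_0] \subseteq \sigma(k)$; since $x_0 \notin \sigma(g)$, this yields $k \not\subseteq g$, violating (K).
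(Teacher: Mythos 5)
Your proof is correct, and its overall strategy coincides with the paper's: weak regularity from \ref{conditionskaplansky2} together with the absence of a supremum in $R$, part (a) as a pointwise triviality, and both directions of (b) via \ref{conditionskaplansky2} plus finite covers of the compact set $\supp(f)$. Where you genuinely diverge is in the execution of the reverse direction of (b), and your version is the more careful one. The paper argues directly: fixing $\beta$ strictly above an upper bound of $f_0(X)$, it applies (K) to the single family $\mathscr{H}=\left\{h\in\mathcal{A}_{\geq f_0}:h\leq\beta,\ h\subseteq f\right\}$, gets an upper bound $k\subseteq g$, and asserts via \ref{conditionskaplansky2} that $k\geq\beta$ on $\sigma(f)$, whence $f\Subset k$ and $f\Subset g$. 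That assertion tacitly requires, for each $x\in\sigma(f)$, a member of $\mathscr{H}$ attaining $\beta$ at $x$; but the function supplied by \ref{conditionskaplansky2} may exceed $\beta$ away from $x$, and since the constant $\beta$ need not belong to $\mathcal{A}$ one cannot truncate by it directly --- this is precisely the obstacle your capping $h_i=(h_i'\lor f_0)\land K$ resolves, with $K\in\mathcal{A}_{\geq f_0}$ built by the finite-cover argument to dominate $\alpha$ on $\supp(f)$. Your contrapositive formulation (a net $y_i\to x_0\in\supp(f)\setminus\sigma(g)$, so that any upper bound $k$ satisfies $k(x_0)\geq\alpha>f_0(x_0)$ and hence $x_0\in[k\neq f_0]\subseteq\sigma(k)$, the last inclusion because $[k\neq f_0]$ is open) is an equivalent repackaging of the paper's direct argument. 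You also resolve an ambiguity in the word ``bounded'' in (K): you read it as order-bounded in $\mathcal{A}$, which is the reading manifestly preserved by the lattice isomorphism in Theorem \ref{theoremkaplansky}, whereas the paper's proof implicitly uses pointwise boundedness by an element of $R$; for families with $h\subseteq f$ for all $h$, the two readings are equivalent, by exactly your construction of $K$. In short: same approach in spirit, with your auxiliary ambient bound $K$ filling in the one genuinely delicate step that the paper's write-up glosses over.
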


\begin{proof}
	Weak regularity is immediate from \ref{conditionskaplansky2} and the fact that $R$ does not have a supremum, and item \ref{importantlemmakaplansky(a)} is trivial. Let us prove \ref{importantlemmakaplansky(b)}. First suppose $f\Subset g$ and $\H\subseteq\mathcal{A}$ is a bounded subset such that $h\subseteq f$ for all $h\in \H$. Let $\alpha\in R$ be an upper bound of $\bigcup_{h\in\H}h(X)$. From weak regularity and compactness of $\supp(f)$, we can take finitely many functions $k_1,\ldots,k_n$ such that $k_i\Subset g$, and for every $x\in\supp(f)$ there is some $i$ with $k_i(x)>\alpha$. Letting $k=\bigvee_{i=1}^n k_i$ we obtain the desired properties.
	
	Conversely, suppose that condition (K) holds. Let $\alpha$ be any upper bound of $f_0(X)$ and again take $\beta>\alpha$. Let $\mathscr{H}=\left\{h\in\mathcal{A}_{\geq f_0}:h\leq \beta, h\subseteq f\right\}$. Let $k$ be an upper bound of $\H$ with $k\subseteq g$. By Property \ref{conditionskaplansky2}, we have $\sigma(f)=\bigcup_{h\in\mathscr{H}}\sigma(h)$, so $k\geq\beta$ on $\sigma(f)$ and thus also on $\supp(f)$, which implies $f\Subset k$. Since $k\subseteq g$ then $f\Subset g$.
\end{proof}

As an immediate consequence of Lemma \ref{importantlemmakaplansky} and Theorem \ref{maintheorem} we have the following generalization of Kaplansky's Theorem:

\begin{theorem}[Kaplansky \cite{MR0020715}]\label{theoremkaplansky}
	Suppose $R$ has no supremum nor infimum, $\mathcal{A}(X)$ and $\mathcal{A}(Y)$ are sublattices of $C_b(X,R)$ and $C_b(Y,R)$, respectively, satisfying conditions \ref{conditionskaplansky1} and \ref{conditionskaplansky2}, and $T\colon\mathcal{A}(X)\to\mathcal{A}(Y)$ is a lattice isomorphism. Then for any $f_0\in\mathcal{A}$, $T$ restricts to a $\perpp$-isomorphism of the regular sublattices $\mathcal{A}(X)_{\geq f_0}$ and $\mathcal{A}(Y)_{\geq Tf_0}$. In particular, $X$ and $Y$ are homeomorphic.
\end{theorem}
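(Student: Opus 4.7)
The plan is to translate the relations $\perp$, $\Subset$ and $\perpp$ into purely lattice-theoretic conditions, so that each of them is automatically preserved by the lattice isomorphism $T$. First I fix $f_0\in\mathcal{A}(X)$ and set $g_0:=Tf_0$. Because lattice isomorphisms are order isomorphisms, $T$ restricts to a bijection $\mathcal{A}(X)_{\geq f_0}\to\mathcal{A}(Y)_{\geq g_0}$, and I apply Lemma \ref{importantlemmakaplansky} to both triples $(X,f_0,\mathcal{A}(X)_{\geq f_0})$ and $(Y,g_0,\mathcal{A}(Y)_{\geq g_0})$ to conclude that both are weakly regular.

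Next, Lemma \ref{importantlemmakaplansky}\ref{importantlemmakaplansky(a)} tells us that for $f,g\in\mathcal{A}(X)_{\geq f_0}$, the relation $f\perp g$ is equivalent to $f\wedge g=f_0$, and likewise in $\mathcal{A}(Y)_{\geq g_0}$. Since $T$ preserves meets and sends $f_0$ to $g_0$, it preserves $\perp$. Then the equi-expressibility stated in Theorem \ref{theoremrelationsrelations}\ref{theoremrelationsrelations(a)} gives that $T$ also preserves $\subseteq$.

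I then use Lemma \ref{importantlemmakaplansky}\ref{importantlemmakaplansky(b)}: the condition $f\Subset g$ is captured by (K), which refers only to $\subseteq$, $\leq$ and the existence of upper bounds of subsets of $\mathcal{A}$ — all of which are purely lattice-theoretic and hence preserved by $T$. Thus $T$ preserves $\Subset$. Combining with Theorem \ref{theoremrelationsrelations}\ref{theoremrelationsrelations(e)}, which expresses $\perpp$ in terms of $\perp$, $\Subset$ and covers (and covers in turn depend only on $\perp$, by Definition \ref{definitiondisjointcover}), it follows that $T$ preserves $\perpp$. Therefore $T|_{\mathcal{A}(X)_{\geq f_0}}$ is a $\perpp$-isomorphism, and Theorem \ref{maintheorem} furnishes the desired homeomorphism $\phi:Y\to X$.

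The main obstacle I anticipate is verifying cleanly that the boundedness clause appearing in (K) — ``$\mathscr{H}\subseteq\mathcal{A}$ is bounded'' — really is a lattice-theoretic condition (i.e.\ that $\mathscr{H}$ admits an upper bound in $\mathcal{A}$), rather than something involving the scalar range $R$ or the topology of $X$. This is precisely what Lemma \ref{importantlemmakaplansky}\ref{importantlemmakaplansky(b)} was tailored to isolate, but one must be careful to apply it to the \emph{same} sublattice on both sides, which is why the reduction to $\mathcal{A}(X)_{\geq f_0}$ (with its new minimum $f_0$) is indispensable. Once this is in place, the rest of the argument is just an unwinding of the equivalences in Theorem \ref{theoremrelationsrelations}.
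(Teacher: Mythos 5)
Your proof is correct and is essentially the paper's argument: the paper derives Theorem \ref{theoremkaplansky} precisely as an immediate consequence of Lemma \ref{importantlemmakaplansky} (which makes $\perp$ and $\Subset$ lattice-theoretic on $\mathcal{A}_{\geq f_0}$), combined with the equi-expressibility results of Theorem \ref{theoremrelationsrelations} and the recovery Theorem \ref{maintheorem}, exactly as you outline. The subtlety you flag about condition (K) --- that $\H$ there ranges over $\mathcal{A}$ rather than $\mathcal{A}_{\geq f_0}$ --- is harmless but worth settling explicitly: replacing each $h\in\H$ by $h\lor f_0$ (and an upper bound $k$ by $k\lor f_0$) preserves boundedness and the relations $h\subseteq f$, $k\subseteq g$, so (K) is equivalent to its restriction to families inside $\mathcal{A}_{\geq f_0}$, where $\subseteq$ is $\perp$-expressible by Theorem \ref{theoremrelationsrelations}\ref{theoremrelationsrelations(a)} and $\perp$ is meet-expressible by Lemma \ref{importantlemmakaplansky}\ref{importantlemmakaplansky(a)}.
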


Our immediate goal is to prove that the homeomorphism between $X$ and $Y$ given by Theorem \ref{theoremkaplansky} does not depend on the choice of function $f_0$ in Lemma \ref{importantlemmakaplansky}.

\begin{lemma}\label{lemmarestrictionoflatticeisomorphismisbasic}\index{$T$-homeomorphism}
	Under the conditions of Theorem \ref{theoremkaplansky}, let $\phi\colon Y\to X$ be the $T|_{\mathcal{A}(X)_{\geq f_0}}$-homeomorphism. If $f,g\in\mathcal{A}(X)_{\geq{f_0}}$ then $\phi^{-1}(\operatorname{int}([f=g]))=\operatorname{int}([Tf=Tg])$.
\end{lemma}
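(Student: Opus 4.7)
The plan is to pass through basic open sets and reduce the claim to a lattice-theoretic characterization of local agreement. By Lemma \ref{importantlemmakaplansky}, both $(X,f_0,\mathcal{A}(X)_{\geq f_0})$ and $(Y,Tf_0,\mathcal{A}(Y)_{\geq Tf_0})$ are weakly regular, and Theorem \ref{maintheorem} gives $\phi(\sigma^{Tf_0}(Tk))=\sigma^{f_0}(k)$ for every $k\in\mathcal{A}(X)_{\geq f_0}$. Since $\phi^{-1}(\operatorname{int}([f=g]))$ and $\operatorname{int}([Tf=Tg])$ are both open in $Y$, weak regularity reduces the claim to showing, for each $k\in\mathcal{A}(X)_{\geq f_0}$, the equivalence
\[\sigma^{f_0}(k)\subseteq\operatorname{int}([f=g])\iff\sigma^{Tf_0}(Tk)\subseteq\operatorname{int}([Tf=Tg]);\]
since $\sigma^{f_0}(k)$ is open, its inclusion in $\operatorname{int}([f=g])$ amounts to ``$f=g$ on $\sigma^{f_0}(k)$'', and similarly on the $Y$-side.

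The heart of the argument is the following characterization, phrased purely in terms of $\leq$ and $\Subset$:
\[f=g\text{ on }\sigma^{f_0}(k)\iff\bigl(\forall h\in\mathcal{A}(X)_{\geq f_0},\ h\Subset k\Rightarrow (h\leq f\iff h\leq g)\bigr).\]
The forward direction is immediate: if $h\Subset k$ then $\supp(h)\subseteq\sigma^{f_0}(k)\subseteq[f=g]$, so ``$h\leq f$'' and ``$h\leq g$'' coincide on $\supp(h)$, while outside $\supp(h)$ one has $h=f_0\leq f,g$ automatically.

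For the reverse direction I will argue contrapositively. Assuming $f(x_0)<g(x_0)$ for some $x_0\in\sigma^{f_0}(k)$, I will pick an open $U\ni x_0$ with $\overline{U}\subseteq\sigma^{f_0}(k)$ and apply condition \ref{conditionskaplansky2} to $f_0$, $U$, $x_0$ and $\alpha=g(x_0)$ to produce $h'\in\mathcal{A}(X)$ with $h'(x_0)=g(x_0)$ and $[h'\neq f_0]\subseteq U$. Setting $h:=(h'\vee f_0)\wedge g\in\mathcal{A}(X)$, a direct check gives $h\geq f_0$, $h\leq g$ globally, $h(x_0)=g(x_0)>f(x_0)$ (so $h\not\leq f$), and $[h\neq f_0]\subseteq[h'\neq f_0]\subseteq U$, whence $\supp(h)\subseteq\overline{U}\subseteq\sigma^{f_0}(k)$ and $h\Subset k$; this $h$ refutes the right-hand side of the characterization.

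Once the characterization is in place, the rest is formal: $T$ preserves $\leq$ as a lattice isomorphism, and preserves $\Subset$ because under weak regularity $\Subset$ is expressible from $\perpp$ by Theorem \ref{theoremrelationsrelations}\ref{theoremrelationsrelations(f)}. Hence the right-hand side of the characterization is transported verbatim by $T$ to the analogous condition for $Tf,Tg,Tk$, and applying the same characterization on the $Y$-side (with the roles of $\mathcal{A}(X)_{\geq f_0}$ and $\mathcal{A}(Y)_{\geq Tf_0}$ swapped) closes the loop. The main obstacle, I expect, is the construction of $h$ in the contrapositive: it must simultaneously lie in $\mathcal{A}(X)_{\geq f_0}$, have support contained in $\sigma^{f_0}(k)$, satisfy $h\leq g$ everywhere, and strictly exceed $f$ at $x_0$. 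The combination $(h'\vee f_0)\wedge g$, with $h'$ supplied by \ref{conditionskaplansky2}, is precisely what enforces all four constraints at once.
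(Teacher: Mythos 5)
Your proof is correct, but it takes a genuinely different route from the paper's. The paper argues pointwise on $Y$: after reducing to $f\leq g$, it writes $g$ as the (possibly infinite) supremum of $\left\{f\right\}\cup\left\{k_x:x\in[f<g]\right\}$, where each $k_x$ is a witness supported off $[f=g]$, transports this supremum through $T$ (a lattice isomorphism preserves all existing suprema), and then for each $y\in\phi^{-1}(\operatorname{int}[f=g])$ uses \ref{conditionskaplansky2} on the $Y$-side to build $h$ and the competitor $h'=(Tf\lor h)\land Tg$, squeezing out $Tf(y)=Tg(y)$; the reverse inclusion follows by symmetry via $T^{-1}$. You instead localize to the basis $\sigma^{f_0}(k)$ and prove a biconditional characterization of ``$f=g$ on $\sigma^{f_0}(k)$'' purely in terms of $\leq$ and $\Subset$, both of which $T$ preserves in both directions ($\leq$ because $T$ is a lattice isomorphism, $\Subset$ because Theorem \ref{theoremkaplansky} makes $T$ a $\perpp$-isomorphism between weakly regular families and Theorem \ref{theoremrelationsrelations} makes $\Subset$ expressible from $\perpp$); gluing over the basis and using $\phi(\sigma^{Tf_0}(Tk))=\sigma^{f_0}(k)$ from Theorem \ref{maintheorem} then yields the equality of open sets directly. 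Your route avoids the infinitary supremum argument (and the implicit appeal to preservation of existing sups) and delivers both inclusions at once from the symmetry of the characterization, at the cost of the extra reduction to basic open sets; the paper's route is more hands-on and needs no definability lemma. Notably, both proofs hinge on essentially the same \ref{conditionskaplansky2}-plus-lattice-operations construction --- your $(h'\lor f_0)\land g$ on the $X$-side plays exactly the role of the paper's $(Tf\lor h)\land Tg$ on the $Y$-side.
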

\begin{proof}
	We will use the superscript ``$f_0$'' as in Definition \ref{definitionsigma}, that is, for all $f\geq f_0$.
	\[\sigma^{f_0}(f)=\operatorname{int}(\overline{[f\neq f_0]}),\qquad\text{and}\qquad Z^{f_0}(f)=\operatorname{int}([f=f_0]).\]
	and similarly with $Tf_0$ in place of $f_0$. Then $\phi$ is the only homeomorphism satisfying $\sigma^{f_0}(f)=\phi(\sigma^{Tf_0}(Tf))$, or equivalently $Z^{f_0}(f)=\phi(Z^{Tf_0}(Tf))$, for all $f\geq f_0$.
	
	First, assume that $f\leq g$, and let $U=\operatorname{int}([f=g])$. For all $x\in[f<g]$, choose a function $k_x\in\mathcal{A}(X)_{\geq f_0}$ such that
	\begin{itemize}
		\item $\sigma^{f_0}(k_x)\cap [f=g]=\varnothing$;
		\item $k_x(x)=g(x)$;
		\item $k_x\leq g$.
	\end{itemize}
	Then $g=\sup\left\{f,k_x:x\in[f<g]\right\}$, so $Tg=\sup\left\{Tf,Tk_x:x\in[f<g]\right\}$..
	
	Let us prove that $Tf(y)=Tg(y)$ for all $y\in\phi(U)$. Since $U\subseteq\bigcap_{x\in[f<g]}Z^{f_0}(k_x)$, then $\phi^{-1}(U)\subseteq\bigcap_{x\in[x<g]}Z^{Tf_0}(Tk_x)$.
	
	Given $y\in\phi(U)$, use property \ref{conditionskaplansky2} to find $h\in\mathcal{A}(Y)$ such that $h(y)=Tf(y)$ and $h=Tg$ outside of $\phi(U)$. Then $h'=(Tf\lor h)\land Tg$ is an upper bound of $\left\{Tf,Tk_x:x\in[f<g]\right\}$, so it is also an upper bound of $Tg$, and in particular
	\begin{align*}
	Tg(y)\leq ((Tf\lor h)\land Tg)(y)=Tf(y)\leq (Tf\lor h)(y)=Tf(y),
	\end{align*}
	so $Tg(y)=Tf(y)$.
	
	In the general case, if $f=g$ on an open set $U$ then $f=f\land g=g$ on $U$, so the previous case implies that $Tf$ and $Tg$ both coincide with $T(f\land g)$ on $\phi^{-1}(U)$.
\end{proof}

\begin{theorem}\label{theoremkaplanskywithoutlowerbound}
	Under the conditions of Theorem \ref{theoremkaplansky}, there exists a unique homeomorphism $\phi\colon Y\to X$ such that $\phi(\operatorname{int}([Tf=Tg]))=\operatorname{int}([f=g])$ for all $f,g\in\mathcal{A}(X)$. (In this case we will still call $\phi$ the \emph{$T$-homeomorphism}.)
\end{theorem}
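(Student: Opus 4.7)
The plan is to build $\phi$ by showing that the homeomorphism $\phi_{f_0}$ supplied by Lemma \ref{lemmarestrictionoflatticeisomorphismisbasic} is independent of the choice of $f_0 \in \mathcal{A}(X)$, and then to read off the desired identity from the lemma applied with $f_0 := f \land g$.

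The key step is the equality $\phi_{f_0} = \phi_{f_1}$ for arbitrary $f_0, f_1 \in \mathcal{A}(X)$. Setting $h := f_0 \lor f_1 \in \mathcal{A}(X)$, any $f, g \in \mathcal{A}(X)_{\geq h}$ satisfies both $f, g \geq f_0$ and $f, g \geq f_1$, so two applications of Lemma \ref{lemmarestrictionoflatticeisomorphismisbasic} yield
\[
\phi_{f_0}^{-1}(\operatorname{int}([f=g])) = \operatorname{int}([Tf=Tg]) = \phi_{f_1}^{-1}(\operatorname{int}([f=g])).
\]
It then suffices to verify that the collection $\{\operatorname{int}([f=g]) : f, g \in \mathcal{A}(X)_{\geq h}\}$ separates the points of $X$, since homeomorphisms pulling back a point-separating family to the same open sets must agree.

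This point-separation step is the main obstacle, and my plan for it is as follows. Given distinct $x_1, x_2 \in X$, regularity of $X$ (locally compact Hausdorff) furnishes an open neighborhood $V$ of $x_1$ with $x_2 \notin \overline{V}$. Since $R$ has no supremum, I can pick $\alpha \in R$ with $\alpha > h(x_2)$ and apply property \ref{conditionskaplansky2} to $h$, to the open set $X \setminus \overline{V}$, to $x_2$, and to $\alpha$, producing $g \in \mathcal{A}(X)$ with $g(x_2) = \alpha$ and $[g \neq h] \subseteq X \setminus \overline{V}$. Then $g' := g \lor h \in \mathcal{A}(X)_{\geq h}$ satisfies $[g' \neq h] = [g > h] \subseteq X \setminus \overline{V}$, so $x_1 \in V \subseteq \operatorname{int}([g' = h])$; meanwhile $g'(x_2) = \alpha \neq h(x_2)$ and $[g' \neq h]$ is open (as $R$ is Hausdorff), so $x_2$ has a neighborhood disjoint from $[g' = h]$ and hence $x_2 \notin \operatorname{int}([g' = h])$. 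The pair $(g', h) \in \mathcal{A}(X)_{\geq h}^2$ therefore separates $x_1$ from $x_2$.

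With the independence of $f_0$ established, let $\phi$ denote the common value of $\phi_{f_0}$. For arbitrary $f, g \in \mathcal{A}(X)$, the element $f \land g$ lies in $\mathcal{A}(X)$ (sublattice) and satisfies $f, g \geq f \land g$, so Lemma \ref{lemmarestrictionoflatticeisomorphismisbasic} applied with $f_0 = f \land g$ immediately gives $\phi(\operatorname{int}([Tf=Tg])) = \operatorname{int}([f=g])$. Uniqueness of $\phi$ follows from the same separation argument, now unrestricted: any two homeomorphisms $Y \to X$ satisfying the identity pull back each $\operatorname{int}([f=g])$ to the same open subset of $Y$, and these sets separate the points of $X$, forcing the homeomorphisms to coincide.
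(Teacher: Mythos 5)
Your proof is correct, and its skeleton matches the paper's: both establish that the homeomorphism $\phi^{f_0}$ from Lemma \ref{lemmarestrictionoflatticeisomorphismisbasic} is independent of the base point $f_0$ and then read off the identity by taking $f_0=f\land g$. The mechanism for the independence step differs, though. The paper passes \emph{down} to the meet: given $f_0,g_0$, it observes that $\phi^{f_0\land g_0}$ satisfies $(\phi^{f_0\land g_0})^{-1}(\operatorname{int}([f=f_0]))=\operatorname{int}([Tf=Tf_0])$ for all $f\geq f_0$ (the lemma with $g=f_0$), which is exactly the defining property $Z^{f_0}(f)=\phi(Z^{Tf_0}(Tf))$ of the $T|_{\mathcal{A}(X)_{\geq f_0}}$-homeomorphism; the uniqueness clause of Theorem \ref{maintheorem} then forces $\phi^{f_0}=\phi^{f_0\land g_0}=\phi^{g_0}$ in one line. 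You instead pass \emph{up} to the join $h=f_0\lor f_1$ and prove from scratch, using \ref{conditionskaplansky2} and the absence of a supremum in $R$, that the sets $\operatorname{int}([f=g])$ with $f,g\in\mathcal{A}(X)_{\geq h}$ separate points of $X$ (your construction of $g'=g\lor h$ is sound: $\overline{V}\subseteq[g'=h]$ gives $x_1\in\operatorname{int}([g'=h])$, while openness of $[g'\neq h]$ excludes $x_2$), so that two homeomorphisms with equal preimages on this family must coincide. Your route is slightly longer but more self-contained, and it has one genuine advantage: the separation argument also proves the uniqueness assertion in the theorem's statement explicitly, which the paper's proof leaves implicit (there it follows from the uniqueness in Theorem \ref{maintheorem} by the same meet trick, but this is not spelled out).
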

\begin{proof}
	For each $f_0\in \mathcal{A}(X)$, let $\phi^{f_0}\colon Y\to X$ be the $T|_{\mathcal{A}(X)_{\geq f_0}}$-homeomorphism. Given $f_0,g_0\in\mathcal{A}(X)$, Lemma \ref{lemmarestrictionoflatticeisomorphismisbasic} implies that $\phi^{f_0\land g_0}$ satisfies the property of both the $T|_{\mathcal{A}(X)_{\geq f_0}}$ and the $T|_{\mathcal{A}(X)_{\geq g_0}}$-homeomorphisms, so $\phi^{f_0}=\phi^{f_0\land g_0}=\phi^{g_0}$. We are done by letting $\phi=\phi^{f_0}$ for some arbitrary $f_0\in\mathcal{A}(X)$.
\end{proof}

A natural goal now is to classify the lattice isomorphisms as given in Theorem \ref{theoremkaplanskywithoutlowerbound}, which is possible when we consider first-countable spaces. A similar argument to that of Theorem \ref{propositionbasicisomorphisms} appears in \cite{MR2995073}, although in a different context (considering lattices of possibly unbounded real-valued continuous functions on complete metric spaces). See also \cite{MR2999998,MR3404615}.

Let us reinforce the assumptions, assumed throughout this subsection, that $X$ denotes a locally compact Hausdorff space and $R$ is a totally ordered set without maximum or minimum with the order topology. The following is a version of Proposition \ref{propositionconstructfunction} in this setting, and is also proven by ``cutting and pasting'' (aided by the lattice operations of $R$), so we omit the details.

\begin{proposition}\label{propositionconstructfunctionkaplansky}
	Assume further that $X$ and $R$ are first-countable, and that $\theta\in C_b(X,R)$ is such that $C_c(X,\theta)$ satisfies \ref{conditionskaplansky2}. Suppose that $\left\{x_n\right\}_n$ is an injective sequence in $X$, converging to $x_\infty\in X$. Let $g_n\in C(X,R)$ be functions such that $g_n(x_n)\to \theta(x)$.
	
	Then there exists $f\in C_c(X,\theta)$ such that $f=g_n$ on a neighbourhood of $x_n$ for each $n$, and that $f=\theta$ outside of a compact containing $\left\{x_n:n\in\mathbb{N}\right\}$ (which may be taken as small as desired with this property).
\end{proposition}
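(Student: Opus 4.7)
The plan is to mirror the proof of Proposition \ref{propositionconstructfunction}\ref{propositionconstructfunction(b)}, with the real-valued Urysohn-type cutoffs replaced by applications of property \ref{conditionskaplansky2} together with the lattice operations $\lor$ and $\land$, which are continuous with respect to the order topology on $R$. Write $t=\theta(x_\infty)$.

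First, exploiting local compactness and Hausdorffness of $X$, fix a compact neighborhood $K$ of the compact set $\{x_n\}_n\cup\{x_\infty\}$, contained in any prescribed open neighborhood. Using first-countability of $X$, choose pairwise disjoint open sets $U_n\ni x_n$ with $\overline{U_n}\subseteq\operatorname{int}(K)\setminus\{x_\infty\}$, arranged so that every neighborhood of $x_\infty$ contains $U_n$ for all but finitely many $n$. Using first-countability of $R$ and the order topology, take a decreasing basis of open intervals $N_k=(\alpha_k,\beta_k)$ around $t$. Continuity of $\theta$ at $x_\infty$, continuity of each $g_n$ at $x_n$, and the hypothesis $g_n(x_n)\to t$ allow us to shrink each $U_n$ further so that $\theta(U_n)\cup g_n(U_n)\subseteq N_{k(n)}$ for some sequence $k(n)\to\infty$.

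Next, for each $n$ apply property \ref{conditionskaplansky2} twice to the pair $(\theta,U_n)$ at the point $x_n$, with prescribed values $\alpha_{k(n)-1}$ and $\beta_{k(n)-1}$, producing $\phi_n^-,\phi_n^+\in C_c(X,\theta)$ with $\phi_n^-(x_n)=\alpha_{k(n)-1}$, $\phi_n^+(x_n)=\beta_{k(n)-1}$, and $\phi_n^\pm=\theta$ outside $U_n$. Define
\[h_n(x)=\begin{cases}\bigl(g_n(x)\lor\phi_n^-(x)\bigr)\land\phi_n^+(x)&\text{if }x\in U_n,\\ \theta(x)&\text{otherwise.}\end{cases}\]
The expression inside $U_n$ is continuous as a lattice combination of continuous maps; on $\partial U_n$ we have $\phi_n^\pm=\theta$, which forces the lattice expression to collapse to $\theta$, so $h_n\in C_c(X,\theta)$ with support in $\overline{U_n}$. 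At $x_n$ the strict inequalities $\phi_n^-(x_n)<g_n(x_n)<\phi_n^+(x_n)$ (from $\alpha_{k(n)-1}<\alpha_{k(n)}<g_n(x_n)<\beta_{k(n)}<\beta_{k(n)-1}$) persist on a neighborhood of $x_n$ by continuity, and there the clamp is inactive so $h_n=g_n$.

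Finally, set $f=h_n$ on each $U_n$ and $f=\theta$ elsewhere. Then $\supp(f)\subseteq K$, $f=g_n$ on a neighborhood of $x_n$, and $f$ is continuous on $X\setminus\{x_\infty\}$ by the piecewise construction. The main obstacle will be continuity of $f$ at $x_\infty$: for $x\in U_n$ close to $x_\infty$ (with $n$ large) we need $h_n(x)$ close to $t$, but \ref{conditionskaplansky2} prescribes $\phi_n^\pm$ only at $x_n$ and outside $U_n$, leaving its behavior on $U_n\setminus\{x_n\}$ uncontrolled. The intended resolution is to shrink $U_n$ once more using the continuity of the $\phi_n^\pm$ just constructed: since $\phi_n^\pm(x_n)\in N_{k(n)-1}$ and $\phi_n^\pm=\theta\in N_{k(n)}$ on $\partial U_n$, one can replace $U_n$ by a smaller open neighborhood of $x_n$ on which $\phi_n^\pm$ takes values in $N_{k(n)-2}$, re-applying \ref{conditionskaplansky2} with this smaller open set to maintain the boundary condition $\phi_n^\pm=\theta$ outside. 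The clamp $(g_n\lor\phi_n^-)\land\phi_n^+$ then forces $h_n(U_n)\subseteq N_{k(n)-2}$, and $k(n)\to\infty$ yields $f(x)\to t$ as $x\to x_\infty$, completing the proof.
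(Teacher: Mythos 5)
Your construction follows the same skeleton as the paper's proof (pairwise disjoint neighbourhoods $U_n$ accumulating only at $x_\infty$, auxiliary functions from \ref{conditionskaplansky2} used to clamp $g_n$, a piecewise definition, with continuity at $x_\infty$ as the crux), but the step you yourself flag as the main obstacle is not resolved by your patch, and this is a genuine gap. Re-applying \ref{conditionskaplansky2} on the shrunken neighbourhood produces \emph{new} functions whose values are prescribed only at $x_n$ and outside the new set; on the new set minus $\left\{x_n\right\}$ they are exactly as uncontrolled as the old $\phi_n^\pm$ were on $U_n$, so the claim that ``the clamp then forces $h_n(U_n)\subseteq N_{k(n)-2}$'' is unjustified, and iterating the shrink-and-reapply move reproduces the same problem at every stage. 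The alternative reading --- keep the old $\phi_n^\pm$ but restrict the clamp to a smaller $V_n$ on which they take values in $N_{k(n)-2}$ --- also fails: the $\phi_n^\pm$ equal $\theta$ only outside $U_n$, not outside $V_n$, so $(g_n\lor\phi_n^-)\land\phi_n^+$ no longer collapses to $\theta$ on $\partial V_n$ and the glued $f$ is discontinuous there. The root defect is that your symmetric two-sided clamp gives no pointwise bound on $h_n$ by controlled functions: $\phi_n^+$ may dip below $\theta$ inside $U_n$ (and there $h_n=\phi_n^+$), while $\phi_n^-$ may rise above $\beta_{k(n)}$.

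The paper circumvents this with a sign-split \emph{one-sided} clamp: it chooses $h_n^+$ with $h_n^+>g_n\lor\theta$ on a neighbourhood of $x_n$ and $\supp^\theta(h_n^+)\subseteq U_n$ (possible since $R$ has no supremum), similarly $h_n^-<g_n\land\theta$, and defines $f=((h_n^+\land g_n)\lor\theta)$ where $g_n\geq\theta$ and $f=((h_n^-\lor g_n)\land\theta)$ where $g_n\leq\theta$. This forces $\theta\land g_n\leq f\leq\theta\lor g_n$ pointwise on $U_n$ \emph{regardless} of what $h_n^\pm$ do inside $U_n$; since both $\theta$ and $g_n$ are controlled near $x_\infty$ (your condition $\theta(U_n)\cup g_n(U_n)\subseteq N_{k(n)}$ is the analogue of the paper's condition (iii) on the $U_n$), continuity at $x_\infty$ follows by squeezing, while near $x_n$ the clamps are slack and $f=g_n$, exactly as in your argument. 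If you want to keep your two-sided clamp, it can be repaired without re-applying \ref{conditionskaplansky2}: replace $\phi_n^+$ by $(\phi_n^+\land\overline{\beta_{k(n)-1}})\lor\theta$ and $\phi_n^-$ by $(\phi_n^-\lor\overline{\alpha_{k(n)-1}})\land\theta$, where bars denote constant functions; these modifications stay in $C_c(X,\theta)$, still equal $\theta$ off $U_n$, keep the prescribed values at $x_n$, and now take values between $\alpha_{k(n)-1}$ and $\beta_{k(n)-1}$ on all of $U_n$, which restores the value control your final paragraph needs (modulo routine care with interval endpoints, e.g.\ choosing the $N_k$ strictly nested). Either device closes the gap; as written, your proof does not.
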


\begin{theorem}\label{theoremclassificationlatticemorphismfirstcountable}
	Suppose that $X$, $Y$ and $R$ are first-countable, that $C_c(X,\theta_X)$ and $C_c(Y,\theta_Y)$ satisfy \ref{conditionskaplansky2}, and that $T\colon C_c(X,\theta_X)\to C_c(Y,\theta_Y)$ is a lattice isomorphism. Then there are a unique homeomorphism $\phi\colon Y\to X$ and a continuous function $\chi\colon Y\times R\to R$ such that
	\[Tf(y)=\chi(y,f(\phi(y)))\qquad\text{ for all }y\in Y\text{ and }f\in C_c(X,0)\ntag\label{equationclassificationoforderkaplanskyisomorphismsforfirstcountble}\]
	and $\chi(y,\cdot)\colon R\to R$ is an increasing bijection for each $y\in Y$.
\end{theorem}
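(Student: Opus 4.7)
The plan is to use Theorem \ref{theoremkaplanskywithoutlowerbound} to obtain $\phi$, then to show $T$ is $\phi$-basic via an interleaving construction based on Proposition \ref{propositionconstructfunctionkaplansky}, and finally to define $\chi$ and verify its joint continuity. Throughout, the key local observation is that if $f=g$ on an open set $U\subseteq X$, then by the characterization of $\phi$ one has $\phi^{-1}(U)\subseteq\operatorname{int}([Tf=Tg])$, so $Tf(y)$ depends only on the germ of $f$ at $\phi(y)$.

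To show $T$ is $\phi$-basic, fix $y\in Y$ and $f,g\in C_c(X,\theta_X)$ with $f(\phi(y))=g(\phi(y))=t$. Replacing the pair $(f,g)$ by $(f\wedge g,\, f\vee g)$ (both still taking the value $t$ at $\phi(y)$), it suffices to treat the case $f\leq g$. Suppose for contradiction that $Tf(y)<Tg(y)$. Then $\phi(y)\notin\operatorname{int}([f=g])$, so by first-countability of $X$ there is an injective sequence $x_n\to\phi(y)$ with $f(x_n)<g(x_n)$. Using \ref{conditionskaplansky2}, pick $f_0\in C_c(X,\theta_X)$ with $f_0(\phi(y))=t$; one checks that $C_c(X,f_0)\subseteq C_c(X,\theta_X)$ and that \ref{conditionskaplansky2} is inherited by $C_c(X,f_0)$. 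Set $g_n=f$ for even $n$ and $g_n=g$ for odd $n$; since $g_n(x_n)\to t=f_0(\phi(y))$, Proposition \ref{propositionconstructfunctionkaplansky} applied with $\theta=f_0$ produces $h\in C_c(X,f_0)\subseteq C_c(X,\theta_X)$ that equals $f$ on a neighborhood of each $x_{2k}$ and equals $g$ on a neighborhood of each $x_{2k+1}$. With $y_n=\phi^{-1}(x_n)\to y$, the germ observation yields $Th(y_{2k})=Tf(y_{2k})$ and $Th(y_{2k+1})=Tg(y_{2k+1})$, and passing to the limit via continuity of $Tf$, $Tg$, $Th$ forces $Tf(y)=Th(y)=Tg(y)$, the desired contradiction.

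With basicness established, define $\chi(y,t)=Tf(y)$ for any $f\in C_c(X,\theta_X)$ with $f(\phi(y))=t$; such $f$ exists for every $(y,t)\in Y\times R$ by \ref{conditionskaplansky2}, and the value is independent of the choice. By Proposition \ref{propositionmodelmorphism} applied to both $T$ and $T^{-1}$, every section $\chi(y,\cdot)\colon R\to R$ is a lattice isomorphism, hence an increasing bijection of the totally ordered set $R$. For joint continuity of $\chi$, given $(y_n,t_n)\to(y,t)$ in the first-countable space $Y\times R$, pick $f_0\in C_c(X,\theta_X)$ with $f_0(\phi(y))=t$ and apply Proposition \ref{propositionconstructfunctionkaplansky} with $\theta=f_0$ and the constant functions $g_n\equiv t_n$ (the hypothesis $t_n\to t=f_0(\phi(y))$ holds) to obtain $h\in C_c(X,\theta_X)$ with $h\equiv t_n$ on a neighborhood of $\phi(y_n)$ and $h(\phi(y))=t$; then $\chi(y_n,t_n)=Th(y_n)\to Th(y)=\chi(y,t)$ by continuity of $Th$. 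Uniqueness of $\phi$ is inherited from Theorem \ref{theoremkaplanskywithoutlowerbound}, and uniqueness of $\chi$ follows from its defining formula. The principal difficulty is the basicness step: Proposition \ref{propositionconstructfunctionkaplansky} demands $g_n(x_n)\to\theta(x_\infty)$, so one must shift the ``base'' from $\theta_X$ to a cleverly chosen $f_0$ with $f_0(\phi(y))=t$ in order for the interleaved values $f(x_n),g(x_n)\to t$ to fit the hypothesis.
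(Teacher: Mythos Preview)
Your proof is correct and follows essentially the same strategy as the paper's: obtain $\phi$ from Theorem~\ref{theoremkaplanskywithoutlowerbound}, establish $\phi$-basicness via an interleaving function built from Proposition~\ref{propositionconstructfunctionkaplansky}, and deduce joint continuity of $\chi$ by the same construction. Your explicit shift of base to an auxiliary $f_0$ with $f_0(\phi(y))=t$ before invoking Proposition~\ref{propositionconstructfunctionkaplansky} is in fact more careful than the paper's own write-up (which leaves this point implicit), and the only minor omission is that in the joint-continuity step you should pass to subsequences to ensure the $\phi(y_n)$ are distinct (handling the case $y_n=y$ infinitely often via continuity of the section $\chi(y,\cdot)$, which is automatic since increasing bijections of $R$ are homeomorphisms for the order topology).
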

\begin{proof}
	Let $\phi\colon Y\to X$ be the $T$-homeomorphism. We just need to prove that $T$ is $\phi$-basic, so assume $y\in Y$ and $f(\phi(y))=g(\phi(y))$. In order to prove that $Tf(y)=Tg(y)$, we may assume that $f\leq g$, by considering the auxiliary function $f\land g$.
	
	If $y$ is isolated in $Y$, then $f$ and $g$ coincide on the open set $\left\{\phi(y)\right\}$, so $Tf$ and $Tg$ coincide on the open set $\left\{y\right\}$.
	
	Assume then that $y$ is not isolated. Since $Y$ is first-countable, let $(y_n)_n$ be an injective sequence in $Y$ converging to $y$. By Proposition \ref{propositionconstructfunctionkaplansky}, there is $h\in C_c(X,\theta)$ such that
	\begin{itemize}
		\item If $n$ is even, $h=f$ on a neighbourhood of $\phi(y_n)$;
		\item If $n$ is odd, $h=g$ on a neighbourhood of $\phi(y_n)$.
	\end{itemize}
	Then $\phi(y)\in\overline{\operatorname{int}[f=h]}$, so $t\in\overline{\operatorname{int}[Tf=Th]}$ and so $Tf(y)=Th(y)$. Similarly, $Tg(y)=Th(y)=Tf(y)$. This proves that $T$ is $\phi$-basic. Let $\chi$ be the $(\phi,T)$-transform.
	
	Proposition \ref{propositionmodelmorphism}, applied to the signature of lattices (with the binary symbol ``$\lor$'' interpreted as ``join'') implies that the sections $\chi(y,\cdot)$ are lattice isomorphisms of $R$ for all $n$, and in particular homeomorphisms. The proof that $\chi$ is continuous is similar to that of implication \ref{theorembasicisomorphisms(2)}$\Rightarrow$\ref{theorembasicisomorphisms(1)} of Theorem \ref{theorembasicisomorphisms} (using Proposition \ref{propositionconstructfunctionkaplansky} instead of \ref{propositionconstructfunction}).
\end{proof}

In the case of additive lattice isomorphisms of spaces of real-valued functions, we do not require the first-countability hypothesis.

\begin{theorem}\label{theoremadditivekaplansky}
	Suppose $R=\mathbb{R}$, and $T\colon C_c(X)\to C_c(Y)$ is an additive lattice isomorphism. Then there are a unique homeomorphism $\phi\colon Y\to X$ and a unique positive continuous function $p\colon Y\to (0,\infty)$ such that $Tf(y)=p(y)f(\phi(y))$ for all $f\in C_c(X)$ and $y\in Y$.
\end{theorem}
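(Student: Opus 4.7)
The plan is to follow the general procedure outlined on page \pageref{generalprocedureforclassification}. First, apply Theorem \ref{theoremkaplanskywithoutlowerbound} to produce the $T$-homeomorphism $\phi:Y\to X$. The specialization $g=0$ of the characterizing property, together with $Z(h)=\operatorname{int}[h=0]$, yields $\phi(Z(Th))=Z(h)$ for every $h\in C_c(X)$. Additivity gives $T(0)=0$ and $T(-f)=-Tf$, and the lattice-preserving property combined with $f\le g\iff f=f\wedge g$ shows that $T$ is order-preserving, so in particular sends nonnegative functions to nonnegative functions. Additivity also forces $\mathbb{Q}$-linearity: $T(h/n)=T(h)/n$ for every positive integer $n$.

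The main step is to show that $T$ is $\phi$-basic. By Proposition \ref{propositionbasicisomorphisms}\ref{propositionbasicisomorphisms(a)} and additivity, it is enough to prove that $h(\phi(y))=0$ implies $Th(y)=0$. Writing $h=h^+-h^-$ reduces this to the case $h\ge 0$. If $\phi(y)\notin\supp(h)$ then $\phi(y)\in Z(h)$ and so $y\in Z(Th)$. Otherwise, choose $u\in C_c(X)$ with $u\ge 0$ and $u\equiv 1$ on $\supp(h)$ (Urysohn's lemma). A direct case check on and off $\supp(h)$ shows
\[h=(h-u/n)^+ \;+\; h\wedge(u/n)\]
as elements of $C_c(X)$ for every $n\ge 1$. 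At $\phi(y)$ we have $u(\phi(y))=1$ and $h(\phi(y))=0$, so $h-u/n=-1/n<0$ at $\phi(y)$; by continuity $(h-u/n)^+$ vanishes on an open neighborhood of $\phi(y)$, i.e.\ $\phi(y)\in Z((h-u/n)^+)$, whence $T((h-u/n)^+)(y)=0$. Applying $T$ to the decomposition and using $0\le h\wedge(u/n)\le u/n$ together with the order-preservation and $\mathbb{Q}$-linearity of $T$ gives
\[0\le Th(y)=T(h\wedge(u/n))(y)\le T(u/n)(y)=\tfrac{1}{n}\,Tu(y).\]
Letting $n\to\infty$ yields $Th(y)=0$, as required.

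With basicness established, let $\chi$ be the $(\phi,T)$-transform. By Proposition \ref{propositionmodelmorphism} applied to the signature $(+,-,0,\vee,\wedge)$ of additive lattices, each section $\chi(y,\cdot):\mathbb{R}\to\mathbb{R}$ is an additive lattice isomorphism; additivity makes it $\mathbb{Q}$-linear, the lattice property makes it monotone (hence continuous and $\mathbb{R}$-linear), and bijectivity forces a nonzero positive slope. Thus $\chi(y,t)=p(y)\,t$ for some $p(y)>0$, and $Tf(y)=p(y)\,f(\phi(y))$. To see that $p$ is continuous at $y_0\in Y$, choose (by Urysohn) $f\in C_c(X)$ with $f\equiv 1$ on a neighborhood $U$ of $\phi(y_0)$; then $p=Tf\circ\id_Y$ on the open set $\phi^{-1}(U)$, which is continuous since $Tf\in C_c(Y)$. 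Uniqueness of $\phi$ is its characterization as the $T$-homeomorphism, and uniqueness of $p$ follows from $p(y)=Tf(y)$ whenever $f(\phi(y))=1$.

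The main obstacle is the basicness step: without first-countability we cannot use the sequence argument of Theorem \ref{theoremclassificationlatticemorphismfirstcountable}. The device that replaces sequences is the decomposition $h=(h-u/n)^+ + h\wedge(u/n)$, which isolates a summand vanishing near $\phi(y)$ and a summand whose image under $T$ is controlled by $\frac{1}{n}Tu(y)$ via the $\mathbb{Q}$-linearity that additivity supplies for free.
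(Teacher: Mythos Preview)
Your proof is correct and takes a genuinely different route from the paper in the key basicness step.

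The paper argues by contradiction: assuming $f(\phi(y))=0$ but $Tf(y)\neq 0$, it notes that $f$ cannot be locally constant at $\phi(y)$ (else the $Z$-property already gives $Tf(y)=0$), picks distinct points $x_n$ near $\phi(y)$ with $|f(x_n)|<n^{-2}$, and then uses Propositions \ref{propositiondisjointopensets} and \ref{propositionconstructfunction}\ref{propositionconstructfunction(b)} to build $g\in C_c(X)$ equal to $nf$ near each $x_n$; since $T(nf)=nTf$ by additivity, $|Tg|$ is unbounded along $\phi^{-1}(x_n)$, a contradiction. Your argument replaces this sequence-and-contradiction device with the pointwise identity
\[
h=(h-u/n)^+ + h\wedge(u/n),
\]
where $u$ is a Urysohn function equal to $1$ on $\supp(h)$, and then squeezes using order-preservation and $\mathbb{Q}$-linearity. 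This is more elementary (no appeal to Propositions \ref{propositiondisjointopensets} or \ref{propositionconstructfunction}) and makes the role of additivity, via the estimate $T(h\wedge(u/n))\leq \tfrac{1}{n}Tu$, completely transparent. The paper's approach, on the other hand, reuses the same ``force unboundedness'' mechanism that drives Milgram's theorem (Theorem \ref{theoremmilgram}), so it fits more uniformly into the overall narrative. The remaining steps (identifying each section $\chi(y,\cdot)$ as multiplication by some $p(y)>0$ and verifying continuity of $p$) match the paper's treatment.
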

\begin{proof}
	
	First note that for all $f\in C_c(X)$, $|f|=(f\lor 0)-(f\land 0)$, so $T|f|=|Tf|$. Let $\phi\colon Y\to X$ be the $T$-homeomorphism, given by Theorem \ref{theoremkaplanskywithoutlowerbound}.
	
	The proof that $T$ is $\phi$-basic is similar to that of item \ref{theoremmilgramstep6} of the proof of Theorem \ref{theoremmilgram}. Let $\chi$ be the $T$-transform. Each section $\chi(y,\cdot)$ is an additive order-preserving bijection (Propositions \ref{propositionmodelmorphism} and \ref{propositionbasicisomorphisms}) and hence has the form $\chi(y,t)=p(y)t$ for some $p(y)>0$. If $Tf(y)\neq 0$, then $f(\phi(y))\neq 0$ as well and $p=Tf/(f\circ \phi)$ on a neighbourhood of $y$, thus $p$ is continuous.\qedhere
\end{proof}

%%%%%%%%%%%%%%%%%%%%%%%%%%%%%%%%%%%%%%%%%%%%%%%%%%%%%%%%%%%%%%%%%%%%%%%%%%%%%%%%%%%%%%%%%%%%%%%%%%%%%%%%%%%%%%%%%%%%%%%%%%%%%%%%%%%%%%%%%%%%%%%%%%%%%%%%%%%%%%%%%%%%%%%%%%%%%%%%%%%%%%%%%%%%%%%%%%%%%%%%%%%%%%%%%%%%%%%%%%%%%%%%%%%%%%%%%%%%%%%%%%%%%%%%%%%%%%%%%%%%

\subsection{Li--Wong Theorem}\label{subsectionliwong}

We will now recover a theorem of Li and Wong, \cite[Theorem 2.2]{MR3162258}, which can be seen as a generalization of Theorem \ref{theoremadditivekaplansky}. We will proceed in the opposite direction, i.e., by proving their result instead as a consequence of the more general Theorem \ref{theoremkaplansky}. Let $\mathbb{K}=\mathbb{R}$ or $\mathbb{C}$.

\begin{theorem}[Li--Wong \cite{MR3162258}]\label{theoremliwong}
	Let $X$ and $Y$ be compact Hausdorff spaces, and $\mathcal{A}(X)$ and $\mathcal{A}(Y)$ be two regular vector sublattices of $C(X,\mathbb{K})$ and $C(Y,\mathbb{K})$, respectively. Suppose that $T\colon \mathcal{A}(X)\to \mathcal{A}(Y)$ is a $\mathbb{K}$-linear isomorphism which preserves non-vanishing functions, that is, for all $f\in\mathcal{A}(X)$,
	\[0\in f(X)\iff 0\in Tf(Y).\]
	Then there is a homeomorphism $\phi\colon Y\to X$ and a continuous non-vanishing function $p\colon Y\to\mathbb{K}$ such that $Tf(y)=p(y)f(\phi(y))$ for all $f\in\mathcal{A}(X)$ and $y\in Y$.
\end{theorem}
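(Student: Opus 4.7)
I will follow the general procedure outlined on page \pageref{generalprocedureforclassification}: first exhibit $T$ as a $\perpp$-isomorphism with an associated homeomorphism $\phi$, then show $T$ is $\phi$-basic, and finally classify the sections of the $(\phi,T)$-transform.

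The main obstacle is upgrading the given hypothesis---which says that $T$ preserves \emph{single}-function non-vanishing---to the strong version of Definition \ref{definitionnonvanishingbijection}, namely $\bigcap_i [f_i=0]=\varnothing \iff \bigcap_i [Tf_i=0]=\varnothing$ for every finite family $f_1,\ldots,f_n\in\mathcal{A}(X)$. A purely linear attack fails: the compact image $\{(f_1(x),\ldots,f_n(x)):x\in X\}\subseteq\mathbb{K}^n\setminus\{0\}$ can meet every linear hyperplane through the origin (e.g.\ the unit circle in $\mathbb{R}^2$), so one cannot in general find $\lambda_i\in\mathbb{K}$ making $\sum_i\lambda_i f_i$ non-vanishing. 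The vector sublattice structure of $\mathcal{A}(X)$ rescues us: since $|f_i|\in\mathcal{A}(X)$, the function $\sum_i|f_i|\in\mathcal{A}(X)$ is non-vanishing precisely when the $f_i$'s have no common zero, and by $\mathbb{K}$-linearity $T\!\left(\sum_i|f_i|\right)=\sum_i T|f_i|$ is then non-vanishing. Tying the zero set of $Tf_i$ to that of $T|f_i|$ via the observation that $[f+\lambda|f|=0]=[f=0]$ for every $\lambda\in\mathbb{K}$ with $|\lambda|\neq 1$ (so the single-function non-vanishing hypothesis transfers from $f+\lambda|f|$ to $Tf+\lambda T|f|$) then delivers $\bigcap_i[Tf_i=0]=\varnothing$. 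Proposition \ref{propositionnonvanishingbijectionisperppisomorphism} and Theorem \ref{theoremnonvanishing} applied to the resulting non-vanishing bijection then produce a unique homeomorphism $\phi:Y\to X$ with the pointwise equivalence $f(\phi(y))=0\iff Tf(y)=0$.

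Combining this equivalence with $\mathbb{K}$-linearity, Proposition \ref{propositionbasicnessofgroupvalued} applied to the additive group $(\mathbb{K},+)$ shows that $T$ is $\phi$-basic, and the same argument for $T^{-1}$ makes each section $\chi(y,\cdot):\mathbb{K}\to\mathbb{K}$ of the $(\phi,T)$-transform a bijection. By Proposition \ref{propositionmodelmorphism} applied to the $\mathbb{K}$-vector-space signature, each section is a $\mathbb{K}$-linear bijection of $\mathbb{K}$, hence equals multiplication by some $p(y)\in\mathbb{K}\setminus\{0\}$, yielding $Tf(y)=p(y)f(\phi(y))$. Continuity of $p$ at any $y_0\in Y$ follows by using regularity of $\mathcal{A}(X)$ to pick $f\in\mathcal{A}(X)$ with $f(\phi(y_0))\neq 0$: continuity ensures that $f\circ\phi\neq 0$ on a neighbourhood $V$ of $y_0$, on which $p=(Tf)/(f\circ\phi)$ is a ratio of continuous functions.
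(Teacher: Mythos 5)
There is a genuine gap at the central step of your argument: the upgrade from the single-function hypothesis to the finite-intersection condition of Definition \ref{definitionnonvanishingbijection}. Your plan requires the \emph{pointwise} containment $[Tf=0]\subseteq[T|f|=0]$: knowing $\sum_i T|f_i|$ is non-vanishing, you pick, at a hypothetical common zero $y_0$ of the $Tf_i$, an index $j$ with $T|f_j|(y_0)\neq 0$, and you need this to force $Tf_j(y_0)\neq 0$. But the observation $[f+\lambda|f|=0]=[f=0]$ for $|\lambda|\neq 1$ only transports \emph{global} information: it tells you that $Tf_j+\lambda T|f_j|$ vanishes \emph{somewhere} in $Y$ whenever $f_j$ vanishes somewhere in $X$ (the interesting case, since if some $f_j$ is non-vanishing you are done at once). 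It says nothing about \emph{where} these zeros sit. Concretely, if $Tf_j(y_0)=0$ and $T|f_j|(y_0)\neq 0$, then $Tf_j+\lambda T|f_j|$ is simply nonzero at $y_0$ and vanishes at some other point, so no contradiction arises. Worse, the obstruction is exactly the one you yourself identified for the ``purely linear attack,'' one level up: all you can extract from your family of constraints is that the joint image $\left\{(Tf_j(y),T|f_j|(y)):y\in Y\right\}\subseteq\mathbb{K}^2$ meets every line through the origin, and (as with your unit-circle example) such a set can avoid the origin entirely. Localizing the global non-vanishing hypothesis is precisely the hard core of the Li--Wong theorem; it is the content of \cite[Lemma 2.3]{MR3162258}, which the paper imports rather than reproves, and it cannot be dispatched by the one-line transfer you propose.

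For comparison, the paper's route is: invoke \cite[Lemma 2.3]{MR3162258} to get that $T$ is a $\perp$-isomorphism, normalize by the non-vanishing function $F=\sum_i|f_i|$ via $S(f)=T(fF)/TF$ so that $S(1)=1$ and $S$ preserves images of functions, deduce that $S$ is a lattice isomorphism, and then apply Kaplansky's Theorem (\ref{theoremkaplanskywithoutlowerbound}) to produce $\phi$, after which basicness and the identification of the transform are proved directly. The remainder of your outline (Theorem \ref{theoremnonvanishing} giving $\phi$ with $f(\phi(y))=0\iff Tf(y)=0$, basicness via Proposition \ref{propositionbasicnessofgroupvalued} for the additive group, $\mathbb{K}$-linear sections forced to be $t\mapsto p(y)t$, and continuity of $p$ from $p=Tf/(f\circ\phi)$ locally) is sound and would indeed be a cleaner endgame than the paper's --- but only once the strong non-vanishing condition, or at least disjointness preservation, is actually established. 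To repair the proof you must either reprove something like Li--Wong's Lemma 2.3 or cite it, as the paper does.
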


The following technical lemma is the main necessary tool of the proof. We do not assume that $\mathcal{A}(X)$ contains the constant functions, however since it is a regular lattice then it contains a strictly positive function $F$ satisfying $0<F<1/2$ (see the beginning of the proof of Theorem \ref{theoremliwong} below). The use of the constant function ``$1/2$'' in the proof of \cite[Lemma 2.3]{MR3162258} can be replaced by $F$.

\begin{lemma}[{\cite[Lemma 2.3]{MR3162258}}]
	Any $T$ as in Theorem \ref{theoremliwong} is a $\perp$-isomorphism.
\end{lemma}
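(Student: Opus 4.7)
The claim is $f \perp g \iff Tf \perp Tg$ for every $f, g \in \mathcal{A}(X)$, and since the only structure $T$ is assumed to preserve is $\mathbb{K}$-linearity together with the predicate ``vanishes somewhere on $X$'' (in both directions), the plan is to characterise $\perp$ algebraically using only these two ingredients.

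Following Li and Wong, I would first construct a strictly positive, uniformly bounded anchor $F \in \mathcal{A}(X)$ with $0 < F < 1/2$, to play the role of the constant $1/2$ that appears in \cite[Lemma 2.3]{MR3162258} (we cannot use the constant itself because $\mathcal{A}(X)$ need not contain constants). To build $F$, use regularity to select, for each $x \in X$, a witness $f_x \in \mathcal{A}(X)$ with $f_x(x) = 1$; cover $X$ by finitely many open sets of the form $[|f_{x_i}| > 1/2]$ using compactness; then form $|f_{x_1}| \vee \cdots \vee |f_{x_n}|$ via the vector lattice operations of $\mathcal{A}(X)$ and rescale so that the result lies strictly between $0$ and $1/2$.

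With $F$ in place, the Li--Wong argument produces an algebraic characterisation of $f \perp g$ phrased in terms of whether certain linear combinations of the form $\alpha F + \beta f + \gamma g$ (for specific scalar choices $\alpha, \beta, \gamma \in \mathbb{K}$) do or do not vanish somewhere on $X$. Since $T$ is linear and preserves non-vanishing in both directions, the same characterisation applied to $(Tf, Tg)$ with $TF$ in the role of $F$ is equivalent; and $TF$ is non-vanishing precisely because $F$ is, so it can legitimately serve as the anchor on the $Y$ side. This yields the desired equivalence $f \perp g \iff Tf \perp Tg$.

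The main obstacle is the characterisation itself. Given a point $x_0$ with $f(x_0), g(x_0) \neq 0$, one must exhibit scalars for which some linear combination involving $F, f, g$ vanishes somewhere on $X$, while the ``partial'' combinations involving only $F, f$ (respectively, only $F, g$) remain non-vanishing on all of $X$. The necessary scalar choices are controlled by the uniform bound $F < 1/2$ and by $\min_X F > 0$ (available from compactness), and verifying feasibility of such a witness is exactly the Li--Wong computation we are importing, now read with $F$ replacing $1/2$ throughout.
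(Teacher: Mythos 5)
Your plan matches the paper's treatment in spirit: the paper gives no independent proof of this lemma, but cites \cite[Lemma 2.3]{MR3162258} together with precisely the remark you reconstruct, namely that the constant function $1/2$ in Li--Wong's computation can be replaced by a strictly positive $F\in\mathcal{A}(X)$ with $0<F<1/2$, built from regularity, compactness and the lattice operations (the paper takes a rescaling of $\sum_i|f_i|$ in the proof of Theorem \ref{theoremliwong}; your rescaled $|f_{x_1}|\lor\cdots\lor|f_{x_n}|$ is an equivalent variant).

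There is, however, a genuine gap in your transport step. You propose to apply ``the same characterisation'' on the $Y$ side ``with $TF$ in the role of $F$'', justified only by $TF$ being non-vanishing. But by your own account the feasibility of the Li--Wong computation depends on the anchor being real-valued, strictly positive and bounded above by $1/2$ (your scalar choices are ``controlled by $F<1/2$ and $\min_X F>0$''). The function $TF$ inherits none of this: $T$ is only linear and non-vanishing--preserving, not positive or lattice-preserving, so $TF$ may be complex-valued or change sign; compactness of $Y$ gives $\min_Y|TF|>0$, but nothing resembling $0<TF<1/2$. A second, related problem is that a characterisation of $f\perp g$ by vanishing of scalar combinations $\alpha F+\beta f+\gamma g$ alone is too weak to be the thing being transported: such data only records which hyperplanes through the origin meet the joint range $\left\{(F(x),f(x),g(x)):x\in X\right\}$, and for instance on connected $X$ any two nonzero disjoint real functions have a common zero, so \emph{every} combination $\beta f+\gamma g$ vanishes somewhere, just as can happen for non-disjoint pairs. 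The correct shape of the argument quantifies over \emph{functions}, not scalars: one shows $f\perp g$ if and only if for every $h\in\mathcal{A}(X)$, non-vanishing of $h-f$ and of $h-g$ forces non-vanishing of $h$ (the forward direction is immediate, since $f\perp g$ gives $[f=\theta_X]\cup[g=\theta_X]=X$, and at each $x$ either $h(x)=(h-f)(x)\neq 0$ or $h(x)=(h-g)(x)\neq 0$); the converse is where the anchor computation lives, producing, from a point $y_0$ with $Tf(y_0)\neq 0\neq Tg(y_0)$, a witness $H\in\mathcal{A}(Y)$ vanishing somewhere with $H-Tf$ and $H-Tg$ non-vanishing. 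This witness must be built \emph{intrinsically} in $\mathcal{A}(Y)$, using that $\mathcal{A}(Y)$ is itself a regular sublattice and hence contains its own anchor $F_Y$ with $0<F_Y<1/2$ -- not by pushing $F$ through $T$. The quantified statement is then preserved verbatim by $T$ (linearity, bijectivity, two-way non-vanishing), and running it symmetrically for $T$ and $T^{-1}$ with the two intrinsic anchors gives the two-sided conclusion; compare the paper's circle-valued analogue, Theorem \ref{theoremisomorphismgroupifcirclevaluedfunctions} proved via the witness Lemma \ref{lemmatheoremisomorphismgroupifcirclevaluedfunctions}, which has exactly this structure.
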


\begin{proof}[Proof of Theorem \ref{theoremliwong}]
	We will use Theorem \ref{theoremkaplanskywithoutlowerbound}. First we need to modify $T$ to obtain a lattice isomorphism. Since $\mathcal{A}(X)$ is a sublattice, then for all $f\in\mathcal{A}(X)$,
	\[f^+=\max(f,0),\quad f^-=\max(-f,0)\quad\text{and}\quad|f|=f^++f^-\text{ belong to }\mathcal{A}(X).\]
	
	As $\mathcal{A}(X)$ is regular and $X$ is compact, we can take finitely many functions $f_1,\ldots,f_n\in\mathcal{A}(X)$ such that for all $x\in X$, $f_i(x)\neq 0$ for some $i$, and therefore the function $F=\sum_{i=1}^n|f_i|$ belongs to $\mathcal{A}(X)$ and is non-vanishing, so $TF$ is also non-vanishing. We define new classes of functions
	\[\mathcal{B}(X)=\left\{f/F:f\in\mathcal{A}(X)\right\},\qquad\mathcal{B}(Y)=\left\{G/TF:g\in\mathcal{A}(Y)\right\}.\]
	It is immediate to see that $\mathcal{B}(X)$ and $\mathcal{B}(Y)$ are regular, and contain the constant functions of $X$ and $Y$, respectively. Define a linear isomorphism $S\colon \mathcal{B}(X)\to\mathcal{B}(Y)$, $S(f)=T(fF)/TF$, which preserves non-vanishing functions and satisfies $S(1)=1$. Given a scalar $\lambda$, linearity and the non-vanishing property of $S$ imply that, for all $f\in\mathcal{B}(X)$,
	\begin{align*}
	\lambda\not\in f(X)&\iff f-\lambda\text{ is non-vanishing}\\
	&\iff Sf-\lambda\text{ is non-vanishing}\iff \lambda\not\in Sf(Y),
	\end{align*}
	so $f(X)=Sf(Y)$, i.e., $S$ preserves images of functions.
	
	However, in order to apply Theorem \ref{theoremkaplanskywithoutlowerbound} we also need to make sure that $\mathcal{B}(X)$ and $\mathcal{B}(Y)$ are lattices. As $F>0$, it readily follows that $\mathcal{B}(X)$ is a (self-adjoint) sublattice of $C(X,\mathbb{K})$, however this is not so immediate for $\mathcal{B}(Y)$.
	
	As $S$ preserves images of functions, it preserves real functions. If $f\in\mathcal{B}(X)$, then $S(\operatorname{Re}(f))$ and $S(\operatorname{Im}(f))$ are real functions such that $Sf=S(\operatorname{Re}(f))+iS(\operatorname{Im}(f))$. As $T$ is a $\perp$-isomorphism then $S$ is also a $\perp$-isomorphism, so we also obtain $S(\operatorname{Re}(f))\perp S(\operatorname{Im}(f))$. This is enough to conclude that $S$ preserves real and imaginary parts of functions, from which it follows that $\mathcal{B}(Y)$ is self-adjoint. Similarly, $S$ preserves positive and negative parts of functions. In particular, if $f\in\mathcal{B}(Y)$ then $f^+\in\mathcal{B}(Y)$, and this is enough to conclude that $\mathcal{B}(Y)$ is a sublattice of $C(Y,\mathbb{K})$, and that $S$ is an order-preserving isomorphism.
	
	We may then consider only real-valued functions, and the complex case will follow by linearity (and since $S$ preserves real and imaginary parts). By Kaplansky's Theorem (\ref{theoremkaplanskywithoutlowerbound}), we can construct the $S$-homeomorphism $\phi\colon Y\to X$. Now we need to prove that $S$ is $\phi$-basic.
	
	Suppose $f(x)\neq 0$ for a given $x\in X$, and let us assume, without loss of generality, that $f(x)>0$. Then $f>0$ on some neighbourhood $U$ of $x$. Again using compactness of $X\setminus U$ and regularity of the sublattice $\mathcal{B}(X)$ we can construct a function $g\in\mathcal{B}(X)$ such that $g=0$ on some neighbourhood of $x$ and $g>0$ on $X\setminus U$. Letting $\widetilde{f}=f\lor g$, we have $\widetilde{f}=f$ on some neighbourhood of $x$, so $S\widetilde{f}=Sf$ on some neighbourhood of $\phi^{-1}(x)$. But $\widetilde{f}$ is non-vanishing, so $S\widetilde{f}$ is also non vanishing and in particular $Sf(\phi^{-1}(x))\neq 0$. This proves that $S$ is basic with respect to $\phi$.
	
	Letting $\chi\colon Y\times\mathbb{R}\to\mathbb{R}$ be the $S$-transform, we have that all sections $\chi(y,\cdot)$ are linear and increasing (Theorem \ref{propositionmodelmorphism}), hence of the form $\chi(y,t)=P(y)t$ for a certain $P(y)>0$. Since $S(1)=1$ then $P=1$, that is, $\chi(y,t)=t$ for all $t\in\mathbb{R}$.
	
	Finally, for all $f\in\mathcal{A}(X)$ and $y\in Y$,
	\[Tf(y)=(TF)(y)\left[S\left(\frac{f}{F}\right)(y)\right]=(TF)(y)\chi\left(y,\frac{f}{F}(\phi(y))\right)=\frac{TF(y)}{F(\phi(y))}f(\phi(y)),\]
	as we wanted.
\end{proof}

%%%%%%%%%%%%%%%%%%%%%%%%%%%%%%%%%%%%%%%%%%%%%%%%%%%%%%%%%%%%%%%%%%%%%%%%%%%%%%%%%%%%%%%%%%%%%%%%%%%%%%%%%%%%%%%%%%%%%%%%%%%%%%%%%%%%%%%%%%%%%%%%%%%%%%%%%%%%%%%%%%%%%%%%%%%%%%%%%%%%%%%%%%%%%%%%%%%%%%%%%%%%%%%%%%%%%%%%%%%%%%%%%%%%%%%%%%%%%%%%%%%%%%%%%%%%%%%%%%%%

\subsection{Jarosz' Theorem}\label{subsectionjarosz}

Throughout this subsection, we fix $\mathbb{K}=\mathbb{R}$ or $\mathbb{C}$. Given a locally compact Hausdorff space $X$, we let $C_c(X)=C_c(X,0)$, the vector space of compactly supported $\mathbb{K}$-valued functions.

\begin{theorem}[Jarosz \cite{MR1060366}]\label{theoremjarosz}
	If $T\colon C_c(X)\to C_c(Y)$ is a linear $\perp$-isomorphism, then there exist a homeomorphism $\phi\colon Y\to X$ and a continuous non-vanishing function $p\colon Y\to\mathbb{K}$ such that $Tf(y)=p(y)f(\phi(y))$ for all $f\in C_c(X)$ and $y\in Y$.
\end{theorem}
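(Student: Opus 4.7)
My plan is to follow the general procedure from Section \ref{sectionconsequences}. First, I would exploit linearity to upgrade the $\perp$-isomorphism $T$ to a $\perpp$-isomorphism. The key observation is that the annihilator $\mathcal{I}(f) := \{g \in C_c(X) : g \perp f\}$ coincides with the linear subspace $\{g : g|_{\supp(f)} = 0\}$, and by Urysohn's lemma one has $\mathcal{I}(g) \subseteq \mathcal{I}(f) \iff \supp(f) \subseteq \supp(g)$. Since $T$ is linear and $\perp$-preserving in both directions, $T(\mathcal{I}(f)) = \mathcal{I}(Tf)$, so $T$ preserves support-inclusions. Combining this with the Urysohn characterization $f \perpp g \iff \exists h_1, h_2 \in C_c(X): f \Subset h_1,\, g \Subset h_2,\, h_1 \perp h_2$, and chaining support-inclusions to express $\Subset$ in terms of $\perp$-data and support-inclusion data, one concludes $T$ preserves $\perpp$. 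Theorem \ref{maintheorem} then provides a unique homeomorphism $\phi: Y \to X$ with $\phi(\supp(Tf)) = \supp(f)$.

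Next, I would show $T$ is $\phi$-basic. By Proposition \ref{propositionbasicnessofgroupvalued} applied to the additive group $(\mathbb{K}, +)$, this reduces to the implication $f(\phi(y)) = 0 \Rightarrow Tf(y) = 0$. The route is automatic continuity: I would show the linear functional $\Psi_y: f \mapsto Tf(y)$ is bounded in sup-norm on each $C_K(X) := \{f \in C_c(X) : \supp(f) \subseteq K\}$ for $K \subseteq X$ compact. Indeed, if $\Psi_y$ were unbounded on some $C_K(X)$, one could extract $f_n \in C_K(X)$ with $\|f_n\|_\infty \to 0$ and $|\Psi_y(f_n)| \to \infty$; the disjointness-preserving structure permits refining to pairwise $\perp$-disjoint $\widetilde{f}_n$ with $|\Psi_y(\widetilde{f}_n)| \to \infty$, but then the $T\widetilde{f}_n$'s have pairwise disjoint $[\cdot \neq 0]$ sets, so $y$ lies in at most one of them, forcing $T\widetilde{f}_n(y) = 0$ for all but one $n$---a contradiction. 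With continuity in hand, for $f$ with $f(\phi(y)) = 0$ and $\eta > 0$, pick a cutoff $\rho \in C_c(X)$ equal to $1$ on a neighborhood $U$ of $\phi(y)$ small enough that $\sup_U |f| \leq \eta$, and split $f = (1-\rho) f + \rho f$: the first summand lies in $\mathbf{I}(X \setminus \{\phi(y)\})$, which $T$ maps into $\mathbf{I}(Y \setminus \{y\})$ by Step 1 and Theorem \ref{theoremperppideals}, giving value $0$ at $y$; the second satisfies $\|\rho f\|_\infty \leq \eta$, so $|T(\rho f)(y)| \leq C\eta$ by continuity of $\Psi_y$ on $C_{\supp(\rho)}(X)$. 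Letting $\eta \to 0$ yields $Tf(y) = 0$.

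Finally, by regularity of $C_c(X)$, the $(\phi, T)$-transform $\chi$ is defined on all of $Y \times \mathbb{K}$. Propositions \ref{propositionmodelmorphism} and \ref{propositionbasicisomorphisms} together imply each section $\chi(y, \cdot): \mathbb{K} \to \mathbb{K}$ is a $\mathbb{K}$-linear bijection, hence multiplication by some $p(y) \in \mathbb{K} \setminus \{0\}$. For continuity of $p$: for each $y_0 \in Y$, by regularity of $C_c(X)$ pick $f \in C_c(X)$ with $f \equiv 1$ on a neighborhood $U$ of $\phi(y_0)$; then on $\phi^{-1}(U)$ one has $Tf(y) = p(y) \cdot 1 = p(y)$, so $p$ agrees locally with the continuous function $Tf$ and is continuous at $y_0$. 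The main obstacle is the automatic continuity step: the careful combinatorial argument refining an unboundedness sequence into pairwise $\perp$-disjoint functions, using the disjointness-preserving structure to rule out blowups of $\Psi_y$.
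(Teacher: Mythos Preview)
Your Step 1 has a genuine gap. The relation $\mathcal{I}(g)\subseteq\mathcal{I}(f)\iff\supp(f)\subseteq\supp(g)$ is correct, but observe that for the regular open sets $\sigma(f),\sigma(g)$ one has $\supp(f)\subseteq\supp(g)\iff\overline{\sigma(f)}\subseteq\overline{\sigma(g)}\iff\sigma(f)\subseteq\sigma(g)$, which is exactly the relation $f\subseteq g$ of Definition~\ref{definitionsigma}. By Theorem~\ref{theoremrelationsrelations}\ref{theoremrelationsrelations(a)} this is already equi-expressible with $\perp$, so your ``support-inclusion'' data adds nothing to what any $\perp$-isomorphism preserves. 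Linearity is never used. Your claim that $\Subset$ can be recovered by ``chaining support-inclusions'' is precisely what Theorem~\ref{theoremrelationsrelations} does \emph{not} assert, and Corollaries~\ref{corollary01stoneperp} and~\ref{corollary01circleperp} exhibit $\perp$-isomorphisms between $C(X)$ and $C(Y)$ for non-homeomorphic $X,Y$, so $\perp$ and $\subseteq$ alone cannot yield $\perpp$. Any valid upgrade from $\perp$ to $\perpp$ here must exploit linearity in a substantive way, and your sketch does not.

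The paper's route is quite different and sidesteps this issue. In the compact case it never establishes that $T$ is a $\perpp$-isomorphism; instead it proves directly that $T$ preserves non-vanishing functions (via a boundedness contradiction in the spirit of Theorem~\ref{theoremadditivekaplansky}, using Proposition~\ref{propositionconstructfunction} to build a function that forces $Tf$ to be unbounded if $Tf$ did not vanish where it should). This feeds into the Li--Wong Theorem~\ref{theoremliwong}, whose proof passes through Kaplansky's lattice machinery to produce $\phi$ and the basic form. The locally compact case is then obtained by restricting $T$ to $C(\supp(b))\to C(\supp(Tb))$ for each $b\in C_c(X)$---well-defined because $b\perp(f'-g')$ detects agreement on $\supp(b)$---and gluing the resulting local data $\phi^b,p^b$.

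A secondary concern: your automatic-continuity argument in Step~2 asserts one can ``refine to pairwise $\perp$-disjoint $\widetilde f_n$'' with $|\Psi_y(\widetilde f_n)|\to\infty$, but you give no mechanism for this refinement. Since $\Psi_y$ is linear rather than multiplicative, splitting an $f_n$ into disjoint pieces can distribute the large value of $\Psi_y(f_n)$ arbitrarily among the pieces (or produce cancellation), so it is not clear any single disjoint piece retains a large value. This step is where the real analytic work lies, and it needs a genuine argument.
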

\begin{proof}
	\textbf{First assume that $X$ and $Y$ are compact}, and let us show that $f\neq 0$ everywhere if and only if $Tf\neq 0$ everywhere. Suppose otherwise, say $f(x)=0$, and we have two cases: first, if $f$ is constant on a neighbourhood of $x$, this means that $Z(f)\neq \varnothing$, and Theorem \ref{theoremdisjoint} implies that $Z(Tf)\neq \varnothing$, and in particular $Tf(y)=0$ for any $y\in Z(Tf)$.
	
	In the second case, if $f$ is not constant on any neighbourhood of $x$, an argument similar to the one in the proof of Theorem \ref{theoremmilgram} yields a contradiction to $Tf$ being bounded, so $Tf(y)=0$ for some $y\in Y$.
	
	The result follows in this case from the Li--Wong Theorem (Theorem \ref{theoremliwong}).
	
	\textbf{Now let $X$ and $Y$ be arbitrary locally compact Hausdorff.} Given $b\in C_c(X)$, set $T_b\colon C(\supp(b))\to C(\supp(Tb))$ as $T_bf=(Tf')|_{\supp(Tb)}$, where $f'$ is any element of $C_c(X)$ extending $f$. Note that $T_bf$ does not depend on the choice of $f'$, because $T$ is an additive $\perp$-isomorphism (and Theorem \ref{theoremdisjoint}).
	
	Since $f\perpp g$ if and only if $f|_{\supp(b)}\perpp g|_{\supp(b)}$ for all $b$, the previous case allows us to obtain functions $p^b$ and $\phi^b$ such that $Tf(y)=p^b(y)f(\phi^b(y))$ for all $y\in\supp(Tb)$. Clearly, if $b\subseteq b'$ then $p^{b'}|_{\supp(b)}=p^b$ and $\phi^{b'}|_{\supp(b)}=\phi^{b'}$. Thus defining $p$ and $\phi$ as $p(y)=p^b(y)$ and $\phi(y)=\phi^b(y)$ where $b\in C_c(X)$ is such that $y\in\supp(b)$ we obtain the desired maps.\qedhere
\end{proof}

%%%%%%%%%%%%%%%%%%%%%%%%%%%%%%%%%%%%%%%%%%%%%%%%%%%%%%%%%%%%%%%%%%%%%%%%%%%%%%%%%%%%%%%%%%%%%%%%%%%%%%%%%%%%%%%%%%%%%%%%%%%%%%%%%%%%%%%%%%%%%%%%%%%%%%%%%%%%%%%%%%%%%%%%%%%%%%%%%%%%%%%%%%%%%%%%%%%%%%%%%%%%%%%%%%%%%%%%%%%%%%%%%%%%%%%%%%%%%%%%%%%%%%%%%%%%%%%%%%%%

\subsection{Banach--Stone Theorem}\label{subsectionbanachstone}

We use the same notation as in the previous subsection. Given a locally compact Hausdorff space $X$, endow $C_c(X)$ with the supremum norm: $\Vert f\Vert_\infty=\sup_{x\in X}|f(x)|$.

Recall that, by the Riesz--Markov--Kakutani Representation Theorem (\cite[Theorem 2.14]{MR584266}), continuous linear functionals on $C_c(X)$ correspond to (integration with respect to) regular Borel measures on $X$. As a consequence, the extremal points $T$ of the unit ball of the dual of $C_c(X)$ have the form $T(f)=\lambda f(x)$ for some $x\in X$ and $|\lambda|=1$.

Given $f\in C_c(X)$, denote by $N(f)$ the set of extremal points $T$ in the unit ball of the dual space $C_c(X)^*$ such that $T(f)\neq 0$. From the previous paragraph we obtain
\[f\perp g\iff N(f)\cap N(g)=\varnothing\tag{BS},\]
and the Banach--Stone Theorem is an immediate consequence of Jarosz' Theorem.

\begin{theorem}[Banach--Stone \cite{MR1501905}]\label{theorembanachstone}
	Let $X$ and $Y$ be locally compact Hausdorff spaces and let $T\colon C_c(X)\to C_c(Y)$ be an isometric linear isomorphism. Then there exists a homeomorphism $\phi\colon Y\to X$ and a continuous function $p\colon Y\to\mathbb{S}^1$ for which
	\[Tf(y)=p(y)f(\phi(y))\qquad\forall f\in C(X),\ \forall y\in Y.\]
\end{theorem}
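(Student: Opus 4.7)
The plan is to reduce the statement to Jarosz' Theorem (Theorem \ref{theoremjarosz}) by showing that an isometric linear isomorphism $T:C_c(X)\to C_c(Y)$ is automatically a $\perp$-isomorphism. Once this is established, Jarosz delivers a homeomorphism $\phi:Y\to X$ and a continuous non-vanishing function $p:Y\to\mathbb{K}$ with $Tf(y)=p(y)f(\phi(y))$, and only the modulus $|p|\equiv 1$ remains.

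For the $\perp$-isomorphism step I would use the characterization (BS) recalled just before the statement. Since $C_c(X)$ is dense in $C_0(X)$ with respect to the supremum norm, the isometric isomorphism $T$ extends uniquely to an isometric isomorphism $\widetilde{T}:C_0(X)\to C_0(Y)$; its Banach-space adjoint $\widetilde{T}^*:C_0(Y)^*\to C_0(X)^*$ is then an isometric bijection that maps the closed unit ball onto the closed unit ball and consequently sends extreme points to extreme points. For any extreme functional $\mathbf{e}$ in the dual unit ball of $C_c(Y)$ and any $f\in C_c(X)$, the identity $\mathbf{e}(Tf)=(\widetilde{T}^*\mathbf{e})(f)$ yields $\widetilde{T}^*(N(Tf))=N(f)$, so the bijectivity of $\widetilde{T}^*$ gives
\[N(Tf)\cap N(Tg)=\varnothing\iff N(f)\cap N(g)=\varnothing.\]
Combined with (BS) this says precisely that $f\perp g\iff Tf\perp Tg$, so $T$ is a $\perp$-isomorphism.

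Next I would invoke Jarosz' Theorem to obtain $\phi:Y\to X$ and a continuous non-vanishing $p:Y\to\mathbb{K}$ with $Tf(y)=p(y)f(\phi(y))$. To upgrade $p$ to a map into $\mathbb{S}^1$, fix $y_0\in Y$ and use Urysohn's Lemma at $\phi(y_0)$ to produce $f\in C_c(X)$ with $f(\phi(y_0))=1$ and $\|f\|_\infty=1$; then
\[|p(y_0)|=|p(y_0)f(\phi(y_0))|=|Tf(y_0)|\leq\|Tf\|_\infty=\|f\|_\infty=1.\]
Inverting the formula for $T$ shows that $T^{-1}$ is also an isometric linear isomorphism and is given by $(T^{-1}g)(x)=p(\phi^{-1}(x))^{-1}g(\phi^{-1}(x))$; the same argument applied to $T^{-1}$ yields $|p(y_0)|^{-1}\leq 1$, and combining both inequalities gives $|p(y_0)|=1$.

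The main obstacle is genuinely the first step, i.e.\ verifying that an isometry preserves $\perp$; everything after Jarosz is a short bookkeeping argument. The only delicate point there is the need to pass from $C_c$ to $C_0$ before invoking the Riesz-Markov-Kakutani representation (so that the extreme points of the dual unit ball are the scalar-times-Dirac functionals), but this is handled painlessly by the density of $C_c(X)$ in $C_0(X)$.
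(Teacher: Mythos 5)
Your proposal is correct and follows essentially the same route as the paper: the paper recalls exactly the Riesz--Markov--Kakutani description of the extreme points of the dual unit ball, records the equivalence (BS) $f\perp g\iff N(f)\cap N(g)=\varnothing$, and deduces Banach--Stone as an immediate consequence of Jarosz' Theorem (\ref{theoremjarosz}). Your write-up merely fills in the details the paper leaves implicit (the extension to $C_0$, the adjoint argument showing $T$ is a $\perp$-isomorphism, and the two-sided isometry argument forcing $|p|=1$), all of which are sound.
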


%%%%%%%%%%%%%%%%%%%%%%%%%%%%%%%%%%%%%%%%%%%%%%%%%%%%%%%%%%%%%%%%%%%%%%%%%%%%%%%%%%%%%%%%%%%%%%%%%%%%%%%%%%%%%%%%%%%%%%%%%%%%%%%%%%%%%%%%%%%%%%%%%%%%%%%%%%%%%%%%%%%%%%%%%%%%%%%%%%%%%%%%%%%%%%%%%%%%%%%%%%%%%%%%%%%%%%%%%%%%%%%%%%%%%%%%%%%%%%%%%%%%%%%%%%%%%%%%%%%%

\subsection{\texorpdfstring{$L^1$}{L¹}-spaces}\label{subsectionl1spaces}

Let $\mathbb{K}=\mathbb{R}$ or $\mathbb{C}$ be fixed. Given a topological space $X$, $C_c(X)$ will denote the space of $\mathbb{K}$-valued compactly supported continuous functions on $X$, where supports are the usual ones: $\supp(f)=\overline{[f\neq 0]}$.

Following \cite{MR924157}, a Borel measure $\mu$ on $X$ will be called \emph{regular} if
\begin{itemize}
	\item $\mu$ is locally finite (i.e., all compact sets have finite measure);
	\item For every Borel $E\subseteq X$, $\mu(E)=\inf\left\{\mu(V):E\subseteq V,\ V\text{ open}\right\}$;
	\item For every open $U\subseteq X$ with $\mu(U)<\infty$, $\mu(U)=\sup\left\{\mu(K):K\subseteq U,\ K\text{ compact}\right\}$.
\end{itemize}
and recall that the \emph{support} of $\mu$ is the set of points $x\in X$ whose neighbourhoods always have positive measure. We say that $\mu$ is \emph{fully supported} (on $X$) is the support of $\mu$ coincides with $X$, i.e., if every nonempty open subset has positive measure.

We will now prove that the $C_c(X)$ endowed with the $L^1$-norm of a fully supported measure $\mu$ completely determine both $X$ and $\mu$. The following lemma describes the $\perp$ relation in terms of the $L^1$-norm, and follows from elementary measure-theoretic considerations.

\begin{lemma}\label{lemmadisjointnessl1}
	Let $X$ be a locally compact Hausdorff space and $\mu$ a fully supported, locally finite Borel measure on $X$. If $\Vert\cdot\Vert_1$ denotes the corresponding $L^1$-norm, then for all $f,g\in C_c(X)$, $f\perp g$ if and only if
	\[\Vert Af+Bg\Vert_1=|A|\Vert f\Vert_1+|B|\Vert g\Vert_1\qquad\forall A,B\in\mathbb{K}\ntag\label{equationlemmadisjointnessl1}\]
\end{lemma}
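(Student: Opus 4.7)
The plan is to handle the two directions separately, with the forward direction essentially trivial and the reverse direction being the content of the lemma.

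For the forward direction, if $f\perp g$ then at every $x\in X$ at least one of $f(x),g(x)$ is zero, so the pointwise identity $|Af(x)+Bg(x)|=|A||f(x)|+|B||g(x)|$ holds (one of the two terms vanishes). Integrating against $\mu$ gives \eqref{equationlemmadisjointnessl1} for every pair of scalars.

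For the reverse direction, the key input is full support of $\mu$. The triangle inequality yields the pointwise bound $|Af(x)+Bg(x)|\leq |A||f(x)|+|B||g(x)|$ everywhere, so the assumed $L^1$-equality forces
\[\int_X\bigl(|A||f|+|B||g|-|Af+Bg|\bigr)\,d\mu=0.\]
The integrand is continuous (because $f,g$ are) and nonnegative. A nonnegative continuous function whose integral against a fully supported measure is zero must vanish identically: indeed, if it were strictly positive at some point, continuity would give a nonempty open set on which it is bounded below by some positive constant, and full support would make that set's measure positive, producing a positive integral. Hence
\[|Af(x)+Bg(x)|=|A||f(x)|+|B||g(x)|\qquad\text{for every }x\in X\text{ and every }A,B\in\mathbb{K}.\]

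Once I have this pointwise identity for all scalars, suppose for contradiction that there exists $x_0\in X$ with $f(x_0)\neq 0$ and $g(x_0)\neq 0$. Specialize to $A=1$ and $B=-f(x_0)/g(x_0)\in\mathbb{K}$ (well-defined since $g(x_0)\neq 0$). The left-hand side at $x_0$ is $|f(x_0)-f(x_0)|=0$, while the right-hand side equals $|f(x_0)|+|f(x_0)/g(x_0)|\cdot|g(x_0)|=2|f(x_0)|>0$, a contradiction. Therefore $[f\neq 0]\cap[g\neq 0]=\varnothing$, which is exactly $f\perp g$.

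The only delicate step is the upgrade from a.e.\ equality to pointwise equality, which is precisely where the hypothesis that $\mu$ is fully supported does its work; the finiteness of $\mu$ on compacts is used only to ensure integrability of $|Af+Bg|$ (which has compact support). No other ingredients are needed.
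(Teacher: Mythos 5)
Your proof is correct and rests on essentially the same mechanism as the paper's: full support of $\mu$ converts the strict pointwise triangle-inequality defect at a common point of $[f\neq 0]\cap[g\neq 0]$ into a strict $L^1$ inequality. The paper argues the contrapositive directly, normalizing so that $f(x)=g(x)=1$ and estimating $\Vert f-g\Vert_1\leq\Vert f\Vert_1+\Vert g\Vert_1-\mu(U)/2$ on a small neighbourhood $U$, whereas you first upgrade the integral equality to a pointwise identity for all scalars and then specialize $A=1$, $B=-f(x_0)/g(x_0)$ --- a cosmetic reorganization of the same argument.
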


\begin{theorem}\label{theoremdisjointnessl1}
	Let $X$ and $Y$ be locally compact Hausdorff spaces with fully supported regular Borel measures $\mu_X$ and $\mu_Y$, and let $T\colon C_c(X)\to C_c(Y)$ be a linear isomorphism which is isometric with respect to the $L^1$-norms. Then there exists a homeomorphism $\phi\colon Y\to X$ and a continuous function $p\colon Y\to\mathbb{S}^1$ such that
	\[Tf(y)=p(y)\frac{d\mu_X}{d(\phi_*\mu_Y)}(\phi(y))f(\phi(y))\]
	for all $f\in C_c(X)$ and $y\in Y$.
\end{theorem}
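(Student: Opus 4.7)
The plan is to reduce Theorem \ref{theoremdisjointnessl1} to Jarosz' Theorem \ref{theoremjarosz} and then to identify the resulting continuous multiplicative weight with the Radon--Nikodym derivative coming from the isometry hypothesis.

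First I would establish that $T$ is a $\perp$-isomorphism. Lemma \ref{lemmadisjointnessl1} characterizes weak disjointness in $C_c(X)$ and $C_c(Y)$ entirely via the $L^1$-norm identity $\Vert Af+Bg\Vert_1=|A|\Vert f\Vert_1+|B|\Vert g\Vert_1$ for all scalars $A,B$. Because $T$ is a bijective linear $L^1$-isometry, both $T$ and $T^{-1}$ preserve this equality, so both are $\perp$-morphisms. Jarosz' Theorem then produces a homeomorphism $\phi:Y\to X$ and a continuous non-vanishing $p_0:Y\to\mathbb{K}$ with
\[Tf(y)=p_0(y)f(\phi(y)),\qquad f\in C_c(X),\ y\in Y.\]

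Next I would exploit the isometry to pin down $|p_0|$. For every $f\in C_c(X)$,
\[\int_X |f|\,d\mu_X=\Vert f\Vert_1=\Vert Tf\Vert_1=\int_Y |p_0(y)|\,|f(\phi(y))|\,d\mu_Y(y).\]
Defining $\rho(x):=|p_0(\phi^{-1}(x))|$, which is continuous and strictly positive on $X$ since $\phi$ is a homeomorphism and $p_0$ is non-vanishing, the pushforward change-of-variables formula rewrites the right-hand side as $\int_X\rho(x)|f(x)|\,d(\phi_*\mu_Y)(x)$. Applied to non-negative $f$ and extended by linearity, this identifies the two regular Borel representing measures of the positive functional $f\mapsto\int_X f\,d\mu_X$ on $C_c(X)$, so Riesz--Markov--Kakutani yields $d\mu_X=\rho\,d(\phi_*\mu_Y)$. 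Consequently $\mu_X$ and $\phi_*\mu_Y$ are mutually absolutely continuous and $\rho$ is a continuous version of $d\mu_X/d(\phi_*\mu_Y)$; this continuous representative is the unique one, since $\phi_*\mu_Y$ has full support on $X$ (inherited from $\mu_Y$ under the homeomorphism $\phi$).

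Finally I would set $p(y):=p_0(y)/|p_0(y)|$, a continuous $\mathbb{S}^1$-valued function, and observe that
\[p_0(y)=p(y)\,|p_0(y)|=p(y)\,\rho(\phi(y))=p(y)\,\tfrac{d\mu_X}{d(\phi_*\mu_Y)}(\phi(y)),\]
which, substituted into the Jarosz representation, is exactly the desired formula. The main delicate point will be the Riesz--Markov--Kakutani uniqueness step: it requires $\rho\,d(\phi_*\mu_Y)$ to be a regular Borel measure matching $\mu_X$, which follows because pushforward by a homeomorphism preserves regularity, continuity and positivity of $\rho$ transfer regularity to $\rho\,d(\phi_*\mu_Y)$, and local finiteness is guaranteed by the hypothesis that compact sets have finite measure under both $\mu_X$ and $\mu_Y$.
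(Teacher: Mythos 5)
Your proposal is correct and follows essentially the same route as the paper's proof: Lemma \ref{lemmadisjointnessl1} to see that the isometry makes $T$ a $\perp$-isomorphism, Jarosz' Theorem (\ref{theoremjarosz}) for the weighted-composition form $Tf=p_0\cdot(f\circ\phi)$, and the change-of-variables identity $\int_X|f|\,d\mu_X=\int_X|p_0\circ\phi^{-1}|\,|f|\,d(\phi_*\mu_Y)$ to recognize $|p_0\circ\phi^{-1}|$ as a continuous instance of the Radon--Nikodym derivative, with $p=p_0/|p_0|$. Your explicit treatment of the Riesz--Markov--Kakutani uniqueness step (regularity of $\rho\,d(\phi_*\mu_Y)$ and uniqueness of the continuous representative via full support) is a detail the paper leaves implicit, but it is the same argument.
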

\begin{proof}
	By the previous lemma, $T$ is a $\perp$-isomorphism, so Jarosz' Theorem (\ref{theoremjarosz}) implies that there are a homeomorphism $\phi\colon Y\to X$ and a non-vanishing continuous function $P\colon Y\to\mathbb{C}$ such that $T(f)(y)=P(y)f(\phi(y))$ for all $f\in C_c(X)$ and $y\in Y$.
	
	Now using the fact that $T$ is isometric, we have, for every $f\in C_c(X)$,
	\[\int_X|f|d\mu_X=\int_Y|Tf|d\mu_Y=\int_Y|P||f\circ\phi|d\mu_Y=\int_X|P\circ\phi^{-1}||f|d(\phi_*\mu_Y)\]
	which means that $|P\circ\phi^{-1}|$ is a continuous instance of the Radon--Nikodym derivative $d\mu_X/d(\phi_*\mu_Y)$. Since $p=P/|P|\colon Y\to\mathbb{S}^1$ is continuous, we obtain the result.
\end{proof}

\subsection{Measured groupoid convolution algebras}\label{subsectionwendelstheorem}

In the next three results, we will focus on convolution algebras of topological groupoids. First, we will consider \emph{measured groupoids} in the sense of Hahn. See \cite{MR496796,MR496797,MR584266,MR1444088,MR0427598}. Note that throughout this section we consider only regular measures.

Recall that a \emph{groupoid} $\G$ is a small category with inverses, and a \emph{topological groupoid} is a groupoid endowed with a topology making the product and inversion maps continuous.

The \emph{source} and \emph{range} maps on $\G$ are defined as $\so(a)=a^{-1}a$ and $\ra(a)=aa^{-1}$, respectively. The \emph{unit space} of $\G$ is $\G[0]=\so(\G)$, and is identified with the object space of $\G$. We denote by $\G[2]=\left\{(a,b)\in\G\times\G:\so(a)=\ra(b)\right\}$ the set of \emph{composable pairs}, i.e., pairs $(a,b)$ for which the product $ab$ is defined. Given $x,y\in\G[0]$, we denote $\G^y=\ra^{-1}(x)$, $\G_x=\so^{-1}(x)$, and $\G_x^y=\G^y\cap\G_x$. We call $\G_x^x$ the \emph{isotropy group} at $x$. The product of two subsets $A,B\subseteq\G$ is $AB=\left\{ab:(a,b)\in(A\times B)\cap\G[2]\right\}$.

Common examples of topological groupoids are: Equivalence relations, topological groups (where $\G[0]$ is a singleton) and topological spaces (where $\G=\G[0]$). More generally, every continuous group action induces a \emph{transformation groupoid}.

Initially, given a locally compact Hausdorff topological groupoid $\G$, we consider $C_c(\G)$, the space of real or complex-valued, compactly supported, continuous functions on $\G$, simply as a vector space (with pointwise operations). Recall the notion of a \emph{Haar system}:

\begin{definition}[{\cite[Definition 2.2]{MR584266}}]\label{definitionhaarsystem}
	A (continuous) \emph{left Haar system} for a locally compact Hausdorff topological groupoid $\G$ is a collection of regular Borel measures $\lambda=\left\{\lambda^x:x\in\G[0]\right\}$ on $\G$ such that
	\begin{enumerate}[label=(\roman*)]
		\item\label{haarsystem1} For each $x\in\G[0]$, $\lambda^x$ has support contained in $\G^x$;
		\item\label{haarsystem2} (left invariance) For each $a\in\G$, $\lambda^{\ra(a)}(aE)=\lambda^{\so(a)}(E)$ for every compact $E\subseteq \G^{\so(a)}$;
		\item\label{haarsystem3} (continuity) For each $f\in C_c(\G)$, the map $\G[0]\to\mathbb{C}$, $x\mapsto \int fd\lambda^x$, is continuous.
	\end{enumerate}
	We will not make any distinction of whether each $\lambda^x$ is considered as a measure on $\G$ or as a measure on $\G^x$. We say that $\lambda$ is \emph{fully supported} if the support of $\lambda^x$ is all of $\G^x$ for all $x\in\G[0]$.
\end{definition}

Left invariance of $\lambda$ implies that for all $a\in\G$ and $f\in C_c(\G^{\ra(a)})$
\[\int f(s)d\lambda^{\ra(a)}(s)=\int f(at)d\lambda^{\so(a)}(t)\]
and we endow $C_c(\G)$ with convolution product
\[(fg)(a)=\int f(s)g(s^{-1}a)\lambda^{\ra(a)}(s)=\int f(at)g(t^{-1})\lambda^{\so(a)}(t),\]
which makes $C_c(\G)$ an algebra. It follows that for all $f,g\in C_c(\G)$,
\[\supp(fg)\subseteq\supp(f)\supp(g).\ntag\label{equationinclusionsupportproduct}\]

\begin{definition}
	Let $\G$ be a locally compact Hausdorff topological groupoid with a left Haar system $\lambda$. Given a regular Borel measure $\mu$ on $\G[0]$, the measure \emph{induced} by $\mu$ and $\lambda$ is the unique regular Borel measure $(\lambda\circ\mu)$ on $\G$ which satisfies
	\[(\lambda\circ\mu)(E)=\int_{\G[0]}\lambda^x(E)d\mu(x)\]
	for every compact $E\subseteq\G$. (The existence of $(\lambda\circ\mu)$ is guaranteed by the Riesz--Markov--Kakutani Representation Theorem.)
\end{definition}

If $\mu$ is fully supported on $\G[0]$ and $\lambda$ is a fully supported Haar system on a locally compact Hausdorff groupoid $\G$, then $(\lambda\circ\mu)$ is fully supported on $\G$.

The following lemma will allow us to verify if certain maps are groupoid morphisms.

\begin{lemma}\label{lemmaproduct}
	Given a topological groupoid $\G$ with $\G[0]$ Hausdorff and $a,b\in\G$, we have $\so(a)=\ra(b)$ if and only if for every pair of neighbourhoods $U$ of $a$ and $V$ of $b$ the product $UV$ is nonempty.
\end{lemma}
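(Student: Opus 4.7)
The plan is to prove both directions directly, with the forward direction being essentially trivial and the reverse direction using Hausdorffness of $\G^{(0)}$ together with continuity of the structural maps $\so$ and $\ra$.

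For the forward direction, I would observe that if $\so(a) = \ra(b)$, then $(a,b) \in \G^{(2)}$, so the product $ab$ is defined. Then for any neighbourhoods $U$ of $a$ and $V$ of $b$, we have $a \in U$, $b \in V$, and $(a,b) \in (U \times V) \cap \G^{(2)}$, hence $ab \in UV$, so $UV$ is nonempty.

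For the reverse direction, I would argue the contrapositive: suppose $\so(a) \neq \ra(b)$. Since $\G^{(0)}$ is Hausdorff, choose disjoint open sets $W_1, W_2 \subseteq \G^{(0)}$ with $\so(a) \in W_1$ and $\ra(b) \in W_2$. By continuity of $\so$ and $\ra$ (which follows from continuity of the inversion and multiplication, since $\so(x) = x^{-1}x$ and $\ra(x) = xx^{-1}$), the sets $U = \so^{-1}(W_1)$ and $V = \ra^{-1}(W_2)$ are open neighbourhoods of $a$ and $b$ respectively. If some element $c$ of $UV$ existed, say $c = uv$ with $u \in U$, $v \in V$, and $(u,v) \in \G^{(2)}$, then $\so(u) = \ra(v)$ would lie in $W_1 \cap W_2 = \varnothing$, a contradiction. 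Thus $UV = \varnothing$, as desired.

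There is no real obstacle here: the only subtlety worth flagging is that continuity of $\so$ and $\ra$ is not an independent hypothesis but is a consequence of the topological groupoid axioms, and that Hausdorffness of $\G^{(0)}$ (rather than of all of $\G$) is exactly what is needed to separate $\so(a)$ from $\ra(b)$ inside the unit space.
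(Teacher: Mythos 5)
Your proof is correct, and both directions match the paper's in substance; the only difference is in how the nontrivial direction is packaged. The paper argues in the positive form: from the hypothesis that $UV\neq\varnothing$ for every pair of neighbourhoods, it builds nets $(a_i)_i\to a$ and $(b_i)_i\to b$ (indexed by pairs of neighbourhoods) with $\so(a_i)=\ra(b_i)$, and then invokes uniqueness of net limits in the Hausdorff space $\G[0]$ to conclude $\so(a)=\ra(b)$. You instead prove the contrapositive: if $\so(a)\neq\ra(b)$, separate these units by disjoint open sets $W_1,W_2\subseteq\G[0]$ and exhibit the specific pair $U=\so^{-1}(W_1)$, $V=\ra^{-1}(W_2)$ with $UV=\varnothing$. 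The two arguments are logically dual and both rest on exactly the same ingredients (Hausdorffness of $\G[0]$ plus continuity of $\so$ and $\ra$, which, as you correctly flag, follows from continuity of inversion and multiplication via $\so(x)=x^{-1}x$). Your version is slightly more elementary in that it avoids nets altogether and produces an explicit witnessing pair of neighbourhoods; one small point worth making explicit is that $W_1$ open in $\G[0]$ only gives $W_1=O_1\cap\G[0]$ for some $O_1$ open in $\G$, but since $\so$ takes values in $\G[0]$ one has $\so^{-1}(W_1)=\so^{-1}(O_1)$, so $U$ is indeed open in $\G$ even when $\G[0]$ is not open in $\G$ (the groupoids here are not assumed étale).
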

\begin{proof}
	From the second condition one can construct two nets $(a_i)_i$ and $(b_i)_i$ (over the same ordered set) converging to $a$ and $b$, respectively, such that $\so(a_i)=\ra(b_i)$, and so $\so(a)=\ra(b)$ because $\G[0]$ is Hausdorff. The reverse implication is trivial.
\end{proof}

\begin{lemma}\label{lemmaderivativehaar}
	If $\lambda$ and $\mu$ are continuous Haar systems on a locally compact Hausdorff topological groupoid $\G$ such that the Radon--Nikodym derivatives $D^x=\frac{d\lambda^x}{d\mu^x}$ exist for all $x\in\G[0]$, then $D$ is invariant in the sense that for all $a\in\G$ and $\mu^{\so(a)}$-almost every $g\in\G^{\so(a)}$, $D^{\ra(a)}(ag)=D^{\so(a)}(g)$.
\end{lemma}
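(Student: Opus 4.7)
The plan is to compute $\int f\,d\lambda^{\ra(a)}$ for $f \in C_c(\G)$ in two ways and compare, exploiting the fact that both Haar systems are left invariant. Fix $a \in \G$ and write $x = \so(a)$, $y = \ra(a)$, so that $L_a \colon \G^x \to \G^y$, $L_a(t) = at$, is a homeomorphism (with inverse $L_{a^{-1}}$). Applying left invariance of $\lambda$ and then the Radon-Nikodym identity $d\lambda^x = D^x\,d\mu^x$,
\begin{equation*}
\int f(s)\,d\lambda^y(s) = \int f(at)\,d\lambda^x(t) = \int f(at)\,D^x(t)\,d\mu^x(t).
\end{equation*}
Applying instead $d\lambda^y = D^y\,d\mu^y$ and then left invariance of $\mu$,
\begin{equation*}
\int f(s)\,d\lambda^y(s) = \int f(s)\,D^y(s)\,d\mu^y(s) = \int f(at)\,D^y(at)\,d\mu^x(t).
\end{equation*}
Subtracting gives $\int f(at)\,[D^x(t) - D^y(at)]\,d\mu^x(t) = 0$ for every $f \in C_c(\G)$.

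Next I would argue that the set of functions $\{t \mapsto f(at) : f \in C_c(\G)\}$ includes all of $C_c(\G^x)$. Indeed, given $h \in C_c(\G^x)$, the function $h \circ L_a^{-1} \in C_c(\G^y)$ can be extended to some $f \in C_c(\G)$ by Tietze's extension theorem followed by multiplication by a compactly supported Urysohn cutoff, since $\G^y$ is closed in the locally compact Hausdorff space $\G$ (as the preimage of a point under $\ra$, with $\G[0]$ Hausdorff). For such $f$, $f(at) = h(t)$ for $t \in \G^x$. Therefore the relation above yields $\int [D^x(t) - D^y(at)]\,h(t)\,d\mu^x(t) = 0$ for every $h \in C_c(\G^x)$.

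The final step is to conclude $\mu^x$-a.e.\ vanishing of the integrand. Since $\mu^x$ is a regular Borel measure on the locally compact Hausdorff space $\G^x$ and the function $t \mapsto D^x(t) - D^y(at)$ is $\mu^x$-measurable and locally integrable (both $D^x$ and $D^y \circ L_a$ being Radon-Nikodym derivatives of Radon measures with respect to Radon measures), the usual duality between $C_c$ and Radon measures forces $D^x(t) = D^y(at)$ for $\mu^x$-almost every $t \in \G^x$, which is the desired invariance. The only mild obstacle is this last almost-everywhere step: it requires the appropriate $\sigma$-finiteness and regularity of $\mu^x$, but both are part of the definition of a continuous Haar system, so the argument goes through directly.
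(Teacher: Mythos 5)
Your proof is correct and is essentially the paper's argument: both combine left invariance of $\lambda$ and of $\mu$ with the Radon--Nikodym identities to show that $t\mapsto D^{\ra(a)}(at)$ integrates against test functions exactly as $D^{\so(a)}=d\lambda^{\so(a)}/d\mu^{\so(a)}$ does, and then conclude $\mu^{\so(a)}$-a.e.\ equality by the duality between $C_c$ and regular measures. The only difference is cosmetic: the paper tests directly against $f\in C_c(\G^{\so(a)})$, using the fibrewise form of the invariance identity, which makes your Tietze-extension step (extending functions from the closed fibre $\G^{\ra(a)}$ to all of $\G$) unnecessary.
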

\begin{proof}
	Using invariance of $\mu$ and $\lambda$, we have, for every $f\in C_c(\G^{\so(a)})$,
	\begin{align*}
	\int f(t) D^{\ra(a)}(at)d\mu^{\so(a)}(t)&=\int f(a^{-1}s)D^{\ra(a)}(s)d\mu^{\ra(a)}(s)\\
	&=\int f(a^{-1}s)d\lambda^{\ra(a)}(s)=\int f(t)d\lambda^{\so(a)}(t).
	\end{align*}
	Thus $t\mapsto D^{\ra(a)}(at)$ satisfies the property of the Radon--Nikodym derivative $d\lambda^{\so(a)}/d\mu^{\so(a)}=D^{\so(a)}$, hence these functions coincide $\mu^{\so(a)}$-a.e.\qedhere
\end{proof}

Now we prove that the convolution algebra $C_c(\G)$ together with the $L^1$-norm coming from $\lambda\circ\mu$, where $\lambda$ is a fully supported Haar system on $\G$ and $\mu$ is a fully supported measure on $\G[0]$ completely determines the triple $(\G,\lambda,\mu)$, up to isomorphism (compare to \cite{MR2102633}). We denote by $\mathbb{S}^1$ the \emph{circle group} (of complex numbers with absolute value $1$ under multiplication).

\begin{theorem}\label{theoremmeasuredgroupoidconvolutionalgebra}
	Let $\G$ and $\H$ be locally compact Hausdorff groupoids. For each $Z\in\left\{\G,\H\right\}$, let $\lambda_Z$ be a fully supported Haar system on $Z$, and $\mu_Z$ a fully supported regular Borel measure on $Z^{(0)}$.
	
	If $T\colon C_c(\G)\to C_c(\H)$ is an algebra isomorphism which is isometric with respect to the $L^1$-norms of $(\lambda_Z\circ \mu_Z)$ ($Z\in\left\{\G,\H\right\}$), then there are a topological groupoid isomorphism $\phi:\H\to\G$ and a continuous morphism $p:\H\to\mathbb{S}^1$ such that
	\[Tf(h)=p(h)D(\phi(h))f(\phi(h))\]
	where $D$ is a continuous instance of the Radon--Nikodym derivative
	\[D(a)=\frac{d\lambda_{\G}^{\ra(a)}}{d(\phi_*\lambda_{\H}^{\phi^{-1}(\ra(a))})}(a)\]
	and in this case, $\mu_{\G}=\phi_*\mu_{\H}$.
\end{theorem}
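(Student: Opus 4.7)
The plan is to apply Theorem~\ref{theoremdisjointnessl1} to obtain a homeomorphism and leading factors, then promote its conclusions to the groupoid level via the multiplicativity of $T$ combined with the disintegrations of $\nu_Z:=\lambda_Z\circ\mu_Z$. Since $T$ is a linear $L^1$-isometry for the fully supported measures $\nu_{\G}$ and $\nu_{\H}$, Theorem~\ref{theoremdisjointnessl1} yields a homeomorphism $\phi:\H\to\G$, a continuous $p:\H\to\mathbb{S}^1$, and a continuous positive function $R$ on $\G$ (a version of $d\nu_{\G}/d(\phi_*\nu_{\H})$) with $Tf(h)=p(h)R(\phi(h))f(\phi(h))$. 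Set $q:=p\cdot(R\circ\phi)$; then $Tf(h)=q(h)f(\phi(h))$, and $\supp(Tf)=\phi^{-1}(\supp f)$ by Theorem~\ref{maintheorem}.

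First I would show $\phi$ is a topological groupoid isomorphism. Given $h_1,h_2\in\H$ with $\so(h_1)=\ra(h_2)$, if $\so(\phi(h_1))\ne\ra(\phi(h_2))$ then Lemma~\ref{lemmaproduct} provides open $U\ni\phi(h_1)$, $V\ni\phi(h_2)$ with $UV=\varnothing$, and for $f,g\in C_c(\G)$ supported in $U,V$ with $f(\phi(h_1)),g(\phi(h_2))\ne 0$ one has $fg=0$, hence $Tf\cdot Tg=0$. After multiplying $f,g$ by scalar phases and shrinking $U,V$ so that $p$ varies little on $\phi^{-1}(U)$ and $\phi^{-1}(V)$, the integrand of $(Tf\cdot Tg)(h_1h_2)$ has phase close to $0$ and is strictly positive in a neighbourhood of $s=h_1\in\H^{\ra(h_1)}$; since $\lambda_{\H}^{\ra(h_1)}$ is fully supported, $(Tf\cdot Tg)(h_1h_2)\ne 0$, a contradiction. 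For $\phi(h_1h_2)=\phi(h_1)\phi(h_2)$, using $\supp(T(fg))=\phi^{-1}(\supp(fg))$ and $\supp(Tf\cdot Tg)\subseteq\phi^{-1}(\supp f)\,\phi^{-1}(\supp g)$, one checks that $\phi^{-1}(\phi(h_1)\phi(h_2))\in\supp(T(fg))$ (choosing $f,g\ge 0$ small and using full support of $\lambda_{\G}$) lies in every neighbourhood of $h_1h_2$ by continuity of multiplication, hence equals $h_1h_2$. Setting $h_1=h_2=u\in\H[0]$ gives $\phi(u)=\phi(u)^2$, so $\phi(u)\in\G[0]$, and $\phi$ is a topological groupoid isomorphism.

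The convolution identity $T(fg)=Tf\cdot Tg$ then becomes, upon changing variables $a=\phi(s)$ in $(fg)(\phi(h))$ and writing $\tilde\lambda^u:=(\phi^{-1})_*\lambda_{\G}^{\phi(u)}$ on $\H^u$, the identity
\[ q(h)\int K(s)\,d\tilde\lambda^{\ra(h)}(s)=\int K(s)\,q(s)\,q(s^{-1}h)\,d\lambda_{\H}^{\ra(h)}(s) \]
for every $h$ and every $K(s)=f(\phi(s))g(\phi(s)^{-1}\phi(h))$. Functions of this form are dense in $C_c(\H^{\ra(h)})$ (Stone--Weierstrass on the graph $\{(a,a^{-1}\phi(h)):a\in\G^{\ra(\phi(h))}\}$), so the two measures on $\H^{\ra(h)}$ coincide; independence of the left side from $h$ makes $q(s^{-1}h)/q(h)$ a function of $s$ alone, and setting $h=s$ produces the cocycle $q(sr)\,q(\ra r)=q(s)q(r)$ on composable pairs, as well as $d\tilde\lambda^u/d\lambda_{\H}^u(s)=q(\so(s))$. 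Positivity of this Radon--Nikodym derivative together with $|p|=1$ forces $p|_{\H[0]}\equiv 1$, and pushing the identity forward by $\phi$ yields
\[ D(a)=\frac{d\lambda_{\G}^{\ra(a)}}{d(\phi_*\lambda_{\H}^{\phi^{-1}(\ra(a))})}(a)=R(\so(a)). \]

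Finally, disintegrating $R=d\nu_{\G}/d(\phi_*\nu_{\H})$ along $\ra$ gives $R(a)=D(a)\,\rho(\ra(a))$ with $\rho:=d\mu_{\G}/d(\phi_*\mu_{\H})$ on $\G[0]$; evaluating at a unit $x$ (where $\so(x)=x$) forces $\rho(x)=1$, so $\mu_{\G}=\phi_*\mu_{\H}$ and $R=D$ throughout $\G$. The cocycle $q(sr)q(\ra r)=q(s)q(r)$, combined with $R=D$, $p\equiv 1$ on units, and $D(a)=R(\so(a))$, then collapses to $p(sr)=p(s)p(r)$, so $p$ is a continuous groupoid morphism $\H\to\mathbb{S}^1$ and $Tf(h)=p(h)D(\phi(h))f(\phi(h))$, as required. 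The principal obstacles are the phase-control argument in the second paragraph (to rule out cancellations in $Tf\cdot Tg$ despite the oscillating factor $p$) and the density/change-of-variables argument in the third, which enables the passage from the multiplicative identity for $T$ to a fiberwise identity of measures on each $\H^u$.
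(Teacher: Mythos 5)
Your proposal is correct and follows the paper's overall skeleton (the $L^1$-disjointness lemma plus Jarosz to get $Tf=q\cdot(f\circ\phi)$, a support argument for the groupoid isomorphism, multiplicativity to identify the fiberwise Radon--Nikodym derivative and a cocycle, then a measure comparison), but all three central steps are executed differently, and the comparison is instructive. For the morphism property, the paper avoids your phase-control estimate entirely: it chooses \emph{nonnegative} $f_a,f_b\in C_c(\H)$ and pulls them back through $T^{-1}$, so that $ab\in\supp(f_af_b)$ by full support of $\lambda_{\H}$, and then $\phi(ab)\in UV$ follows from $\supp(fg)\subseteq\supp(f)\supp(g)$ alone; this yields composability (via Lemma \ref{lemmaproduct}) and $\phi(ab)=\phi(a)\phi(b)$ in one stroke, with no cancellation issues to rule out. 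For the fiberwise identification, where you invoke Stone--Weierstrass density of the split functions $s\mapsto f(\phi(s))g(\phi(s)^{-1}\phi(h))$ in $C_c(\H^{\ra(h)})$ (fine: their span is a self-adjoint subalgebra separating points and vanishing nowhere, and both measures are Radon), the paper instead substitutes $g(t)=f(\phi(c)t^{-1})$, which turns the convolution identity into an identity with integrand $|f|^2$ and reads off the density directly, at the price of needing Lemma \ref{lemmaderivativehaar} for the invariance of $D$ --- invariance which in your route is automatic, since you prove $D=R\circ\so$ outright. The endgame is where the routes genuinely diverge: the paper has not yet used the isometry fully, so it computes $\Vert Tf\Vert_{\H}$ explicitly to obtain $|p\circ\phi^{-1}|=\bigl(d\mu_{\G}/d(\phi_*\mu_{\H})\bigr)\circ\ra$ almost everywhere, whence $|p|=1$ and $\mu_{\G}=\phi_*\mu_{\H}$; in your parametrization the isometry is exhausted at the outset by Theorem \ref{theoremdisjointnessl1} (you have $|p|=1$ from the start and $\Vert Tf\Vert_{\H}=\Vert f\Vert_{\G}$ is automatic), so you must instead recover $\mu_{\G}=\phi_*\mu_{\H}$ and $R=D$ from uniqueness of the disintegration of $\nu_{\G}$ along $\ra$. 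You assert that step rather than prove it, but it does go through: writing $W=R/(R\circ\so)$ and $\Lambda F(x)=\int F\,d\lambda_{\G}^x$, testing against $(u\circ\ra)F$ with $u\in C_c(\G[0])$ gives $(\Lambda F)\,\mu_{\G}=(\Lambda(FW))\,\phi_*\mu_{\H}$ for every $F\in C_c(\G)$, and continuity of the Haar systems together with full support of all measures forces $W=\rho\circ\ra$ for a continuous $\rho=d\mu_{\G}/d(\phi_*\mu_{\H})$; evaluating $W$ at units kills $\rho$, exactly as you say. In short, your approach buys a free invariance of $D$ and dispenses with the norm computation, while the paper's buys phase-free positivity arguments and avoids both the density and the disintegration-uniqueness machinery.
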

\begin{proof}
	Again applying Lemma \ref{lemmadisjointnessl1} and Jarosz' Theorem (\ref{theoremjarosz}), we can find a homeomorphism $\phi\colon\H\to\G$ and a continuous non-vanishing scalar function $P$ such that
	\[Tf(h)=P(h) f(\phi(h))\qquad\text{for all } f\in C_c(\G)\text{ and }h\in\H.\]
	
	Let us check that $\phi$ is a groupoid morphism. Suppose $(a,b)\in\H[2]$, and consider neighbourhoods $U$ and $V$ of $\phi(a)$ and $\phi(b)$, respectively.
	
	Choose non-negative functions $f_U,f_V\in C_c(\H)$ such that
	\[\supp(f_a)\subseteq\phi^{-1}(U),\quad\supp(f_b)\subseteq \phi^{-1}(V)\quad\text{and}\quad f_a(a)=f_b(b)=1.\]
	Then $ab\in\supp(f_af_b)$, because $\lambda_{\H}$ has full support, and so $\phi(ab)\in\supp(T^{-1}(f_af_b))$. As $\phi$ is the $T$-homeomorphism and $T$ is an isomorphism, the inclusion in \eqref{equationinclusionsupportproduct} implies $\phi(ab)\subseteq UV$. By Lemma \ref{lemmaproduct}, the product $\phi(a)\phi(b)$ is defined, and moreover, continuity of the product implies that every neighbourhood of $\phi(a)\phi(b)$ contains $\phi(ab)$. Since $\G$ is Hausdorff then $\phi(ab)=\phi(a)\phi(b)$. Therefore $\phi$ is a morphism and a homeomorphism, thus a topological groupoid isomorphism.
	
	We now need to rewrite the function $P$ as $P(h)=p(h)D(\phi(h))$ as in the statement of the theorem, and to this end we use multiplicativity of $T$ and compare integrals. If $f,g\in C_c(\G)$ and $c\in\H$, then on one hand
	\begin{align*}
	T(fg)(c)&=(TfTg)(c)=\int_{\G^{\ra(c)}} Tf(t)Tg(t^{-1}c)\lambda_{\H}^{\ra(c)}(t)\\
	&=\int_{\H^{\ra(c)}} P(t)f(\phi(t))P(t^{-1}c)g(\phi(t^{-1}c))d\lambda_{\H}^{\ra(c)}(t)\\
	&=\int_{\G^{\phi(\ra(c))}} P(\phi^{-1}(s))f(s)P(\phi^{-1}(s)^{-1}c)g(s^{-1}\phi(c))d(\phi_*\lambda_{\H}^{\ra(c)})(s)\ntag\label{theoremmeasuredgroupoidconvolutionalgebratodecomposeP1}
	\end{align*}
	and on the other
	\begin{align*}
	T(fg)(c)&=P(c)(fg)(\phi(c))=P(c)\int_{\G^{\phi(\ra(c))}} f(t)g(t^{-1}\phi(c))d\lambda_{\G}^{\phi(\ra(c))}(t)\ntag\label{theoremmeasuredgroupoidconvolutionalgebratodecomposeP2}
	\end{align*}
	
	Now let $f\in C_c(\G^{\phi(\ra(c))})$ be an arbitrary non-negative function. Define $g\in C_c(\G_{\phi(\so(c))})$ by $g(t)=f(\phi(c)t^{-1})$. Extending $f$ and $g$ arbitrarily to elements of $C_c(\G)$, Equations \eqref{theoremmeasuredgroupoidconvolutionalgebratodecomposeP1} and \eqref{theoremmeasuredgroupoidconvolutionalgebratodecomposeP2} become
	\[\int_{\G^{\phi(\ra(c))}} P(\phi^{-1}(s))P(\phi^{-1}(s)^{-1}c)|f(s)|^2d\phi_*\lambda_{\H}^{\ra(c)}(s)=P(c)\int_{\G^{\phi(\ra(c))}} |f(s)|^2d\lambda_{\G}^{\phi(\ra(c))}(s)\ntag\label{equationtheoremrenaultorsimilar}\]
	for all non-negative $f\in C_c(\G^{\phi(\ra(c))})$ and all $c\in\H$. Define $D\colon\G\to\mathbb{C}$ (or $\mathbb{R}$ in the real case) by
	\[D(s)=\frac{P(\phi^{-1}(s))P(\phi^{-1}(s)^{-1})}{P(\phi^{-1}(\ra(s)))}.\]
	Using Equation \eqref{equationtheoremrenaultorsimilar} with $y=\ra(c)$ in place of $c$, we obtain
	\[\int_{\G^{\phi(y)}} D(s)|f(s)|^2d(\phi_*\lambda_{\H}^{y})(s)=\int_{\G^{\phi(y)}}|f(s)|^2d\lambda_{\G}^{\phi(y)}(s)\]
	for all $f\in C_c(\G^{\phi(y)})$, thus $D$ is a continuous instance of the Radon--Nikodym derivative
	\[D(s)=\frac{d\lambda_{\G}^{\phi(y)}}{d(\phi_*\lambda_{\H}^{y})}(s)=\frac{d\lambda_{\G}^{\phi(\ra(c))}}{d(\phi_*\lambda_{\H}^{\ra(c)})}(s).\]
	Now applying this to Equation \eqref{equationtheoremrenaultorsimilar}, and using regularity of all measures involved, we conclude that for $\lambda_{\G}^{\phi(\ra(c))}$-a.e.\ $s\in\G^{\phi(\ra(c))}$
	\[P(\phi^{-1}(s))P(\phi^{-1}(s)^{-1}c)=D(s)P(c)\]
	and since all functions involved are continuous, and $\lambda_{\G}^{\phi(\ra(c))}$ has full support, the same equality is actually valid for \emph{all} $s\in\G^{\phi(\ra(c))}$. Equivalently, for all $c\in\H$ and all $t\in\H^{\ra(c)}$, $P(t)P(t^{-1}c)=D(\phi(t))P(c)$. Together with Lemma \ref{lemmaderivativehaar}, this implies that the map $p=P/(D\circ\phi)$ is a continuous groupoid morphism from $\H$ to the group of non-zero scalars.
	
	Now let us verify that $\mu_{\G}$ and $\phi_*\mu_{\H}$ are equivalent measures. By regularity of the measures, we may extend the formula $T(f)=P\cdot (f\circ\phi)$ to obtain a linear isomorphism between the classes of measurable functions on $\G$ and on $\H$, and which restricts to an isometry from $L^1(\lambda_{\G}\circ\mu_{\G})$ to $L^1(\lambda_{\H}\circ\mu_{\H})$. Suppose that $K\subseteq\G[0]$ has positive $\mu_{\G}$-measure, and let $f=1_{\ra^{-1}(K)}$ be the characteristic function of $\ra^{-1}(K)$. Then $Tf=P\cdot 1_{\phi^{-1}(\ra(K))}$. As $\Vert f\Vert_{\G}>0$, then $\Vert Tf\Vert_{\H}>0$ as well, so the support of $Tf$ has positive $(\lambda_{\H}\circ\mu_{\H})$-measure, and this implies that $\ra(\supp(Tf))=\phi^{-1}(K)$ has positive $\mu_{\H}$-measure. Therefore $\mu_{\G}$ is absolutely continuous with respect to $\phi_*\mu_{\H}$. The reverse implication is similar.
	
	To prove that $p$ takes value in $\mathbb{S}^1$, let us denote by $\Vert\cdot\Vert_Z$ the $L^1$-norm with respect to $(\lambda_Z\circ\mu_Z)$ when $Z\in\left\{\G,\H\right\}$. For all $f\in C_c(\G)$ we have
	\begin{align*}
	\Vert Tf\Vert_{\H}&=\int_{\H[0]}\left(\int_{\H^y}|Tf|d\lambda_{\H}^y\right)d\mu_{\H}(y)=\int_{\H[0]}\left(\int_{\H^y}D|p(f\circ\phi)|d\lambda_{\H}^y\right)d\mu_{\H}(y)\\
	&=\int_{\G[0]}\left(\int_{\G^{x}}|(p\circ\phi^{-1})f|d\lambda_{\G}^x\right)d(\phi_*\mu_{\H})(x)\\
	&=\int_{\G[0]}\left(\int_{\G^{x}}|(p\circ\phi^{-1})(s)|\left(\frac{d(\phi_*\mu_{\H})}{d\mu_{\G}}(x)\right)|f(s)|d\lambda_{\G}^x(s)\right)d\mu_{\G}(x)\\
	&=\int_{\G[0]}\left(\int_{\G^{x}}|(p\circ\phi^{-1})(s)|\left(\frac{d(\phi_*\mu_{\H})}{d\mu_{\G}}(\ra(s))\right)|f(s)|d\lambda_{\G}^x(s)\right)d\mu_{\G}(x)\\
	&=\int_{\G}|p\circ\phi^{-1}|\left(\frac{d(\phi_*\mu_{\H})}{d\mu_{\G}}\circ\ra\right)|f(s)|d(\lambda_{\G}\circ\mu_{\G})
	\end{align*}
	and since $\Vert Tf\Vert_{\H}=\Vert f\Vert_{\G}=\int_{\G}|f|d(\lambda_{\G}\circ\mu_{\G})$, we obtain
	\[|p\circ\phi^{-1}|=\frac{d\mu_{\G}}{d(\phi_*\mu_{\H})}\circ\ra\qquad(\lambda_{\G}\circ\mu_{\G})\text{-a.e.}\ntag\label{equationcomparecocycleandradonnykodymofbasisspace}\] 
	Since $p$ is a morphism then $p(\H^{(0)})=\left\{1\right\}$, which, along with Equation \eqref{equationcomparecocycleandradonnykodymofbasisspace} and continuity of $p$, yields $|p|=|p\circ\ra|=1$ on $\H$. The same  Equation (\ref{equationcomparecocycleandradonnykodymofbasisspace}) then also implies $\mu_{\G}=\phi_*\mu_{\H}$.
\end{proof}

\begin{named}{Remark}
	In the case of groups, the same type of classification was first proven by Wendel in \cite{MR0049910}, when considering the whole $L^1$-algebras of locally compact Hausdorff groups instead of only algebras of compactly supported continuous functions. Further generalizations of Wendel's Theorem were proven in \cite{MR0177058} and \cite{MR0193531}, and closely results in \cite{MR0160846} and \cite{MR0361622}.
\end{named}

\subsection{\texorpdfstring{$(I,\ra)$}{(I,r)}-Groupoid convolution algebras}

In the next result we will again use the convolution algebras of topological groupoids, however now we will consider another norm, which was already defined in the work of Hahn (\cite{MR496797}) and played an important role in Renault's work (\cite{MR584266}). A locally compact Hausdorff groupoid is \emph{étale} if the range map $\ra\colon\G\to\G[0]$ is a local homeomorphism. From this, it follows that $\G[0]$ is open in $\G$, that the product map is open and that $\G^x$ is discrete for all $x\in\G[0]$ (see \cite{MR2304314}).

Let $\G$ be a locally compact étale Hausdorff groupoid, $\mathbb{K}=\mathbb{R}$ or $\mathbb{C}$ and $\theta=0$, and let $\lambda$ be a Haar system for $\G$. Again, we will consider the convolution algebra $C_c(\G)=C_c(\G,\mathbb{K})$ as defined in the previous subsection.

Every left Haar system on an étale groupoid is essentially the counting measure (\cite[2.7]{MR584266}), in the sense that for all $x,y\in\G[0]$, the map $a\mapsto \lambda^{\ra(a)}(\left\{a\right\})$ is constant on set $\G_x^y$.

We define the $(I,\ra)$-norm on $C_c(\G)$ as
\[\Vert f\Vert_{I,\ra}=\sup_{x\in\G[0]}\int |f|d\lambda^x.\]

As $\G$ is Hausdorff, the unit space $\G[0]$ of $\G$ is a closed subgroupoid of $\G$, hence (trivially) étale, Hausdorff and locally compact itself. The convolution product on $C_c(\G[0])$ coincides with the pointwise product, and the $(I,\ra)$-norm is the uniform one: $\Vert f\Vert_{I,\ra}=\Vert f\Vert_\infty=\sup_{x\in\G[0]}|f(x)|$.

Moreover, $\G[0]$ is also open in $\G$ (because $\G$ is étale), so we can identify $C_c(\G[0])$ with the subalgebra $\left\{f\in C_c(\G[0]:\supp(f)\subseteq\G[0]\right\}$ of $C_c(\G)$.

\begin{definition}
	The algebra $C_c(\G[0])$, identified as a subalgebra of $C_c(\G)$, is called the \emph{diagonal subalgebra} of $C_c(\G)$. If $\G$ and $\H$ are locally compact étale Hausdorff groupoids, an isomorphism $T\colon C_c(\G)\to C_c(\H)$ is called \emph{diagonal-preserving} if $T(C_c(\G[0]))=C_c(\H[0])$.
\end{definition}

\begin{theorem}\label{theoremrenault}
	Let $\G$ and $\H$ be locally compact Hausdorff étale groupoids with continuous fully supported left Haar systems $\lambda_G$ and $\lambda_H$, respectively, and $T\colon C_c(\G)\to C_c(\H)$ a diagonal-preserving algebra isomorphism, isometric with respect to the $(I,\ra)$-norms. Then there is a (unique) topological groupoid isomorphism $\phi\colon\H\to\G$ and a continuous morphism $p\colon\H\to\mathbb{S}^1$ such that
	\[Tf(h)=p(h)D(\phi(h))f(\phi(h))\]
	where $D$ is a continuous instance of the Radon--Nikodym derivative
	\[D(a)=\frac{d\lambda_{\G}^{\ra(a)}}{d(\phi_*\lambda_{\H}^{\phi^{-1}(\ra(a))})}(a).\]
\end{theorem}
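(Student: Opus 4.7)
My plan is first to apply Banach-Stone to the diagonal subalgebra, then to bootstrap to a groupoid isomorphism of the full algebras using the algebra structure and the $(I,\ra)$-isometry, mirroring the argument of Theorem \ref{theoremmeasuredgroupoidconvolutionalgebra}.

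Since $\G$ is étale, $\G[0]$ is open in $\G$, convolution on $C_c(\G[0])$ coincides with pointwise product, and the $(I,\ra)$-norm there is simply the supremum norm. Thus the restriction $T_0 := T|_{C_c(\G[0])}$ is a sup-norm isometric algebra isomorphism between commutative algebras. Banach-Stone (Theorem \ref{theorembanachstone}) yields a homeomorphism $\phi_0 : \H[0] \to \G[0]$ and a continuous $p_0 : \H[0] \to \mathbb{S}^1$ with $T_0 g = p_0 \cdot (g \circ \phi_0)$; multiplicativity of $T_0$ forces $p_0^2 = p_0$, so $p_0 \equiv 1$ and $T_0 g = g \circ \phi_0$.

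Next I would show $T$ is a $\perp$-isomorphism. For $g_1, g_2 \in C_c(\G[0])$ and $f \in C_c(\G)$, the étale structure gives $(g_1 * f * g_2)(b) = w(b)\, g_1(\ra_{\G} b)\, f(b)\, g_2(\so_{\G} b)$ for a positive weight $w$ coming from the Haar system; applying $T$ and using $T_0 g = g \circ \phi_0$, the corresponding formula on $\H$ expresses $T(g_1 * f * g_2)(c)$ as a positive multiple of $(g_1 \circ \phi_0 \circ \ra_{\H})(c)\, Tf(c)\, (g_2 \circ \phi_0 \circ \so_{\H})(c)$. If $c_0 \in \supp(Tf) \cap \supp(Tg)$, choosing $g_1, g_2$ peaked at $\phi_0(\ra_{\H} c_0)$ and $\phi_0(\so_{\H} c_0)$ produces functions $g_1 * f * g_2$ and $g_1 * g * g_2$ both nonzero — detectable by the $(I,\ra)$-isometry since $\lambda_\H$ has full support — which pulls back through $T$ to force $\supp(f) \cap \supp(g) \neq \varnothing$. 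Applying the same reasoning to $T^{-1}$ shows $T$ is a $\perp$-isomorphism.

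By Jarosz' Theorem (\ref{theoremjarosz}) there exist a homeomorphism $\phi : \H \to \G$ and a continuous non-vanishing $P : \H \to \mathbb{K}$ with $Tf(h) = P(h) f(\phi(h))$; by support considerations $\phi$ restricts to $\phi_0$ on the unit spaces. From here the groupoid-morphism and cocycle arguments of Theorem \ref{theoremmeasuredgroupoidconvolutionalgebra} apply essentially verbatim: Lemma \ref{lemmaproduct} together with the support inclusion \eqref{equationinclusionsupportproduct} shows that $\phi$ is a topological groupoid isomorphism, and expanding $T(fg) = Tf * Tg$ via the convolution formula produces the cocycle identity $P(t)P(t^{-1}c) = D(\phi(t)) P(c)$, where $D$ is a continuous instance of the Radon-Nikodym derivative $d\lambda_\G^{\ra(a)}/d(\phi_*\lambda_\H^{\phi^{-1}(\ra a)})(a)$. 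Setting $p := P/(D \circ \phi)$ yields a continuous groupoid morphism $p : \H \to \mathbb{K}^\times$ satisfying the required identity. To show $|p| \equiv 1$, I would exploit the $(I,\ra)$-isometry on functions supported in open bisections: for $f \in C_c(\G)$ supported on an open bisection $U \subseteq \G$ one has $\|f\|_{I,\ra} = \sup_{a \in U} \lambda_\G^{\ra(a)}(\{a\})\,|f(a)|$, and $\|Tf\|_{I,\ra}$ is an analogous weighted supremum over the bisection $\phi^{-1}(U) \subseteq \H$; letting $f$ concentrate at each $a \in U$ gives $|P(\phi^{-1}a)|\,\lambda_\H^{\phi^{-1}(\ra a)}(\{\phi^{-1}a\}) = \lambda_\G^{\ra a}(\{a\})$, which is exactly $|P| = D \circ \phi$, so $|p| \equiv 1$. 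The main obstacle is the $\perp$-isomorphism step: unlike the $L^1$-setting of Theorem \ref{theoremmeasuredgroupoidconvolutionalgebra}, the $(I,\ra)$-norm admits no direct disjointness criterion analogous to Lemma \ref{lemmadisjointnessl1}, so the diagonal-preserving hypothesis combined with the étale bimodule action must carry the burden of transferring disjointness through $T$.
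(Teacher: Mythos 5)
Your skeleton matches the paper's proof (Banach--Stone on the diagonal with $P\equiv 1$ by multiplicativity, then a $\perp$-transfer step, then Jarosz, then the groupoid-morphism and cocycle arguments borrowed verbatim from Theorem \ref{theoremmeasuredgroupoidconvolutionalgebra}), and your bisection-based verification of $|p|\equiv 1$ is a legitimate variant of the paper's computation. But the step you yourself flag as the main obstacle is a genuine gap, and your proposed mechanism for it does not work. Peaking $g_1,g_2\in C_c(\H[0])$ at $\ra(c_0)$ and $\so(c_0)$ and pulling $g_1\ast Tf\ast g_2\neq 0$, $g_1\ast Tg\ast g_2\neq 0$ back through $T$ only produces arrows $b,b'$ with $f(b)\neq 0$, $g(b')\neq 0$ whose ranges and sources are close to $\phi_0(\ra(c_0))$ and $\phi_0(\so(c_0))$; it does not force $b=b'$, so it yields no contradiction with $f\perp g$ whenever some $\G_x^y$ contains more than one arrow. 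The simplest failure is $\G=\H$ a nontrivial discrete group: the diagonal is $C_c(\{e\})$, so $g_1\ast f\ast g_2$ is just a scalar multiple of $f$, and your argument records nothing at all about disjointness. So the bimodule action alone cannot carry the burden, exactly as you feared.

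The paper's resolution supplies the idea you were missing: the $(I,\ra)$-isometry together with the diagonal \emph{does} recover fibrewise $L^1$-data, and Lemma \ref{lemmadisjointnessl1} then applies fibre by fibre. Concretely, for each $x\in\G[0]$ one takes a net $a^x_i\in C_c(\G[0])$ with $0\leq a^x_i\leq 1$, $a^x_i(x)=1$, decreasing open supports and $\bigcap_i\supp(a^x_i)=\{x\}$; continuity of the Haar system gives $\lim_i\Vert a^x_i f\Vert_{I,\ra}=\int_{\G^x}|f|\,d\lambda_\G^x$. Since $T$ is multiplicative and diagonal-preserving, $T(a^x_i f)=(Ta^x_i)(Tf)$ with $Ta^x_i=a^x_i\circ\phi_0$ a net of the same kind at $\phi_0^{-1}(x)$, so the isometry transfers all fibrewise $L^1$-norms. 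Then $f\perp g$ is equivalent to: for all $x$, $f|_{\G^x}\perp g|_{\G^x}$ in $L^1(\lambda^x)$, which by Lemma \ref{lemmadisjointnessl1} (the criterion $\Vert Af+Bg\Vert_1=|A|\Vert f\Vert_1+|B|\Vert g\Vert_1$ for all scalars $A,B$, valid on the discrete fully supported fibre $\G^x$) is expressed entirely in terms of quantities $T$ preserves. This is what makes $T$ a $\perp$-isomorphism and lets Jarosz' Theorem (\ref{theoremjarosz}) take over; note that the linear-combination trick, not mere non-vanishing of localized products, is what separates disjoint functions living on the same fibre.
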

\begin{proof}
	By the Banach--Stone Theorem (\ref{theorembanachstone}), there is a homeomorphism $\phi\colon\H[0]\to\G[0]$ and a continuous function $P\colon\H[0]\to\mathbb{S}^1$ such that $Tf(y)=P(y)f(\phi(y))$ for all $f\in C_c(\G[0])$ and $y\in\H[0]$. Since $T$ is multiplicative we obtain $P=1$. (The same conclusion can be obtained in a similar manner by Milgram's or Jarosz' Theorem.)
	
	For each $x\in\G[0]$, let $\left\{a^x_i:i\in I_x\right\}$ be a net of functions in $C_c(\G[0])$ satisfying:
	\begin{enumerate}[label=(\roman*)]
		\item\label{theoremrenaultauxiliary(i)} $0\leq a^x_i\leq 1$, and $a^x_i(x)=1$;
		\item\label{theoremrenaultauxiliary(ii)} $\bigcap_i\supp(a^x_i)=\left\{x\right\}$;
		\item\label{theoremrenaultauxiliary(iii)} If $j\geq i$ then $[a^x_j\neq 0]\subseteq[a^x_i\neq 0]$.
	\end{enumerate}
	Items \ref{theoremrenaultauxiliary(ii)}-\ref{theoremrenaultauxiliary(iii)} and compactness of each $\supp(a^x_i)$ imply that $\left\{[a^x_i\neq 0]:i\in I_x\right\}$ is a neighbourhood basis at $x$. For $y\in\H[0]$, let $a^y_i=T(a^{\phi(y)}_i)=a^{\phi(y)}_i\circ\phi$, so that the net $\left\{a^y_i:i\in I_{\phi(y)}\right\}$ satisfies \ref{theoremrenaultauxiliary(i)}-\ref{theoremrenaultauxiliary(iii)} as well.
	
	Continuity of $\lambda_G$ implies that for all $x\in \G[0]$ and $f\in C_c(\G)$, $\lim_{i\in I_x}\Vert a^x_i f\Vert_{I,\ra}=\int |f(\gamma)|d\lambda^x(\gamma)$, and similarly on $\H$.
	
	Given $f,g\in C_c(\G)$, we use Lemma \ref{lemmadisjointnessl1} to obtain
	\begin{align*}
	f\perp g&\iff\forall x\left(f|_{\G^x}\perp g|_{\G^x}\text{ in }C(\G^x)\right)\\
	&\iff\forall x\forall A,B(\lim\Vert a^x_i(Af+Bg)\Vert_{I,\ra}=\lim(|A|\Vert a^x_i f\Vert_{I,\ra}+|B|\Vert a^x_i g\Vert_{I,\ra}
	\end{align*}
	and the last condition is preserved by $T$, so by Jarosz' Theorem $T$ is of the form $Tf(\alpha)=\widetilde{P}(\alpha)f(\widetilde{\phi}(\alpha))$ for a certain homeomorphism $\widetilde{\phi}\colon\H\to\G$ and a non-vanishing continuous scalar function $\widetilde{P}$. We can readily see that $\widetilde{\phi}$ and $\widetilde{P}$ are extensions of $\phi$ and $P$, respectively, so instead let us simply denote $\widetilde{\phi}=\phi$ and $\widetilde{P}=P$.
	
	The proof that $\phi$ is a groupoid isomorphism, and that $P$ can be decomposed as $P=(D\circ\phi)p$ for the (continuous) Radon--Nikodym derivative $D$ and some continuous morphism $p\colon\H\to\mathbb{C}\setminus\left\{0\right\}$ is the same as in Theorem \ref{theoremmeasuredgroupoidconvolutionalgebra}, but the verification that $|p|=1$ is different.
	
	Given $y\in\H[0]$  and $f\in C_c(\G)$, using the definition of $D$ as a Radon--Nikodym derivative,
	\[\int_{\H^y}|Tf|d\lambda_{\H}^y=\int_{\H^y} |p|(D\circ\phi)|f\circ\phi|d\lambda_{\H}^y=\int_{\G^{\phi^{-1}(y)}} |p\circ\phi^{-1}||f|d\lambda_{\G}^{\phi(y)}.\]
	Considering again the functions $a^{\phi(y)}_i$ and $a^y_i$, and the fact that $T$ is isometric we obtain
	\begin{align*}
	\int_{\G^{\phi^{-1}(y)}} |p\circ\phi^{-1}||f|d\lambda_{\G}^{\phi(y)}&=\lim_{i\in I_{\phi(y)}}\Vert a_i^y Tf\Vert_{I,\ra}=\lim_{i\in I_{\phi(y)}}\Vert a_i^{\phi(y)} f\Vert_{I,\ra}= \int |f|d\lambda_{\G}^{\phi(y)}
	\end{align*}
	for all $f\in C_c(\G)$, which implies that $|p|=1$ $\lambda_{\H}^y$-a.e. Since $p$ is continuous and $\lambda_{\H}$ is fully supported, we conclude that $|p|=1$ on $\H$.
\end{proof}

\subsection{Steinberg Algebras}

Steinberg algebras were independently introduced in \cite{MR2565546} and \cite{MR3274831}, as algebraic analogues of groupoid C*-algebras, and are generalizations of Leavitt path algebras and universal inverse semigroup algebras. We refer to \cite{MR3743184} and \cite{carlsenrout2017} for more details.

A locally compact, zero-dimensional étale groupoid is called \emph{ample}. A \emph{bisection} of a groupoid $\G$ is a subset $A\subseteq\G$ such that the source and range maps are injective on $A$. If $\G$ is an ample Hausdorff groupoid, we denote by $\KB(\G)$ the semigroup of compact-open bisections of $\G$, which forms a basis for the topology of $\G$.

In this section, $R$ is a fixed commutative ring with unit. Given an ample Hausdorff groupoid $\G$, we denote by $R^{\G}$ the $R$-module of $R$-valued functions on $\G$. Given $A\subseteq\G$, we define $1_A$ as the characteristic function of $A$ (with values in $R$).

Steinberg algebras were The goal of this section is to prove that the Steinberg algebra of an ample Hausdorff groupoid $\G$ together with its diagonal algebra completely characterize $\G$. Although the main theorem of this subsection (Theorem \ref{theoremsteinbergalgebras}) is partially stated and proven (for more general \emph{graded} Steinberg algebras) in \cite[Corollary 3.14]{carlsenrout2017}, we can obtain a precise classification of the diagonal-preserving isomorphisms of Steinberg algebras, as described in Theorem \ref{theoremsteinbergalgebras} and Corollary \ref{corollaryclassificationofautomorphismsofsteinbergalgebras}

We will need to recover the bisections of $\G$ from $A_R(\G)$, and in particular the compact-open subsets of $\G[0]$. The main idea is, again, to identify subsets of $\G[0]$ with their characteristic functions, and these are precisely the functions which attain only the values $0$ and $1$. We thus need to assume an extra condition on the ring $R$.

\begin{definition}[{\cite[X.7]{MR1878556}}]
	A (nontrivial) commutative unital ring $R$ is \emph{indecomposable} if its only idempotents are $0$ and $1$. Equivalently, $R$ is indecomposable if it cannot be written as a direct sum $R\simeq R_1\oplus R_2$, where $R_1$ and $R_2$ are nontrivial rings.
\end{definition}

A subset $A$ of a groupoid $\G$ is a bisection if and only if $AA^{-1}\cup A^{-1}A\subseteq \G[0]$. A similar type of condition will be used to recover an ample Hausdorff groupoid $\G$ from the pair $(A_R(\G),D_R(\G))$.

\begin{definition}
	A \emph{normalizer} of $D_R(\G)$ is an element $f\in A_R(\G)$ for which there exists $g\in A_R(\G)$ such that
	\begin{enumerate}
		\item[(i)] $fD_R(\G)g\subseteq D_R(\G)$ and $gD_R(\G)f\subseteq D_R(\G)$;
		\item[(ii)] $fgf=f$ and $gfg=g$.
	\end{enumerate}
	We denote by $N_R(\G)$ the set of normalizers of $D_R(\G)$. An element $g$ satisfying (i) and (ii) above will be called and \emph{inverse of $f$ relative to $D_R(\G)$}.
\end{definition}

It can be verified that $N_R(\G)$ is a multiplicative subsemigroup of $A_R(\G)$, which is moreover an \emph{inverse} semigroup. In particular, the inverse relative to $D_R(\G)$ of an element $f\in N_R(\G)$ is unique. However, we will not necessitate these results.

\begin{example}
	If $A\in\KB(\G)$ then $1_A\in N_R(\G)$. More generally, if $\lambda_1,\ldots,\lambda_n$ are invertible elements in $R$ and $U_1,\ldots,U_n$ are compatible disjoint compact-open bisections (that is, $\bigcup_i U_i$ is also a bisection), then $f=\sum_i \lambda_i1_{U_i}$ is a normalizer of $D_R(\G)$. The unique inverse of $f$ relative to $D_R(\G)$ is given by $f^*=\sum_i\lambda_i^{-1}1_{U_i^{-1}}$, that is, $f^*(a)=f(a^{-1})^{-1}$ for all $a\in\supp(f)^{-1}$.
\end{example}

In order to recover $\G$ from $(A_R(\G),D_R(\G))$, we need that all normalizers of $D_R(\G)$ have the form described in the example above, so additional conditions will have to be assumed on the groupoids we consider.

The following property was considered in \cite{arxiv1711.01903v2}, when working on the same recovery problem.

\begin{definition}\label{definitionlocalbisectionhypothesis}
	If $\G$ is an ample Hausdorff groupoid and $R$ is an indecomposable (commutative, unital) ring, we say that $(\G,R)$ satisfies the \emph{local bisection hypothesis} if $\supp(f)$ is a bisection for all $f\in N_R(\G)$.
\end{definition}

\begin{lemma}\label{lemmaformofnrgforlocalbisectionhypothesis}
	Suppose that $(\G,R)$ satisfies the local bisection hypothesis and $f\in N_r(\G)$. Then for all $a\in\supp(f)$, $f(a)$ is invertible in $R$.
\end{lemma}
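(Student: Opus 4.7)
My plan is to exploit that both $\supp(f)$ and $\supp(g)$ are bisections (where $g$ is the inverse of $f$ relative to $D_R(\G)$), reduce the convolutions $fgf$ and $gfg$ to a single-term product at each point, and then use the indecomposability of $R$.

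First I would observe that $g$ is itself a normalizer of $D_R(\G)$ with $f$ as relative inverse: the four defining conditions are symmetric in $f$ and $g$. Consequently, by the local bisection hypothesis applied to both $f$ and $g$, both $\supp(f)$ and $\supp(g)$ are bisections of $\G$.

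Next I would fix $a \in \supp(f)$ and compute $(fgf)(a)$ using the convolution formula $(fgf)(a) = \sum_{xyz = a} f(x)g(y)f(z)$ where the sum ranges over composable triples. Since $\supp(f)$ is a bisection and $a \in \supp(f)$, the only $z \in \supp(f)$ with $\so(z) = \so(a)$ is $z = a$, forcing $xy = \ra(a)$; and similarly the only $x \in \supp(f)$ with $\ra(x) = \ra(a)$ is $x = a$, forcing $y = a^{-1}$. Hence the sum collapses to
\[
(fgf)(a) = f(a)\,g(a^{-1})\,f(a).
\]
From $fgf = f$ this gives $f(a)^2\, g(a^{-1}) = f(a)$ in $R$. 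Since $f(a) \neq 0$, we get $g(a^{-1}) \neq 0$, i.e.\ $a^{-1} \in \supp(g)$. The analogous computation at $a^{-1}$, using $gfg = g$ and the bisection $\supp(g)$, yields $g(a^{-1})^2\, f(a) = g(a^{-1})$.

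Finally, set $x = f(a)$ and $y = g(a^{-1})$ in $R$, so $x^2 y = x$ and $y^2 x = y$. Then $(xy)^2 = x^2 y \cdot y = x y$, so $xy$ is an idempotent of $R$. Since $R$ is indecomposable, $xy \in \{0,1\}$. If $xy = 0$ then $x = x^2 y = x \cdot (xy) = 0$, contradicting $a \in \supp(f)$; hence $xy = 1$, proving $f(a)$ is invertible with inverse $g(a^{-1})$. The main thing to be careful with is the bisection reduction of the triple-convolution sum, since one needs to verify that the unique surviving index at each step really is $a$ (or $a^{-1}$); everything else is then formal manipulation in $R$ plus one appeal to indecomposability.
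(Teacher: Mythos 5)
Your proof is correct, but it takes a slightly different route from the paper's, and the comparison is instructive. The paper first uses condition (i) of the normalizer definition to place $fg=f1_{\so(\supp(f))}g$ in $D_R(\G)$, observes that $fg$ is an idempotent of $D_R(\G)$ (where the product is pointwise), applies indecomposability of $R$ to conclude $(fg)(\ra(a))\in\{0,1\}$, and only then uses the bisection property of $\supp(f)$ twice, to get $f(a)=(fg)(\ra(a))f(a)$ and $(fg)(\ra(a))=f(a)g(a^{-1})$. You bypass $D_R(\G)$ and condition (i) entirely: your triple-convolution collapse yields the scalar identity $f(a)^2g(a^{-1})=f(a)$ directly, and you apply indecomposability to the scalar idempotent $f(a)g(a^{-1})\in R$ --- which is in fact the same idempotent the paper uses, since $(fg)(\ra(a))=f(a)g(a^{-1})$. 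Your version is actually slightly more economical in hypotheses than you realize: setting $x=f(a)$, $y=g(a^{-1})$, the single relation $x^2y=x$ already gives $(xy)^2=(x^2y)y=xy$ in the commutative ring $R$, so your entire second computation --- the symmetry argument that $g\in N_R(\G)$, the appeal to the local bisection hypothesis for $\supp(g)$, and the identity $g(a^{-1})^2f(a)=g(a^{-1})$ from $gfg=g$ --- is redundant. Stripped of that, your argument shows the lemma needs only $fgf=f$ and the hypothesis that $\supp(f)$ is a bisection, which is marginally more general than the paper's proof (the paper also invokes condition (i)). One small point both proofs use implicitly and correctly: for $a\in\supp(f)$ one has $f(a)\neq 0$, because elements of $A_R(\G)$ are locally constant, so $\supp(f)=[f\neq 0]$ is clopen; your final contradiction step relies on this, and it is fine.
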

\begin{proof}
	Let $g$ be an inverse of $f$ relatively to $D_R(\G)$. First note that $fg=f1_{\so(\supp(f))}g\in D_R(\G)$.
	
	Let $a\in\supp(f)$. Since $fg$ is an idempotent in $D_R(\G)$,  the product in $D_R(\G)$ is pointwise and $R$ is indecomposable, then $fg(\ra(a))\in\left\{0,1\right\}$. Moreover, as $\supp(f)$ is a bisection we have $f(a)=fgf(a)=(fg)(\ra(a))f(a)$, so $(fg)(\ra(a))=1$.
	
	Again using that $\supp(f)$ is bisection, we obtain
	\[1=fg(\ra(a))=f(a)g(a^{-1})\]
	so $f(a)$ is invertible in $R$.\qedhere
\end{proof}

The following stronger condition was considered in \cite{carlsenrout2017}, and is more easily checked than the one above. Recall that if $R$ is a ring and $G$ is a group, a \emph{trivial unit} of the group-ring $RG$ is an element of the form $ug$, where $u$ is invertible in $R$ and $g\in G$.

\begin{definition}
	If $\G$ is an ample Hausdorff groupoid and $R$ is an indecomposable (commutative, unital) ring, we say that $(\G,R)$ \emph{satisfies condition (S)} if the set of all $x\in\G[0]$ such that the group ring $R\G_x^x$ has only trivial units is dense in $\G[0]$.
\end{definition}

The property of a group-ring $RG$ (where $G$ is a group and $R$ is a ring) having only trivial units has been studied, for example, in \cite{MR0002137}. A group $G$ is \emph{indexed} if there exists a non-trivial group morphism from $G$ to $\mathbb{Z}$, and \emph{indicable throughout} if every nontrivial finitely generated subgroup of $G$ is indexed. (Note that if $G$ is indicable throughout then $G$ is torsion-free.)

\begin{theorem}[{\cite[Theorem 13]{MR0002137}}]
	If $G$ is indicable throughout and $R$ is an integral domain, then $RG$ has only trivial units.
\end{theorem}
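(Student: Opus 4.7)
The plan is to follow Higman's classical leading-term argument, strengthening the inductive hypothesis to handle zero divisors and units simultaneously. First I would reduce to the case that $G$ is finitely generated: given $u,v\in RG$ with $uv=1$, the subgroup $H=\langle\supp(u)\cup\supp(v)\rangle$ is finitely generated and inherits indicability throughout (since every finitely generated subgroup of $H$ is one of $G$), and the entire relation $uv=1$ takes place in $RH$. So it suffices to prove the statement, together with the companion statement that $RG$ has no zero divisors, under the assumption that $G$ is finitely generated.

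Next I would induct on the total support size $n(u,v):=|\supp(u)|+|\supp(v)|$, proving simultaneously (Z) $RG$ is a domain and (U) all units of $RG$ are trivial. The base $n(u,v)=2$ is immediate because $(r_1g_1)(r_2g_2)=r_1r_2\cdot g_1g_2$, and $R$ is a domain. For the inductive step, if $G$ is nontrivial use indicability to fix a surjection $\phi:G\twoheadrightarrow\mathbb{Z}$, and decompose each $f\in RG$ along $\phi$-fibres as $f=\sum_n f_n$, with $m(f),\ell(f)$ the top and bottom indices. For $uv=\varepsilon\in\{0,1\}$, the top component of the product is $u_{m(u)}v_{m(v)}$ and the bottom is $u_{\ell(u)}v_{\ell(v)}$. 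If either $m(u)>\ell(u)$ or $m(v)>\ell(v)$, then one of the pairs $(u_{m(u)},v_{m(v)})$ or $(u_{\ell(u)},v_{\ell(v)})$ has strictly smaller total support and satisfies a relation of the same form (product equal to $0$ or $1$), contradicting the minimal counterexample.

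The remaining case is when $\supp(u)\subseteq g_1\ker\phi$ and $\supp(v)\subseteq h_1\ker\phi$ lie in single cosets. Writing $u=g_1u'$, $v=h_1v'$ with $u',v'\in R\ker\phi$ and setting $w=h_1v'h_1^{-1}\in R\ker\phi$ (using normality of $\ker\phi$), one obtains $(h_1g_1u')\,w=\varepsilon$ in $R\ker\phi$; here $h_1g_1\in\ker\phi$ since $\phi(g_1)+\phi(h_1)=m(u)+m(v)=0$. Moreover, $u$ is a trivial unit in $RG$ iff $u'$ is a trivial unit in $R\ker\phi$ (and similarly for $v$, $w$), and $\varepsilon=0$ in one ring iff in the other. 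So the problem reduces to the analogous one in $R H_1$ where $H_1=\langle\supp(h_1g_1u')\cup\supp(w)\rangle$ is a finitely generated subgroup of $\ker\phi\lneq G$, inheriting indicability throughout.

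The main obstacle is that in this single-coset case the total support does not strictly decrease, so support-size induction alone cannot close the argument. The resolution—and the technical heart of Higman's proof—is to organise the induction so that Scenario B is absorbed into the induction on the ambient finitely generated group: one fixes a putative minimal counterexample $(G,u,v)$ first minimising $n(u,v)$ and then, among pairs achieving that minimum, choosing $G$ to be minimal under the relation ``is a finitely generated subgroup of $\ker\phi$ for some $\phi$''. Scenario B then produces an $H_1$ strictly smaller in that relation with the same $n$, while Scenario A produces a strictly smaller $n$; either way, minimality is contradicted. Establishing that this combined well-ordering is in fact well-founded on the class of finitely generated indicable-throughout groups is the subtle combinatorial/group-theoretic point, and is where the full strength of indicability throughout (as opposed to mere indicability of $G$ itself) enters.
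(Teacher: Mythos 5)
The paper itself offers no proof of this statement (it is quoted from Higman), so your proposal stands on its own, and most of it is sound: the reduction to finitely generated $G$, the simultaneous induction on $n(u,v)$ for the two statements (Z) and (U), and the leading/trailing fibre analysis in Scenario A are all correct (one small slip: in the single-coset case with $\varepsilon=0$ the identity $\phi(g_1)+\phi(h_1)=0$ need not hold, since a zero product imposes no degree constraint; but there you can just translate by $g_1^{-1}$ and conjugate, so this is cosmetic). The genuine gap is exactly the point you flag and leave open: the relation ``is a finitely generated subgroup of $\ker\phi$ for some index map $\phi$'' is \emph{not} well-founded on finitely generated indicable-throughout groups, so your minimal counterexample need not exist. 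Concretely, take $G=F_2$, the free group of rank $2$ (indicable throughout): for any surjection $\phi:F_2\to\mathbb{Z}$ the kernel is free of infinite rank and contains finitely generated subgroups isomorphic to $F_2$ again, giving an infinite strictly descending chain $F_2,F_2,F_2,\ldots$ in your relation with $n$ held constant. Scenario B can therefore recur forever, and the combined ordering you propose cannot close the induction; as written, the proof fails at this step.

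The repair — and this is the actual device in Higman's argument — is to eliminate Scenario B before it arises, rather than absorb it into a subgroup descent. Since multiplying $u$ and $v$ by trivial units changes neither $n(u,v)$ nor the property being tested, normalize so that $1\in\supp(u)\cap\supp(v)$. Now set $H=\langle\supp(u)\cup\supp(v)\rangle$; if $H$ is trivial you are in the base case, and otherwise $H$ is finitely generated and nontrivial, so indicability throughout furnishes a surjection $\phi:H\to\mathbb{Z}$, and you run your fibre decomposition \emph{inside $H$ with this $\phi$}. Scenario B is then impossible: if both supports lay in single fibres, those fibres would be $\ker\phi$ itself (each support contains $1$, which has degree $0$), whence $H=\langle\supp(u)\cup\supp(v)\rangle\subseteq\ker\phi$, contradicting surjectivity of $\phi$ on $H$. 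So at least one of $m(u)>\ell(u)$, $m(v)>\ell(v)$ always holds, Scenario A applies, and your induction on $n(u,v)$ alone terminates — no ordering on subgroups is needed. Note this is also precisely where indicability \emph{throughout} enters: $\phi$ must be chosen on the subgroup generated by the supports, not on the ambient group.
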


Every free group, and every torsion-free abelian group is indicable throughout. The class of indicable throughout groups is closed under products, free products and extensions (see \cite{MR0002137}).

The following result from \cite{carlsenrout2017} provides a large class of groupoids satisfying the local bisection hypothesis. Although in \cite{carlsenrout2017} the authors assume stronger hypotheses (namely, that $R$ is an integral domain and $R\G_x^x$ does not have zero divisors for all $x$ in a dense subset of $\G[0]$), their proof works under the weaker assumptions we adopt.

\begin{lemma}[{\cite[Lemma 3.5(2)]{carlsenrout2017}}]\label{lemmanormalizerssteinbergalgebradiagonal2}
	Suppose that $\G$ is an ample Hausdorff groupoid, $R$ is an indecomposable ring and that $(\G,R)$ satisfies condition (S). Then $(\G,R)$ satisfies the local bisection hypothesis.
\end{lemma}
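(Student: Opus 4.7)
\emph{Plan.} The plan is to combine the indecomposability of $R$ (which forces a dichotomy on any idempotent-valued data) with condition (S) (which provides a dense supply of points at which the isotropy group ring admits only trivial units) in order to rule out branching in $\supp(f)$.

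First I would promote the algebraic normalizer identities into geometric data. Given $f\in N_R(\G)$ with inverse $g$ relative to $D_R(\G)$, pick any $d\in D_R(\G)$ equal to $1$ on the compact set $\so(\supp(f))\cup\ra(\supp(f))$; then $fg=fdg\in fD_R(\G)g\subseteq D_R(\G)$, and similarly $gf\in D_R(\G)$. The identity $gfg=g$ gives $(fg)(fg)=f(gfg)=fg$, so $fg$ is an idempotent of $D_R(\G)\simeq C_c(\G[0],R)$; evaluating pointwise on $\G[0]$ and using that the only idempotents of an indecomposable ring are $0$ and $1$, one obtains $fg=1_E$ for some (necessarily compact-open) $E\subseteq\G[0]$, and analogously $gf=1_F$. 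A short convolution computation using $fgf=f$ then identifies $E=\ra(\supp(f))$ and $F=\so(\supp(f))$.

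Next, assume for contradiction that $\supp(f)$ is not a bisection. By the $\ra\leftrightarrow\so$ symmetry (applying the same analysis to $g\in N_R(\G)$, which itself has inverse $f$), one may assume that $\ra$ fails to be injective on $\supp(f)$. Decomposing $f=\sum_k\lambda_k 1_{U_k}$ as a finite sum over disjoint compact-open bisections $U_k$ with $\lambda_k\in R\setminus\{0\}$, pick $a\in U_i$ and $b\in U_j$ with $i\neq j$ and $\ra(a)=\ra(b)$. Multiplying $f$ and $g$ by characteristic functions of suitable small compact-open subsets of $\G[0]$ (all of which belong to $D_R(\G)$ and preserve the normalizer relations), one reduces to the case where every surviving $U_k$ has common range $W$, a small open neighbourhood of $\ra(a)$, and either (i) the sources $\so(U_i),\so(U_j)$ coincide over an open piece $W'\subseteq W$, or (ii) they are disjoint. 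Case (ii) is ruled out by evaluating $(fg)(y)=1$ on $y\in W$ using $(gf)(\so(U_k))=1$ — which forces each $\lambda_k$ to be a unit with $g((a_y^k)^{-1})=\lambda_k^{-1}$, so that $(fg)(y)$ equals a sum of $1$'s incompatible with indecomposability — or else handled symmetrically at the source via condition (S).

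Finally, in case (i) the product $U_iU_j^{-1}$ is a well-defined compact-open sub-bisection of the isotropy bundle above $W'$, containing the non-identity element $ab^{-1}\in\G_{\ra(a)}^{\ra(a)}$. By condition (S) the set of $y\in W'$ with $R\G_y^y$ having only trivial units is dense, so pick such a $y_0$; the translated pair $a_{y_0},b_{y_0}\in\supp(f)\cap\G^{y_0}$ then defines a non-identity element $\gamma_0=a_{y_0}b_{y_0}^{-1}\in\G_{y_0}^{y_0}$. The principal obstacle — the technical heart of the argument — is to produce a concrete non-trivial unit of $R\G_{y_0}^{y_0}$ from the localized $f$ and $g$: I would further multiply $f$ and $g$ by indicators of compact-open isotropy-preserving bisections above $W'$, cutting them down to elements $\widetilde{f},\widetilde{g}$ whose fibrewise restrictions at $y_0$ become mutually inverse elements of the group ring $R\G_{y_0}^{y_0}$. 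Since $\widetilde{f}$ at $y_0$ has support containing both $y_0$ and $\gamma_0$, it is a non-trivial unit in $R\G_{y_0}^{y_0}$, contradicting the choice of $y_0$. This last step follows the construction in \cite[Lemma 3.5(2)]{carlsenrout2017}, the key observation being that only indecomposability of $R$ — not the stronger integral-domain or no-zero-divisors hypothesis used in the source — is required for the structural identities it exploits.
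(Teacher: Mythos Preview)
The paper does not supply its own proof of this lemma: it is stated with a citation to \cite[Lemma 3.5(2)]{carlsenrout2017}, accompanied only by the remark that the original argument goes through under the weaker hypothesis that $R$ is merely indecomposable (rather than an integral domain with no zero-divisors in the isotropy group rings). So there is nothing in the paper to compare your sketch against beyond that citation; what you have written is essentially an outline of the Carlsen--Rout argument, which is the intended content.

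Your initial reductions are correct and cleanly stated: the passage from $f\in N_R(\G)$ to $fg, gf\in D_R(\G)$ via a suitable $d$, the idempotent argument yielding $fg=1_E$ and $gf=1_F$ from indecomposability, and the identification of $E$ and $F$ with the range and source images of $\supp(f)$ (more precisely: $\ra(\supp(f))\subseteq E$, which is all you actually need). The localization to a small compact-open neighbourhood of $\ra(a)=\ra(b)$ is also the right move.

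However, your case split (i)/(ii) on whether the sources $\so(U_i),\so(U_j)$ coincide or are disjoint is where the sketch becomes imprecise. Case (ii) is not disposed of by the sentence you give: evaluating $(fg)(y)=1$ does not, on its own, force a sum of $1$'s in $R$, because you have no a priori control over the values $g(c^{-1})$ for $c$ ranging over $\supp(f)\cap\G^y$ when the sources are distinct, and the phrase ``handled symmetrically at the source via condition (S)'' is not a justification --- condition (S) concerns isotropy group rings, which are not in play when $\so(a)\neq\so(b)$. In the Carlsen--Rout argument this difficulty is circumvented by a more careful localization that forces the situation into the isotropy picture (your case (i)) at a point chosen by condition (S), after which the non-trivial-unit contradiction applies. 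Your case (i) handling is correct in spirit, but you should either show how case (ii) reduces to case (i), or follow the source more closely in setting up the localization so that the dichotomy does not arise.
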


An important class of groupoids consists of the \emph{topologically principal} ones, whose associated algebras have been extensively studied (see, for example, \cite{MR3189105,MR2745642,MR1681679,MR2590626}). In fact it is possible to classify C*-algebras which come from them (see \cite{MR2460017}).

\begin{definition}\label{definitiontopologicallyprincipal}
	A topological groupoid $\G$ is \emph{topologically principal} if the set of all $x\in X$ whose isotropy group $\G_x^x$ is trivial is dense in $\G[0]$.
\end{definition}

It follows that if $\G$ is an ample Hausdorff topologically principal groupoid and $R$ is an indecomposable ring, then $(\G,R)$ satisfies the local bisection hypothesis.

We are ready to classify diagonal-preserving isomorphisms of Steinberg algebras of groupoids and rings satisfying the local bisection hypothesis. For this, let us first define the class of maps of interest:

\begin{definition}\label{definitioncocycle}
	Let $R$ and $S$ be rings and $\G$ be a groupoid. Denote by $\operatorname{Iso}_+(R,S)$ the set of additive isomorphisms from $R$ to $S$. A map $\chi\colon\G\to\operatorname{Iso}_+(R,S)$ satisfying $\chi(ab)(rs)=\chi(a)(r)\chi(b)(s)$ for all $(a,b)\in\G[2]$ and $r,s\in R$ will be called a \emph{cocycle}.
\end{definition}

\begin{example}
	Consider $C_2=\left\{1,g\right\}$, the group of order 2, acting on itself by left multiplication and consider the transformation groupoid $\G=C_2\ltimes C_2$. Let $R=S=\mathbb{Z}$. If we define $\chi\colon\G\to\operatorname{Iso}_+(R,S)$ by $\chi(1,y)(r)=r$ and $\chi(g,r)=-r$, then $\chi$ is a cocycle. Note that $\chi(g,1)$ is not a ring isomorphism.
\end{example}

\begin{example}
	Suppose $R$ is a unital ring and $\chi\colon\G\to\operatorname{Iso}_+(R,R)$ is a cocycle. Then $\chi$ is a morphism from the groupoid $\G$ to the group (under composition) $\operatorname{Iso}_+(R,R)$ if, and only if, $\chi(x)=\id_R$ for all $x\in \G[0]$.
\end{example}

\begin{proposition}
	Let $R$ and $S$ be commutative unital rings, $\G$ a groupoid and $\chi\colon\G\to\operatorname{Iso}_+(R,S)$ a cocycle. Then
	\begin{enumerate}
		\item[(a)] For all $x\in\G[0]$, $\chi(x)$ is a ring isomorphism;
		\item[(b)] For all $a\in\G$, if $u\in R$ is invertible then $\chi(a)(u)$ is invertible in $S$, and $\chi(a)(u)^{-1}=\chi(a^{-1})(u)$.
		\item[(c)] For all $a\in\G$, $\chi(\so(a))=\chi(\ra(a))$. In other words, the restriction of $\chi$ to $\G[0]$ is \emph{invariant}.
	\end{enumerate}
\end{proposition}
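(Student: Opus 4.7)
The plan is to apply the cocycle identity $\chi(ab)(rs)=\chi(a)(r)\chi(b)(s)$ to three special kinds of composable pairs---namely $(x,x)$ for $x\in\G[0]$, $(a,a^{-1})$, and $(a^{-1},a)$---and to exploit the fact that an element of $R$ can be factored both as $1\cdot r$ and as $r\cdot 1$. Commutativity of $S$ enters only at the very end, in part (c).

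For (a), since $x\cdot x=x$, the cocycle identity immediately gives $\chi(x)(rs)=\chi(x)(r)\chi(x)(s)$, so $\chi(x)$ is multiplicative. Setting $r=s=1$ shows that $e:=\chi(x)(1_R)$ is an idempotent of $S$; since $\chi(x)$ is an additive bijection, every $s\in S$ has the form $\chi(x)(r)$ for some $r\in R$, and then $es=\chi(x)(1)\chi(x)(r)=\chi(x)(r)=s$, forcing $e=1_S$. Hence $\chi(x)$ is a unital ring isomorphism.

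For (b), with $a\in\G$ and $u\in R$ invertible, apply the cocycle identity to $(a,a^{-1})$ and to $(a^{-1},a)$, in each case with the factorization $u\cdot u^{-1}$. Using (a) to identify $\chi(\ra(a))(1_R)=\chi(\so(a))(1_R)=1_S$, one obtains $\chi(a)(u)\chi(a^{-1})(u^{-1})=1_S=\chi(a^{-1})(u^{-1})\chi(a)(u)$, so $\chi(a)(u)$ is invertible with inverse $\chi(a^{-1})(u^{-1})$. (There appears to be a typographical slip in the displayed formula of the statement: the correct inverse is $\chi(a^{-1})(u^{-1})$ rather than $\chi(a^{-1})(u)$, since applying the cocycle with $r=s=u$ would otherwise force $u^{2}=1_R$ for every invertible $u$.)

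For (c), factor a general $r\in R$ both as $r=r\cdot 1$ and as $r=1\cdot r$, and feed both factorizations into the cocycle identity. Applied to $(a,a^{-1})$ they yield
\[
\chi(\ra(a))(r)=\chi(a)(r)\chi(a^{-1})(1)=\chi(a)(1)\chi(a^{-1})(r),
\]
and applied to $(a^{-1},a)$ they yield
\[
\chi(\so(a))(r)=\chi(a^{-1})(r)\chi(a)(1)=\chi(a^{-1})(1)\chi(a)(r).
\]
Commutativity of $S$ turns $\chi(a)(r)\chi(a^{-1})(1)$ into $\chi(a^{-1})(1)\chi(a)(r)$, so the first right-hand side equals the second, giving $\chi(\ra(a))(r)=\chi(\so(a))(r)$ for every $r\in R$ and hence $\chi(\ra(a))=\chi(\so(a))$. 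The only genuine content of the proof lies in this final swap; the rest is bookkeeping with the cocycle relation. The main (mild) obstacle is keeping track of the right factorizations and recognizing that commutativity of $S$ is exactly what allows source and range to be interchanged in (c).
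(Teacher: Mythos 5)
Your proof is correct and follows essentially the same route as the paper's: specialize the cocycle identity to the pairs $(x,x)$, $(a,a^{-1})$ and $(a^{-1},a)$, with commutativity of $S$ invoked only in (c). You also rightly flag the typo in (b) --- the paper's own proof (taking $b=a^{-1}$, $r=u$, $s=u^{-1}$) establishes $\chi(a)(u)^{-1}=\chi(a^{-1})(u^{-1})$, exactly your corrected formula --- and your unitality argument in (a), using surjectivity of $\chi(x)$ to show $\chi(x)(1_R)=1_S$, fills in a step the paper leaves implicit.
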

\begin{proof}
	The cocycle condition states that $\chi(ab)(rs)=\chi(a)(r)\chi(b)(s)$ for all $a,b,r,s$. Taking $a=b=x$ yields (a). Taking $b=a^{-1}$, $r=u$ and $s=u^{-1}$ yields (b), and for item (c) we use commutativity of $S$:
	\[\chi(\so(a))(r)=\chi(a^{-1}a)(1r)=\chi(a^{-1})(1)\chi(a)(r)=\chi(a)(r)\chi(a^{-1})(1)=\chi(\ra(a))(r).\qedhere\]
\end{proof}

We endow $\operatorname{Iso}_+(R,S)$ with the topology of pointwise convergence, so that a map $\chi$ from a topological space $X$ to $\operatorname{Iso}_+(R,S)$ is continuous if and only if for every $r\in R$, the map $X\ni x\mapsto\chi(x)(r)\in S$ is continuous, that is, locally constant.

\begin{theorem}\label{theoremsteinbergalgebras}
	Let $\G$ and $\H$ be ample Hausdorff groupoids. Let $R$ and $S$ be two indecomposable (commutative, unital) rings such that $(\G,R)$ and $(\H,S)$ satisfy the local bisection hypothesis. Let $T\colon A_R(\G)\to A_S(\H)$ be a diagonal-preserving ring isomorphism, that is, $T(D_R(\G))=D_S(\H)$.
	
	Then there exists a unique topological groupoid isomorphism $\phi\colon\H\to\G$ and a continuous cocycle $\chi\colon\H\to\operatorname{Iso}_+(R,S)$ such that $Tf(a)=\chi(a)(f(\phi(a)))$ for all $a\in\H$ and $f\in A_R(\G)$.
\end{theorem}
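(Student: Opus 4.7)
The plan is to apply the general procedure of page \pageref{generalprocedureforclassification} in two stages: first recover a homeomorphism between the unit spaces from the restriction of $T$ to the diagonal subalgebras, and then extend this homeomorphism to a groupoid isomorphism by using the fact that normalizers of the diagonal detect compact-open bisections. Because $T$ is diagonal-preserving, its restriction $T|_{D_R(\G)} : D_R(\G) \to D_S(\H)$ is a ring isomorphism of $A_R(\G[0])$ and $A_S(\H[0])$, whose multiplications are pointwise. The characteristic functions $1_U$ for $U \in \KB(\G[0])$ lie in $D_R(\G)$ with $\sigma(1_U) = U$, so $(\G[0], 0, D_R(\G))$ is weakly regular; moreover $f \perp g$ in $D_R(\G)$ is equivalent to $fg = 0$, so $T|_{D_R(\G)}$ is a $\perpp$-isomorphism. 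Theorem \ref{maintheorem} then produces a unique homeomorphism $\phi_0 : \H[0] \to \G[0]$ with $\phi_0(\supp(Tf)) = \supp(f)$ for $f \in D_R(\G)$.

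I would then extend $\phi_0$ to a map $\phi : \H \to \G$ as follows. Given $a \in \H$, pick any $B \in \KB(\H)$ with $a \in B$. Since $1_B \in N_S(\H)$ and $T^{-1}$ is a diagonal-preserving ring isomorphism, $T^{-1}(1_B) \in N_R(\G)$, and the local bisection hypothesis forces $A := \supp(T^{-1}(1_B))$ to be a compact-open bisection. Applying $T^{-1}$ to the identities $1_B \cdot 1_{B^{-1}} = 1_{\ra(B)}$ and $1_{B^{-1}} \cdot 1_B = 1_{\so(B)}$, combined with Lemma \ref{lemmaformofnrgforlocalbisectionhypothesis} (so that $T^{-1}(1_B)$ is invertible at each point of $A$) and the support-transfer property of $\phi_0$, gives $\ra(A) = \phi_0(\ra(B))$ and $\so(A) = \phi_0(\so(B))$. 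Define $\phi(a)$ to be the unique element of $A$ with $\ra(\phi(a)) = \phi_0(\ra(a))$, and verify independence of $B$ by passing to the intersection $B \cap B'$ of two competing choices.

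The main obstacle lies in verifying that $\phi$ is a topological groupoid isomorphism and that $T$ is $\phi$-basic. Multiplicativity $\phi(ab) = \phi(a) \phi(b)$ is established by applying $T^{-1}$ to $1_{B_1} \cdot 1_{B_2} = 1_{B_1 B_2}$ for compatible bisections $B_1 \ni a$, $B_2 \ni b$; bijectivity comes from a symmetric construction using $T$ itself; and continuity in both directions is automatic, since $\phi$ sends each basic compact-open bisection $B$ into $\supp(T^{-1}(1_B))$. For the basic-map property, I would use that every $f \in A_R(\G)$ is a finite $R$-linear combination $\sum_i r_i 1_{U_i}$ with $U_i \in \KB(\G)$, together with the inclusion $\supp(T(r \cdot 1_U)) \subseteq \phi^{-1}(U)$, which follows from the same support-transfer reasoning used to define $\phi$. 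The sections $\chi(a, \cdot)$ of the $(\phi, T)$-transform are then additive bijections by Proposition \ref{propositionmodelmorphism} applied to both $T$ and $T^{-1}$; the cocycle identity $\chi(ab)(rs) = \chi(a)(r) \chi(b)(s)$ follows from multiplicativity of $T$ evaluated on products $(r \cdot 1_{U_1}) \cdot (s \cdot 1_{U_2})$ with $U_1 \ni \phi(a)$, $U_2 \ni \phi(b)$; and continuity of $\chi$ reduces to local constancy of $a \mapsto T(r \cdot 1_U)(a) = \chi(a)(r)$ for fixed $r$ and bisection $U \ni \phi(a)$.
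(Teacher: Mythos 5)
There is one step in your first paragraph that fails as written: the claim that $f\perp g$ in $D_R(\G)$ is equivalent to $fg=0$. Indecomposability of $R$ does not exclude zero divisors (e.g.\ $R=\mathbb{Z}/4\mathbb{Z}$ is indecomposable), and for $f=g=2\cdot 1_V$ with $V\in\KB(\G[0])$ nonempty one has $fg=0$ while $f$ is certainly not weakly disjoint from itself; so $fg=0$ does not imply $f\perp g$, and your stated justification that $T|_{D_R(\G)}$ is a $\perpp$-isomorphism collapses for such rings. The conclusion is salvageable by a short repair: by indecomposability the idempotents of $D_R(\G)$ are exactly the characteristic functions $1_V$ with $V\in\KB(\G[0])$, and $f\perp g$ holds if and only if there are idempotents $e,e'$ with $ee'=0$, $fe=f$ and $ge'=g$ --- a condition visibly preserved by any diagonal-preserving ring isomorphism (alternatively, $T$ restricts to an isomorphism of the generalized Boolean algebras of idempotents, and Stone duality already yields $\phi_0$). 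It is worth noting that the paper's own proof steers around exactly this trap: it never characterizes $\perp$ on the diagonal algebraically, but works with normalizers, whose nonzero values are invertible by Lemma \ref{lemmaformofnrgforlocalbisectionhypothesis}, characterizes $f\subseteq g$ there by $f=gp$ with $p\in D_R(\G)$, and transports $\perp$ to all of $A_R(\G)$ through decompositions $f=\sum_i f_i\widetilde{f_i}$ with $f_i\in N_R(\G)$ and $\widetilde{f_i}\in D_R(\G)$.

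Beyond this, your architecture is genuinely different from the paper's and is sound. The paper proves that $T$ is a $\perp$- (hence, by local constancy, $\perpp$-) isomorphism of the whole algebra $A_R(\G)$ and invokes Theorem \ref{maintheorem} with the groupoid $\G$ itself as the underlying locally compact space, obtaining $\phi$ on all of $\H$ in one stroke; multiplicativity of $\phi$ is then checked as in Theorem \ref{theoremmeasuredgroupoidconvolutionalgebra}, via Lemma \ref{lemmaproduct} and the inclusion $\supp(fg)\subseteq\supp(f)\supp(g)$. You instead apply Theorem \ref{maintheorem} only on the unit spaces and reconstruct the arrows Weyl-groupoid style, locating $\phi(a)$ inside the bisection $\supp(T^{-1}(1_B))$; this avoids the decomposition characterization of $\perp$ on $A_R(\G)$ and is closer in spirit to the Renault/Carlsen--Rout reconstruction, at the cost of extra bookkeeping. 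In particular, besides well-definedness you must verify the source-side compatibility $\so(\phi(a))=\phi_0(\so(a))$: your range-only definition of $\phi(a)$ does not give this for free (the equality $\so(A)=\phi_0(\so(B))$ is a statement about the whole bisection, not about which point of $A$ sits over which source), and your multiplicativity argument silently uses it when factoring $\phi(ab)$ inside $\supp(T^{-1}(1_{B_1}))\supp(T^{-1}(1_{B_2}))$; it follows by shrinking $B$ to $BV$ for small compact-open $V\ni\so(a)$, or by intersecting over all bisections containing $a$. With these two points patched, your endgame --- basicness via $\supp(T(r1_U))\subseteq\phi^{-1}(U)$ and additivity, the cocycle identity via products of characteristic functions on bisections, and continuity via local constancy of $a\mapsto T(r1_U)(a)$ --- matches the paper's.
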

\begin{proof}
	Since $T$ preserves the respective diagonal algebras, it also preserves their normalizers, i.e., $T(N_R(\G))=N_S(\H)$. Let us describe disjointness first for elements in $N_R(\G)$. The local bisection hypothesis implies, by Lemma \ref{lemmaformofnrgforlocalbisectionhypothesis}, that an element $f$ of $N_R(\G)$ has the form
	\[f=\sum_{i=1}^n\lambda_i1_{U_i}\]
	where $\lambda_1,\ldots,\lambda_n$ are invertible elements in $R$ and $U_1,\ldots,U_n$ are disjoint compact-open bisections of $\G$ such that $\bigcup_{i=1}^n U_i=\supp(f)$ is also a compact-open bisection. A similar statement holds for $N_S(\H)$.
	
	If $f,g\in N_R(\G)$, then $f\subseteq g$ if and only if $f=g p$ for some $p\in D_R(\G)$: Indeed,
	\begin{itemize}
		\item If $f=gp$ then $\supp(f)\subseteq\supp(g)\supp(p)\subseteq\supp(g)$;
		\item Conversely, if $\supp(f)\subseteq\supp(g)$ take $p=g^*f$. Then
		\[\supp(p)\subseteq\supp(g^*)\supp(f)\subseteq(\supp(g))^{-1}\supp(g)\subseteq\G[0]\]
		where the last inclusion follows from $\supp(g)$ being a bisection. The equality $f=gp$ follows from the definition of $p$ and since $\ra(\supp(f))\subseteq\ra(\supp(g))$.
	\end{itemize}
	
	Therefore $T$ preserves inclusion of normalizers. Since $N_R(\G)$ contains $\left\{1_U:U\in\KB(\G)\right\}$ then it is regular (Definition \ref{definitionregularperpp}), because $\KB(\G)$ is a basis for the topology of $\G$. Hence $T$ also preserver disjointness of normalizers (Proposition \ref{propositionrelationsrelations}).
	
	To prove that $T$ preserves disjointness in all of $A_R(\G)$, we decompose elements of $A_R(\G)$ in terms of elements of $N_R(\G)$ and $D_R(\G)$: if $f,g\in A_R(\G)$, then $f\perp g$ if and only if there are finite collections of normalizers $f_i,g_j\in N_R(\G)$ and elements $\widetilde{f_i},\widetilde{g_j}\in D_R(\G)$ ($1\leq i\leq n$, $1\leq j\leq m$) such that
	\[f=\sum_i f_i\widetilde{f_i},\quad g=\sum_j g_j\widetilde{g_j}\text{ and }f_i\perp g_j\text{ for all }i,j\]
	Indeed, if there are such $f_i,g_j,\widetilde{f_i},\widetilde{g_j}$ then $\supp(f)\subseteq\bigcup_i\supp(f_i)$ and $\supp(g)\subseteq\bigcup_j\supp(g_j)$, and the latter sets are disjoint.
	
	Conversely, we write $f=\sum_i\lambda_i1_{A_i}$, where the $A_i$ are pairwise disjoint compact-open bisections and $\lambda_i\neq 0$, and take $f_i=1_{A_i}$ and $\widetilde{f_i}=\lambda_i1_{\so(A_i)}$, so that $\supp(f)=\bigcup_{i=1}^n\supp(f_i)$. Similarly, writing $g=\sum_j\widetilde{g_j}g_j$ where $g_j\in N_R(\G)$ and $\supp(g)=\bigcup_j\supp(g_j)$, then $f\perp g$ implies $f_i\perp g_j$ for all $i$ and $j$.
	
	Therefore, $T$ is a $\perp$-isomorphism. Note that $\perpp$ and $\perp$ coincide in $A_R(\G)$, since its elements are locally constant (similarly to Example \ref{examplekaniarmoutil}). Then $T$ is a $\perpp$-isomorphism, so let $\phi\colon\H\to\G$ be the $T$-homeomorphism. The verification that $\phi$ is a groupoid isomorphism is similar to that of Theorem \ref{theoremmeasuredgroupoidconvolutionalgebra}.
	
	Since elements of $A_R(\G)$ (and $A_S(\H)$) are locally constant, then for all $f\in A_R(\G)$,
	\[f(\phi(a))=0\iff \phi(a)\in Z(f)\iff x\in Z(Tf)\iff Tf(a)=0.\]
	and therefore $T$ is basic (by additivity of $T$ and Proposition \ref{propositionbasicnessofgroupvalued}). Let $\chi$ be the $T$-transform. Since $T$ is additive with the pointwise operations, each section $\chi(\alpha)=\chi(\alpha,\cdot)$ is additive (by Proposition \ref{propositionmodelmorphism}). This yields a map $\chi\colon\G\to\operatorname{Iso}_+(R,S)$, and we need now to verify that $\chi$ is a cocycle.
	
	If $(a,b)\in\H[2]$ and $r,s\in R$, choose compact-open bisections $U,V$ of $\G$ containing $\phi(a)$ and $\phi(b)$, respectively. Then using multiplicativity of $T$ we obtain
	\begin{align*}
	\chi(ab)(rs)&=\chi(ab)\big((r1_U)(s1_V)(\phi(ab))\big)=T\big((r1_U)(s1_V)\big)(ab)\\
	&=\big(T(r1_U)T(s1_V)\big)(ab)=\sum_{cd=ab}T(r1_U)(c)T(s1_V)(d)\\
	&=\sum_{cd=ab}\chi(c)\big(r1_U(\phi(c))\big)\chi(d)\big(s1_V(\phi(d))\big)
	\end{align*}
	If $cd=ab$ is such that the last term above is nonzero, then $\ra(c)=\ra(a)$ and $\phi(c)\in U$, so since $U$ is a bisection we obtain $a=c$. Similarly, $d=b$, therefore $\chi(ab)(rs)=\chi(a)(r)\chi(b)(s)$, and $\chi$ is a cocycle.
	
	It remains only to prove that $\chi$ is continuous: Let $r\in R$ be fixed, $a\in\H$ and $U$ any compact-open bisection containing $\phi(a)$. For all $b\in\phi^{-1}(U)$,
	\[\chi(b)(r)=\chi(b)(r1_U(\phi(b)))=T(r1_U)(b)\]
	which means that the map $b\mapsto\chi(b)(r)$ coincides with $T(r1_U)$ on $\phi^{-1}(U)$ and thus it is continuous.
\end{proof}

We should note that according to \cite{arxiv1711.01903v2}, the local bisection hypothesis is preserved by diagonal-preserving isomorphisms, so the same result is valid if we assume, in principle, that only $(\G,R)$ satisfies this condition.

From this we can immediately classify the group of diagonal-preserving automorphisms of Steinberg algebras satisfying the local bisection hypothesis. Let $\G$ be a groupoid and $R$ a ring. Denote by $\operatorname{Coc}(\G,R)$ the set of all continuous cocycles $\chi\colon\G\to\operatorname{Iso}_+(R,R)$, which is a group with the canonical (pointwise) structure: $(\chi\rho)(a)=\chi(a)\circ\rho(a)$ for all $\chi,\rho\in C(\G,R)$ and $a\in\G$, where $\circ$ denotes composition.

Let $\operatorname{Aut}(\G)$ be the group of topological groupoid automorphisms of $\G$. Then $\operatorname{Aut}(\G)$ acts on $\operatorname{Coc}(\G,R)$ in the usual (dual) manner: for $\phi\in\operatorname{Aut}(\G)$, $\chi\in\operatorname{Coc}(\G,R)$ and $a\in\G$ set $(\phi\chi)(a)=\chi(\phi^{-1}a)$.

Denote by $\operatorname{Aut}(A_R(\G),D_R(\G))$ the group of diagonal-preserving ring automorphisms of $A_R(\G)$. From Theorem \ref{theoremsteinbergalgebras} we immediately obtain:

\begin{corollary}\label{corollaryclassificationofautomorphismsofsteinbergalgebras}
	If $(\G,R)$ satisfies the local bisection hypothesis, then the group $\operatorname{Aut}(A_R(\G),D_R(\G))$ is isomorphic to the semidirect product $C(\G,R)\rtimes\operatorname{Aut}(\G)$.
\end{corollary}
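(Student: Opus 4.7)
The plan is to construct an explicit isomorphism $\Psi \colon \operatorname{Aut}(A_R(\G),D_R(\G)) \to \operatorname{Coc}(\G,R) \rtimes \operatorname{Aut}(\G)$. For each diagonal-preserving ring automorphism $T$, Theorem \ref{theoremsteinbergalgebras} (applied with $\H = \G$ and $S = R$) produces a unique pair $(\chi_T, \phi_T) \in \operatorname{Coc}(\G,R) \times \operatorname{Aut}(\G)$ satisfying $Tf(a) = \chi_T(a)(f(\phi_T(a)))$ for all $f \in A_R(\G)$ and $a \in \G$. I would set $\Psi(T) = (\chi_T, \phi_T^{-1})$; well-definedness and injectivity are immediate from the uniqueness clause of that theorem. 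The inverse of $\phi_T$ is taken so that composition of automorphisms matches semidirect product multiplication, as verified below.

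For surjectivity, given $(\chi, \psi) \in \operatorname{Coc}(\G, R) \times \operatorname{Aut}(\G)$, I would define $T_{(\chi,\psi)} f(a) = \chi(a)(f(\psi^{-1}(a)))$ and verify that $T_{(\chi,\psi)} \in \operatorname{Aut}(A_R(\G), D_R(\G))$ with $\Psi(T_{(\chi,\psi)}) = (\chi, \psi)$. That $T_{(\chi,\psi)}$ maps $A_R(\G)$ to itself follows from continuity of $\chi$ and $\psi$ being a homeomorphism (so locally constant compactly supported functions are sent to the same). Additivity is inherited from each $\chi(a)$ being an additive isomorphism. Multiplicativity would be checked by expanding the convolution product $(fg)(a) = \sum_{cd = a} f(c) g(d)$, which is a finite sum since $\G$ is ample Hausdorff, and applying the cocycle identity $\chi(cd)(rs) = \chi(c)(r)\chi(d)(s)$ together with the groupoid-morphism property $\psi^{-1}(cd) = \psi^{-1}(c)\psi^{-1}(d)$. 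The diagonal is preserved because $\psi^{-1}(\G[0]) = \G[0]$, and an explicit two-sided inverse is furnished by the same construction applied to $(\chi', \psi^{-1})$ with $\chi'(a) = \chi(\psi(a))^{-1}$.

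Finally, I would verify that $\Psi$ is a group homomorphism. For $T_i \leftrightarrow (\chi_i, \phi_i)$ ($i = 1, 2$), direct expansion gives
\[(T_2 T_1) f(a) = T_2(T_1 f)(a) = \chi_2(a) \circ \chi_1(\phi_2(a))\bigl[f(\phi_1\phi_2(a))\bigr],\]
so $T_2 T_1$ corresponds to the pair $(a \mapsto \chi_2(a) \circ \chi_1(\phi_2 a),\, \phi_1\phi_2)$, and hence $\Psi(T_2 T_1) = (\chi_{12},\, (\phi_1\phi_2)^{-1})$ where $\chi_{12}(a) = \chi_2(a) \circ \chi_1(\phi_2 a)$. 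Using the action $(\psi \cdot \chi)(a) = \chi(\psi^{-1} a)$ and the pointwise product of cocycles, the semidirect product multiplication reads
\[\Psi(T_2) \cdot \Psi(T_1) = (\chi_2, \phi_2^{-1})(\chi_1, \phi_1^{-1}) = \bigl(\chi_2 \cdot (\phi_2^{-1} \cdot \chi_1),\, \phi_2^{-1}\phi_1^{-1}\bigr),\]
and since $(\phi_2^{-1} \cdot \chi_1)(a) = \chi_1(\phi_2 a)$, this agrees with $\Psi(T_2 T_1)$. No step here is really hard: the only subtlety is bookkeeping the convention for the action of $\operatorname{Aut}(\G)$ on $\operatorname{Coc}(\G,R)$ so that composition of automorphisms (which reverses the order of the $\phi$-components) translates into the correct order for semidirect product multiplication, which is what forces the choice $\Psi(T) = (\chi_T, \phi_T^{-1})$.
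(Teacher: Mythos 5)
Your proposal is correct and follows exactly the route the paper intends: the paper states this corollary as an immediate consequence of Theorem \ref{theoremsteinbergalgebras}, and your argument is just that deduction with the routine verifications written out (uniqueness of the pair $(\chi_T,\phi_T)$, surjectivity via the explicit formula $T_{(\chi,\psi)}$, and the composition law). Your bookkeeping is also right on the one genuinely delicate point, namely that composing automorphisms reverses the order of the $\phi$-components, so one must send $T\mapsto(\chi_T,\phi_T^{-1})$ to match the action $(\phi\chi)(a)=\chi(\phi^{-1}a)$ defined in the paper.
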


\subsection{Groups of circle-valued functions}

A natural question in C*-algebra theory is whether we can extend isomorphisms of unitary groups of C*-algebras to isomorphisms (or anti/conjugate-isomorphisms) of the whole C*-algebras. Dye proved in \cite{MR0066568} that this is always possible for continuous von Neumann factors, however this is not true in the general C*-algebraic case, even in the commutative case. Therefore we should consider isomorphisms between unitary groups which preserve more structure than just the product, such as an analogue to that of Theorem \ref{theoremliwong}. Recall that the unitary group of a commutative C*-algebra $C(X)$, where $X$ is compact Hausdorff, is $C(X,\mathbb{S}^1)$.

\begin{theorem}\label{theoremisomorphismgroupifcirclevaluedfunctions}
	Let $X$ and $Y$ be two Stone (zero-dimensional, compact Hausdorff) spaces. Suppose that $T\colon C(X,\mathbb{S}^1)\to C(Y,\mathbb{S}^1)$ is a group isomorphism such that $1\in f(X)\iff 1\in Tf(X)$. Then there exist a homeomorphism $\phi\colon Y\to X$, a finite isolated subset $F\subseteq Y$ and a continuous function $p\colon Y\setminus F\to\{\pm 1\}$ satisfying $Tf(y)=f(\phi(y))^{p(y)}$ for all $y\in Y\setminus F$.
	
	In particular, if $X$ (and/or $Y$) do not have isolated points then $F=\varnothing$.
\end{theorem}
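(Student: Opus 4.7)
The plan is to apply the general procedure outlined at the start of Section~\ref{sectionconsequences}, taking $\theta_X = \theta_Y = 1$. Zero-dimensionality makes both $(X, 1, C(X, \mathbb{S}^1))$ and $(Y, 1, C(Y, \mathbb{S}^1))$ regular: given $x$, a neighbourhood $U$ of $x$, and $c \in \mathbb{S}^1$, any clopen $V$ with $x \in V \subseteq U$ supports a continuous function equal to $c$ on $V$ and $1$ off $V$. The main preliminary step is to upgrade the single-function hypothesis to the full non-vanishing condition of Definition~\ref{definitionnonvanishingbijection}: given $f_1, \ldots, f_n \in C(X, \mathbb{S}^1)$ with $\bigcap_i [f_i = 1] = \varnothing$, compactness and zero-dimensionality let one partition $X = V_1 \sqcup \cdots \sqcup V_m$ into clopens, each contained in some $[f_{i(k)} \neq 1]$, and assemble the pieces into a single $g \in C(X, \mathbb{S}^1)$ with $[g = 1] = \varnothing$ that is expressible from the $f_i$'s through the group operations together with the order-two ``flip'' functions $\epsilon_U \in C(X, \mathbb{S}^1)$ attached to clopens $U \subseteq X$. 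Applying the hypothesis to $g$ and tracking the identities back yields $\bigcap_i [Tf_i = 1] = \varnothing$.

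By Proposition~\ref{propositionnonvanishingbijectionisperppisomorphism}, $T$ is then a $\perpp$-isomorphism, so Theorem~\ref{maintheorem} produces a homeomorphism $\phi : Y \to X$; Theorem~\ref{theoremnonvanishing} sharpens this to $[f = 1] = \phi([Tf = 1])$, equivalently $f(\phi(y)) = 1 \iff Tf(y) = 1$, so Proposition~\ref{propositionbasicnessofgroupvalued} gives that $T$ is $\phi$-basic. Let $\chi : Y \times \mathbb{S}^1 \to \mathbb{S}^1$ be the $(\phi, T)$-transform (its domain is the full product by regularity). Proposition~\ref{propositionmodelmorphism} shows each section $\chi(y, \cdot)$ is a group automorphism of $\mathbb{S}^1$; and for each fixed $z \in \mathbb{S}^1$ the map $y \mapsto \chi(y, z) = T(\overline{z})(y)$ is continuous on $Y$, where $\overline{z}$ denotes the constant-$z$ function on $X$.

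Define $F = \{y \in Y : \chi(y, \cdot) \notin \{z \mapsto z, \, z \mapsto z^{-1}\}\}$. I would first show that $F$ contains only isolated points of $Y$: if $y$ is non-isolated, take a net $y_\alpha \to y$ with $y_\alpha \neq y$, and for any $s \in \mathbb{S}^1$ and $s_\alpha \to s$ apply Propositions~\ref{propositiondisjointopensets} and \ref{propositionconstructfunction}\ref{propositionconstructfunction(a)} (valid because $\mathbb{S}^1$ is first-countable and locally path-connected) along with zero-dimensionality of $X$ to build $f \in C(X, \mathbb{S}^1)$ with $f(\phi(y_\alpha)) = s_\alpha$ and $f(\phi(y)) = s$; continuity of $Tf$ forces $\chi(y_\alpha, s_\alpha) \to \chi(y, s)$, which combined with the constant case $s_\alpha = s$ makes $\chi(y, \cdot)$ continuous and therefore of the form $z \mapsto z^{\pm 1}$. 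Next, $F$ must be finite: an accumulation point of $F$ in $Y$ would be non-isolated (hence not in $F$), but selecting $s_\alpha$'s that expose the wildness of each $\chi(y_\alpha, \cdot)$ near such a point contradicts continuity of the associated $Tf$; thus $F$ is closed and discrete in the compact space $Y$, so finite. On $Y \setminus F$ define $p(y) = 1$ if $\chi(y, \cdot) = \mathrm{id}$ and $p(y) = -1$ if $\chi(y, \cdot)$ is the inversion $z \mapsto z^{-1}$; continuity of $p$ follows from continuity of $y \mapsto T(\overline{z})(y)$ at any fixed $z \in \mathbb{S}^1 \setminus \{\pm 1\}$, since $p(y)$ is determined by whether this value equals $z$ or $z^{-1}$. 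The main obstacle is the non-vanishing upgrade in the first paragraph, since the group operations on $C(X, \mathbb{S}^1)$ do not directly encode joint pointwise equations; one must exploit the abundance of clopens in the Stone space $X$ to write the witness function $g$ in terms of the $f_i$'s and the flips $\epsilon_U$.
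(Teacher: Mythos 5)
Your plan hinges on upgrading the single-function hypothesis to the full non-vanishing condition of Definition~\ref{definitionnonvanishingbijection}, and this step, which you yourself flag as the main obstacle, does not go through as sketched. Since $C(X,\mathbb{S}^1)$ is abelian, any function ``expressible from the $f_i$'s through the group operations together with the flips $\epsilon_U$'' has the global form $g=f_1^{a_1}\cdots f_n^{a_n}h$ with fixed integer exponents and $h$ an order-two element; the cut-and-paste operation (restrict $f_{i(k)}$ to the clopen $V_k$, extend by $1$) is not a group word, so your partition cannot be assembled algebraically. Worse, even if you produce a nowhere-$1$ function $g$ of the above form (which is easy, e.g.\ $g=-1$), applying $T$ gives $Tg=\prod_i(Tf_i)^{a_i}\,Th$, and at a point $y\in\bigcap_i[Tf_i=1]$ you learn only that $Th(y)\neq 1$, i.e.\ $Th(y)=-1$ --- no contradiction, because at this stage nothing is known about how $T$ acts pointwise on order-two elements. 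The paper avoids this entirely: it never verifies Definition~\ref{definitionnonvanishingbijection} and never invokes Theorem~\ref{theoremnonvanishing}. Instead it proves directly that $T$ is a $\perp$-isomorphism using the Li--Wong-style avoidance Lemma~\ref{lemmatheoremisomorphismgroupifcirclevaluedfunctions} (given $f\perp g$ with $Tf,Tg$ not disjoint, build $H$ avoiding $Tf,Tg$ pointwise but attaining $1$, pull back, and contradict the hypothesis applied to $f^{-1}h$ and $g^{-1}h$); it then restricts to the order-two subgroups, where $\perp$ and $\perpp$ coincide because the functions are locally constant, obtains $\phi$ there, bootstraps via $\sigma(h)=\bigcup_{a\subseteq h}\sigma(a)$ and Theorem~\ref{theoremdisjoint} to see $T$ is a $\perpp$-isomorphism, and proves basicness by the local-perturbation claim (if $f(\phi(y))\neq 1$, replace $f$ by a $g$ agreeing with $f$ near $\phi(y)$ with $1\notin g(X)$).

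There is a second gap in your treatment of $F$. Propositions~\ref{propositiondisjointopensets} and~\ref{propositionconstructfunction} are sequence-based, and a Stone space need not be first countable: a non-isolated point of $Y$ (think of $\beta\mathbb{N}$) need not be the limit of any nontrivial sequence, and one certainly cannot prescribe values of a continuous function along an arbitrary net $y_\alpha\to y$ --- the net may have uncountably many terms with no disjoint separating neighbourhoods. So your argument that every non-isolated point has a continuous section fails. Note also that finiteness of $F$ alone does not make its points isolated (a finite subset of a compact Hausdorff space need not consist of isolated points). The paper's order of battle is the correct one: first show $F$ is finite (if infinite, extract by Proposition~\ref{propositiondisjointopensets} countably many points $y_n$ no one of which lies in the closure of the others, use the quadrant trick for discontinuous circle automorphisms to pick $z_n\to 1$ with $\chi(y_n,z_n)$ far from $1$, build $f$ by zero-dimensionality with $f(\phi(y_n))=z_n$, and contradict continuity of $Tf$ at an accumulation point); then show $F$ is \emph{open} using only continuity of $T$ on constant functions, which is the observation you already make for the continuity of $p$ --- that part of your proposal is sound.
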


The following lemma, based on \cite{MR3162258}, will be crucial to the proof of the theorem.

\begin{lemma}\label{lemmatheoremisomorphismgroupifcirclevaluedfunctions}
	Suppose that $X$ is a Stone space. For every pair of continuous functions $f,g\colon X\to\mathbb{S}^1$ and for every finite subset $F\subseteq X$ such that $f(F)\cup g(F)$ does not contain $1$, there exists $h\in C(X,\mathbb{S}^1)$ such that
	\[h(x)\not\in\left\{f(x),g(x)\right\}\text{ for all }x\text{ and }h(F)=\{1\}.\]
\end{lemma}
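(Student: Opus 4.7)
The plan is to exploit zero-dimensionality to build $h$ as a locally constant function, piecing it together on a finite clopen partition of $X$. The key observation is that $\mathbb{S}^1$ has plenty of room: at each point $x$, the set $\mathbb{S}^1 \setminus \{f(x), g(x)\}$ is nonempty (in fact cocountable), so locally we can choose a constant value of $h$ that avoids both $f$ and $g$.

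Concretely, I would proceed as follows. First, write $F = \{x_1, \ldots, x_n\}$. Since $1 \notin f(F) \cup g(F)$, continuity of $f$ and $g$ and zero-dimensionality of $X$ let me choose pairwise disjoint clopen neighborhoods $U_1, \ldots, U_n$ of $x_1, \ldots, x_n$ such that $1 \notin f(U_i) \cup g(U_i)$ for every $i$. Set $K = X \setminus \bigsqcup_{i=1}^n U_i$, which is clopen (and compact). Next, for each $x \in K$, pick any $c_x \in \mathbb{S}^1 \setminus \{f(x), g(x)\}$ and use continuity together with zero-dimensionality to find a clopen neighborhood $V_x \subseteq K$ of $x$ with $c_x \notin f(V_x) \cup g(V_x)$. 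By compactness of $K$, finitely many such $V_{y_1}, \ldots, V_{y_m}$ cover $K$; using that $X$ is zero-dimensional, replace them by a clopen partition $W_1, \ldots, W_m$ of $K$ with $W_j \subseteq V_{y_j}$, and let $c_j = c_{y_j}$.

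Finally, define $h$ to be the constant $1$ on each $U_i$ and the constant $c_j$ on each $W_j$. Then $h$ is locally constant, hence continuous, takes values in $\mathbb{S}^1$, equals $1$ on $F$, and on each piece of the partition avoids both $f$ and $g$ pointwise by construction.

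I do not foresee a serious obstacle: the only thing to be careful about is the separation of $F$ from the rest of $X$ into its own clopen region where $1$ is a safe value; once that is done, the finite clopen partition of the complement is routine. The argument relies on $|\mathbb{S}^1| \geq 3$ and on the Stone-space hypothesis in an essential way, since it fails, e.g., if $X$ is connected (one cannot then patch together constants).
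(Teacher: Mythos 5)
Your proof is correct and takes essentially the same approach as the paper: clopen neighborhoods of the points of $F$ on which $1$ avoids $f$ and $g$, then a finite clopen partition of the compact complement (obtained from a cover by compactness and disjointification) on which a locally constant $h$ avoiding $f$ and $g$ is defined. The only cosmetic difference is that you fix the avoided value $c_x$ before shrinking the clopen neighborhood, whereas the paper shrinks first so that $f(U_i)\cup g(U_i)\neq\mathbb{S}^1$ and then picks $z_i$ outside this set; the two orderings are interchangeable.
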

\begin{proof}
	For every point $y\in F$, choose a clopen set $U_y$ containing $y$ such that $f(U_y)\cup g(U_y)$ does not contain $1$. For every other point $x\in X':=X\setminus\bigcup_{y\in F}U_y$, there is a clopen set $U\subseteq X'$ such that $f(U)\cup g(U)\neq\mathbb{S}^1$. Using compactness of $X'$ and taking complements and intersections if necessary we can find a clopen partition $U_1,\ldots,U_n$ of $X'$ such that $f(U_i)\cup g(U_i)\neq\mathbb{S}^1$ for all $i$. Simply choose $z_i\in \mathbb{S}^1\setminus(f(U_i)\cup g(U_i))$ and define $h=z_i$ on $U_i$, and $h=1$ on $\bigcup_{f\in F}U_f$.
\end{proof}

\begin{proof}[Proof of Theorem \ref{theoremisomorphismgroupifcirclevaluedfunctions}]
	For the notion of support we will use (Definition \ref{definitionsigma}), we take $\theta=1$, the constant function at $1$, so regularity of $C(X,\mathbb{S}^1)$ is immediate. 
	
	Suppose that $f\perp g$ but that $Tf$ and $Tg$ are not disjoint. By Lemma \ref{lemmatheoremisomorphismgroupifcirclevaluedfunctions}, there exists $H\in C(Y,\mathbb{S}^1)$ such that $H\neq Tf,Tg$ everywhere, but that $1\in H(Y)$. Let $h=T^{-1}H$. Then $T(f^{-1}h)$ and $T(g^{-1}h)$ do not attain $1$, which implies that $f^{-1}h$ and $g^{-1}h$ do not attain $1$ as well. Thus
	\begin{align*}
	h^{-1}(1)&=X\cap h^{-1}(1)=(f^{-1}(1)\cup g^{-1}(1))\cap h^{-1}(1)\\
	&=(g^{-1}(1)\cap h^{-1}(1))\cup(f^{-1}(1)\cap h^{-1}(1))\subseteq (g^{-1}h)^{-1}(1)\cup (f^{-1}h)^{-1}(1)=\varnothing.
	\end{align*}
	But $(Th)^{-1}(1)=H^{-1}(1)$ is nonempty, contradicting the given property of $T$.
	
	Therefore $f\perp g$ implies $Tf\perp Tg$, and the same argument yields the opposite implication, so $T$ is a $\perp$-isomorphism. Let $\mathcal{A}(X)$ and $\mathcal{A}(Y)$ be the subgroups of order-$2$ elements of $C(X,\mathbb{S}^1)$ and $C(Y,\mathbb{S}^1)$, respectively (i.e., the groups of continuous functions with values in $\left\{-1,1\right\}$).
	
	$\mathcal{A}(X)$ and $\mathcal{A}(Y)$ are also regular, since $X$ and $Y$ are zero-dimensional, and the restriction $T|_{\mathcal{A}(X)}\colon\mathcal{A}(X)\to \mathcal{A}(Y)$ is a $\perpp$-isomorphism, because $\perp$ and $\perpp$ coincide on $\mathcal{A}(X)$ and $\mathcal{A}(Y)$. Let $\phi\colon Y\to X$ be the corresponding $T|_{\mathcal{A}(X)}$-homeomorphism. 
	
	Let $h\in C(X,\mathbb{S}^1)$ be arbitrary. Since $\sigma(h)=\bigcup_{a\in \mathcal{A}(X),a\subseteq h}\sigma(a)$ and $T$ is a $\perp$-isomorphism, we obtain by Theorem \ref{theoremdisjoint} that, for all $h\in C(X,\mathbb{S}^1)$,
	\[\phi(\sigma(Th))=\bigcup_{\substack{a\in \mathcal{A}(X)\\a\subseteq h}}\phi(\sigma(Ta))=\bigcup_{\substack{a\in \mathcal{A}(X)\\a\subseteq h}}\sigma(a)=\sigma(h)\]
	Since $\phi$ is a homeomorphism it preserves closures, from which it follows that $T$ is also a $\perpp$-isomorphism, and $\phi$ is also the $T$-homeomorphism.
	
	\begin{description}
		\item[Claim] $f(\phi(y))=1\iff Tf(y)=1$.
	\end{description}
	
	Suppose $f(\phi(y))\neq 1$. Choose a function $g\in C(X,\mathbb{S}^1)$ which coincides with $f$ on a neighbourhood of $\phi(y)$ and such that $1\not\in g(X)$. Then $1\not\in Tg(Y)$ and since $Tf$ coincides with $Tg$ on a neighbourhood of $y$ then $Tf(\phi(y))=Tg(\phi(y))\neq 1$. The other direction is analogous, and thus we have proved the claim.
	
	Therefore $T$ is basic. Let $\chi$ be the $T$-transform, so that each section $\chi(y,\cdot)$ is an automorphism of the circle. If $\chi(y,\cdot)$ is continuous then it has the form $\chi(y,z)=z^{p(y)}$ where $p(y)\in\left\{\pm1\right\}$. Let us prove that for all except finitely many $y\in Y$, the section $\chi(y,\cdot)$ is continuous.
	
	Let $F=\left\{y\in Y:\chi(y,\cdot)\text{ is discontinuous}\right\}$, and suppose that $F$ were infinite. By Proposition \ref{propositiondisjointopensets}, there are countably infinitely many distinct points $y_n\in F$ ($n\in\mathbb{N}$), such that no $y_n$ lies in the closure of the other ones. We can choose a sequence $z_n\to 1$ such that $\chi(y_n,z_n)$ lies in the second quadrant of the circle. Define $f(\phi(y_n))=z_n$, $f=1$ on the boundary of $\{\phi(y_n):n\in\mathbb{N}\}$ and extend $f$ continuously to all of $X$. Let $y$ be a cluster point of $\{y_n\}_n$, so that in particular $f(\phi(y))=1$. Then
	\[Tf(y)=\chi(y,1),\quad Tf(y_n)=\chi(y_n,z_n)\]
	But $y$ is an accumulation point of the $y_n$, and $Tf(y_n)$ lies in the second quadrant while $Tf(y)=1$, a contradiction to the continuity of $Tf$.
	
	Therefore $F$ is finite, so now we show that it is open in order to conclude that its points are isolated in $Y$. Let $y\in Y$ and choose $z_0\in\mathbb{S}^1$ of the form $z_0=e^{it}$ where $-\pi/4\leq t\leq \pi/4$, but such that $\chi(y,z_0)$ is in the second or third quadrant, so in particular it is not $z_0$ nor $z_0^{-1}$. Denote by $z_0$ the constant function at $z_0$, we that
	\[T(z_0)(y)=\chi(y,z_0)\neq z_0,z_0^{-1}.\]
	Since $T(z_0)$ is continuous, there is a neighbourhood $U$ of $y$ such that $\chi(x,z_0)\neq z_0,z_0^{-1}$ for all $x\in U$, so $x\in F$.
	
	Therefore $Y'=Y\setminus F$ is also compact, and we already constructed the function $p\colon Y'\to\{\pm 1\}$ with the desired property. To see that $p$ is continuous, denote by $i$ the constant function $x\mapsto i$ and note that
	\[p^{-1}(1)=\left\{y\in Y':\chi(y,i)=i\right\}=\left\{y\in Y':T(i)(y)=i\right\}=T(i)^{-1}(i)\cap Y'\]
	and similarly $p^{-1}(-1)=T(i)^{-1}(-i)\cap Y'$, so these two sets, which are complementary in $Y'$, are closed and hence clopen.
\end{proof}

\begin{example}
	As an easy example where the subset $F\subseteq Y$ in the previous theorem is nonempty, let $X=Y=\left\{\ast\right\}$ be (equal) singletons, and let $t\colon\mathbb{S}^1\to\mathbb{S}^1$ be a discontinuous automorphism of $\mathbb{S}^1$.
	
	Consider the map $T\colon C(X,\mathbb{S}^1)\to C(Y,\mathbb{S}^1)$, $T(f)(\ast)=t(f(\ast))$ (in other words, $T$ is the function obtained from $t$ by identifying $C(X,\mathbb{S}^1)$ and $C(Y,\mathbb{S}^1)$ with $\mathbb{S}^1$). Then $T$ satisfies the hypotheses of the previous theorem but $F=Y$.
\end{example}

We now endow $C(X,\mathbb{S}^1)$ with the uniform metric:
\[d_\infty(f,g)=\sup_{x\in X}|f(x)-g(x)|\]
(which is the metric coming from the C*-algebra $C(X,\mathbb{C})$).

\begin{theorem}\label{theoremisometricisomorphismcirclegroups}
	If $X$ and $Y$ are as above and $T\colon C(X,\mathbb{S}^1)\to C(Y,\mathbb{S}^1)$ is an isometric isomorphism, then there is a homeomorphism $\phi\colon Y\to X$ and a continuous function $p\colon Y\to\left\{\pm 1\right\}$ such that $Tf(y)=f(\phi(y))^{p(y)}$ for all $y\in Y$.
\end{theorem}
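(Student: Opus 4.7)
The plan is to verify the hypothesis of Theorem \ref{theoremisomorphismgroupifcirclevaluedfunctions} for $T$, namely that $1\in f(X)$ if and only if $1\in Tf(Y)$, and then to upgrade the resulting conclusion by showing that the exceptional set $F$ must be empty when $T$ is isometric. The key intermediate observation is that $T$ sends the constant function $-1$ to the constant function $-1$; this is what makes the non-vanishing hypothesis transparent.

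First I show $T(-1)\equiv -1$. Since $T$ is a group morphism, $T(-1)$ has order dividing $2$, so it takes values in $\{\pm 1\}$. The isometry gives $d_\infty(T(-1),1)=d_\infty(-1,1)=2$, so $T(-1)$ attains $-1$ somewhere. Suppose for contradiction that $T(-1)(y_0)=1$ for some $y_0\in Y$. Identifying each $t\in\mathbb{S}^1$ with its corresponding constant function, the isometry gives $d_\infty(Tt,1)=d_\infty(t,1)=|t-1|$, so $Tt\to 1$ uniformly as $t\to 1$, and in particular $Tt(y_0)\to 1$. Then $|T(-1)(y_0)-Tt(y_0)|=|1-Tt(y_0)|\to 0$, contradicting $d_\infty(T(-1),Tt)=d_\infty(-1,t)=|1+t|\to 2$. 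Hence $T(-1)\equiv -1$.

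Given $T(-1)\equiv -1$, note that $d_\infty(f,-1)=\sup_x|f(x)+1|=2$ if and only if $1\in f(X)$, by compactness of $X$. The isometric identity $d_\infty(f,-1)=d_\infty(Tf,T(-1))=d_\infty(Tf,-1)$ then yields $1\in f(X)\iff 1\in Tf(Y)$. Theorem \ref{theoremisomorphismgroupifcirclevaluedfunctions} applies, producing a homeomorphism $\phi:Y\to X$, a finite isolated $F\subseteq Y$, and a continuous $p:Y\setminus F\to\{\pm 1\}$ satisfying $Tf(y)=f(\phi(y))^{p(y)}$ for $y\in Y\setminus F$.

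Finally I show $F=\varnothing$. The proof of Theorem \ref{theoremisomorphismgroupifcirclevaluedfunctions} establishes that $T$ itself is $\phi$-basic on all of $Y$, with $(\phi,T)$-transform $\chi:Y\times\mathbb{S}^1\to\mathbb{S}^1$; discontinuous sections occur only at points of $F$. Since the isometry makes $T$ continuous in the uniform norm, for any $z_n\to z$ in $\mathbb{S}^1$ the constant functions $z_n$ converge uniformly to $z$, hence $\chi(y,z_n)=Tz_n(y)\to Tz(y)=\chi(y,z)$ for every $y\in Y$. Thus every section $\chi(y,\cdot)$ is a continuous automorphism of $\mathbb{S}^1$, necessarily of the form $z\mapsto z^{\pm 1}$, so $F=\varnothing$ and $p$ extends continuously to all of $Y$. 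The main obstacle is spotting that the constant $-1$ is the correct ``probe'' through which the isometry encodes the non-vanishing condition; with that identified, the remainder follows routinely from the previous theorem and from continuity of $T$ on the constants, which is free from isometry.
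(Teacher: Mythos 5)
Your overall strategy is exactly the paper's: probe $T$ with the constant function $-1$, use the isometry to translate ``$1\in f(X)$'' into the metric statement $d_\infty(f,-1)=2$, invoke Theorem \ref{theoremisomorphismgroupifcirclevaluedfunctions}, and then kill the exceptional set $F$ by noting that $T$, being isometric, is continuous on constant functions, so every section $\chi(y,\cdot)$ is a continuous automorphism of $\mathbb{S}^1$. Steps two through four are correct and match the paper (your compactness argument for $d_\infty(f,-1)=2\iff 1\in f(X)$ is fine).

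However, your proof of the key first step, $T(-1)\equiv -1$, has a genuine gap. You correctly get that $T(-1)$ takes values in $\{\pm 1\}$ and attains $-1$ somewhere, but the contradiction you then derive is not one: you compare the \emph{pointwise} quantity $|T(-1)(y_0)-Tt(y_0)|\to 0$ at the single point $y_0$ against the \emph{supremum} $d_\infty(T(-1),Tt)=|1+t|\to 2$. These are perfectly compatible. A priori $Y$ splits into nonempty clopen sets $A=T(-1)^{-1}(1)$ and $B=T(-1)^{-1}(-1)$; since $Tt\to 1$ uniformly as $t\to 1$, on $B$ one has $|T(-1)(y)-Tt(y)|=|-1-Tt(y)|\to 2$, so the supremum is attained on $B$ and tends to $2$ exactly as the isometry demands, with no constraint on what happens at $y_0\in A$. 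So your argument does not rule out $A\neq\varnothing$. The conclusion is true, but it needs a different argument; the paper's is to characterize the constant $-1$ by properties invariant under an isometric group isomorphism: $h^2=1$ together with ``$d_\infty(h,g)\in\{1,2\}$ for every $g$ with $g^3=1$.'' Indeed, an $h$ with $h^2=1$ takes values in $\{\pm 1\}$; testing against the constant $g=1$ forces $h$ to attain $-1$, and if $h$ attained $1$ at some point, testing against the constant $g=\omega$ (a primitive cube root of unity) gives $d_\infty(h,g)=|1-\omega|=\sqrt{3}\notin\{1,2\}$. Since $T$ preserves both properties, $T(-1)=-1$. With this repair the rest of your proof goes through as written.
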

\begin{proof}
	We identify each $\lambda\in\mathbb{S}^1$ with the corresponding constant map on $X$ or $Y$. The constant function $-1$ is characterized by the following two properties:
	\begin{itemize}
		\item $(-1)^2=1$;
		\item If $g^3=1$, then $d_\infty(-1,g)\in\left\{1,2\right\}$.
	\end{itemize}
	Thus $T(-1)=-1$. A function $f$ does not attain $1$ if and only if $d_\infty(-1,f)<1$, so $T$ preserves functions not attaining $1$, and we apply Theorem \ref{theoremisomorphismgroupifcirclevaluedfunctions} (or more precisely its proof) in order to obtain a homeomorphism $\phi\colon Y\to X$, a function $\chi\colon Y\times\mathbb{S}^1\to\mathbb{S}^1$ and a continuous function $p\colon Y'\to\left\{-1,1\right\}$, where $Y'=\left\{y\in Y:\chi(y,\cdot)\text{ is continuous}\right\}$, such that
	\[Tf(y)=\chi(y,f(\phi(y))\qquad\text{and}\qquad\chi(y',t)=t^{p(y')}\]
	for all $y\in Y$, $y'\in Y'$ and $f\in C(X,\mathbb{S}^1)$. It remains only to prove that $Y'=Y$, i.e., every section $\chi(y,\cdot)$ is continuous.
	
	If $\lambda_i\to\lambda$ in $\mathbb{S}^1$ then we also have uniform convergence of the corresponding constant functions, so
	\[\chi(y,\lambda_i)=T(\lambda_i)y\to T(\lambda)y=\chi(y,\lambda)\]
	thus $\chi(y,\cdot)$ is continuous for all $y$.
\end{proof}

\subsection*{Acknowledgements}
This work is part of the author's PhD thesis at the University of Ottawa, written under supervision of Thierry Giordano and Vladimir Pestov, who provided many useful insights and suggestions to the subject at hand.

\bibliographystyle{abbrv}
\bibliography{library}
\end{document}